\providecommand{\U}[1]{\protect\rule{.1in}{.1in}}
\numberwithin{equation}{section}
\newtheorem{theorem} {Theorem} [section]
\newtheorem{proposition}[theorem]{Proposition}
\newtheorem{corollary}  [theorem]     {Corollary}
\newtheorem{lemma}  [theorem]     {Lemma}
\newtheorem{example}  [theorem]     {Example}
\newtheorem{question}  [theorem]     {Question}
\newtheorem{prob}[theorem]{Problem}
\newtheorem{remark}  [theorem]     {Remark}
\newtheorem{definition}  [theorem]     {Definition}
\newtheorem{notation}  [theorem]     {Notation}
\newtheorem{conjecture}  [theorem]     {Conjecture}
\newcommand{\dz}{d \bar{z}}
\newcommand{\pz}{\partial\bar{z}}
\newcommand{\w}{\wedge}
\newcommand{\im}{\mathrm{im}}
\renewcommand{\1}{\mathds{1}}
\newcommand{\G}{\mathbb{G}}
\newcommand{\db}{\overline{\partial}}
\newcommand{\dbs}{\overline{\partial}^*}
\newcommand{\lc}{\lrcorner}
\newcommand{\lk}{\left(}
\newcommand{\rk}{\right)}
\newcommand{\btheorem}{\begin{theorem}}
\newcommand{\etheorem}{\end{theorem}}
\newcommand{\bproposition}{\begin{proposition}}
\newcommand{\eproposition}{\end{proposition}}
\newcommand{\bdefinition}{\begin{definition}}
\newcommand{\edefinition}{\end{definition}}
\newcommand{\bcorollary}{\begin{corollary}}
\newcommand{\ecorollary}{\end{corollary}}
\newcommand{\bproof}{\begin{proof}}
\newcommand{\eproof}{\end{proof}}
\newcommand{\beq}{\begin{equation}}
\newcommand{\eeq}{\end{equation}}
\newcommand{\ee}{\end{eqnarray*}}
\newcommand{\be}{\begin{eqnarray*}}
\newcommand{\elemma}{\end{lemma}}
\newcommand{\blemma}{\begin{lemma}}
\newcommand{\ts}{\otimes}
\newcommand{\om}{\omega}
\renewcommand{\>}{\rightarrow}
\newcommand{\sL}{{\mathcal L}}
\newcommand{\p}{\partial}
\newcommand{\bp}{\overline\partial}
\newcommand{\bd}{\begin{enumerate} }
\newcommand{\ed}{\end{enumerate}}
\def\p{\partial}
\def\o{\overline}
\def\b{\bar}
\def\mc{\mathcal}
\def\w{\wedge}
\def\om{\omega}
\def\l{\lrcorner}
\begin{document}
\title{Several special complex structures and their deformation properties}

\author{Sheng Rao}
\address{School of Mathematics and statistics, Wuhan  University,
Wuhan 430072, China; Department of Mathematics, University of
California at Los Angeles, CA 90095-1555, USA}
\email{likeanyone@whu.edu.cn}
\thanks{Rao is partially supported by the National Natural Science Foundations of China No. 11301477, 11671305
and China Scholarship Council/University of California, Los Angeles Joint Scholarship
Program.  The corresponding author Zhao is partially supported by the
Fundamental Research Funds for the Central Universities No.
CCNU16A05013 and China Postdoctoral Science
Foundation No. 2016M592356}

\author{Quanting Zhao}
\address{School of Mathematics and Statistics \&
Hubei Key Laboratory of Mathematical Sciences,
Central China Normal University, Wuhan, 430079, P.R.China.}
\email{zhaoquanting@126.com; zhaoquanting@mail.ccnu.edu.cn}

\date{\today}

\subjclass[2010]{Primary 32G05; Secondary 13D10, 14D15, 53C55}
\keywords{Deformations of complex structures; Deformations and
infinitesimal methods, Formal methods; deformations, Hermitian and
K\"ahlerian manifolds}

\begin{abstract}
We introduce a natural map from the space of pure-type complex
differential forms on a complex manifold to the corresponding one on
the infinitesimal deformations of this complex manifold. By use of
this map, we generalize an extension formula in a recent work of K.
Liu, X. Yang and the first author. As direct corollaries, we prove
several deformation invariance theorems for Hodge numbers. Moreover,
we also study the Gauduchon cone and its relation with the balanced
cone in the K\"ahler case, and show that the limit of the Gauduchon
cone in the sense of D. Popovici for a generic fiber in a
K\"ahlerian family is contained in the closure of the Gauduchon cone
for this fiber.
\end{abstract}
\maketitle

\tableofcontents

\section{Introduction}

We introduce an extension map from the space of complex differential
forms on a complex manifold to the corresponding one on the
infinitesimal deformations of the complex manifold and generalize an
extension formula in \cite{lry} with more complete deformation
significance. As direct corollaries, we prove several deformation
invariance theorems for Hodge numbers in sufficiently general situations
by a power series approach, which is analogously used to reprove the classical Kodaira-Spencer's local stability
of K\"ahler structures in a recent paper \cite{RwZ}.
We will also study the Gauduchon cone and its relation with the balanced one
in the K\"ahler case, to explore the deformation properties on the Gauduchon cone of
an \textbf{sGG} manifold introduced by D. Popovici \cite{P1}. We are
much motivated by Popovici's remarkable work on \cite[Conjecture
1.1]{P3}, which confirms that if the central fiber $X_0$ of a
holomorphic family of complex manifolds admits the deformation
invariance of $(0,1)$-type Hodge numbers or a so-called strongly
Gauduchon metric and the generic fiber $X_t$ ($t\neq 0$) of this
family is projective, then $X_0$ is Moishezon.

We will mostly follow the notations in \cite{lry}.  All manifolds in
this paper are assumed to be $n$-dimensional compact complex
manifolds. A {\emph{Beltrami differential}} is an element in $A^{0,1}(X,
T^{1,0}_X)$, where $T^{1,0}_X$ denotes the holomorphic tangent
bundle of $X$. Then $i_\phi$ or $\phi\lrcorner$ denotes the
contraction operator with $\phi\in A^{0,1}(X,T^{1,0}_X)$
alternatively if there is no confusion. We also follow the
convention
\begin{equation}\label{0e-convention}
e^{\spadesuit}=\sum_{k=0}^\infty \frac{1}{k!} \spadesuit^{k},
\end{equation}
where $\spadesuit^{k}$ denotes $k$-time action of the operator
$\spadesuit$. Since the dimension of $X$ is finite, the summation in
the above formulation is always finite.

Consider the smooth family $\pi: \mathcal{X} \rightarrow
B$ of $n$-dimensional complex manifolds
over a small domain $B$ in $\mathbb{R}^k$ as in Definition \ref{smf}, with the
central fiber $X_0:= \pi^{-1}(0)$ and the general fibers $X_t:=
\pi^{-1}(t).$ Set $k=1$ for simplicity. Denote by
$\zeta:=(\zeta^\alpha_j(z,t))_{\alpha=1}^n$ the holomorphic
coordinates of $X_t$ induced by the family with the holomorphic
coordinates $z:=(z^i)_{i=1}^n$ of $X_0$, under a coordinate covering
$\{\mathcal{U}_j\}$ of $\mathcal{X}$, when $t$ is assumed to be
fixed. Suppose that this family induces the integrable Beltrami
differential $\varphi(z,t)$, which is denoted by $\varphi(t)$ and
$\varphi$ interchangeably. These are reviewed at the beginning of
Section \ref{ext-formula}. Then we have the following crucial
calculation: \blemma[=Lemma \ref{inverse}]
$$\begin{pmatrix} \frac{\p z}{\p \zeta} & \frac{\p z}{\p \bar{\zeta}}  \\
\frac{\p \bar{z}}{\p \zeta} & \frac{\p \bar{z}}{\p \bar{\zeta}}  \\
\end{pmatrix} = \begin{pmatrix}
\lk \1 - \varphi \overline{\varphi} \rk^{-1} \lk \frac{\p \zeta}{\p
z}
\rk^{-1} & - \varphi \lk \1- \overline{\varphi} \varphi \rk^{-1} \lk \overline{ \frac{\p \zeta}{\p z} } \rk ^{-1}  \\
- \lk \1- \overline{\varphi} \varphi \rk^{-1} \overline{ \varphi }
\lk \frac{\p \zeta}{\p z} \rk^{-1} & \lk \overline{ \1 - \varphi
\overline{\varphi} } \rk ^{-1} \lk \overline{ \frac{\p \zeta}{\p z} } \rk^{-1}   \\
\end{pmatrix},$$
where $\varphi \overline{\varphi}$, $\overline{\varphi} \varphi$
stand for the two matrices $(
\varphi^i_{\bar{k}} \overline{\varphi^k_{\bar{j}}})_{\begin{subarray}{l} 1 \leq i \leq n \\
1 \leq j \leq n \\ \end{subarray}}$,
$(\overline{\varphi^i_{\bar{k}}} \varphi^k_{\bar{j}})_{\begin{subarray}{l} 1 \leq i \leq n \\
1 \leq j \leq n \\ \end{subarray}}$, respectively, and $\1$ is the
identity matrix. \elemma

Using this calculation and its corollaries, we are able to reprove
an important result (Proposition \ref{hol}) in deformation theory of
complex structures, which asserts that the holomorphic structure on
$X_t$ is determined by $\varphi(t)$. Actually, we obtain that for a
differentiable function $f$ defined on an open subset of $X_0$
$$
\db_t f=e^{i_{\overline{\varphi}}}\lk\lk
\1-\overline{\varphi}\varphi\rk ^{-1}\lc(\db-\varphi\lc \p)f\rk,
$$
where the differential operator $d$ is decomposed as $d=\p_t + \db_t
$ with respect to the holomorphic structure on $X_t$ and
$e^{i_{\overline{\varphi}}}$ follows the notation
\eqref{0e-convention}.

Motivated by the new proof of Proposition \ref{hol}, we
introduce a map
$$e^{i_{\varphi(t)}|i_{\overline{\varphi(t)}}}:
 A^{p,q}(X_0)\> A^{p,q}(X_t),$$ which plays an important role in
 this paper and is given in Definition \ref{map}. This map
 is a real linear isomorphism as $t$ is arbitrarily small.  Based on this, we
 achieve:

\begin{proposition}[=Proposition \ref{extension-in}]\label{10extension-in}
For any $\alpha\in A^{*,*}(X_0)$,
$$\b{\p}_t(e^{i_{\varphi}|i_{\b{\varphi}}}(\alpha))=0$$
amounts to
$$([\p,i_{\varphi}]+\b{\p})(\1-\b{\varphi}\varphi)\Finv\alpha=0,$$
where '$\Finv$' is the simultaneous  contraction introduced in Subsection \ref{obs-eqn}.
\end{proposition}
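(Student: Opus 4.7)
The strategy is to pull the equation $\bar{\partial}_t(e^{i_{\varphi}|i_{\bar{\varphi}}}(\alpha))=0$, which lives on the deformed manifold $X_t$, back to an equivalent equation on the fixed background $X_0$ by explicitly computing $\bar\partial_t\circ e^{i_\varphi|i_{\bar\varphi}}$ in $X_0$-coordinates. Since $e^{i_\varphi|i_{\bar\varphi}}$ is a real-linear isomorphism $A^{p,q}(X_0)\to A^{p,q}(X_t)$ for small $t$, the desired equivalence will follow once one expresses $\bar\partial_t(e^{i_\varphi|i_{\bar\varphi}}(\alpha))$ as an \emph{invertible} operator applied to $([\partial,i_\varphi]+\bar\partial)\bigl((\1-\bar\varphi\varphi)\Finv\alpha\bigr)$.

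The first step is to extend the scalar formula $\bar\partial_t f=e^{i_{\bar\varphi}}\bigl((\1-\bar\varphi\varphi)^{-1}\lrcorner(\bar\partial-\varphi\lrcorner\partial)f\bigr)$ displayed in the introduction from functions to arbitrary $(p,q)$-forms. For this I would write $d\zeta^{\alpha}$ and $d\bar\zeta^{\alpha}$ in the $(dz,d\bar z)$-frame using Lemma \ref{inverse}, so that the $d\zeta$-part involves $\1-\bar\varphi\varphi$ and $\partial\zeta/\partial z$, while the $d\bar\zeta$-part is essentially the $e^{i_{\bar\varphi}}$-twist of $\bar\partial\zeta$. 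This identifies $e^{i_\varphi|i_{\bar\varphi}}$ as the natural twisted coordinate change on the $(p,q)$-frame, with the scalar coefficient unchanged.

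The second step is to apply $\bar\partial_t$ to $e^{i_\varphi|i_{\bar\varphi}}(\alpha)$ for $\alpha=\alpha_{I\bar{J}}\,dz^{I}\wedge d\bar z^{J}$, using the Leibniz rule together with the type decomposition $d=\partial_t+\bar\partial_t$ on $X_t$. Two classes of terms arise: differentiating the coefficient $\alpha_{I\bar J}$ produces, via the scalar formula above, the combination $\bar\partial\alpha-\varphi\lrcorner\partial\alpha$ pre-composed with $(\1-\bar\varphi\varphi)^{-1}\lrcorner$ and post-composed with $e^{i_{\bar\varphi}}$; differentiating the twisted $d\zeta$-frame produces $\partial\varphi$-terms which, after the standard anti-commutation with the $dz$-factors and regrouping, assemble into $[\partial,i_\varphi]\alpha$, again pre- and post-composed with the same invertible factors. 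Absorbing $(\1-\bar\varphi\varphi)^{-1}\Finv$ by applying $(\1-\bar\varphi\varphi)\Finv$ to the input side yields the clean identity
$$\bar\partial_t\bigl(e^{i_\varphi|i_{\bar\varphi}}(\alpha)\bigr)\;=\;e^{i_{\bar\varphi}}\Bigl((\1-\bar\varphi\varphi)^{-1}\Finv\bigl([\partial,i_\varphi]+\bar\partial\bigr)\bigl((\1-\bar\varphi\varphi)\Finv\alpha\bigr)\Bigr).$$
Since both $e^{i_{\bar\varphi}}$ and $(\1-\bar\varphi\varphi)^{-1}\Finv$ are invertible for small $t$, the vanishing of the left-hand side is equivalent to the vanishing of $([\partial,i_\varphi]+\bar\partial)\bigl((\1-\bar\varphi\varphi)\Finv\alpha\bigr)$, which is the proposition.

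The principal obstacle is the index bookkeeping in the second step: one must verify that the $\partial\varphi$-terms produced by differentiating the twisted $(p,0)$-portion of the frame and those from differentiating the twisted $(0,q)$-portion really reassemble into the single graded commutator $[\partial,i_\varphi]$ acting on $\alpha$, rather than leaving asymmetric remainders. This reduces to checking the compatibility between the $e^{i_\varphi}$-twist on $(1,0)$-forms, the $e^{i_{\bar\varphi}}$-twist on $(0,1)$-forms, and the full simultaneous contraction $\Finv$ by $\1-\bar\varphi\varphi$; once Lemma \ref{inverse} is used to control the Jacobian factors, the remaining verification is a careful, but essentially mechanical, application of the Leibniz rule.
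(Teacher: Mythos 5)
Your proposal is correct in substance and targets exactly the right intermediate identity, but it takes a genuinely different route from the paper. The paper never differentiates the twisted frame by hand: it writes $d\circ e^{i_{\varphi}|i_{\b{\varphi}}}=e^{i_{\varphi}}\circ\big(e^{-i_{\varphi}}\circ d\circ e^{i_{\varphi}}\big)\circ e^{-i_{\varphi}}\circ e^{i_{\varphi}|i_{\b{\varphi}}}$, invokes the already-established conjugation formula $e^{-i_{\varphi}}\circ d\circ e^{i_{\varphi}}=[\p,i_{\varphi}]+\bp+\p$ (Proposition \ref{main1} combined with the integrability condition), and then converts the two bracketing operators $e^{-i_{\varphi}}\circ e^{i_{\varphi}|i_{\b{\varphi}}}$ and $e^{-i_{\varphi}|-i_{\b{\varphi}}}\circ e^{i_{\varphi}}$ into the simultaneous contractions $(\1-\b{\varphi}\varphi+\b{\varphi})\Finv$ and $\big((\1-\b{\varphi}\varphi)^{-1}-(\1-\b{\varphi}\varphi)^{-1}\b{\varphi}\big)\Finv$; a final comparison of pure types isolates the $\bp_t$-component. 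All of the ``index bookkeeping'' that you flag as the principal obstacle --- checking that the $\p\varphi$-terms coming from the $(p,0)$- and $(0,q)$-portions of the frame reassemble into the single graded commutator $[\p,i_{\varphi}]$ --- is thereby absorbed into Proposition \ref{main1}, which the paper has already proved once and for all by induction on $i_{\varphi}^{k}$; your route would in effect reprove that lemma inside the computation. Both arguments are valid: yours is more self-contained at the level of coordinates, while the paper's is shorter and reduces the only nontrivial step to the type separation, which your argument also cannot avoid since $e^{-i_{\varphi}|-i_{\b{\varphi}}}\circ e^{i_{\varphi}}$ spreads $A^{p,q}(X_0)$ over $\bigoplus_{i}A^{p+i,q-i}(X_0)$. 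One slip to correct: in your displayed identity the outer operator must be $e^{i_{\varphi}|i_{\b{\varphi}}}$, not $e^{i_{\b{\varphi}}}$; the two agree on $A^{0,\ast}(X_0)$ (and on functions, from which you extrapolated), but for $p>0$ the argument $(\1-\b{\varphi}\varphi)^{-1}\Finv\big([\p,i_{\varphi}]+\bp\big)(\1-\b{\varphi}\varphi)\Finv\alpha$ lies in $A^{p,q+1}(X_0)$ and its $dz$-part must also be twisted in order to land in $A^{p,q+1}(X_t)$.
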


This proposition provides a criterion for a specific $\db$-extension from $A^{p,q}(X_0)$ to $A^{p,q}(X_t)$
and generalizes \cite[Theorem 3.4]{lry} (or Proposition \ref{main1}) in deformation significance.
As a direct application of Proposition \ref{10extension-in},
we consider the deformation invariance of Hodge numbers.
Before stating the main theorems in Section \ref{def}, we recall several definitions of related cohomology groups
and mappings.

Let $X$ be a compact complex manifold of complex dimension $n$
with the following commutative diagram
$$\xymatrix{      & H^{p,q}_{\p}(X) \ar[dr]^{\iota^{p,q}_{\p,A}} &            \\
 H^{p,q}_{BC}(X) \ar[ur]^{\iota^{p,q}_{BC,\p}} \ar[dr]_{\iota^{p,q}_{BC,\db}} \ar[rr]^{\iota^{p,q}_{BC,A}} & &  H^{p,q}_{A}(X)  \\
                & H^{p,q}_{\db}(X) \ar[ur]_{\iota^{p,q}_{\db,A}} &          .}$$
\emph{Dolbeault cohomology groups} $H^{\bullet,\bullet}_{\db}(X)$ of $X$ are defined by:
$$H^{\bullet,\bullet}_{\db}(X):=\frac{\ker\db}{\im\ \db},$$
with $H^{\bullet,\bullet}_{\p}(X)$ similarly defined, while \emph{Bott-Chern and Aeppli cohomology groups} are defined as
$$H^{\bullet,\bullet}_{BC}(X):=\frac{\ker \p\cap \ker\db}{\im\ \p\db}\quad
\text{and}\quad H^{\bullet,\bullet}_{A}(X):=\frac{\ker \p\db}{\im\ \p+\im\ \db},$$
respectively. The dimensions of $H^{p,q}_{\db}(X)$, $H^{p,q}_{BC}(X)$, $H^{p,q}_{A}(X)$ and $H^{p,q}_{\p}(X)$ over $\mathbb{C}$
are denoted by $h^{p,q}_{\db}(X)$, $h^{p,q}_{BC}(X)$, $h^{p,q}_{A}(X)$ and $h^{p,q}_{\p}(X)$, respectively, the first three of which
are usually called $(p,q)$-\emph{Hodge numbers}, \emph{Bott-Chern numbers} and \emph{Aeppli numbers}. From the very definition of these cohomology groups,
the following equalities clearly hold
\[ h^{p,q}_{BC} = h^{q,p}_{BC}=h^{n-q,n-p}_{A}=h^{n-p,n-q}_{A},  h^{n-p,n-q}_{\db}=h^{p,q}_{\db}=h^{q,p}_{\p}=h^{n-q,n-p}_{\p}.\]

Now let us describe our basic philosophy to consider the deformation invariance of Hodge numbers briefly. The
Kodaira-Spencer's upper semi-continuity theorem (\cite[Theorem 4]{KS}) tells us that the function
$$t\longmapsto h^{p,q}_{\db_t}(X_t)=\dim_{\mathbb{C}}H^{p,q}_{\db_t}(X_t,\mathbb{C})$$
is always upper semi-continuous for $t\in B$ and thus,
to approach the deformation invariance of $h^{p,q}_{\db_t}(X_t)$, we only need to obtain the lower
semi-continuity. Here our main strategy is a modified iteration
procedure, originally from \cite{LSY} and developed in
\cite{Sun,SY,RZ,lry}, which is to look for an injective extension map from
$H^{p,q}_{\db}(X_0)$ to $H^{p,q}_{\db_t}(X_t)$.
More precisely, for a nice uniquely-chosen representative $\sigma_0$ of the initial Dolbeault
cohomology class 
in $H^{p,q}_{\db}(X_0)$, we try to
construct a convergent power series
$$\label{ps} \sigma_t=\sigma_0+\sum_{j+k=1}^\infty t^k t^{\bar{j}}\sigma_{k\bar{j}}\in
A^{p,q}(X_0),
$$
with $\sigma_t$ varying smoothly on $t$ such
that for each small $t$:
\begin{enumerate}
\item\label{S1} $e^{i_{\varphi}|i_{\overline{\varphi}}}(\sigma_t)\in A^{p,q}(X_t)$
is $\db_t$-closed with respect to the holomorphic structure on $X_t$;
\item\label{S2} The extension map $H^{p,q}_{\db}(X_0) \rightarrow H^{p,q}_{\db_t}(X_t):[\sigma_0]_{\db} \mapsto
[e^{i_{\varphi}|i_{\overline{\varphi}}}(\sigma_t)]_{\db_t}$ is injective.
\end{enumerate}

One main theorem in Section \ref{def} can be stated as:
\begin{theorem}[=Theorem \ref{inv-pq}]\label{inv-pq-intro}
If the injectivity of the mappings $\iota_{BC,\p}^{p+1,q},\iota_{\db,A}^{p,q+1}$ on the central fiber
$X_0$ and the deformation invariance of the $(p,q-1)$-Hodge number $h^{p,q-1}_{\db_t}(X_t)$ holds,
then $h^{p,q}_{\db_t}(X_t)$ are deformation invariant.
\end{theorem}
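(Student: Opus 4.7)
Following the general strategy outlined after Proposition \ref{10extension-in}, the plan is to construct, for every class $[\sigma_0]_{\db}\in H^{p,q}_{\db}(X_0)$, a smooth family $\sigma_t\in A^{p,q}(X_0)$ with $\sigma_t|_{t=0}=\sigma_0$ such that $e^{i_\varphi|i_{\bar\varphi}}(\sigma_t)\in A^{p,q}(X_t)$ is $\db_t$-closed, and then to prove that the induced extension map $[\sigma_0]_{\db}\mapsto [e^{i_\varphi|i_{\bar\varphi}}(\sigma_t)]_{\db_t}$ is injective. Combined with Kodaira--Spencer upper semi-continuity, the injectivity of this map supplies the matching lower semi-continuity of $h^{p,q}_{\db_t}(X_t)$, forcing it to be locally constant.

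By Proposition \ref{10extension-in}, the required $\db_t$-closedness amounts to the single identity
\[
([\p,i_\varphi]+\db)(\1-\bar\varphi\varphi)\Finv\sigma_t=0.
\]
Expanding $\sigma_t=\sigma_0+\sum_{k+j\geq 1}t^k\bar{t}^{j}\sigma_{k\bar j}$ and $\varphi$ in Taylor series and collecting coefficients of each bidegree $(k,j)$ produces a hierarchy of equations $\db\sigma_{k\bar j}=R_{k\bar j}$, where $R_{k\bar j}$ depends only on $\sigma_{k'\bar j'}$ and $\varphi_{k'\bar j'}$ with $k'+j'<k+j$. Using the integrability of $\varphi$ and the Cartan-type identities of \cite{lry}, one checks by induction that $R_{k\bar j}$ is $\db$-closed and $\p$-exact, hence Aeppli-exact; injectivity of $\iota^{p,q+1}_{\db,A}$ then upgrades this to $\db$-exactness, so each equation is solvable. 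The complementary injectivity of $\iota^{p+1,q}_{BC,\p}$ plays the role of the $\p\db$-lemma needed to select a good initial representative $\sigma_0$ and, iteratively, to arrange the $\p$-part of the ansatz so that the $\p$-cohomological obstructions collapse to $\p\db$-exact terms and can be absorbed. Taking the $L^2$-minimal solution at each stage and running the standard majorant argument of \cite{LSY,RZ,lry} together with elliptic regularity for $\db$ yields $C^\infty$ convergence of the series on $X_0$.

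The main obstacle is the injectivity of the induced map on cohomology, and this is where the hypothesis on $h^{p,q-1}_{\db_t}(X_t)$ is used essentially. Suppose that $e^{i_\varphi|i_{\bar\varphi}}(\sigma_t)=\db_t\eta_t$ for some family $\eta_t\in A^{p,q-1}(X_t)$; the goal is to deduce $[\sigma_0]_{\db}=0$. The assumed deformation invariance of $h^{p,q-1}_{\db_t}(X_t)$, together with Hodge theory on the fibers, guarantees that the $\db_t$-harmonic projection at bidegree $(p,q-1)$ varies smoothly in $t$ down to $t=0$, so $\eta_t$ may be chosen smooth across $t=0$ without norm blow-up. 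Pulling back by the real-linear inverse of $e^{i_\varphi|i_{\bar\varphi}}$ and letting $t\to 0$ then produces $\sigma_0=\db\eta_0$, contradicting $[\sigma_0]_{\db}\neq 0$. I expect the subtlest step to be justifying precisely this smooth extension of $\eta_t$ to the central fiber: without the invariance of $h^{p,q-1}_{\db_t}(X_t)$, the harmonic projections of $\eta_t$ on nearby fibers need not interpolate those on $X_0$ and the norms could blow up, so the third hypothesis does genuine work in closing the injectivity argument rather than serving as a merely technical convenience.
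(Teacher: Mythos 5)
Your plan follows essentially the same route as the paper's proof (Propositions \ref{thmpq} and \ref{pq-pq-1}): the two injectivity hypotheses are used exactly where you place them --- $\iota^{p,q+1}_{\db,A}$ to solve the $\db$-equations whose right-hand sides are $\p$-exact and $\db$-closed, and $\iota^{p+1,q}_{BC,\p}$ both to pick the canonical $d$-closed initial representative and to restore $\p$-closedness at each order --- while the injectivity of the extension map comes from the differentiable dependence of the Green's operator on $A^{p,q-1}(X_t)$, which is precisely where the invariance of $h^{p,q-1}_{\db_t}(X_t)$ enters. The only cosmetic differences are that the paper, after reducing to the Kuranishi family, expands in $t$ alone rather than in $t$ and $\bar t$, and it makes the smoothness at $t=0$ manifest by replacing $\eta_t$ with $\G_t\dbs_t\big(e^{i_{\varphi}|i_{\overline{\varphi}}}(\sigma_t)\big)$ instead of arguing that $\eta_t$ itself can be chosen to extend smoothly.
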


Obviously, a classical result that a complex manifold satisfying the $\p\db$-lemma admits the deformation invariance of all-type Hodge numbers follows by this theorem and induction.
Three examples \ref{ex10}, \ref{ex20} and \ref{ex23} in the Kuranishi family of the Iwasawa manifold (cf. \cite[Appendix]{A})
are found that the deformation invariance of the $(p,q)$-Hodge number
fails when one of the three conditions in Theorem \ref{inv-pq-intro} does not hold, while the other two do.
It indicates that the three conditions above may not be omitted in order to state a theorem
for the deformation invariance of all the $(p,q)$-Hodge numbers. We also refer
the readers to \cite{Y} (based on \cite{Gt}) for the negative
counterpart of invariance of Hodge numbers.

The speciality of the types may lead to the weakening
of the conditions in Theorem \ref{inv-pq-intro}, such as $(p,0)$ and $(0,q)$:
\begin{theorem}[=Theorems \ref{inv-p0}+\ref{inv-0q}]\label{inv-p0-0q-intro}
\begin{enumerate}[$(1)$]
    \item \label{inv-p0-intro}
If the injectivity of the mappings $\iota^{p+1,0}_{\db,A}$ and $\iota^{p,1}_{\db,A}$
on $X_0$ holds, then $h^{p,0}_{\db_t}(X_t)$ are independent of $t$;
    \item \label{inv-0q-intro}
If the surjectivity of the mapping $\iota^{0,q}_{BC,\db}$ on $X_0$
and the deformation invariance of $h^{0,q-1}_{\db_t}(X_t)$ holds, then
$h^{0,q}_{\db_t}(X_t)$ are independent of $t$.
 \end{enumerate}
\end{theorem}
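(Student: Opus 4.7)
In both parts the plan is to follow the iteration procedure outlined after Proposition~\ref{10extension-in}: choose a carefully selected representative $\sigma_0$ of the prescribed class in $H^{p,q}_{\db}(X_0)$, build a formal power series
\[ \sigma_t=\sigma_0+\sum_{k+j\geq 1} t^k \bar t^{\,j}\sigma_{k\bar j}\in A^{p,q}(X_0) \]
such that $e^{i_\varphi|i_{\b\varphi}}(\sigma_t)$ is $\db_t$-closed, and check that the induced map $[\sigma_0]_{\db}\mapsto [e^{i_\varphi|i_{\b\varphi}}(\sigma_t)]_{\db_t}$ is injective. By Proposition~\ref{10extension-in} the first task amounts to solving, order by order in $t,\b t$, the equation
\[ ([\p,i_\varphi]+\db)(\1-\b\varphi\varphi)\Finv\sigma_t=0, \]
while step~(\ref{S2}) will follow by the standard reversal argument of \cite{lry,RZ}: if $e^{i_\varphi|i_{\b\varphi}}(\sigma_t)=\db_t\eta_t$, then applying the inverse real-linear isomorphism and taking $t\to 0$ shows that $\sigma_0$ is $\db$-exact on $X_0$.

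For part~(\ref{inv-p0-intro}), the type $(p,0)$ of $\sigma_t$ eliminates the direct contraction $i_{\b\varphi}\sigma_t$ and substantially shortens $(\1-\b\varphi\varphi)\Finv\sigma_t$, so that at each order the defining equation takes the shape
\[ \db\sigma_{k\bar j}=-\Omega_{k\bar j}\qquad\text{in }A^{p,1}(X_0), \]
for an explicit obstruction $\Omega_{k\bar j}$ depending only on $\{\sigma_{k'\bar{j'}}\}_{k'+j'<k+j}$ and the Taylor coefficients of $\varphi$. The injectivity of $\iota^{p+1,0}_{\db,A}$ is used at order zero to refine the chosen $\db$-closed representative $\sigma_0$ to one that is also $\p$-closed, the $(p,0)$-analogue of a Bott-Chern representative. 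A direct computation using the Maurer-Cartan equation $\db\varphi=\tfrac12[\varphi,\varphi]$ and the standard Tian-Todorov-type identities for $[\p,i_\varphi]$ then shows inductively that each $\Omega_{k\bar j}$ is $\p$-closed, $\db$-closed and Aeppli-trivial; the injectivity of $\iota^{p,1}_{\db,A}$ upgrades Aeppli-triviality to $\db$-exactness, supplying $\sigma_{k\bar j}$.

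For part~(\ref{inv-0q-intro}), $\sigma_t\in A^{0,q}(X_0)$ forces $i_\varphi\sigma_t=0$, so the bracket $[\p,i_\varphi]$ enters only through the coefficients of $\sigma_t$. The surjectivity of $\iota^{0,q}_{BC,\db}$ allows the initial representative to be chosen with $\p\sigma_0=\db\sigma_0=0$. Each induction step then reduces to a $\db$-equation in $A^{0,q}(X_0)$ for $\sigma_{k\bar j}$ whose obstruction $\Omega_{k\bar j}$ is $\db$-closed by a computation analogous to the first part. The hypothesis that $h^{0,q-1}_{\db_t}(X_t)$ is deformation invariant then provides, via Kodaira's theory of families of elliptic operators, a smoothly varying right inverse for $\db$ on the relevant image in $A^{0,q-1}(X_t)$, which is exactly what is needed to solve the equation for $\sigma_{k\bar j}$ smoothly in $t$.

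The main obstacle in both parts is the inductive bookkeeping: at each order $(k,j)$ one has to verify that the obstruction $\Omega_{k\bar j}$ is not merely cohomologically trivial in a coarse sense but sits in the precise image recognised by the stated $\iota$-hypothesis. Establishing this requires repeatedly commuting $\p$ past $i_\varphi$, applying the Maurer-Cartan equation, and exploiting the type vanishings that make the $(p,0)$ and $(0,q)$ cases more tractable than the generic $(p,q)$ case of Theorem~\ref{inv-pq-intro}. Once each $\sigma_{k\bar j}$ is in hand, convergence of $\sigma_t$ in a suitable Sobolev norm follows from the elliptic estimates used in \cite{lry,LSY}, and combining the resulting lower semi-continuity with the Kodaira-Spencer upper semi-continuity theorem yields the deformation invariance of $h^{p,0}_{\db_t}$ and $h^{0,q}_{\db_t}$.
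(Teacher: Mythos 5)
Your treatment of part (1) is essentially the paper's argument: the representative is made $d$-closed via $\iota^{p+1,0}_{\db,A}$ (equivalently $\mathbb{S}^{p+1,0}$), the order-by-order obstruction $-\p\bigl(\sum_i\varphi_i\lrcorner\sigma_{k-i}\bigr)$ is shown $\db$-closed by the Tian--Todorov identities and the Maurer--Cartan equation, and $\iota^{p,1}_{\db,A}$ (i.e.\ $\mathbb{S}^{p,1}$) supplies the solution of $\db\sigma_k=-\p(\cdots)$. One bookkeeping point you underplay: the hypothesis $\mathbb{S}^{p+1,0}$ is not used only ``at order zero''; it is needed at \emph{every} order to force $\p\sigma_k=0$ (from $\db\p\sigma_k=\p\p(\cdots)=0$), and this $\p$-closedness of all lower-order terms is exactly what makes the inductive $\db$-closedness computation of the next obstruction go through. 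Injectivity of the extension map is automatic here since $A^{p,-1}=0$.

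Part (2) contains a genuine gap: you have assigned the hypothesis on $h^{0,q-1}_{\db_t}(X_t)$ to the wrong step. The iteration for $\sigma_{k\bar j}$ takes place entirely on the fixed central fiber $X_0$, so the deformation invariance of $h^{0,q-1}_{\db_t}(X_t)$ cannot provide a ``smoothly varying right inverse for $\db$'' that solves those equations; and you never identify what \emph{does} make the order-$(k,\bar j)$ obstruction $\db$-exact in $A^{0,q+1}(X_0)$, so your induction does not close. In the paper no iteration is needed at all: once $\mathcal{B}^{1,q}$ (the surjectivity of $\iota^{0,q}_{BC,\db}$) yields a $d$-closed representative $\sigma_0$, the explicit choice $\sigma_t=(\1-\b{\varphi}\varphi)^{-1}\Finv\sigma_0$ satisfies $([\p,i_\varphi]+\db)(\1-\b{\varphi}\varphi)\Finv\sigma_t=([\p,i_\varphi]+\db)\sigma_0=0$ by type reasons. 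The invariance of $h^{0,q-1}$ is consumed instead by the injectivity step: your ``standard reversal argument'' (apply the inverse isomorphism to $\db_t\eta_t$ and let $t\to0$) is incomplete because $\eta_t$ need not stay bounded as $t\to0$; one must replace $\eta_t$ by $\G_t\dbs_t e^{i_\varphi|i_{\b\varphi}}(\sigma_t)$ and invoke Kodaira--Spencer's theorem that the Green's operators on $A^{0,q-1}(X_t)$ depend differentiably on $t$ precisely when $h^{0,q-1}_{\db_t}(X_t)$ is constant. Without relocating the hypothesis to this step, both halves of your part (2) are unsupported.
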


As mentioned in Remark \ref{01-sgg},
for the case $q=1$ of Theorem \ref{inv-p0-0q-intro}.\eqref{inv-0q-intro}, the surjectivity of the mapping $\iota^{0,1}_{BC,\db}$
is equivalent to the \textbf{sGG} condition proposed by Popovici-Ugarte \cite{P1,PU},
from \cite[Theorem 2.1 (iii)]{PU}.
Hence, the \textbf{sGG} manifolds can be examples of Theorem \ref{inv-0q},
where the Fr\"olicher spectral sequence does not  necessarily degenerate at the $E_1$-level, by
\cite[Proposition 6.3]{PU}. Inspired by the deformation invariance of the $(0,1),(0,2)$ and $(0,3)$-Hodge numbers of the Iwasawa
manifold $\mathbb{I}_3$ shown in \cite[Appendix]{A}, we prove

\begin{corollary}[=Corollary \ref{cplx-prl}]
Let $X = \Gamma \backslash G$ be a complex parallelizable nilmanifold of complex dimension $n$,
where $G$ is a simply connected complex nilpotent Lie group and $\Gamma$ is denoted by
a discrete and co-compact subgroup of $G$.
Then $X$ is an \textbf{sGG} manifold. In addition, the $(0,q)$-Hodge numbers of $X$
are deformation invariant for $1 \leq q \leq n$.
\end{corollary}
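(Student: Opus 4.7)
The plan is to reduce everything to the surjectivity of the natural map $\iota^{0,q}_{BC,\db}$ on $X_0$ for all $q \geq 1$, and then to feed this into Theorem \ref{inv-0q-intro} by induction on $q$. Since by Remark \ref{01-sgg} the \textbf{sGG} condition is precisely the surjectivity of $\iota^{0,1}_{BC,\db}$, proving surjectivity for all $q$ automatically gives \textbf{sGG} as the $q=1$ special case.

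First, I would invoke Sakane's theorem on Dolbeault cohomology of complex parallelizable nilmanifolds, which identifies $H^{0,q}_{\db}(X)$ with the cohomology of the complex of left-invariant $(0,q)$-forms. The key structural input is that, since $G$ is a complex Lie group, one can choose a global frame of left-invariant holomorphic $(1,0)$-forms $\omega^1,\dots,\omega^n$ on $G$, which descend to $X$. Because they are holomorphic, $\db\omega^i=0$, and hence by complex conjugation $\p\bar\omega^i=0$. Consequently every left-invariant $(0,q)$-form, being a constant-coefficient linear combination of wedges $\bar\omega^{i_1}\wedge\cdots\wedge\bar\omega^{i_q}$, is automatically $\p$-closed.

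Combining these two facts, every class $[\sigma]_{\db}\in H^{0,q}_{\db}(X)$ admits a left-invariant representative $\sigma$, and such a $\sigma$ satisfies $\p\sigma=0$ and $\db\sigma=0$, hence $\p\db\sigma=0$. Therefore $\sigma$ defines a class in $H^{0,q}_{BC}(X)$ mapping to $[\sigma]_{\db}$, so $\iota^{0,q}_{BC,\db}$ is surjective for every $q\ge 1$. In particular the case $q=1$ yields the \textbf{sGG} property.

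For the deformation invariance statement, I would argue by induction on $q$. The base case $q=1$ follows from Theorem \ref{inv-p0-0q-intro}.\eqref{inv-0q-intro} applied to $X_0$, using the surjectivity of $\iota^{0,1}_{BC,\db}$ just established together with the trivial invariance $h^{0,0}_{\db_t}(X_t)=1$. For the inductive step from $q-1$ to $q$, the invariance of $h^{0,q-1}_{\db_t}(X_t)$ is the inductive hypothesis, and the surjectivity of $\iota^{0,q}_{BC,\db}$ on $X_0$ has been proved, so Theorem \ref{inv-p0-0q-intro}.\eqref{inv-0q-intro} applies again to give the invariance of $h^{0,q}_{\db_t}(X_t)$. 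The main obstacle I anticipate is the careful invocation of Sakane's isomorphism and the justification that left-invariant forms descend to $X=\Gamma\backslash G$ and retain their $(p,q)$-type and closedness properties; modulo citing that result, the rest is a short bootstrap using Theorem \ref{inv-p0-0q-intro}.\eqref{inv-0q-intro}.
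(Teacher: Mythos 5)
Your proposal is correct and follows essentially the same route as the paper: both arguments rest on the Sakane--Angella isomorphisms identifying the Dolbeault (and Bott--Chern) cohomology with the invariant-form complex, on the structural fact that $d\mathfrak{g}^{*(1,0)}\subseteq\bigwedge^{2}\mathfrak{g}^{*(1,0)}$ forces every invariant $(0,q)$-form to be $\p$-closed (hence every Dolbeault class has a $d$-closed representative, i.e.\ $\iota^{0,q}_{BC,\db}$ is surjective, which for $q=1$ is the \textbf{sGG} condition), and on feeding this into Theorem \ref{inv-0q} by induction on $q$. The only cosmetic difference is that you phrase the key step as surjectivity of $\iota^{0,q}_{BC,\db}$ directly, while the paper phrases it as the equivalent condition $\mathcal{B}^{1,q}$ on the Lie algebra.
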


Inspired by Console-Fino-Poon \cite[Section 6]{CFP}, we use the proof of Theorem \ref{inv-p0-0q-intro}.\eqref{inv-p0-intro} to give in Example \ref{cfp}
a holomorphic family of nilmanifolds of complex dimension $5$ with the central fiber endowed with an abelian complex structure,
which admits the deformation invariance of the $(p,0)$-Hodge numbers for $1 \leq p \leq 5$, but not the $(1,1)$-Hodge number or $(1,1)$-Bott-Chern number.
This shows the function of Theorem \ref{inv-p0-0q-intro}.\eqref{inv-p0-intro} possibly beyond Kodaira-Spencer's squeeze \cite[Theorem 13]{KS} in this case.

Here is an interesting question:
\begin{question}
What are the sufficient and necessary conditions for a class of compact
complex manifolds to satisfy the deformation invariance for each
prescribed-type Hodge number and all-type Hodge numbers?
\end{question}

In Section \ref{Gau}, we will study various cones to explore the
deformation properties of \textbf{sGG} manifolds. Here are several
notations. The K\"{a}hler cone $\mathcal{K}_X$ and its closure
$\overline{\mathcal{K}}_X$, the numerically effective cone (shortly
nef cone), are important geometric objects on a compact K\"{a}hler
manifold $X$, extensively studied such as in
\cite{D1,DP,DPS,BDPP,WYZ,FX,P1,PU}. J. Fu and J. Xiao \cite{FX} study the relation between the balanced
cone $\mathcal{B}_X$
and the K\"{a}hler cone $\mathcal{K}_X$.
Meanwhile, Popovici \cite{P1}, together with Ugarte \cite{PU},
investigates geometric properties of the Gauduchon cone
$\mathcal{G}_X$ and its related cones. The \emph{Gauduchon cone}
$\mathcal{G}_X$ is defined by
\[ \mathcal{G}_X = \left\{\big[\Omega\big]_{\mathrm{A}} \in
H^{n-1,n-1}_{\mathrm{A}}(X,\mathbb{R})\ \Big|\ \Omega\ \textrm{is}\
\textrm{a}\ \p\db\textrm{-closed}\ \textrm{positive}\
(n-1,n-1)\textrm{-form} \right\}.\]
More
detailed descriptions of real Bott-Chern groups
$H^{p,p}_{\mathrm{BC}}(X,\mathbb{R})$, Aeppli groups
$H^{p,p}_{\mathrm{A}}(X,\mathbb{R})$ and these cones will appear at
the beginning of Section \ref{Gau}.

Inspired by all these, we hope to understand the relation of the
balanced cone $\mathcal{B}_X$ and the Gauduchon cone $\mathcal{G}_X$
via the mapping $\mathscr{J}:
H^{n-1,n-1}_{\mathrm{BC}}(X,\mathbb{R}) \>
 H^{n-1,n-1}_{\mathrm{A}}(X,\mathbb{R})$ induced by the identity
map.  Another direct motivation of this part is the following
conjecture:
\begin{conjecture}[{\cite[Conjecture 6.1]{P4}}] \label{ddbar-b}
Each compact complex manifold $X$ satisfying the $\p\db$-lemma
admits a balanced metric.
\end{conjecture}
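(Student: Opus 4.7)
The plan is to combine Michelsohn's duality criterion for balanced metrics with the $\p\db$-lemma (extended to currents) and the existence of a Gauduchon metric, which is automatic on any compact Hermitian manifold.

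Recall Michelsohn's characterization: $X$ admits a balanced metric if and only if every positive $(1,1)$-current $T$ of the form $T=\p u+\db\bar u$ for some $(0,1)$-current $u$ vanishes; equivalently, the map sending a positive $(1,1)$-current to its Aeppli class has trivial kernel in $H^{1,1}_A(X,\mathbb R)$. So the goal is to show that under the $\p\db$-lemma no such nonzero $T$ exists.

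Suppose for contradiction that $T\neq 0$ is positive of bidegree $(1,1)$ with $T=\p u+\db\bar u$. Then $T$ is automatically $\p\db$-closed and $[T]_A=0$. Since the $\p\db$-lemma passes to currents by Schweitzer's regularization, the natural map $\mathscr J\colon H^{1,1}_{BC}(X,\mathbb R)\to H^{1,1}_A(X,\mathbb R)$ is an isomorphism even on currents. The crucial further step is to upgrade $[T]_A=0$ to $T=i\p\db\phi$ for some real distribution $\phi$ on $X$. Granting this, pick a Gauduchon metric $\gamma$ (existing by Gauduchon's theorem) and compute
\[
\int_X T\wedge\gamma^{n-1} \;=\; \int_X i\p\db\phi\wedge\gamma^{n-1} \;=\; i\int_X\phi\cdot\p\db\gamma^{n-1} \;=\; 0
\]
by distributional integration by parts and $\p\db\gamma^{n-1}=0$. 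On the other hand, positivity of $T$ and strict positivity of $\gamma^{n-1}$ as an $(n-1,n-1)$-form make $T\wedge\gamma^{n-1}$ a nonnegative Radon measure on $X$; vanishing of its total mass forces $T\wedge\gamma^{n-1}\equiv 0$, which pointwise is a positive multiple of $\operatorname{tr}_\gamma T\cdot\gamma^n$, yielding $T\equiv 0$, the sought contradiction.

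The essential obstacle is the upgrade $[T]_A=0\Rightarrow T\in\im\p\db$. The $\p\db$-lemma alone only gives $H^{1,1}_{BC}\cong H^{1,1}_A$ via $\mathscr J$, which means every Aeppli class contains a $d$-closed representative; it does \emph{not} give $\im\p+\im\db=\im\p\db$, since in general the cohomology $\ker\p\db/\im\p\db$ strictly surjects onto $H^{1,1}_A$ with nontrivial kernel $(\im\p+\im\db)/\im\p\db$. Closing this gap must crucially exploit the positivity of $T$ itself: one plausible route is to combine a structure theorem for positive $\p\db$-closed $(1,1)$-currents with the $\p\db$-lemma at the level of currents so as to manufacture the real potential $\phi$. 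An alternative direct route bypasses Michelsohn: for any Gauduchon form $\omega$ on $X$, the form $\db\omega^{n-1}$ is $d$-closed and $\db$-exact, hence $\p\db$-exact under the $\p\db$-lemma, so the equation $\p\db\alpha=\db\omega^{n-1}$ admits a solution $\alpha\in A^{n-2,n-1}(X)$; the corrected form $\omega^{n-1}+\p\alpha+\db\bar\alpha$ is then $d$-closed and real, and the remaining analytic task is to choose $\alpha$ within its ambiguity so that this correction is strictly positive, after which Michelsohn's $(n-1)$-st root theorem produces the desired balanced metric. The positivity adjustment is the hard step in this second route.
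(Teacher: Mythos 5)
This statement is a \emph{conjecture} --- Popovici's \cite[Conjecture 6.1]{P4} --- and the paper does not prove it. The authors only record a possible strategy (show $\mathscr{J}^{-1}(\mathcal{G}_X)=\mathcal{B}_X$ and use that the Gauduchon cone is never empty while $\mathscr{J}$ is an isomorphism under the $\p\db$-lemma) together with a conditional result: the conjecture holds for manifolds in the Fujiki class $\mathcal{C}$ \emph{if} one assumes Conjecture \ref{conj-mov} of Boucksom--Demailly--Paun--Peternell. So there is no proof in the paper to compare against, and your proposal must be judged as an attempt at an open problem.

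Judged as such, it does not close the problem, and to your credit you flag exactly where it fails. In your first route, the reduction to ``no nonzero positive $(1,1)$-current $T=\p u+\overline{\p u}$'' is the standard Michelsohn/Hahn--Banach duality and is sound, as is the concluding pairing against a Gauduchon metric \emph{provided} one had $T=i\p\db\phi$. But the $\p\db$-lemma is a statement about $d$-closed pure-type forms; your $T$ need not be $d$-closed, and the implication $[T]_{\mathrm{A}}=0\Rightarrow T\in\im(i\p\db)$ for \emph{positive} currents is precisely the open content of the conjecture --- dually it amounts to comparing $\mathcal{E}_{\p\db}$ with $\mathscr{L}(\mathcal{E}_X)$, which the paper ties to Conjecture \ref{conj-mov} via Lemma \ref{two-inclusions} and Proposition \ref{biject} (the latter already assuming $X$ balanced, so it cannot be used to produce the metric). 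Your second route reproduces the paper's own suggested approach: the corrected form $\omega^{n-1}+\p\alpha+\overline{\p\alpha}$ is exactly the $d$-closed (strongly Gauduchon) representative of $\mathscr{J}^{-1}\big([\omega^{n-1}]_{\mathrm{A}}\big)$, and the ``positivity adjustment'' you defer is the assertion $\mathscr{J}^{-1}(\mathcal{G}_X)\subseteq\mathcal{B}_X$, i.e.\ Question \ref{que}, which is again the conjecture itself. Both reductions are correct, but each terminates at a genuine gap that coincides with why the problem is open; neither constitutes a proof.
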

One possible approach is to prove
$\mathscr{J}^{-1}(\mathcal{G}_X)=\mathcal{B}_X$, since the Gauduchon
cone of a compact complex manifold is never empty and $\mathscr{J}$
is an isomorphism from the $\p\db$-lemma. See the important argument
in \cite[Section 6]{P4} or \cite[Section 2]{crs} relating a slightly
different conjecture with the quantitative part of Transcendental
Morse Inequalities Conjecture for differences of two nef classes as
in \cite[Conjecture 10.1.(ii)]{BDPP} and (more precisely) also their
main Conjecture \ref{conj-mov}.

A weaker question comes up:
\begin{question}\label{que}
Does the mapping $\mathscr{J}$ map the balanced cone $\mathcal{B}_X$
bijectively onto the Gauduchon cone $\mathcal{G}_X$ on the
K\"{a}hler manifold $X$?
\end{question}
It is clear that $\mathscr{J}$ maps $\mathcal{B}_X$ injectively into
$\mathcal{G}_X$ from the $\p\db$-lemma of K\"{a}hler manifolds. The
affirmation of this question is equivalent to the equality
\begin{equation}\label{int-eq}
\mathcal{E}_X = \mathscr{L}^{-1}(\mathcal{E}_{\p\db})
\end{equation}
by Proposition \ref{biject}. The \emph{pseudo-effective cone}
$\mathcal{E}_X$ is generated by Bott-Chern classes in
$H^{1,1}_{\mathrm{BC}}(X,\mathbb{R})$ represented by $d$-closed
positive $(1,1)$-currents and the convex cone $\mathcal{E}_{\p\db}
\subseteq H^{1,1}_{\mathrm{A}}(X,\mathbb{R})$, is generated by Aeppli
classes represented by $\p\db$-closed positive $(1,1)$-currents,
with the natural isomorphism $\mathscr{L}:
H^{1,1}_{\mathrm{BC}}(X,\mathbb{R}) \>
H^{1,1}_{\mathrm{A}}(X,\mathbb{R})$ induced by the identity map. The
pull-back cone $\mathscr{L}^{-1}(\mathcal{E}_{\p\db})$ denotes the
inverse image of the cone $\mathcal{E}_{\p\db}$ under the
isomorphism $\mathscr{L}$. The closed convex cone $\mathcal{M}_X
\subseteq H^{n-1,n-1}_{\mathrm{BC}}(X,\mathbb{R})$ is called the
movable cone, originating from \cite{BDPP}, and
$\big(\mathcal{M}_X\big)^{\mathrm{v}_c}$ denotes its dual cone
(cf. Definitions \ref{dual-cone} and \ref{movable}).
\begin{lemma}[See Lemma \ref{two-inclusions} and its remarks]\label{ka-in}
Let $X$ be a compact K\"{a}hler manifold. There exist the following inclusions:
\[ \mathcal{E}_X \subseteq \mathscr{L}^{-1}(\mathcal{E}_{\p\db})
\subseteq \big(\mathcal{M}_X\big)^{\mathrm{v}_c}. \] \end{lemma}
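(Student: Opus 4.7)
The two inclusions have quite different characters: the first is a near-tautological unwinding of definitions, while the second encodes the duality between positive $(1,1)$-currents and the movable cone in the spirit of \cite{BDPP}. My plan is to handle them separately and then combine.

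For $\mathcal{E}_X \subseteq \mathscr{L}^{-1}(\mathcal{E}_{\p\db})$, I would take a generator $[T]_{BC} \in \mathcal{E}_X$ represented by a $d$-closed positive $(1,1)$-current $T$. Because $dT = 0$ forces $\p\db T = 0$ and positivity is preserved, $T$ simultaneously represents an Aeppli class in $\mathcal{E}_{\p\db}$; and since $\mathscr{L}$ is induced by the identity on currents, $\mathscr{L}([T]_{BC}) = [T]_A$, so $[T]_{BC} \in \mathscr{L}^{-1}(\mathcal{E}_{\p\db})$. Extending by convex-conic closure completes this step.

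For $\mathscr{L}^{-1}(\mathcal{E}_{\p\db}) \subseteq (\mathcal{M}_X)^{\mathrm{v}_c}$, I would pick $[\alpha]_{BC}$ in the preimage, so that $\alpha - T = \p u + \db v$ for some $\p\db$-closed positive $(1,1)$-current $T$ and currents $u,v$. For each generator $M := \mu_*(\tilde\om_1 \wedge \cdots \wedge \tilde\om_{n-1})$ of the movable cone $\mathcal{M}_X$, where $\mu : \tilde X \to X$ is a modification and the $\tilde\om_i$ are K\"ahler forms on $\tilde X$, I would compute
\[
\la [\alpha]_{BC}, [M]_{BC} \ra
= \int_X \alpha \wedge M
= \int_X T \wedge M
= \int_{\tilde X} \mu^* T \wedge \tilde\om_1 \wedge \cdots \wedge \tilde\om_{n-1} \geq 0,
\]
where the second equality uses $d$-closedness of $M$ together with integration by parts to kill the $\p u + \db v$ term, and the third is the projection formula. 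Taking closed convex-conic limits covers all of $\mathcal{M}_X$, whence $[\alpha]_{BC} \in (\mathcal{M}_X)^{\mathrm{v}_c}$.

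The main technical obstacle is justifying the current-theoretic manipulations: $\mu^* T$ is a well-defined positive $(1,1)$-current on $\tilde X$ via the Alessandrini-Bassanelli/Meo pull-back theory for positive currents under modifications, after which its wedge with the smooth K\"ahler forms $\tilde\om_i$ is a bona fide positive measure. I would also need to unwind Definition \ref{dual-cone} to confirm that the relevant pairing is the natural integration pairing, under which $(\mathcal{M}_X)^{\mathrm{v}_c}$ can be identified inside $H^{1,1}_{BC}(X,\mathbb{R})$ (using the map $\iota^{n-1,n-1}_{BC,A}$, or equivalently the isomorphism $\mathscr{L}$ in the K\"ahler case), so that both sides of the stated inclusion genuinely sit in a common ambient space.
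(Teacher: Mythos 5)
Your overall strategy coincides with the paper's: the first inclusion is the observation that a $d$-closed positive $(1,1)$-current is in particular $\p\db$-closed and positive, and the second inclusion rests on testing against generators $\mu_*(\widetilde{\omega}_1\wedge\cdots\wedge\widetilde{\omega}_{n-1})$ of $\mathcal{M}_X$ and invoking the Alessandrini--Bassanelli theorem on modifications of $\p\db$-closed positive $(1,1)$-currents --- exactly the citation the paper uses. The one point where your write-up would fail as literally stated is the middle of your chain of equalities: the expression $\int_X T\wedge M$, with $T$ a positive $(1,1)$-current and $M=\mu_*(\widetilde{\omega}_1\wedge\cdots\wedge\widetilde{\omega}_{n-1})$ itself only a current on $X$, is a wedge of two currents and is not defined, and the same objection applies to the integration by parts you use to kill the $\p u+\db v$ term (it pairs the currents $u,v$ against $\p M,\db M$). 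The paper's proof is ordered precisely to avoid this: one first applies the projection formula to the \emph{smooth} representative $\alpha$, obtaining $\int_{\widetilde{X}}\mu^*\alpha\wedge\widetilde{\omega}_1\wedge\cdots\wedge\widetilde{\omega}_{n-1}$, and only then replaces $\mu^*\alpha$ by the Alessandrini--Bassanelli current $T'$ on $\widetilde{X}$ lying in the same Aeppli class $\mu^*[\alpha]_{\mathrm{A}}$; at that stage the other factor $\widetilde{\omega}_1\wedge\cdots\wedge\widetilde{\omega}_{n-1}$ is smooth and $d$-closed, so the substitution is legitimate and positivity of $T'$ gives the sign. Since you explicitly flag the pull-back theory as the technical crux, this is a fixable ordering issue rather than a missing idea, but the equalities should be rearranged in the paper's order for the argument to be rigorous.
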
 By
the inclusions in this lemma, the equality \eqref{int-eq} is
actually a part of:
\begin{conjecture}[{\cite[Conjecture 2.3]{BDPP}}]\label{conj-mov}
Let $X$ be a compact K\"{a}hler manifold. Then the equality holds
\[\mathcal{E}_X = \big(\mathcal{M}_X\big)^{\mathrm{v}_c}. \]
\end{conjecture}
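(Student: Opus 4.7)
The lemma consists of two inclusions, which I would prove in turn; the first is essentially tautological and the second requires a positivity argument routed through modifications.

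The inclusion $\mathcal{E}_X \subseteq \mathscr{L}^{-1}(\mathcal{E}_{\p\db})$ is immediate from the definitions. If $\alpha = [T]_{\mathrm{BC}}$ for a $d$-closed positive $(1,1)$-current $T$, then $T$ is a fortiori $\p\db$-closed and still positive, so $\mathscr{L}(\alpha) = [T]_{\mathrm{A}} \in \mathcal{E}_{\p\db}$, i.e.\ $\alpha \in \mathscr{L}^{-1}(\mathcal{E}_{\p\db})$.

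For the inclusion $\mathscr{L}^{-1}(\mathcal{E}_{\p\db}) \subseteq (\mathcal{M}_X)^{\mathrm{v}_c}$, fix $\alpha \in \mathscr{L}^{-1}(\mathcal{E}_{\p\db})$ and represent it by a smooth $d$-closed $(1,1)$-form $\omega$; the hypothesis yields a positive $\p\db$-closed $(1,1)$-current $T$ together with currents $u,v$ of bidegrees $(0,1),(1,0)$ such that $\omega = T + \p u + \db v$. By the very definition of the dual cone, together with continuity of the natural pairing and finite-dimensionality of $H^{n-1,n-1}_{\mathrm{BC}}(X,\mathbb{R})$, it suffices to show $\int_X \omega \wedge \Gamma \geq 0$ for a generating family of representatives $\Gamma$ of $\mathcal{M}_X$, namely $\Gamma = \mu_*(\tilde\omega_1 \wedge \cdots \wedge \tilde\omega_{n-1})$, where $\mu:\tilde X \to X$ is a modification from a compact K\"ahler manifold $\tilde X$ and the $\tilde\omega_i$ are K\"ahler forms on $\tilde X$. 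The projection formula (together with $\mu^*\p = \p\mu^*$ and $\mu^*\db = \db\mu^*$ at the level of currents) followed by integration by parts against the $d$-closed smooth form $\tilde\omega_1 \wedge \cdots \wedge \tilde\omega_{n-1}$ gives
\[ \int_X \omega \wedge \Gamma \;=\; \int_{\tilde X} \mu^*\omega \wedge \tilde\omega_1 \wedge \cdots \wedge \tilde\omega_{n-1} \;=\; \int_{\tilde X} \mu^*T \wedge \tilde\omega_1 \wedge \cdots \wedge \tilde\omega_{n-1} \;\geq\; 0, \]
the last inequality because $\mu^*T$ is a positive $(1,1)$-current on $\tilde X$ and $\tilde\omega_1 \wedge \cdots \wedge \tilde\omega_{n-1}$ is strongly positive, so their product is a positive Radon measure.

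The substantive point, and the only one I expect to require care, is the well-definedness and positivity of $\mu^*T$ for a merely $\p\db$-closed positive current $T$ (not a priori $d$-closed): for a modification between compact K\"ahler manifolds this pull-back is standard in pluripotential theory via local regularization of positive currents, and I would quote the appropriate result rather than reprove it. The final step, passing from generators to an arbitrary class in $\mathcal{M}_X$, is then immediate from the closedness of $(\mathcal{M}_X)^{\mathrm{v}_c}$ and the continuity of the pairing on the finite-dimensional cohomology groups, completing the argument.
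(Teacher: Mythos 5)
You have proved the wrong statement. What you establish is the chain of inclusions $\mathcal{E}_X \subseteq \mathscr{L}^{-1}(\mathcal{E}_{\p\db}) \subseteq \big(\mathcal{M}_X\big)^{\mathrm{v}_c}$, which is precisely Lemma \ref{two-inclusions} of the paper, and your argument for that lemma is essentially the paper's own: the first inclusion is definitional, and the second is routed through the lift of a $\p\db$-closed positive $(1,1)$-current under a modification (the point you rightly flag as needing a citation; the paper quotes Alessandrini--Bassanelli \cite{AB95} for the existence of a $\p\db$-closed positive $T'$ on $\widetilde{X}$ with $\mu_{\ast}T' = T$ and $T'$ in the pulled-back Aeppli class). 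But the statement under review is Conjecture \ref{conj-mov}, which asserts the \emph{equality} $\mathcal{E}_X = \big(\mathcal{M}_X\big)^{\mathrm{v}_c}$. Your argument yields only the inclusion $\mathcal{E}_X \subseteq \big(\mathcal{M}_X\big)^{\mathrm{v}_c}$ and says nothing about the converse.

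The reverse inclusion --- that every class in $H^{1,1}_{\mathrm{BC}}(X,\mathbb{R})$ evaluating non-negatively on the movable cone is represented by a $d$-closed positive $(1,1)$-current --- is the substantive content of the statement, and it cannot be reached by the soft positivity and duality arguments you use. It is an open conjecture of Boucksom--Demailly--Paun--Peternell for a general compact K\"ahler manifold, which is exactly why the paper records it as a conjecture rather than proving it; the paper notes it is known only in special cases, e.g. for compact hyperk\"ahler manifolds and for K\"ahler limits by deformation of projective manifolds with Picard number $\rho = h^{1,1}$, by \cite[Theorem 10.12, Corollary 10.13]{BDPP}. So there is a genuine gap: half of a conjectured equality is missing, and that half is the hard one.
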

An analogous conjecture of the balanced case is proposed as
\cite[Conjecture 5.4]{FX}. The following theorem provides
some evidence for the assertion of Question \ref{que}.

\begin{theorem}[= Theorem \ref{nef-b-g}]\label{0nef-b-g}
Let $X$ be a compact K\"{a}hler manifold and
$\big[\alpha\big]_{\mathrm{BC}}$ a nef class. Then
$\big[\alpha^{n-1}\big]_{\mathrm{A}} \in \mathcal{G}_X$ implies that
$\big[ \alpha^{n-1}\big]_{\mathrm{BC}} \in \mathcal{B}_X$. Hence
$\overline{\mathbb{I}}(\overline{\mathcal{K}}_X) \bigcap
\mathcal{B}_X$ and $\overline{\mathbb{K}}(\overline{\mathcal{K}}_X)
\bigcap \mathcal{G}_X$ can be identified by the mapping
$\mathbb{J}$.
\end{theorem}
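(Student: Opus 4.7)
The plan is to reduce the main implication to the Fu--Xiao-type statement that a nef and big Bott--Chern class on a compact K\"ahler manifold has balanced $(n-1)$-st power, and to obtain the bigness of $[\alpha]_{BC}$ for free from the Gauduchon hypothesis via the duality between the Gauduchon cone and the pseudo-effective cone.

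Since $X$ is K\"ahler, the Serre-type pairing
\[ H^{n-1,n-1}_A(X,\mathbb{R}) \times H^{1,1}_{BC}(X,\mathbb{R}) \to \mathbb{R},\qquad ([\eta]_A,[T]_{BC})\mapsto \int_X \eta\wedge T, \]
is well-defined and non-degenerate, and by Lamari's duality the closure $\overline{\mathcal{G}_X}$ is the polar of the pseudo-effective cone $\overline{\mathcal{E}_X}$. The open cone $\mathcal{G}_X$ therefore consists of exactly those classes $[\eta]_A$ with $\int_X\eta\wedge T > 0$ for every non-zero $[T]_{BC}\in\overline{\mathcal{E}_X}$. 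First I would apply this characterisation with $[\eta]_A = [\alpha^{n-1}]_A$ and $[T]_{BC} = [\alpha]_{BC}$: nefness places $[\alpha]_{BC}$ in $\overline{\mathcal{K}}_X\subseteq\overline{\mathcal{E}_X}$, and $[\alpha]_{BC}$ must be non-zero, for otherwise $[\alpha^{n-1}]_{BC} = [\alpha]_{BC}^{n-1} = 0$ would force $[\alpha^{n-1}]_A = 0\notin\mathcal{G}_X$, a contradiction. It would then follow that
\[ \int_X \alpha^n \;=\; \int_X \alpha^{n-1}\wedge\alpha \;>\; 0, \]
so $[\alpha]_{BC}$ is nef and big.

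Next I would invoke the Fu--Xiao-type result \cite{FX}: a nef and big Bott--Chern class on a compact K\"ahler manifold has balanced $(n-1)$-st power. This auxiliary statement is obtained by solving Yau's complex Monge--Amp\`ere equation on the approximating K\"ahler classes $[\alpha+\epsilon\omega]_{BC}$ (with $\omega$ a fixed K\"ahler form) to produce K\"ahler forms $\omega_\epsilon\in[\alpha+\epsilon\omega]_{BC}$ whose $(n-1)$-st powers are uniformly controlled in view of $\int_X\alpha^n>0$; a $\partial\overline\partial$-lemma correction absorbs the explicit $d$-closed polynomial error $\epsilon R_\epsilon := (\alpha+\epsilon\omega)^{n-1}-\alpha^{n-1}$, and a Kolodziej-type passage to the limit $\epsilon\to 0^+$ delivers a smooth, $d$-closed, strictly positive representative of $[\alpha^{n-1}]_{BC}$. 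This gives $[\alpha^{n-1}]_{BC}\in\mathcal{B}_X$. The identification of $\overline{\mathbb{I}}(\overline{\mathcal{K}}_X)\cap\mathcal{B}_X$ with $\overline{\mathbb{K}}(\overline{\mathcal{K}}_X)\cap\mathcal{G}_X$ via $\mathbb{J}$ is then automatic, because $\mathbb{J}$ tautologically sends $[\alpha^{n-1}]_{BC}$ to $[\alpha^{n-1}]_A$, the direction ``balanced $\Rightarrow$ Gauduchon'' is trivial, and the reverse inclusion is the main implication just established.

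The main obstacle is the nef-and-big $\Rightarrow$ balanced step, which is a genuinely analytic argument requiring Yau's theorem together with uniform pluripotential estimates and cannot be bypassed by purely cohomological means. The novel content of the theorem lies in the reduction to this step: the Gauduchon hypothesis on $[\alpha^{n-1}]_A$, which a priori concerns only Aeppli cohomology, already forces $[\alpha]_{BC}$ to be big through its duality pairing with the pseudo-effective cone.
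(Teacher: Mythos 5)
Your first step (extracting $\int_X\alpha^n>0$ from the Gauduchon hypothesis via the duality with the pseudo-effective cone, after noting $[\alpha]_{\mathrm{BC}}\neq 0$) and your concluding cone identification both match the paper. The gap is in the middle: the auxiliary statement you reduce to --- \emph{every} nef and big Bott--Chern class on a compact K\"ahler manifold has balanced $(n-1)$-st power --- is false, so the reduction cannot work. Take $X=\mathrm{Bl}_p\mathbb{P}^n$ with exceptional divisor $E$ and $\alpha=\pi^*H$: this class is nef and big, yet $\int_X\alpha^{n-1}\wedge[E]=\int_E(\pi^*H|_E)^{n-1}=0$ with $[E]$ a non-zero $\p\db$-closed positive $(1,1)$-current, so $[\alpha^{n-1}]_{\mathrm{BC}}\notin\mathcal{B}_X$ by \cite[Lemma 3.3]{FX} (and, consistently, $[\alpha^{n-1}]_{\mathrm{A}}\notin\mathcal{G}_X$). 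This is also why \cite[Theorem 1.3]{FX} for projective Calabi--Yau manifolds has extra hypotheses beyond bigness; and your sketch of its proof cannot produce a \emph{smooth strictly positive} $d$-closed representative of $[\alpha^{n-1}]_{\mathrm{BC}}$, since the Monge--Amp\`ere solution for a nef and big class has minimal singularities and is only smooth on the Zariski-open set $\mathrm{Amp}(\alpha)$.

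The missing idea is that the Gauduchon hypothesis must be used a \emph{second} time, not just to get bigness. The paper's argument: fix a $\p\db$-closed positive $(1,1)$-current $Q$ and show $\int_X\alpha^{n-1}\wedge Q>0$ unless $Q=0$, which by \cite[Lemma 3.3]{FX} gives $[\alpha^{n-1}]_{\mathrm{BC}}\in\mathcal{B}_X$. Using Yau's solutions $u_t$ for $[\alpha+t\omega_X]_{\mathrm{BC}}$ and the Boucksom--Eyssidieux--Guedj--Zeriahi solution at $t=0$, the measures $(\alpha+t_k\omega_X+\sqrt{-1}\p\db u_{t_k})^{n-1}\wedge Q$ converge weakly to a positive measure $\mu$ with total mass $\int_X\alpha^{n-1}\wedge Q$ which restricts on $\mathrm{Amp}(\alpha)$ to $(\alpha+\sqrt{-1}\p\db u)^{n-1}\wedge Q$ against a K\"ahler metric. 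Hence $\int_X\alpha^{n-1}\wedge Q=0$ forces $\mathrm{Supp}(Q)\subseteq X\setminus\mathrm{Amp}(\alpha)$, an analytic set of dimension $\leq n-1$, whence by the support theorems of \cite{AB92} and \cite[Lemma 3.5]{FX} one gets $Q=\sum_i c_i[V_i]$ over the divisorial components $V_i$. Only now does the hypothesis $[\alpha^{n-1}]_{\mathrm{A}}\in\mathcal{G}_X$ finish the job: each $[V_i]$ is a non-zero $d$-closed positive $(1,1)$-current, so Lemma \ref{G_class} gives $\int_X\alpha^{n-1}\wedge[V_i]>0$ and therefore $c_i=0$, i.e.\ $Q=0$. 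Your proposal never confronts these residual currents supported on the non-K\"ahler locus, and that is precisely where the statement would otherwise fail.
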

The mappings $\overline{\mathbb{I}}$ and $\overline{\mathbb{K}}$ are
contained in the pair of diagrams
$(\mathrm{D},\overline{\mathrm{D}})$ as in the beginning of Section
\ref{relation}.  The proof relies on several important results on
solving complex Monge-Amp\`{e}re equations on the compact K\"{a}hler
manifold $X$. One is the Yau's celebrated results of solutions of
the complex Monge-Amp\`{e}re equations for K\"{a}hler classes
\cite{Yst}. The other one is the Boucksom-Eyssidieux-Guedj-Zeriahi's
work on the equations for the
nef and big classes \cite{BEGZ}.

Popovici and Ugarte in \cite[Theorem 5.7]{PU} prove that the
following inclusion holds $$\label{inclusion} \mathcal{G}_{X_0}
\subseteq \lim\limits_{t\>0} \mathcal{G}_{X_t} $$ for the family $\pi:
\mathcal{X} \rightarrow \Delta_{\epsilon}$ over a small complex disk with the central fiber an
\textbf{sGG} manifold, where $\lim\limits_{t\>0} \mathcal{G}_{X_t}$
is defined by
\[ \lim\limits_{t\>0} \mathcal{G}_{X_t} = \Big\{ \big[ \Omega
\big]_{\mathrm{A}} \in H^{n-1,n-1}_{\mathrm{A}}(X_0,\mathbb{R})
\Big| \mathrm{P}_t \circ \mathrm{Q}_{\,0} \big(\big[ \Omega
\big]_{\mathrm{A}}\big) \in \mathcal{G}_{X_t}\ \textrm{for}\ t\
\textrm{sufficiently}\ \textrm{small}\Big\}.\] The canonical
mappings $\mathrm{P}_t: H^{2n-2}_{\mathrm{DR}}(X_t,\mathbb{R}) \>
H^{n-1,n-1}_{\mathrm{A}}(X_t,\mathbb{R})$ are surjective for all $t$
 and the mapping $\mathrm{Q}_{\,0}: H^{n-1,n-1}_{\mathrm{A}}(X_t,\mathbb{R}) \>
H^{2n-2}_{\mathrm{DR}}(X_t,\mathbb{R})$, depending on a fixed
Hermitian metric $\omega_0$ on $X_0$, is injective, which satisfies
 $\mathrm{P}_0 \circ \mathrm{Q}_{\,0} =
\mathrm{id}_{H^{n-1,n-1}_{\mathrm{A}} (X,\mathbb{R})}$. Here we give
another inclusion from the other side as follows, where Demailly's
regularization of closed positive currents (Theorem
\ref{regularization}) plays an important role in the proof.
\begin{theorem}[= Theorem \ref{inclusion_1}]
Let $\pi: \mathcal{X} \rightarrow \Delta_{\epsilon}$ be a
holomorphic family  with the K\"ahlerian central fiber $X_0$. Then we
have \[  \lim_{t \> \tau} \mathcal{G}_{X_t} \subseteq
\mathcal{N}_{X_{\tau}}\quad \textrm{for each}\ \ \tau \in
\Delta_{\epsilon},\] where $\mathcal{N}_{X_{\tau}}$ is the convex
cone generated by Aeppli classes of $\p_{\tau}\db_{\tau}$-closed
positive $(n-1,n-1)$-currents on $X_{\tau}$. Moreover, the following
inclusion holds, for $\tau \in \Delta_{\epsilon} \setminus \bigcup
S_{\nu}$,
\[  \lim_{t \> \tau} \mathcal{G}_{X_t} \subseteq \overline{\mathcal{G}}_{X_{\tau}}. \]
\end{theorem}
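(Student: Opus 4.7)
The plan is to realize the limit Gauduchon class $[\Omega]_{\mathrm{A}}$ as the Aeppli class of a weak limit of Gauduchon witnesses $\Omega_t$ on the nearby fibers, and then, off a countable union of bad loci, to recover honest Gauduchon representatives via Demailly's regularization. By the very definition of $\lim_{t \> \tau}\mathcal{G}_{X_t}$, for every $t$ in a punctured neighborhood of $\tau$ there is a smooth strictly positive $(n-1,n-1)_t$-form $\Omega_t$ on $X_t$ with $\p_t\db_t\Omega_t=0$ whose Aeppli class is $\mathrm{P}_t\circ \mathrm{Q}_{\tau}([\Omega]_{\mathrm{A}})$. Using a $C^\infty$-trivialization of $\pi$, I view the $\Omega_t$ as a family of positive $(2n-2)$-currents on the underlying smooth manifold and aim to extract a weak-$*$ limit as $t \> \tau$.

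To justify the extraction I need a uniform mass bound. Shrinking $\epsilon$ and invoking Kodaira--Spencer stability, every $X_t$ is K\"ahler and carries a smooth family of K\"ahler forms $\omega_t$. Since $\omega_t$ is $d$-closed while $\Omega_t$ is $\p_t\db_t$-closed, the mass $\int_{X_t}\Omega_t\w\omega_t$ is the cohomological pairing of $[\Omega_t]_{\mathrm{A}}\in H^{n-1,n-1}_{\mathrm{A}}(X_t,\mathbb{R})$ with $[\omega_t]_{\mathrm{BC}}\in H^{1,1}_{\mathrm{BC}}(X_t,\mathbb{R})$; both classes vary continuously in $t$ (the first by the defining identity $[\Omega_t]_{\mathrm{A}}=\mathrm{P}_t\circ \mathrm{Q}_{\tau}([\Omega]_{\mathrm{A}})$), so the mass stays locally bounded near $\tau$. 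Weak-$*$ compactness then yields a subsequential limit $\Omega_{\tau}$.

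I next verify that $\Omega_{\tau}$ witnesses $[\Omega]_{\mathrm{A}} \in \mathcal{N}_{X_{\tau}}$. Positivity passes to weak limits, and since $J_t \> J_{\tau}$ smoothly the limit is of pure type $(n-1,n-1)_{\tau}$ and positive with respect to $J_{\tau}$. The operators $\p_t\db_t$ converge in the distributional sense to $\p_{\tau}\db_{\tau}$, so $\p_{\tau}\db_{\tau}\Omega_{\tau}=0$; and the Aeppli class $[\Omega_{\tau}]_{\mathrm{A}}$ equals $[\Omega]_{\mathrm{A}}$ by continuity of the pairings used to detect Aeppli cohomology together with $\mathrm{P}_{\tau}\circ \mathrm{Q}_{\tau}=\mathrm{id}$. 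This establishes the first inclusion. For the generic statement I feed the $\p_{\tau}\db_{\tau}$-closed positive $(n-1,n-1)$-current $\Omega_{\tau}$ into Demailly's regularization (Theorem \ref{regularization}), obtaining smooth approximants $\Omega_{\tau}^{(\nu)}$ with the same Aeppli class and with positivity loss controlled by Lelong-type level sets. The countable family $\{S_\nu\}\subset\Delta_\epsilon$ is precisely the collection of base points whose fibers carry level sets large enough to spoil strict positivity of $\Omega_{\tau}^{(\nu)}$; for $\tau$ outside $\bigcup S_\nu$ the approximants are genuine Gauduchon forms, so $[\Omega]_{\mathrm{A}}=\lim_\nu [\Omega_{\tau}^{(\nu)}]_{\mathrm{A}}\in \overline{\mathcal{G}}_{X_{\tau}}$.

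The hardest step is the last one. Demailly's regularization of closed positive currents is native to bidegree $(1,1)$, so adapting it to $\p\db$-closed positive $(n-1,n-1)$-currents on a K\"ahler fiber, while keeping the Aeppli class fixed and quantifying the loss of positivity precisely enough to pin down the exceptional locus $\bigcup S_\nu$, is where the real technical work lies. A secondary subtlety is verifying that the weak limit in the total space decomposes compatibly with the varying bigrading and that $\p_t\db_t \> \p_{\tau}\db_{\tau}$ on currents of this type; these bigrading issues are routine once K\"ahler stability of the family is secured, but must be handled carefully before the regularization argument can be invoked.
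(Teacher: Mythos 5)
Your argument for the first inclusion is essentially the paper's: bound the masses $\int_{X_t}\Omega_t\wedge\omega_t$ cohomologically using a smooth family of K\"ahler metrics, extract a weak limit, and observe that the limit is a $\p_{\tau}\db_{\tau}$-closed positive $(n-1,n-1)$-current in the class $[\Omega]_{\mathrm{A}}$. That part is fine.

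The second inclusion, however, has a genuine gap, and the gap is exactly the step you defer to at the end. You propose to feed the limit current $\Omega_{\tau}$ into Demailly's regularization and produce smooth Gauduchon approximants in the same Aeppli class. But Theorem \ref{regularization} applies to $d$-closed almost positive $(1,1)$-currents; there is no known regularization of $\p\db$-closed positive $(n-1,n-1)$-currents within a fixed Aeppli class with controlled loss of positivity, and if such a tool existed it would give $\mathcal{N}_{X}\subseteq\overline{\mathcal{G}}_{X}$ on every fiber with no exceptional set at all --- which is not what the theorem asserts. Your description of $\bigcup S_{\nu}$ is also not the one in the statement: $\bigcup S_{\nu}$ is defined purely from the family via Barlet's relative cycle spaces $C^{p}(\mathcal{X}/\Delta_{\epsilon})$ (the images of components not surjecting onto the disk), independently of any particular class or of Lelong level sets of $\Omega_{\tau}$; its role is to guarantee that every irreducible analytic subvariety of $X_{\tau}$ extends to the nearby fibers with invariant cohomology class. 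The actual proof goes through the duality $\overline{\mathcal{G}}_{X_{\tau}}=(\mathcal{E}_{X_{\tau}})^{\mathrm{v}_c}$ of Proposition \ref{closed-dual}: one must show $\int_{X_{\tau}}\Omega\wedge T\geq 0$ for every $d$-closed positive $(1,1)$-current $T$. It is $T$, not $\Omega_{\tau}$, that gets regularized; one then blows up along the analytic singularities $Y_k$, uses the Siu decomposition $T_k=\mu_{k*}(\tilde{R}_k)+\sum_j\nu_{kj}[Y_{kj}]$, extends both the K\"ahler data on the blow-up and the divisorial components $Y_{kj}$ to the nearby fibers (this is where $\tau\notin\bigcup S_{\nu}$ enters), and exploits that $\Omega_t$ is an honest positive form on $X_t$ for $t\neq\tau$ to conclude that each term is bounded below by $-C\epsilon_k$. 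None of this machinery appears in your outline, so as written the second inclusion is not proved.
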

Here $\bigcup S_{\nu}$ is a countable union of analytic subvarieties
$S_{\nu}$ of $\Delta_{\epsilon}$. And Theorem \ref{inclusion_2}
deals with the case of the fiber, satisfying the equality
$\overline{\mathcal{K}}_X = \mathcal{E}_X$, in a K\"ahler family.

In \cite{RwZ}, X. Wan and the authors will apply the
 extension methods developed here to a power series proof of
Kodaira-Spencer's local stability theorem of K\"ahler metrics, which
is motivated by:
\begin{prob}[{Remark $1$ on \cite[p.  180]{MK}}]
A good problem would be to find an elementary proof (for example,
using power series methods). Our proof uses nontrivial results from
partial differential equations.
\end{prob}

\textbf{Acknowledgement}: We would like to express our gratitude
to Professors Daniele Angella, Kwokwai Chan, Huitao Feng, Jixiang Fu, Lei Fu, Conan
Leung, Kefeng Liu, Dan Popovici, Fangyang Zheng, and Dr. Jie Tu,
Yat-hin Suen, Xueyuan Wan, Jian Xiao, Xiaokui Yang, Wanke Yin,
Shengmao Zhu for their useful advice or interest on this work. This
work started when the first author was invited by Professor J.-A.
Chen to Taiwan University in May-July of 2013 with the support of
the National Center for Theoretical Sciences, and was completed
during his visit in June of 2015 to the Mathematics Department of
UCLA. He takes this opportunity to thank them for their hospitality.
Last but not least, the anonymous referee's careful reading and valuable comments
improve the statement significantly.

\section{An extension formula for complex differential
forms}\label{ext-formula}
Inspired by the classical Kodaira-Spencer-Kuranishi deformation
theory of complex structures and the recent work \cite{lry}, we will
present an extension formula for complex differential forms. For a holomorphic family of compact complex manifolds, we adopt the definition \cite[Definition 2.8]{K2}; while for the differentiable one, we follow:
\begin{definition}[{\cite[Definition 4.1]{K2}}]\label{smf} Let $\mc{X}$ be a differentiable manifold, $B$ a domain of $\mathbb{R}^k$ and $\pi$ a smooth map of $\mc{X}$ onto $B$.
By a \emph{differentiable family of $n$-dimensional compact complex manifolds} we mean the triple $\pi:\mc{X}\to B$ satisfying the following conditions:
\begin{enumerate}[$(i)$]
    \item \label{}
The rank of the Jacobian matrix of $\pi$ is equal to $k$ at every point of $\mc{X}$;
    \item \label{}
For each point $t\in B$, $\pi^{-1}(t)$ is a compact connected subset of $\mc{X}$;
    \item \label{}
$\pi^{-1}(t)$ is the underlying differentiable manifold of the $n$-dimensional compact complex manifold $X_t$ associated to each $t\in B$;
     \item \label{}
There is a locally finite open covering $\{\mathcal{U}_j\ |\ j=1,2,\cdots\}$ of $\mc{X}$ and complex-valued smooth functions $\zeta_j^1(p),\cdots,\zeta_j^n(p)$, defined on $\mathcal{U}_j$
such that for each $t$, $$\{p\rightarrow (\zeta_j^1(p),\cdots,\zeta_j^n(p))\ |\ \mathcal{U}_j\cap \pi^{-1}(t)\neq \emptyset\}$$ form a system of local holomorphic coordinates of $X_t$.
\end{enumerate}
\end{definition}

\subsection{Extension maps for deformations}
Let us introduce several new notations. For $\phi\in
A^{0,s}(X,T^{1,0}_X)$ on a complex manifold $X$, the contraction
operator can be extended to
$$ i_\phi:\, A^{p,q}(X)\>A^{p-1,q+s}(X)\label{db3}.$$ For example,
if $\phi=\eta\ts Y$ with $\eta\in A^{0,q}(X)$ and
$Y\in\Gamma(X,T^{1,0}_X)$, then for any $\om \in A^{p,q}(X),$ $$
(i_\phi)(\omega)=\eta\wedge (i_Y\omega).$$
Let $\varphi\in A^{0,p}(X,T^{1,0}_X)$ and $\psi\in A^{0,q}(X,T^{1,0}_X)$, locally
written as
\[
\varphi=\frac{1}{p!}\sum\varphi^{i}_{\bar{j}_{1},\cdots,\bar{j}_{p}}
d\bar{z}^{j_{1}}\wedge\cdots\wedge d\bar{z}^{j_{p}}\ts \p_i\
\textmd{and}\ \psi=\frac{1}{q!}\sum\psi^{i}_{\bar{k}_{1},\cdots,\bar{k}_{q}}d\bar{z}^{k_{1}%
}\wedge\cdots\wedge d\bar{z}^{k_{q}}\ts \p_i.
\]
Then we have
$$
\label{Belt}[\varphi,\psi]=\sum_{i,j=1}^{n}(\varphi^{i}\wedge\partial_{i}%
\psi^{j}-(-1)^{pq}\psi^{i}\wedge\partial_{i}\varphi^{j})\otimes\partial_{j},
$$
where $$\partial_{i}\varphi^{j}=\frac{1}{p!}\sum\partial_{i}\varphi^{j}%
_{\bar{j}_{1},\cdots,\bar{j}_{p}}d\bar{z}^{j_{1}}\wedge\cdots\wedge
d\bar {z}^{j_{p}}$$ and similarly for $\partial_{i}\psi^{j}$. In
particular, if $\varphi,\psi\in A^{0,1}(X,T^{1,0}_X)$,
$$[\varphi,\psi]=\sum
_{i,j=1}^{n}(\varphi^{i}\wedge\partial_{i}\psi^{j}+\psi^{i}\wedge\partial
_{i}\varphi^{j})\otimes\partial_{j}.$$
For any $\phi\in A^{0,q}(X,T^{1,0}_X)$, we can define $\sL_\phi$ by
$$ \sL_\phi=(-1)^q d\circ i_\phi+i_\phi\circ d. $$ According to the
types, we can decompose
$$ \sL_\phi=\sL_\phi^{1,0}+\sL_{\phi}^{0,1},$$ where $$
\sL^{1,0}_\phi=(-1)^q\p\circ i_\phi+i_\phi\circ \p$$ and $$
\sL^{0,1}_\phi=(-1)^q\bp\circ i_\phi+i_\phi\circ \bp. $$ Then one
has the following commutator formula, which originated from
\cite{T,To89} and whose various versions appeared in
\cite{F,BK,Li,LSY,C} and also \cite{LR,lry} for vector bundle valued
forms.
\begin{lemma}\label{aaaa} For $\phi, \phi'\in
A^{0,1}(X,T^{1,0}_X)$ on a complex manifold $X$ and $\sigma\in
A^{*,*}(X)$,
$$\label{f1}
[\phi,\phi']\lrcorner\sigma=-\p(\phi'\lrcorner(\phi
\lrcorner\sigma))-\phi'\lrcorner(\phi \lrcorner\p\sigma)
+\phi\lrcorner\p(\phi'\lrcorner\sigma)+\phi'
\lrcorner\p(\phi\lrcorner\sigma);
$$
or equivalently,
\begin{equation}\label{f11}
i_{[\phi,\phi']}=\sL_\phi^{1,0}\circ i_{\phi'}-i_{\phi'}\circ \sL_\phi^{1,0}.
\end{equation}
\end{lemma}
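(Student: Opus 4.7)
The plan is to verify the equivalent commutator form \eqref{f11}. Both sides of this identity are $\mathbb{C}$-bilinear in $(\phi,\phi')$: the local formula recalled just above for $[\phi,\phi']$ is manifestly $\mathbb{C}$-bilinear, while both $\phi\mapsto\sL^{1,0}_\phi$ and $\phi'\mapsto i_{\phi'}$ are $\mathbb{C}$-linear. Since any element of $A^{0,1}(X,T^{1,0}_X)$ is locally a finite sum of decomposable tensors, it suffices to verify the identity when $\phi=\eta\otimes X$ and $\phi'=\eta'\otimes X'$ with $\eta,\eta'\in A^{0,1}(X)$ and $X,X'$ smooth $(1,0)$-vector fields. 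For such $\phi$, the Leibniz rule for $\p$ applied to $i_\phi=\eta\wedge i_X$ yields the operator identity
$$\sL^{1,0}_\phi=-(\p\eta)\wedge i_X+\eta\wedge L^{(1,0)}_X,\qquad L^{(1,0)}_X:=\p\circ i_X+i_X\circ\p.$$

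The key auxiliary fact is the Cartan-type identity $[L^{(1,0)}_X,i_{X'}]=i_{[X,X']}$, which I would deduce from the classical Cartan magic formula $[L_X,i_{X'}]=i_{[X,X']}$ by bidegree decomposition. Writing $L_X=L^{(1,0)}_X+L^{(0,1)}_X$ with $L^{(0,1)}_X:=\bp\circ i_X+i_X\circ\bp$, one checks that $L^{(1,0)}_X$ preserves the $(p,q)$-type while $L^{(0,1)}_X$ shifts it by $(-1,1)$; hence $[L^{(1,0)}_X,i_{X'}]$ and $[L^{(0,1)}_X,i_{X'}]$ have bidegrees $(-1,0)$ and $(-2,1)$ respectively, and matching against $i_{[X,X']}$ (of bidegree $(-1,0)$) isolates the claimed identity. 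The same Leibniz argument also shows that $L^{(1,0)}_X$ is a graded derivation of wedge products of degree zero.

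Armed with these ingredients, I would expand $[\sL^{1,0}_\phi,i_{\phi'}]$ using $i_X\eta'=i_{X'}\eta=0$ and $i_Xi_{X'}=-i_{X'}i_X$. The spurious $\p\eta\wedge\eta'\wedge i_{X'}i_X$ contributions arising from the $-(\p\eta)\wedge i_X$ piece of $\sL^{1,0}_\phi$ cancel because $\eta'\wedge\p\eta=\p\eta\wedge\eta'$ (the sign is even since $\p\eta$ has degree two), leaving
$$[\sL^{1,0}_\phi,i_{\phi'}]=\eta\wedge(i_X\p\eta')\wedge i_{X'}+\eta'\wedge(i_{X'}\p\eta)\wedge i_X+\eta\wedge\eta'\wedge i_{[X,X']}.$$
On the other hand, inserting $\phi^j=X^j\eta$ and $\phi'^j=X'^j\eta'$ into the explicit local bracket formula recalled above and invoking the elementary identity $\sum_i X^i\p_i\eta'=i_X\p\eta'$ gives
$$[\phi,\phi']=(\eta\wedge\eta')\otimes[X,X']+(\eta\wedge i_X\p\eta')\otimes X'+(\eta'\wedge i_{X'}\p\eta)\otimes X,$$
whose contraction with $\sigma$ matches the previous display. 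The main obstacle is not conceptual but purely bookkeeping: carefully tracking signs through the two expansions, most delicately the cancellation of the $\p\eta\wedge\eta'$ contributions and the reordering of $\eta'\wedge\eta=-\eta\wedge\eta'$ at the end of the direct bracket calculation.
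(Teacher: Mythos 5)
Your argument is correct, and I have checked the computations it defers to ``bookkeeping'': the reduction to decomposables is legitimate because both sides of the commutator identity are $\mathbb{C}$-bilinear local operators in $(\phi,\phi')$ and any Beltrami-type form is locally a finite sum of terms $\eta\otimes X$; the operator identity $\sL^{1,0}_{\eta\otimes X}=-(\p\eta)\wedge i_X+\eta\wedge(\p i_X+i_X\p)$ is right; the bigraded splitting of Cartan's formula does isolate $[\,\p i_X+i_X\p,\,i_{X'}]=i_{[X,X']}$ by bidegree count (the $(0,1)$-part of $L_X$ contributes in bidegree $(-2,1)$ only, and $[X,X']$ stays of type $(1,0)$ on a complex manifold); the two $\p\eta\wedge\eta'\wedge i_{X'}i_X$ terms cancel exactly as you say; and the expansion of the local bracket formula on decomposables reproduces $\eta\wedge\eta'\otimes[X,X']+\eta\wedge(i_X\p\eta')\otimes X'+\eta'\wedge(i_{X'}\p\eta)\otimes X$, whose contraction matches your commutator computation. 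Note that the paper does not actually prove this lemma --- it only cites its provenance in the Tian--Todorov circle of ideas and its later appearances (Clemens, Liu--Rao--Yang, etc.), where the usual treatment is a direct multi-index coordinate computation. Your route through decomposable tensors and the $(1,0)$-part of the Cartan magic formula is therefore a genuine, self-contained alternative; what it buys is that the only nontrivial input is the classical $[L_X,i_{X'}]=i_{[X,X']}$ plus type bookkeeping, at the mild cost of having to justify the reduction to decomposables and track a few signs. One small point worth making explicit if you write this up: the equivalence of the two displayed forms of the lemma uses $i_\phi\circ i_{\phi'}=i_{\phi'}\circ i_\phi$ for $\phi,\phi'\in A^{0,1}(X,T^{1,0}_X)$, which is immediate from the same decomposable computation.
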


Let $\phi\in A^{0,1}(X,T^{1,0}_X)$ and $i_\phi$ be the contraction
operator. Define an operator
$$e^{i_\phi}=\sum_{k=0}^\infty \frac{1}{k!} i_\phi^{k},$$
where $i_\phi^k=\underbrace{i_\phi\circ \cdots\circ i_\phi}_{k\
\text{copies}}$. Since the dimension of $X$ is finite, the summation
in the above formulation is also  finite.

\begin{proposition}[{\cite[Theorem 3.4]{lry}}]\label{main1}
Let $\phi\in A^{0,1}(X,T^{1,0}_X)$. Then on the space $A^{*,*}(X)$,
\beq\label{ext-old} e^{- i_{\phi}}\circ d\circ e^{
i_\phi}=d-\sL_{\phi}- \
i_{\frac{1}{2}[\phi,\phi]}=d-\sL_\phi^{1,0}+i_{\bp\phi-\frac{1}{2}[\phi,\phi]}.\eeq
Or equivalently
\begin{equation}\label{db-ext}
e^{- i_{\phi}}\circ \bp\circ e^{ i_\phi}=\bp-\sL_{\phi}^{0,1}
\end{equation}
and $$e^{- i_{\phi}}\circ \p \circ e^{ i_\phi}=\p-\sL^{1,0}_{\phi}-
\ i_{\frac{1}{2}[\phi,\phi]}.\label{f4}$$
\end{proposition}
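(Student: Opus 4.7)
The plan is to deduce the first displayed identity from the two ``equivalent'' identities for $\bp$ and $\p$ by summing them, and then obtain the second displayed identity via the auxiliary fact $\sL_\phi^{0,1}=-i_{\bp\phi}$. The central tool is the standard ad-expansion
\[
e^{-i_\phi}\circ D\circ e^{i_\phi} \;=\; \sum_{k=0}^\infty \frac{(-1)^k}{k!}\,(\operatorname{ad}_{i_\phi})^k(D),\qquad \operatorname{ad}_{i_\phi}(D):=i_\phi\circ D - D\circ i_\phi,
\]
valid for any linear operator $D$ on $A^{*,*}(X)$; the sum terminates because $i_\phi$ is pointwise nilpotent on the finite-dimensional exterior algebra.

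The first brackets are immediate from the definitions (using $q=1$ for $\phi\in A^{0,1}(X,T^{1,0}_X)$): $\operatorname{ad}_{i_\phi}(\bp) = \sL_\phi^{0,1}$, $\operatorname{ad}_{i_\phi}(\p) = \sL_\phi^{1,0}$, and hence $\operatorname{ad}_{i_\phi}(d)=\sL_\phi$. The single nontrivial step is the second bracket for $\p$: applying \eqref{f11} of Lemma \ref{aaaa} with $\phi'=\phi$ yields
\[
\operatorname{ad}_{i_\phi}(\sL_\phi^{1,0}) \;=\; -\,i_{[\phi,\phi]}.
\]
This is the only input from Lemma \ref{aaaa}, and I regard it as the main technical obstacle of the whole argument, already packaged in that lemma.

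To truncate the ad-series, I would first establish an auxiliary identity $\sL_\phi^{0,1}=-\,i_{\bp\phi}$ by a short local computation: writing $\phi=\sum_i \phi^i\ts \partial_i$ with $\phi^i\in A^{0,1}(X)$, the relation $i_{\partial_i}\circ\bp + \bp\circ i_{\partial_i}=0$ (holomorphic vector fields anticommute with $\bp$ under interior product) collapses the claim to Leibniz bookkeeping. Next I would verify that any two contractions $i_\phi$ and $i_\psi$, with $\phi\in A^{0,1}(X,T^{1,0}_X)$ and $\psi\in A^{0,s}(X,T^{1,0}_X)$, commute, by an analogous local calculation in which the graded signs from wedging $(0,*)$-factors precisely cancel the signs from $i_{\partial_i}i_{\partial_j}=-i_{\partial_j}i_{\partial_i}$. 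Consequently $[i_\phi,\sL_\phi^{0,1}] = -[i_\phi, i_{\bp\phi}] = 0$ and $[i_\phi, i_{[\phi,\phi]}] = 0$, so $\operatorname{ad}_{i_\phi}^{k}(\bp)=0$ for $k\ge 2$ and $\operatorname{ad}_{i_\phi}^{k}(\p)=0$ for $k\ge 3$.

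Putting everything together, the $\bp$-expansion has only two surviving terms and yields $e^{-i_\phi}\bp e^{i_\phi}=\bp - \sL_\phi^{0,1}$; the $\p$-expansion terminates at $k=2$ and gives $e^{-i_\phi}\p e^{i_\phi}=\p - \sL_\phi^{1,0} + \tfrac{1}{2}(-i_{[\phi,\phi]}) = \p - \sL_\phi^{1,0} - i_{\frac{1}{2}[\phi,\phi]}$; summing recovers $e^{-i_\phi}d e^{i_\phi}=d - \sL_\phi - i_{\frac{1}{2}[\phi,\phi]}$, and substituting $\sL_\phi^{0,1}=-i_{\bp\phi}$ rewrites this as $d - \sL_\phi^{1,0} + i_{\bp\phi - \frac{1}{2}[\phi,\phi]}$. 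The only delicate bookkeeping lies in the sign conventions of \eqref{f11} and of the two auxiliary (anti)commutativity statements, all of which are routine graded-Leibniz computations once the conventions are fixed.
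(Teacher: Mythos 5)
Your proposal is correct and takes essentially the same route as the paper: both arguments reduce the conjugation $e^{-i_\phi}\circ d\circ e^{i_\phi}$ to the first- and second-order commutators of $i_\phi$ with $d$, use the commutator formula \eqref{f11} to identify the second-order term as $-i_{[\phi,\phi]}$, and kill all higher-order terms via the commutation of $i_\phi$ with $i_{[\phi,\phi]}$ (the paper's \eqref{2phi-phi}). The only differences are organizational — you phrase the computation as the adjoint expansion $\sum_k\frac{(-1)^k}{k!}(\operatorname{ad}_{i_\phi})^k$ rather than the paper's Leibniz rule for $[d,i_\phi^k]$, and you explicitly verify the auxiliary identities $\sL_\phi^{0,1}=-i_{\bp\phi}$ and the commutativity of the contractions (which the paper imports from \cite{C} or leaves implicit), both of which check out.
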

\begin{proof}
Note that (\ref{db-ext}) proved in \cite[Lemma 8.2]{C} will not be
used in this new proof, but only the commutator formula (\ref{f11})
and
\begin{equation}\label{2phi-phi}
i_{[\phi,\phi]}\circ  i_\phi= i_\phi\circ  i_{[\phi,\phi]}
\end{equation}
by a formula on \cite[p.  $361$]{C}.

Let us first define a bracket
$$[d,i_\phi^k]=d\circ i_\phi^k-i_\phi^k\circ d.$$
Obviously, $[d,i_\phi]=-\sL_{\phi}$ and \eqref{ext-old} is
equivalent to
\begin{equation}\label{eq-ext}
[d, e^{ i_\phi}]=e^{ i_\phi}\circ [d,
 i_\phi]-e^{ i_\phi}\circ i_{\frac{1}{2} [\phi,\phi]}.
\end{equation}
We check the Leibniz rule for the bracket: for $k\geq 2$,
$$[d,i_\phi^k]=\sum_{j=1}^k i_\phi^{j-1}\circ [d, i_\phi]\circ i_\phi^{k-j}.$$
As for $k=2$,
$$[d,i_\phi^2]=d\circ i_\phi^2-i_\phi\circ d\circ i_\phi
+i_\phi\circ d\circ i_\phi-i_\phi^2\circ d=[d,i_\phi]\circ
i_\phi+i_\phi\circ [d,i_\phi].$$ Then similarly, one is able to
prove the cases for $k\geq 3$ by induction.

Now we can prove \eqref{eq-ext}. Actually, the Leibniz rule and the
formulae \eqref{f11} \eqref{2phi-phi} tell us: for $k\geq 2$,
$$
[d, \ i_\phi^k]=k i_{\phi}^{k-1}\circ [d, \
i_\phi]-\frac{k(k-1)}{2}i_\phi^{k-2}\circ i_{[\phi,\phi]},\label{p1}
$$ which implies \eqref{eq-ext}.
\end{proof}

From now on, one considers the smooth family
$$\pi: \mathcal{X} \rightarrow B$$
 of $n$-dimensional compact complex manifolds over a small real domain with the central fiber
$$X_0:= \pi^{-1}(0)$$ and the general fibers denoted by $$X_t:=
\pi^{-1}(t).$$ Assume that $k=1$ for simplicity.
We will use the standard notions in deformation theory as in
the beginning of \cite[Chapter 4]{MK}.
Fix an open coordinate covering $\{\mathcal{U}_j\}$
of $\mathcal{X}$ so that
$$\mathcal{U}_j:=\big\{
(\zeta_j,t)=(\zeta_j^1,\cdots,\zeta_j^n,t)\ |\
|\zeta_j|<1,|t|<\epsilon\big\},$$
$$\pi(\zeta_j,t)=t$$
and $$\zeta^\alpha_j=f^\alpha_{jk}(\zeta_k,t)\ on\
\mathcal{U}_j\cap\mathcal{U}_k,$$ where $f_{jk}$ is holomorphic in
$\zeta_k$ and smooth in $t$. By Ehresmann's theorem \cite{E}, $\mathcal{X}$ is
diffeomorphic to $X\times B$, where $X$ is the
underlying differentiable manifold of $X_0$. Then $$\mathcal{U}_j =
U_j\times B,$$ where $U_j= \{\zeta_j\ |\
|\zeta_j|<1\}.$ Thus, we can consider $X_t$ as a compact manifold
obtained by glueing $U_j$ with $t\in B$ by
identifying $\zeta_k\in U_k$ with $\zeta_j=f_{jk}(\zeta_k,t)\in
U_j$. We refer the readers to \cite[\S 4.1.(b)]{K2} for more details
on this description. If $x$ is a point of the underlying
differentiable manifold $X$ of $X_0$ and $t\in \Delta_{\epsilon}$,
we notice that
$$\zeta^\alpha_j=\zeta^\alpha_j(x,t)$$ is a differentiable function
of $(x,t)$. Use the holomorphic coordinates $z$ of $X_0=X$ as
differentiable coordinates so that
$$\zeta^\alpha_j(x,t)=\zeta^\alpha_j(z,t),$$
where $\zeta^\alpha_j(z,t)$ is a differentiable function of $(z,t)$.
At $t=0$, $\zeta^\alpha_j(z,t)$ is holomorphic in $z$ and otherwise
it is only differentiable.

Then a Beltrami differential $\varphi(t)$ can be calculated out
explicitly on the above local coordinate charts. As we focus on one
coordinate chart, the subscript is suppressed. From \cite[p.
150]{MK}, \beq\label{krnsh data} \varphi(t) = \left(
\frac{\partial\ }{\partial z} \right)^{\!\!\!T} \left( \frac{\partial
\zeta}{\partial z} \right)^{\!\!\!-1} \bp \zeta,
\eeq where $\frac{\partial\ }{\partial z} =
\begin{pmatrix} \frac{\p\ }{\p z^1} \\ \vdots \\ \frac{\p\ }{\p z^n} \end{pmatrix}$,
$\bp \zeta = \begin{pmatrix} \bp \zeta^1 \\ \vdots \\
\bp \zeta^n \end{pmatrix}$,
$\frac{\partial \zeta}{\partial z}$ stands for the matrix $(
\frac{\p \zeta^\alpha}{\p z^j} )_{\begin{subarray}{l} 1 \leq \alpha \leq n \\
1 \leq j \leq n \\ \end{subarray}}$ and $\alpha,j$ are the row and
column indices. Here $\left( \frac{\partial\ }{\partial
z} \right)^{\!T}$ is the transpose of $\frac{\p\ }{\p z}$ and $\db$ denotes
the Cauchy-Riemann operator with respect to the holomorphic structure on $X_0$.

Since $\varphi(t)$ is locally expressed as $\varphi^i_{\bar{j}}
\dz^j \ts \frac{\p\ }{\p z^i} \in A^{0,1}(T^{1,0}_{X_0})$, it can be
considered as a matrix $(\varphi^i_{\bar{j}})_{\begin{subarray}{l} 1 \leq i \leq n \\
1 \leq j \leq n \\ \end{subarray}}$. By \eqref{krnsh data}, this
matrix can be explicitly written as: \beq\label{krnsh matrix}
\begin{aligned}
\varphi = (\varphi^i_{\bar{j}})_{\begin{subarray}{l} 1 \leq i \leq n \\
1 \leq j \leq n \\ \end{subarray}}
        =\varphi(t)\big( \frac{\p\ }{\p \bar{z}^j}, dz^i \big)
         = \lk \lk \frac{\p \zeta}{\p z} \rk^{\!\!\!-1} \lk \frac{\p \zeta}{\p \bar{z}} \rk
         \rk^i_{\bar{j}}.
\end{aligned} \eeq
A fundamental fact is that the Beltrami differential $\varphi(t)$
defined as above satisfies the integrability:
\begin{equation}\label{int-conditin}
\bp\varphi(t)=\frac{1}{2}[\varphi(t),\varphi(t)].
\end{equation}

One needs the following crucial calculation:
\blemma\label{inverse}
$$\begin{pmatrix} \frac{\p z}{\p \zeta} & \frac{\p z}{\p \bar{\zeta}}  \\[4pt]
\frac{\p \bar{z}}{\p \zeta} & \frac{\p \bar{z}}{\p \bar{\zeta}}  \\
\end{pmatrix} = \begin{pmatrix}
\lk \1 - \varphi \overline{\varphi} \rk^{\!-1} \lk \frac{\p \zeta}{\p z}
\rk^{\!\!-1} & - \varphi \lk \1- \overline{\varphi} \varphi \rk^{\!-1} \lk \overline{ \frac{\p \zeta}{\p z} } \rk ^{\!\!-1}  \\[4pt]
- \lk \1- \overline{\varphi} \varphi \rk^{\!-1} \overline{ \varphi }
\lk \frac{\p \zeta}{\p z} \rk^{\!\!-1} & \lk \overline{ \1 - \varphi
\overline{\varphi} } \rk ^{\!-1} \lk \overline{ \frac{\p \zeta}{\p z} } \rk^{\!\!-1}   \\
\end{pmatrix}.$$
Here $\varphi \overline{\varphi}$, $\overline{\varphi} \varphi$
stand for the two matrices $(
\varphi^i_{\bar{k}} \overline{\varphi^k_{\bar{j}}})_{\begin{subarray}{l} 1 \leq i \leq n \\
1 \leq j \leq n \\ \end{subarray}}$,
$(\overline{\varphi^i_{\bar{k}}} \varphi^k_{\bar{j}})_{\begin{subarray}{l} 1 \leq i \leq n \\
1 \leq j \leq n \\ \end{subarray}}$, respectively. \elemma In many
places, $\varphi \overline{\varphi}$ and $\overline{\varphi}
\varphi$ can also be seen as $\varphi^i_{\bar{k}}
\overline{\varphi^k_{\bar{j}}} dz^j \ts \frac{\p\ }{\p z^i} \in
A^{1,0}(T^{1,0}_{X_0})$ and $\overline{\varphi^i_{\bar{k}}}
\varphi^k_{\bar{j}} \dz^j \ts \frac{\p\ }{\p \bar{z}^i} \in
A^{0,1}(T^{0,1}_{X_0})$. Actually, $\varphi \overline{\varphi} =
\overline{\varphi} \lc \varphi, \overline{\varphi} \varphi = \varphi
\lc \overline{\varphi}$ and $\1$ is the identity matrix. \bproof
It is easy to see that $\begin{pmatrix} \frac{\p z}{\p \zeta} & \frac{\p z}{\p \bar{\zeta}}  \\[4pt]
\frac{\p \bar{z}}{\p \zeta} & \frac{\p \bar{z}}{\p \bar{\zeta}}  \\
\end{pmatrix}$ is the inverse matrix of $\begin{pmatrix}
\frac{\p \zeta}{\p z} & \frac{\p \zeta}{\p \bar{z}}  \\[4pt]
\frac{\p \bar{\zeta}}{\p z} & \frac{\p \bar{\zeta}}{\p \bar{z}}  \\
\end{pmatrix}$. Then it follows, \beq\label{trnsf}
\begin{pmatrix} \1 & 0 \\
- \lk \frac{\p \bar{\zeta}}{\p z} \rk \lk \frac{\p \zeta}{\p z} \rk^{\!\!-1} & \1 \\
\end{pmatrix}
\begin{pmatrix}
\frac{\p \zeta}{\p z} & \frac{\p \zeta}{\p \bar{z}}  \\[4pt]
\frac{\p \bar{\zeta}}{\p z} & \frac{\p \bar{\zeta}}{\p \bar{z}}  \\
\end{pmatrix} = \begin{pmatrix}
\frac{\p \zeta}{\p z} & \frac{\p \zeta}{\p \bar{z}}  \\
0 & \frac{\p \bar{\zeta}}{\p \bar{z}}
-\lk \frac{\p \bar{\zeta}}{\p z} \rk \lk \frac{\p \zeta}{\p z} \rk^{\!\!-1} \lk \frac{\p \zeta}{\p \bar{z}} \rk \\
\end{pmatrix}.
\eeq Take the inverse matrices of both sides of \eqref{trnsf},
yielding \beq\label{inverse1}
\begin{pmatrix}
\frac{\p \zeta}{\p z} & \frac{\p \zeta}{\p \bar{z}}  \\[4pt]
\frac{\p \bar{\zeta}}{\p z} & \frac{\p \bar{\zeta}}{\p \bar{z}}  \\
\end{pmatrix}^{\!\!\!-1}
= \begin{pmatrix}
\frac{\p \zeta}{\p z} & \frac{\p \zeta}{\p \bar{z}}  \\
0 & \frac{\p \bar{\zeta}}{\p \bar{z}}
-\lk \frac{\p \bar{\zeta}}{\p z} \rk \lk \frac{\p \zeta}{\p z} \rk^{\!\!-1} \lk \frac{\p \zeta}{\p \bar{z}} \rk \\
\end{pmatrix}^{\!\!\!-1}
\begin{pmatrix} \1 & 0 \\
- \lk \frac{\p \bar{\zeta}}{\p z} \rk \lk \frac{\p \zeta}{\p z} \rk^{\!\!-1} & \1 \\
\end{pmatrix}.
\eeq From Linear Algebra, we have the basic equality below
\beq\label{linear}
\begin{pmatrix} A & C \\ 0 & B \\ \end{pmatrix}^{\!\!\!-1}
=\begin{pmatrix} A^{-1} & -A^{-1}CB^{-1} \\ 0 & B^{-1} \\
\end{pmatrix},
\eeq where $A,B$ are invertible matrices. Combine with \eqref{krnsh
matrix} and \eqref{linear} and go back to \eqref{inverse1}:
\[\begin{aligned}
\begin{pmatrix}
\frac{\p \zeta}{\p z} & \frac{\p \zeta}{\p \bar{z}}  \\[4pt]
\frac{\p \bar{\zeta}}{\p z} & \frac{\p \bar{\zeta}}{\p \bar{z}}  \\
\end{pmatrix}^{\!\!\!-1}
&= \begin{pmatrix}
\frac{\p \zeta}{\p z} & \frac{\p \zeta}{\p \bar{z}}  \\[4pt]
0 & \lk \frac{\p \bar{\zeta}}{\p \bar{z}} \rk \lk \1 - \lk
\frac{\p\bar{\zeta}}{\p \bar{z}} \rk^{\!\!-1} \lk \frac{\p
\bar{\zeta}}{\p z} \rk
\lk \frac{\p \zeta}{\p z} \rk^{\!\!-1} \lk \frac{\p \zeta}{\p \bar{z}} \rk \rk\\
\end{pmatrix}^{\!\!\!-1}
\begin{pmatrix} \1 & 0 \\
- \lk \frac{\p \bar{\zeta}}{\p z} \rk \lk \frac{\p \zeta}{\p z} \rk^{\!\!-1} & \1 \\
\end{pmatrix} \\
&= \begin{pmatrix}
\frac{\p \zeta}{\p z} & \frac{\p \zeta}{\p \bar{z}}  \\
0 & \lk \frac{\p \bar{\zeta}}{\p \bar{z}} \rk \lk \1 - \overline{\varphi} \varphi \rk\\
\end{pmatrix}^{\!\!\!-1}
\begin{pmatrix} \1 & 0 \\
- \lk \frac{\p \bar{\zeta}}{\p z} \rk \lk \frac{\p \zeta}{\p z} \rk^{\!\!-1} & \1 \\
\end{pmatrix}  \\
&= \begin{pmatrix} \lk \frac{\p \zeta}{\p z} \rk^{\!\!-1}
& - \varphi \lk \1 - \overline{\varphi} \varphi \rk^{\!-1} \lk \frac{\p \bar{\zeta}}{\p \bar{z}} \rk^{\!\!-1} \\[5pt]
0 & \lk \1 - \overline{\varphi} \varphi \rk^{\!-1} \lk \frac{\p \bar{\zeta}}{\p \bar{z}} \rk^{\!\!\!-1} \\
\end{pmatrix}
\begin{pmatrix} \1 & 0 \\
- \lk \frac{\p \bar{\zeta}}{\p z} \rk \lk \frac{\p \zeta}{\p z} \rk^{\!\!-1} & \1 \\
\end{pmatrix}  \\
&= \begin{pmatrix} \lk \1 + \varphi \lk \1 - \overline{\varphi}
\varphi \rk^{\!-1} \overline{\varphi} \rk \lk \frac{\p \zeta}{\p
z} \rk^{\!\!-1}
& - \varphi \lk \1 - \overline{\varphi} \varphi \rk^{\!-1} \lk \frac{\p \bar{\zeta}}{\p \bar{z}} \rk^{\!\!-1} \\[4pt]
- \lk \1- \overline{\varphi} \varphi \rk^{\!-1} \overline{ \varphi }
\lk \frac{\p \zeta}{\p z} \rk^{\!-1}
& \lk \1 - \overline{\varphi} \varphi \rk^{\!-1} \lk \frac{\p \bar{\zeta}}{\p \bar{z}} \rk^{\!\!-1} \\
\end{pmatrix}\\
&= \begin{pmatrix} \lk \1 - \varphi \overline{\varphi} \rk^{\!-1} \lk
\frac{\p \zeta}{\p z}
\rk^{\!\!-1} & - \varphi \lk \1- \overline{\varphi} \varphi \rk^{\!-1} \lk \overline{ \frac{\p \zeta}{\p z} } \rk ^{\!\!-1}  \\[4pt]
- \lk \1- \overline{\varphi} \varphi \rk^{\!-1} \overline{ \varphi }
\lk \frac{\p \zeta}{\p z} \rk^{\!-1} & \lk \overline{ \1 - \varphi
\overline{\varphi} } \rk ^{\!-1} \lk \overline{ \frac{\p \zeta}{\p z} } \rk^{\!\!-1}   \\
\end{pmatrix}.
\end{aligned} \]
\eproof

We need a few more local formulae:
 \blemma\label{dwdz} $\begin{cases} d\zeta^{\alpha} &= \frac{\p
\zeta^{\alpha}}{\p z^i} \lk e^{i_{\varphi}}(dz^i) \rk ,\\
\frac{\p\ }{\p \zeta^{\alpha}} &= \lk \lk \1 - \varphi
\overline{\varphi} \rk^{\!-1} \lk \frac{\p \zeta}{\p z} \rk^{\!\!-1}
\rk^j_{\alpha} \frac{\p\ }{\p z^j} - \lk \lk \1- \overline{\varphi}
\varphi \rk^{\!-1} \overline{ \varphi } \lk \frac{\p \zeta}{\p z}
\rk^{\!-1} \rk^{\bar{j}}_{\alpha} \frac{\p\ }{\pz^j}.
\end{cases}$ \elemma
\bproof For the the first equality, \[\begin{aligned}
d\zeta^{\alpha} &= \frac{\p \zeta^{\alpha}}{\p z^i}dz^i + \frac{\p
\zeta^{\alpha}}{\pz^j}\dz^j\\
&= \frac{\p \zeta^{\alpha}}{\p z^i} \lk dz^i + \lk \Big(\frac{\p
\zeta}{\p
z}\Big)^{\!-1} \rk^{i}_{\beta} \frac{\p \zeta^{\beta}}{\pz^{j}} \dz^j \rk \\
&= \frac{\p \zeta^{\alpha}}{\p z^i} \lk dz^i + \varphi^i_{\bar{j}}
\dz^j \rk
= \frac{\p \zeta^{\alpha}}{\p z^i} \lk e^{i_{\varphi}}(dz^i) \rk. \\
\end{aligned}\]
Then the second one follows from Lemma \ref{inverse}:
\[\begin{aligned}
\frac{\p\ }{\p \zeta^{\alpha}} &= \frac{\p z^i}{\p \zeta^{\alpha}}
\frac{\p\ }{\p z^i} + \frac{\pz^j}{\p \zeta^{\alpha}} \frac{\p\ }{\pz^j} \\
&= \lk \lk \1 - \varphi \overline{\varphi} \rk^{\!-1} \lk \frac{\p
\zeta}{\p z} \rk^{\!\!\!-1} \rk^j_{\alpha} \frac{\p\ }{\p z^j} - \lk
\lk \1- \overline{\varphi} \varphi \rk^{\!-1} \overline{ \varphi }
\lk \frac{\p \zeta}{\p z} \rk^{\!\!\!-1} \rk^{\bar{j}}_{\alpha} \frac{\p\ }{\pz^j}. \\
\end{aligned}\] \eproof

\bcorollary\label{dwdzdz} $\frac{\p \zeta^{\alpha}}{\p z^i}
\frac{\p\ }{\p \zeta^{\alpha}} = \lk \lk \1 - \varphi
\overline{\varphi} \rk^{\!-1} \rk^j_i \frac{\p\ }{\p z^j} - \lk \lk
\1- \overline{\varphi} \varphi \rk^{\!-1} \overline{ \varphi }
\rk^{\bar{j}}_{i} \frac{\p\ }{\pz^j}$. \ecorollary

\bproof It is a direct corollary of the second equality in Lemma
\ref{dwdz}.\eproof

By the above preparation, we can reprove the following important
proposition in deformation theory of complex structures, which can be dated back to \cite{Fr} (see \cite[Section
1]{NN} and also \cite[pp. 151-152]{MK}).

\begin{proposition}[]\label{hol}
The holomorphic structure on $X_t$ is determined by $\varphi(t)$. More
specifically, a differentiable  function $f$ defined on any open subset
of $X_0$ is holomorphic with respect to the holomorphic structure of
$X_t$ if and only if
\begin{equation}\label{f-hol}
\lk \overline{\partial}-\sum_i\varphi^i(t)\partial_i \rk f(z)=0,
\end{equation}
where
$\varphi^i(t)=\sum_{j}\varphi(t)^i_{\overline{j}}d\overline{z}^j$,
or equivalently,
$$\lk \overline{\partial}-\varphi(t)\lc\partial\rk f(z)=0.$$
\end{proposition}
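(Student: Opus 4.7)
The strategy I would follow is to identify the space of $(1,0)$-forms of $X_t$ inside the $(z,\bar z)$-frame of $X_0$ and then read off \eqref{f-hol} by a type comparison, rather than attack $\p f/\p\bar\zeta^\alpha$ directly by the chain rule.

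I would begin from the observation that a differentiable function $f$ is holomorphic with respect to the complex structure on $X_t$ if and only if $df$ is of pure type $(1,0)$ with respect to that structure, equivalently $\p f/\p\bar\zeta^\alpha=0$ for every $\alpha$. Next I would invoke the first identity of Lemma \ref{dwdz}: $d\zeta^\alpha=\frac{\p\zeta^\alpha}{\p z^i}\,e^{i_\varphi}(dz^i)$. Since the matrix $(\p\zeta/\p z)$ is invertible for $|t|$ sufficiently small, the $(1,0)_t$-forms on $X_t$ are locally spanned, over the smooth functions, by
$$
e^{i_\varphi}(dz^i)=dz^i+\varphi^i_{\bar j}\,d\bar z^j,\qquad i=1,\dots,n.
$$
Thus $f$ is holomorphic on $X_t$ if and only if one can write $df=A_i\bigl(dz^i+\varphi^i_{\bar j}\,d\bar z^j\bigr)$ for some smooth functions $A_1,\dots,A_n$.

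In the final step I would compare $(1,0)$- and $(0,1)$-parts on $X_0$ in this decomposition. The $(1,0)$-part forces $A_i=\p_i f$, and the $(0,1)$-part then becomes
$$
\bp f=(\p_i f)\,\varphi^i_{\bar j}\,d\bar z^j=\varphi\lrcorner\p f,
$$
which is exactly \eqref{f-hol}. Conversely, if \eqref{f-hol} holds, setting $A_i:=\p_i f$ reconstructs a pure $(1,0)_t$-decomposition of $df$, so $f$ is holomorphic on $X_t$.

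Because the argument is essentially linear algebra in the cotangent bundle once Lemma \ref{dwdz} is available, I do not expect a serious obstacle. The two points needing care are the invertibility of $(\p\zeta/\p z)$ for $|t|$ small, which is what allows one to replace the frame $\{d\zeta^\alpha\}$ by $\{e^{i_\varphi}(dz^i)\}$, and the index bookkeeping in the formula $\varphi\lrcorner\p f=\varphi^i_{\bar j}\p_i f\,d\bar z^j$. Had I chosen instead the direct route through Lemma \ref{inverse} and the chain rule for $\p f/\p\bar\zeta^\alpha$, the delicate point would have been the identity $(\1-\b\varphi\varphi)^{-1}\b\varphi=\b\varphi(\1-\varphi\b\varphi)^{-1}$ together with careful tracking of barred vs.\ unbarred indices; the present approach bypasses this hazard entirely.
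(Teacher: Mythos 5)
Your proof is correct, but it takes a different route from the one in the paper. You characterize holomorphicity of $f$ on $X_t$ by the condition that $df$ lie in the span of the $(1,0)_t$-frame $e^{i_{\varphi}}(dz^i)=dz^i+\varphi^i_{\bar j}\,d\bar z^j$ (justified by the first identity of Lemma \ref{dwdz} and the invertibility of $\frac{\p\zeta}{\p z}$ for small $t$), and then read off \eqref{f-hol} by comparing $(1,0)$- and $(0,1)$-components with respect to $X_0$; this needs only the first identity of Lemma \ref{dwdz}, which follows directly from \eqref{krnsh matrix}, and entirely bypasses Lemma \ref{inverse}, Corollary \ref{dwdzdz}, and the $(\1-\b{\varphi}\varphi)^{-1}$ bookkeeping. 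The paper instead runs the chain rule on $df=\frac{\p f}{\p\zeta^\alpha}d\zeta^\alpha+\frac{\p f}{\p\bar\zeta^\beta}d\bar\zeta^\beta$ through Lemma \ref{inverse} and Corollary \ref{dwdzdz}; this is heavier, but its payoff is the explicit closed formula \eqref{dbt-f}, $\db_t f= e^{i_{\overline{\varphi}}}\big(\lk\1-\overline{\varphi}\varphi\rk^{-1}\lc(\db-\varphi\lc\p)f\big)$, which is precisely what motivates the extension map $e^{i_{\varphi}|i_{\overline{\varphi}}}$ used throughout the rest of the paper, whereas your argument yields only the equivalence and not the expression for $\db_t f$. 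Your frame-based argument is essentially the one the paper attributes to Chan--Suen \cite{cs} (in dual form), so it is a legitimate and in fact acknowledged alternative; the only points to keep explicit are the ones you already flag, namely that $\{e^{i_{\varphi}}(dz^i)\}$ spans the same space as $\{d\zeta^\alpha\}$ because $\frac{\p\zeta}{\p z}$ is invertible, and that the coefficients $A_i$ are uniquely determined by the $dz^i$-components.
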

\begin{proof}
By use of Lemma \ref{dwdz} and Corollary \ref{dwdzdz}, we get
\begin{align*}\label{dbtf}
df
 &=\frac{\p f}{\p \zeta^{\alpha}} d\zeta^\alpha + \frac{\p f}{\partial\bar{\zeta}^{\beta}} d\overline{\zeta}^{\beta}\\
 &= \frac{\p f}{\p \zeta^{\alpha}} \frac{\partial \zeta^\alpha}{\p z^i}\lk e^{i_{\varphi}}(dz^i) \rk
  + \frac{\p f}{\partial\bar{\zeta}^{\beta}} \overline{\frac{\partial \zeta^\beta}{\p z^i} \lk e^{i_{\varphi}}(dz^i) \rk}\\
 &= \lk \lk \lk \1 - \varphi \overline{\varphi} \rk^{\!-1}
 \rk^j_i \frac{\p f}{\p z^j} - \lk \lk \1- \overline{\varphi} \varphi
 \rk^{\!-1} \overline{ \varphi } \rk^{\bar{j}}_{i} \frac{\p f}{\pz^j} \rk \lk e^{i_{\varphi}}(dz^i) \rk\\
 &\quad + \lk \lk \lk \1 - \overline{\varphi} \varphi \rk^{\!-1}
 \rk^{\bar{j}}_{\bar{i}} \frac{\p f}{\pz^j} - \lk  \varphi \lk \1- \overline{\varphi}
 \varphi \rk^{\!-1} \rk^j_{\bar{i}} \frac{\p f}{\p z^j} \rk \lk
 \overline{e^{i_{\varphi}}(dz^i)} \rk \\
 &=e^{i_{\varphi}} \lk
 \lk \lk \1 - \varphi \overline{\varphi} \rk^{\!-1} \rk^k_i
 \lk \frac{\p f}{\p z^k} - \overline{\varphi}^j_{\bar{k}} \frac{\p f}{\pz^j} \rk dz^i
 \rk \\
 &\quad +e^{i_{\overline{\varphi}}}\lk \lk \lk \1 - \overline{\varphi} \varphi \rk^{\!-1}
 \rk^{\bar{k}}_{\bar{i}}
 \lk \frac{\p f}{\pz^k} - \varphi^j_{\bar{k}} \frac{\p f}{\p z^j} \rk \dz^i
 \rk.
\end{align*}
Now, let us calculate the second term in the bracket:
\begin{align*}
   &e^{i_{\overline{\varphi}}}\lk \lk \lk \1 - \overline{\varphi} \varphi \rk^{\!-1}
   \rk^{\bar{k}}_{\bar{i}}
 \lk \frac{\p f}{\pz^k} - \varphi^j_{\bar{k}} \frac{\p f}{\p z^j} \rk \dz^i\rk\\
 =&e^{i_{\overline{\varphi}}} \bigg( \lk \1-\overline{\varphi}\varphi\rk ^{\!-1}\lc \db f
 -\lk \1-\overline{\varphi}\varphi\rk ^{\!-1}\lc\varphi\lc \p f \bigg)\\
 =&e^{i_{\overline{\varphi}}}\bigg( \lk \1-\overline{\varphi}\varphi\rk ^{\!-1}\lc(\db-\varphi\lc
 \p)f\bigg).
\end{align*}
Thus,
\begin{equation}\label{dbt-f}
\db_t f= e^{i_{\overline{\varphi}}} \lk \lk \lk \1 -
\overline{\varphi} \varphi \rk^{\!-1} \rk^{\bar{k}}_{\bar{i}}
 \lk \frac{\p f}{\pz^k} - \varphi^j_{\bar{k}} \frac{\p f}{\p z^j} \rk \dz^i \rk
 =e^{i_{\overline{\varphi}}} \bigg( \lk \1-\overline{\varphi}\varphi\rk ^{-1}\lc(\db-\varphi\lc \p)f \bigg)
\end{equation}
since $df$ can be decomposed into $\p_t f + \db_t f$ with respect to
the holomorphic structure on $X_t$. Hence, the desired result follows from the invertibility of
$e^{i_{\overline{\varphi}}}$ and $\lk \1 - \overline{\varphi}
\varphi \rk^{\!\!-1}$.
\end{proof}
See also another proof in \cite[Proposition 3.1]{cs} and our proof
gives an explicit expression of $\db_t$ on the differentiable functions
as in \eqref{dbt-f}. The formula used in the classical proof of
Proposition \ref{hol} is
$$(\db-\varphi\lc \p)f=\lk \1 - \overline{\varphi}\varphi\rk^{\bar{i}}_{\bar{j}}\overline{\p_i \zeta^\alpha}\dz^{\bar{j}}\frac{\p f}{\partial\bar{\zeta}^\alpha},$$
which is just an equivalent version of (\ref{dbt-f})
\[(\db-\varphi\lc \p)f=\lk \1-\overline{\varphi}\varphi\rk\lc e^{-i_{\overline{\varphi}}}(\db_t f)\]
by use of the first formula of Lemma \ref{dwdz}.

By the Leibniz rule, one has
\begin{equation}\label{nn-zz}
\frac{\p z^k}{\p \bar\zeta^\alpha}+\varphi^k_{\bar i}\frac{\pz^i}{\p
\bar\zeta^\alpha}=0,
\end{equation}
which is equivalent to the definition (\ref{krnsh matrix}). In fact,
if (\ref{krnsh matrix}) is assumed, then the Leibniz rule yields
that
\begin{align*}
\frac{\p z^k}{\p\bar\zeta^\alpha}+\varphi^k_{\bar i}\frac{
\pz^i}{\p\bar\zeta^\alpha}
 &=\frac{\p z^k}{\p\bar\zeta^\alpha}+\lk\lk\frac{\p \zeta}{\p z}\rk^{-1}\rk^{k}_{\beta}\frac{\p\zeta^\beta}{\pz^i}\frac{\pz^i}{\p \bar\zeta^\alpha}\\
 &=\frac{\p z^k}{\p\bar\zeta^\alpha}-\lk\lk\frac{\p \zeta}{\p z}\rk^{-1}\rk^{k}_{\beta}\frac{\p\zeta^\beta}{\p z^i}\frac{\p z^i}{\p \bar\zeta^\alpha}\\
 &=0;
\end{align*}
while the converse is similar. Thus, when $f$ satisfies
(\ref{f-hol}), one has
\begin{equation}\label{nn-hol}
\begin{aligned}
\frac{\p f}{\p \bar\zeta^\alpha}
 &=\frac{\p f}{\p z^k}\frac{\p z^k}{\p \bar\zeta^\alpha}+\frac{\p f}{\pz^k}\frac{\pz^k}{\p \bar\zeta^\alpha}\\
 &=\frac{\p f}{\p z^k}\frac{\p z^k}{\p \bar\zeta^\alpha}+\frac{\p f}{\p z^i}\varphi^i_{\bar k}\frac{\pz^k}{\p \bar\zeta^\alpha}\\
 &=\frac{\p f}{\p z^i}\left(\frac{\p z^i}{\p \bar\zeta^\alpha}+\varphi^i_{\bar k}\frac{\pz^k}{\p \bar\zeta^\alpha}\right)\\
 &=0.
\end{aligned}
\end{equation}
Conversely, $\frac{\p f}{\p \bar\zeta^\alpha}=0$ implies that $f$
satisfies (\ref{f-hol}). Actually, we can substitute (\ref{nn-zz}) into
the first equality of (\ref{nn-hol}) to get
$$\frac{\p f}{\p \bar\zeta^\alpha}=\frac{\pz^k}{\p \bar\zeta^\alpha}\lk\frac{\p f}{\pz^k} - \varphi^j_{\bar{k}} \frac{\p f}{\p z^j}\rk.$$
By Lemma \ref{inverse}, one knows that $\frac{\pz^k}{\p
\bar\zeta^\alpha}$ is an invertible matrix as $t$ is small. Hence,
this is the third proof of Proposition \ref{hol}, which is implicit
in Newlander-Nirenberg's proof of their integrability theorem
\cite{NN}.

Let us recall the Newlander-Nirenberg integrability theorem.
Let  $\varphi$ be a holomorphic tangent bundle-valued (0,1)-form
defined on a domain $U$ of $\mathbb{C}^n$ and
$L_i=\db_i-\varphi^j_{\bar{i}}\p_j$. Assume that
$L_1,\cdots,L_n,\bar{L}_1,\cdots,\bar{L}_n$ are linearly
independent, and that they satisfy the integrability condition
(\ref{int-conditin}). Then the system of partial differential
equations
\begin{equation}\label{sys-pde}
L_if=0, i=1,\cdots,n,
\end{equation}
has $n$ linearly independent smooth solutions
$f=\zeta^\alpha=\zeta^\alpha(z),\alpha=1,\cdots,n,$ in a small
neighbourhood of any point of $U$. Here the solutions
$\zeta^1,\cdots,\zeta^n$ are said to be linearly independent if
$$\det \frac{\p(\zeta^1,\cdots,\zeta^n,\overline{\zeta^1},\cdots,\overline{\zeta^n})}{\p(z^1,\cdots,z^n,\overline{z^1},\cdots,\overline{z^n})}\neq0,$$
which obviously implies
$$\det (\1 - \overline{\varphi} \varphi)
\left|\det\frac{\p(\zeta^1,\cdots,\zeta^n)}{\p(z^1,\cdots,z^n)}\right|^2\neq
0$$ since the resolution of the system \eqref{sys-pde} of partial
differential equations yields
\[ \begin{aligned} \begin{pmatrix} \frac{\p \zeta}{\p z}
& \frac{\p \zeta}{\p z}\varphi\\[3pt]
\overline{\left(\frac{\p \zeta}{\p z}\varphi\right)}
& \frac{\p \bar{\zeta}}{\p \bar{z}}\\
\end{pmatrix}\begin{pmatrix} \1
& -\varphi\\
0
& \1\\
\end{pmatrix}
=\begin{pmatrix} \frac{\p \zeta}{\p z}
& 0\\[3pt]
\frac{\p\bar\zeta}{\p z}
& \frac{\p \bar{\zeta}}{\p \bar{z}}\lk \1 - \overline{\varphi} \varphi \rk\\
\end{pmatrix}.
\end{aligned} \]
This theorem, together with Proposition \ref{hol}, is actually the
starting point of Kodaira-Nirenberg-Spencer's existence theorem for
deformations and a quite clear description can be found in
\cite[pp. 268-269]{K2}. We also find that the term $\1 -
\overline{\varphi} \varphi$ in Lemma \ref{inverse} is natural.

Motivated by the new proof of Proposition \ref{hol}, we introduce a
map $$e^{i_{\varphi(t)}|i_{\overline{\varphi(t)}}}:
 A^{p,q}(X_0)\> A^{p,q}(X_t),$$ which plays an important role in
 this paper.
\begin{definition}\label{map}\rm
For $\sigma \in A^{p,q}(X_0)$, we define
$$\label{lbro} e^{i_{\varphi(t)}|i_{\overline{\varphi(t)}}}(\sigma)=
\sigma_{i_1\cdots i_p \bar{j}_1 \cdots \bar{j}_q}(z)
\Big(e^{i_{\varphi(t)}}\left(dz^{i_1}\wedge\cdots\wedge
dz^{i_p}\right)\Big) \wedge
\Big(e^{i_{\overline{\varphi(t)}}}\left(d\overline{z}^{j_1}\wedge\cdots\wedge
d\overline{z}^{j_q}\right)\Big),$$
where $\sigma$ is locally written as
$$\sigma=\sigma_{i_1\cdots i_p \bar{j}_1\cdots
\bar{j}_q}(z)dz^{i_1}\wedge\cdots\wedge dz^{i_p}\wedge
d\overline{z}^{j_1}\wedge\cdots\wedge d\overline{z}^{j_q}$$ and the
operators $e^{i_{\varphi(t)}}$, $e^{i_{\overline{\varphi(t)}}}$
follow the convention:
\begin{equation}\label{e-convention}
e^{\spadesuit}=\sum_{k=0}^\infty \frac{1}{k!} \spadesuit^{k},
\end{equation}
where $\spadesuit^{k}$ denotes $k$-time action of the operator
$\spadesuit$. Since the dimension of $X$ is finite, the summation in
the above formulation is always finite.
\end{definition}

Then we have:
\blemma\label{identification}
The extension map
$e^{i_{\varphi(t)}|i_{\overline{\varphi(t)}}}:
 A^{p,q}(X_0)\> A^{p,q}(X_t)$
is a linear isomorphism as $t$ is arbitrarily small. \elemma \bproof
Notice that $$(dz^1+\varphi(t)\lc dz^1,\cdots, dz^n+\varphi(t)\lc
dz^n)\quad \text{and}\quad (d\overline{z}^1+\overline{\varphi(t)}\lc
d\overline{z}^1,\cdots, d\overline{z}^n+\overline{\varphi(t)}\lc
d\overline{z}^n)$$ are two local bases of $A^{1,0}(X_t)$ and
$A^{0,1}(X_t)$, respectively, thanks to the first identity of Lemma
\ref{dwdz} and the matrix $\lk\frac{\p \zeta^{\alpha}}{\p z^i}\rk$
therein is invertible as $t$ is small. Then the map
$e^{i_{\varphi(t)}|i_{\overline{\varphi(t)}}}$ is obviously
well-defined since $\varphi(t)$ is a well-defined, global
$(1,0)$-vector valued $(0, 1)$-form on $X_0$ as on \cite[pp. 150-151]{MK}.

For the desired isomorphism, we define the inverse map
$$e^{-i_{\varphi(t)}|-i_{\overline{\varphi(t)}}}: A^{p,q}(X_t)\>
A^{p,q}(X_0)$$ of $e^{i_{\varphi(t)}|i_{\overline{\varphi(t)}}}$ as:
\begin{align*}
   & e^{-i_{\varphi(t)}|-i_{\overline{\varphi(t)}}}(\eta)\\
 =&\eta_{i_1\cdots i_p \bar{j}_1 \cdots \bar{j}_q}(\zeta)
 \bigg(e^{-i_{\varphi(t)}}\Big( \big(dz^{i_1}+\varphi(t)\lc
dz^{i_1}\big) \wedge \cdots \wedge
\big(dz^{i_p}+\varphi(t) \lc dz^{i_p}\big) \Big) \wedge\\
&\qquad\qquad\qquad
e^{-i_{\overline{\varphi(t)}}} \Big( \big(d\overline{z}^{j_1}+\overline{\varphi(t)}\lc
d\overline{z}^{j_1}\big) \wedge \cdots \wedge
\big( d\overline{z}^{j_q}+\overline{\varphi(t)}\lc
d\overline{z}^{j_q} \big)\Big)\bigg),
\end{align*}
where $\eta\in A^{p,q}(X_t)$ is locally written as
$$\eta=\eta_{i_1\cdots i_p \bar{j}_1 \cdots \bar{j}_q}(\zeta)
(dz^{i_1}+\varphi(t)\lc dz^{i_1}) \wedge \cdots\wedge
(dz^{i_p}+\varphi(t)\lc dz^{i_p})\wedge
(d\overline{z}^{j_1}+\overline{\varphi(t)}\lc
d\overline{z}^{j_1})\wedge\cdots\wedge
(d\overline{z}^{j_q}+\overline{\varphi(t)}\lc
d\overline{z}^{j_q}),$$
 and the operators $e^{-i_{\varphi(t)}}$,
$e^{-i_{\overline{\varphi(t)}}}$ also follow the convention
(\ref{e-convention}). \eproof

The dual version of the fact about the basis in the proof is used by
K. Chan-Y. Suen \cite{cs} to prove Proposition \ref{hol} and also by
L. Huang in the second paragraph of \cite[Subsection (1.2)]{h}.
Notice that the extension map
$e^{i_{\varphi(t)}|i_{\overline{\varphi(t)}}}$ admits more complete
deformation significance than $e^{i_{\varphi(t)}}$ which extends
only the holomorphic part of a complex differential form.

\begin{lemma}\label{realopt}
The map $e^{i_{\varphi(t)}|i_{\overline{\varphi(t)}}}:
A^{p,q}(X_0)\> A^{p,q}(X_t)$ is a real operator.
\end{lemma}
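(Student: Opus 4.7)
The plan is to interpret \emph{real operator} as meaning that $e^{i_\varphi|i_{\overline{\varphi}}}$ commutes with complex conjugation, i.e.\ for any $\sigma\in A^{p,q}(X_0)$,
$$\overline{e^{i_\varphi|i_{\overline{\varphi}}}(\sigma)}=e^{i_\varphi|i_{\overline{\varphi}}}(\overline{\sigma}),$$
where the right-hand side makes sense since $\overline{\sigma}\in A^{q,p}(X_0)$ and the definition of $e^{i_\varphi|i_{\overline{\varphi}}}$ is applied on each bidegree separately. This is exactly the statement that the map sends real forms (in the sum $A^{p,q}\oplus A^{q,p}$) to real forms.

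First I would establish the elementary identity $\overline{i_\varphi\alpha}=i_{\overline{\varphi}}\overline{\alpha}$ for any $\alpha\in A^{*,*}(X_0)$. In local coordinates $\varphi=\varphi^i_{\bar j}d\bar z^j\otimes\p_i$ and $\overline{\varphi}=\overline{\varphi^i_{\bar j}}\,dz^j\otimes\p_{\bar i}$, and since both complex conjugation and contraction act coefficient-wise on wedge products of $dz^i, d\bar z^j$, the identity is immediate. Iterating and summing yields
$$\overline{e^{i_\varphi}(dz^I)}=e^{i_{\overline{\varphi}}}(d\bar z^I),\qquad \overline{e^{i_{\overline{\varphi}}}(d\bar z^J)}=e^{i_\varphi}(dz^J),$$
for any multi-indices $I$ of size $p$ and $J$ of size $q$.

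Writing $\sigma=\sigma_{I\bar J}(z)\,dz^I\w d\bar z^J$ locally, distributing the conjugation then gives
$$\overline{e^{i_\varphi|i_{\overline{\varphi}}}(\sigma)}=\overline{\sigma_{I\bar J}(z)}\;e^{i_{\overline{\varphi}}}(d\bar z^I)\w e^{i_\varphi}(dz^J).$$
On the other hand, the standard-form expression of $\overline{\sigma}\in A^{q,p}(X_0)$ is $\overline{\sigma}=(-1)^{pq}\overline{\sigma_{I\bar J}(z)}\,dz^J\w d\bar z^I$, so applying Definition \ref{map} yields
$$e^{i_\varphi|i_{\overline{\varphi}}}(\overline{\sigma})=(-1)^{pq}\overline{\sigma_{I\bar J}(z)}\;e^{i_\varphi}(dz^J)\w e^{i_{\overline{\varphi}}}(d\bar z^I).$$
Since $e^{i_{\overline{\varphi}}}(d\bar z^I)$ has total degree $p$ and $e^{i_\varphi}(dz^J)$ has total degree $q$, graded commutativity of the wedge product contributes a sign $(-1)^{pq}$ that exactly cancels the one arising from rewriting $\overline{\sigma}$ in standard form, and the two expressions coincide.

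The calculation is short and presents no real obstacle; the only point requiring care is keeping track of the $(-1)^{pq}$ sign in the identification $A^{p,q}\cong A^{q,p}$ via conjugation and verifying it is cancelled by the graded commutativity of the wedge of two inhomogeneous forms of total degrees $p$ and $q$. Everything else reduces to the conjugation-contraction identity $\overline{i_\varphi\cdot}=i_{\overline{\varphi}}\overline{(\cdot)}$ and its $k$-fold iterate.
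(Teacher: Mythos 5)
Your proposal is correct and follows essentially the same route as the paper: conjugate the local expression, use $\overline{i_\varphi(\cdot)}=i_{\overline\varphi}\overline{(\cdot)}$ to get $\overline{e^{i_\varphi}(dz^I)}=e^{i_{\overline\varphi}}(d\bar z^I)$, and cancel the $(-1)^{pq}$ from graded commutativity against the sign from writing $\overline\sigma$ in standard form. The paper's proof is exactly this computation, so nothing further is needed.
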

\begin{proof}
It suffices to prove, for any $\sigma\in A^{p,q}(X_0)$,
$$\overline{e^{i_{\varphi(t)}|i_{\overline{\varphi(t)}}}(\sigma)}=
e^{i_{\varphi(t)}|i_{\overline{\varphi(t)}}}(\overline{\sigma}).$$
In fact, let
$$\sigma=\sum_{|I|=p,|J|=q}\sigma_{I\bar{J}}(z)dz^I\w \dz^J$$
by multi-index notation and then
\begin{align*}
\overline{e^{i_{\varphi(t)}|i_{\overline{\varphi(t)}}}(\sigma)}
 &=\overline{\sigma_{I\bar{J}}(z)e^{i_{\varphi(t)}}(dz^I)\w e^{i_{\overline{\varphi(t)}}}(\dz^J)}\\
 &=\overline{\sigma_{I\bar{J}}(z)}e^{i_{\overline{\varphi(t)}}}(\dz^I)\w e^{i_{\varphi(t)}}(dz^J)\\
 &=\overline{\sigma_{I\bar{J}}(z)}(-1)^{|I|\cdot|J|}e^{i_{\varphi(t)}}(dz^J)\w
 e^{i_{\overline{\varphi(t)}}}(\dz^I)\\
 &=e^{i_{\varphi(t)}|i_{\overline{\varphi(t)}}}(-1)^{|I|\cdot|J|}\overline{\sigma_{I\bar{J}}(z)} dz^J\w \dz^I\\
 &=e^{i_{\varphi(t)}|i_{\overline{\varphi(t)}}}(\overline{\sigma}).
\end{align*}
\end{proof}

\subsection{Obstruction equation} \label{obs-eqn}
This section is to obtain obstruction equation for $\db$-extension, i.e., obstruction equation for extending a $\db$-closed
$(p,q)$-form on $X_0$ to the one on $X_t$.

\blemma\label{dvarphidz} \begin{align*} d \lk e^{i_{\varphi}} \lc
dz^i \rk &= \lk \lk \1- \overline{\varphi} \varphi \rk^{\!-1}
\overline{ \varphi } \rk^{\bar l}_{k} \frac{\p \varphi^i_{\bar
l}}{\p z^j} \lk e^{i_{\varphi}}(dz^k) \rk \wedge\lk
e^{i_{\varphi}}(dz^j) \rk  \\
& \quad - \lk \lk \1- \overline{\varphi} \varphi \rk^{\!-1}
\rk^{\bar l}_{\bar k} \frac{\p \varphi^i_{\bar l}}{\p z^j} \lk
\overline{e^{i_{\varphi}}(dz^k)} \rk \wedge\lk e^{i_{\varphi}}(dz^j)
\rk. \end{align*} \elemma

\bproof Here we use Proposition \ref{main1}. By \eqref{ext-old}, one
has
$$ \begin{aligned} d \lk
e^{i_{\varphi}}(dz^i) \rk
&= (d\circ e^{i_{\varphi}}-e^{i_{\varphi}}\circ d)(dz^i)\\
&=e^{i_{\varphi}}(\p\circ i_{\varphi}-i_{\varphi}\circ \p)(dz^i)\\
&=\frac{\p \varphi^i_{\bar l}}{\p z^j} \lk
e^{i_{\varphi}}(dz^j)\rk\wedge \dz^l.
\end{aligned} $$
Moreover, we have
\begin{equation}
\begin{aligned}\label{dbtf1}
\dz^l
 &=\frac{\p \bar{z}^l}{\p \zeta^{\alpha}} d\zeta^\alpha + \frac{\p \bar{z}^l}{\partial\bar{\zeta}^{\beta}} d\overline{\zeta}^{\beta}\\
 &= \frac{\p \bar{z}^l}{\p \zeta^{\alpha}} \frac{\partial \zeta^\alpha}{\p z^i}\lk e^{i_{\varphi}}(dz^i) \rk
  + \frac{\p \bar{z}^l}{\partial\bar{\zeta}^{\beta}} \overline{\frac{\partial \zeta^\beta}{\p z^i} \lk e^{i_{\varphi}}(dz^i) \rk}\\
 &=  - \lk \lk \1- \overline{\varphi} \varphi
 \rk^{\!-1} \overline{ \varphi } \rk^{\bar{l}}_{k}\lk e^{i_{\varphi}}(dz^k) \rk + \lk \lk \1- \overline{\varphi} \varphi \rk^{\!-1}
\rk^{\bar l}_{\bar k} \lk \overline{e^{i_{\varphi}}(dz^k)} \rk.
\end{aligned}
\end{equation}

\eproof

For a general $\sigma\in A^{p,q}(X_0)$, Proposition \ref{main1} and the
integrability condition \eqref{int-conditin} give
\begin{align}\label{2.5.1}
  \begin{split}
    d(e^{i_{\varphi}|i_{\b{\varphi}}}(\sigma))&=d\circ e^{i_{\varphi}}\circ e^{-i_{\varphi}}\circ e^{i_{\varphi}|i_{\b{\varphi}}}(\sigma)\\
    &=e^{i_{\varphi}}\circ\left([\p,i_{\varphi}]+\b{\p}+\p\right)\circ e^{-i_{\varphi}}\circ e^{i_{\varphi}|i_{\b{\varphi}}}(\sigma)\\
    &=e^{i_{\varphi}|i_{\b{\varphi}}}\circ\left(e^{-i_{\varphi}|-i_{\b{\varphi}}}\circ e^{i_{\varphi}}\circ\left([\p,i_{\varphi}]+\b{\p}+\p\right)
    \circ e^{-i_{\varphi}}\circ
    e^{i_{\varphi}|i_{\b{\varphi}}}(\sigma)\right).
  \end{split}
\end{align}
Here
$$e^{-i_{\varphi(t)}|-i_{\overline{\varphi(t)}}}: A^{p,q}(X_t)\>
A^{p,q}(X_0)$$
 is the
 inverse map of
$e^{i_{\varphi(t)}|\iota_{\overline{\varphi(t)}}}$ as defined in the proof of Lemma \ref{identification}.
We introduce one more new notation $\Finv$ to denote the
\emph{simultaneous contraction} on each component of a complex
differential form as in \cite[Subsection 2.1]{RwZ}. For example,
$(\1-\b{\varphi}\varphi+\b{\varphi})\Finv\sigma$ means that the
operator $(\1-\b{\varphi}\varphi+\b{\varphi})$ acts on $\sigma$
simultaneously as:
\begin{align}\label{2.8}
\begin{split}
&(\1-\b{\varphi}\varphi+\b{\varphi})\Finv(f_{i_1\cdots
i_p\o{j_1}\cdots\o{j_q}}dz^{i_1}\wedge\cdots \wedge dz^{i_p}\wedge
d\b{z}^{j_1}\wedge\cdots \wedge d\b{z}^{j_q})
\\
=&\sigma_{i_1\cdots
i_p\o{j_1}\cdots\o{j_q}}(\1-\b{\varphi}\varphi+\b{\varphi})\l
dz^{i_1} \wedge\cdots\wedge(\1-\b{\varphi}\varphi +\b{\varphi})\l
dz^{i_p}
\\&\qquad\quad\quad\wedge(\1-\b{\varphi}\varphi+\b{\varphi})\l
d\b{z}^{j_1}\wedge\cdots\wedge(\1-\b{\varphi}\varphi+\b{\varphi})\l
d\b{z}^{j_q},
\end{split}
\end{align}
if $\sigma$ is locally expressed by:
$$\sigma=\sigma_{i_1\cdots i_p\o{j_1}\cdots\o{j_q}}dz^{i_1}\wedge \cdots\wedge dz^{i_p}\wedge d\b{z}^{j_1}\wedge \cdots\wedge d\b{z}^{j_q}.$$
This new simultaneous contraction is well-defined since $\varphi(t)$
is a global $(1,0)$-vector valued $(0, 1)$-form on $X_0$ (on
\cite[pp. $150-151$]{MK}) as reasoned in the proof of Lemma
\ref{identification}. Using this notation, one can rewrite the extension map
$e^{i_{\varphi}|i_{\b{\varphi}}}$ in Definition \ref{map}:
$$e^{i_{\varphi}|i_{\b{\varphi}}}=(\1+\varphi+\b{\varphi})\Finv.$$
Then one has:
\begin{lemma}[{\cite[Lemmata $2.2$+$2.3$]{RwZ}}]
For any $\sigma\in A^{p,q}(X_0)$,
\begin{align}\label{2.3}
  e^{-i_{\varphi}}\circ e^{i_{\varphi}|i_{\b{\varphi}}}(\sigma)=(\1-\b{\varphi}\varphi+\b{\varphi})\Finv\sigma
\end{align}
and
 \begin{align}\label{2.4.1}
   e^{-i_{\varphi}|-i_{\b{\varphi}}}\circ e^{i_{\varphi}}(\sigma)=\left((\1-\b{\varphi}\varphi)^{-1}-(\1-\b{\varphi}\varphi)^{-1}\b{\varphi}\right)\Finv\sigma,
 \end{align}
 where $\left((\1-\b{\varphi}\varphi)^{-1}-(\1-\b{\varphi}\varphi)^{-1}\b{\varphi}\right)$ acts on $\sigma$ just as
 (\ref{2.8}).
\end{lemma}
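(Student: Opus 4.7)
The plan is to reduce both identities to algebraic checks on the generators $dz^{i}, d\bar z^{j}$ by exploiting an underlying algebra-homomorphism structure on $A^{*,*}(X_{0})$. First I would verify the preparatory observation that $i_{\varphi}$ and $i_{\bar\varphi}$ are \emph{ordinary} (un-signed) derivations of the exterior algebra: for $\varphi = d\bar z^{k}\otimes\p_{i}$, the Koszul sign produced by the odd derivation $i_{\p_{i}}$ cancels the sign produced when the $(0,1)$-factor $d\bar z^{k}$ is commuted past an intermediate form, so $i_{\varphi}(\alpha\w\beta)=i_{\varphi}(\alpha)\w\beta+\alpha\w i_{\varphi}(\beta)$ (and similarly for $i_{\bar\varphi}$). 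Consequently $e^{\pm i_{\varphi}}$ and $e^{\pm i_{\bar\varphi}}$ are unital algebra automorphisms. Combining this with Definition \ref{map}, the extension map $e^{i_{\varphi}|i_{\bar\varphi}}$ is exactly the algebra homomorphism sending $dz^{i}\mapsto \alpha^{i}:=dz^{i}+\varphi\lc dz^{i}$ and $d\bar z^{j}\mapsto \beta^{j}:=d\bar z^{j}+\bar\varphi\lc d\bar z^{j}$, and its inverse $e^{-i_{\varphi}|-i_{\bar\varphi}}$ is the algebra homomorphism $\alpha^{i}\mapsto dz^{i}$, $\beta^{j}\mapsto d\bar z^{j}$, as is evident from the construction in Lemma \ref{identification}. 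Finally, by the very form of (\ref{2.8}), each operator $T\Finv$ is an algebra homomorphism, specified by the single-tensor actions $T\lc dz^{i}$ and $T\lc d\bar z^{j}$.

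Since both sides of (\ref{2.3}) and (\ref{2.4.1}) are compositions of algebra homomorphisms, the plan is to verify each identity only on the two generator types. For (\ref{2.3}) applied to $dz^{i}$, $e^{-i_{\varphi}}(\alpha^{i})=dz^{i}$ by direct cancellation, matching $(\1-\bar\varphi\varphi+\bar\varphi)\lc dz^{i}=dz^{i}$ because both $\bar\varphi\varphi$ and $\bar\varphi$ annihilate $(1,0)$-forms. On $d\bar z^{j}$, using $i_{\varphi}(d\bar z^{j})=0$ and the identity $i_{\varphi}(\bar\varphi\lc d\bar z^{j})=i_{\varphi}(\overline{\varphi^{j}_{\bar k}}dz^{k})=\overline{\varphi^{j}_{\bar k}}\varphi^{k}_{\bar l}d\bar z^{l}=(\bar\varphi\varphi)\lc d\bar z^{j}$, a two-line expansion of $e^{-i_{\varphi}}(\beta^{j})$ yields $d\bar z^{j}+\bar\varphi\lc d\bar z^{j}-\bar\varphi\varphi\lc d\bar z^{j}=(\1-\bar\varphi\varphi+\bar\varphi)\lc d\bar z^{j}$, which is the right-hand side evaluated on $d\bar z^{j}$.

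For (\ref{2.4.1}), the only novelty is that $e^{i_{\varphi}}(d\bar z^{j})=d\bar z^{j}$ is not in the $X_{t}$-basis, so one must first invert the linear change of basis before applying $e^{-i_{\varphi}|-i_{\bar\varphi}}$. Solving the $2n\times 2n$ block system $\alpha^{i}=dz^{i}+\varphi^{i}_{\bar k}d\bar z^{k}$, $\beta^{j}=d\bar z^{j}+\overline{\varphi^{j}_{\bar k}}dz^{k}$ (a short computation that parallels Lemma \ref{inverse}) produces
\[
d\bar z^{j}=\bigl((\1-\bar\varphi\varphi)^{-1}\bigr)^{\bar j}_{\bar k}\bigl(\beta^{k}-\overline{\varphi^{k}_{\bar l}}\,\alpha^{l}\bigr).
\]
Substituting $\beta^{k}\to d\bar z^{k}$ and $\alpha^{l}\to dz^{l}$ via $e^{-i_{\varphi}|-i_{\bar\varphi}}$ gives $((\1-\bar\varphi\varphi)^{-1})^{\bar j}_{\bar k}(d\bar z^{k}-\overline{\varphi^{k}_{\bar l}}dz^{l})$, which is exactly $((\1-\bar\varphi\varphi)^{-1}-(\1-\bar\varphi\varphi)^{-1}\bar\varphi)\lc d\bar z^{j}$. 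On $dz^{i}$, both sides trivially return $dz^{i}$ since the matrix operators $\bar\varphi\varphi$ and $\bar\varphi$ have no $(1,0)$-contraction.

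The main subtlety I expect to flag is the \emph{matrix-versus-composition} interpretation of $(\1-\bar\varphi\varphi)^{-1}\bar\varphi$ in (\ref{2.4.1}): it must be read as the matrix product of the two tensors — namely a $(0,1)$-vector-valued $(1,0)$-form with components $((\1-\bar\varphi\varphi)^{-1})^{\bar j}_{\bar k}\overline{\varphi^{k}_{\bar l}}$ — contracted once against $d\bar z^{j}$, rather than as a composition of two contraction operators acting sequentially (which would collapse to $\bar\varphi\lc d\bar z^{j}$, since $(\1-\bar\varphi\varphi)^{-1}$ fixes $(1,0)$-forms, and give the wrong answer). Once this convention, which is the same one already used for $\bar\varphi\varphi$ in Lemma \ref{inverse} and is built into the definition of $\Finv$ in (\ref{2.8}), is fixed, the remainder of the argument is routine bookkeeping.
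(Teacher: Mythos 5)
Your proposal is correct and follows essentially the same route as the paper's proof: both reduce the identities to the generators $dz^i$ and $d\bar z^j$ via the (un-signed) derivation property of $i_\varphi$, $i_{\bar\varphi}$, and both establish \eqref{2.4.1} by inverting the change of frame to express $d\bar z^j$ in the $X_t$-basis, which is precisely the paper's formula \eqref{dbtf1}. You are somewhat more explicit than the paper about why $e^{\pm i_\varphi}$ and $\Finv$ are algebra homomorphisms and about the matrix-versus-composition reading of $(\1-\bar\varphi\varphi)^{-1}\bar\varphi$, but the substance is identical.
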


\begin{proof} Here we give a different proof from those in \cite[Lemmata $2.2$+$2.3$]{RwZ}. Locally set
$$\sigma=\sigma_{I_p\b J_q}dz^{I_p}\wedge d\b z^{J_q}$$
by multi-index notation. So
$$e^{i_{\varphi}|i_{\b{\varphi}}}(\sigma)=\sigma_{I_p\b J_q}e^{i_{\varphi}}(dz^{I_p})\wedge e^{i_{\b{\varphi}}}(d\b z^{J_q})$$
and thus,
$$e^{-i_{\varphi}}\circ e^{i_{\varphi}|i_{\b{\varphi}}}(\sigma)
=\sigma_{I_p\b J_q} dz^{I_p}\wedge e^{-i_{\varphi}}\circ e^{i_{\b{\varphi}}}(d\b z^{J_q})
=\sigma_{I_p\b J_q} dz^{I_p}\wedge (\1-\b{\varphi}\varphi+\b{\varphi})\Finv(d\b z^{J_q}).$$
As for \eqref{2.4.1}, \eqref{dbtf1} tells us that
\begin{align*}
   e^{-i_{\varphi}|-i_{\b{\varphi}}}\circ e^{i_{\varphi}}(\sigma)
 &=\sigma_{I_p\b J_q} e^{-i_{\varphi}|-i_{\b{\varphi}}}(e^{i_{\varphi}}(dz^{I_p})\wedge d\b z^{J_q})\\
 &=\sigma_{I_p\b J_q} e^{-i_{\varphi}|-i_{\b{\varphi}}}\left(e^{i_{\varphi}}(dz^{I_p})\wedge e^{i_{\varphi}|i_{\b{\varphi}}}\left((\1-\b{\varphi}\varphi)^{-1}-(\1-\b{\varphi}\varphi)^{-1}\b{\varphi}\right)\Finv d\b z^{J_q}\right)\\
 &=\sigma_{I_p\b J_q} dz^{I_p}\wedge \left((\1-\b{\varphi}\varphi)^{-1}-(\1-\b{\varphi}\varphi)^{-1}\b{\varphi}\right)\Finv d\b z^{J_q}.
\end{align*}
\end{proof}

The following equivalence describes the $\db$-extension obstruction for
$(p,q)$-forms of the smooth family.
\begin{proposition}\label{extension-in} For any $\sigma\in A^{p,q}(X_0)$,
$$\b{\p}_t(e^{i_{\varphi}|i_{\b{\varphi}}}(\sigma))=0$$
amounts to
$$([\p,i_{\varphi}]+\b{\p})(\1-\b{\varphi}\varphi)\Finv\sigma=0.$$
\end{proposition}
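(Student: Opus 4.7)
The plan is to extract the $(p,q+1)$-component of the already-derived identity \eqref{2.5.1} in the bidegree decomposition on $X_0$, and match it with the operator appearing in the statement. Since the extension map $e^{i_{\varphi}|i_{\overline{\varphi}}}:A^{p,q}(X_0)\to A^{p,q}(X_t)$ is an isomorphism that respects $(p,q)$-types between the two complex structures, the vanishing of $\overline{\partial}_t\bigl(e^{i_{\varphi}|i_{\overline{\varphi}}}(\sigma)\bigr)$ in $A^{p,q+1}(X_t)$ is equivalent to the vanishing of the $A^{p,q+1}(X_0)$-component of $e^{-i_{\varphi}|-i_{\overline{\varphi}}}\bigl(d(e^{i_{\varphi}|i_{\overline{\varphi}}}(\sigma))\bigr)$. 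By \eqref{2.5.1}, this latter quantity equals $\psi\bigl(([\partial,i_{\varphi}]+\overline{\partial}+\partial)\tau\bigr)$, where I set $\tau:=(\1-\overline{\varphi}\varphi+\overline{\varphi})\Finv\sigma$, and the operator $\psi:=e^{-i_{\varphi}|-i_{\overline{\varphi}}}\circ e^{i_{\varphi}}$ is described explicitly by \eqref{2.4.1}.

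The next step is a careful bidegree analysis of $\tau$. Since $(\1-\overline{\varphi}\varphi+\overline{\varphi})\Finv$ acts as the identity on each $dz^i$ and decomposes each $d\overline{z}^j$ into a $(0,1)$-piece $(\1-\overline{\varphi}\varphi)\lc d\overline{z}^j$ plus a $(1,0)$-piece $\overline{\varphi}\lc d\overline{z}^j$, one gets a type decomposition $\tau=\sum_{k=0}^{q}\tau^{(p+k,q-k)}$ with leading piece $\tau^{(p,q)}=(\1-\overline{\varphi}\varphi)\Finv\sigma$. The operators $[\partial,i_{\varphi}]$ and $\overline{\partial}$ both carry bidegree $(0,+1)$, while $\partial$ carries bidegree $(+1,0)$; hence the only contribution to bidegree $(p,q+1)$ in $([\partial,i_{\varphi}]+\overline{\partial}+\partial)\tau$ comes from the leading piece and equals $\bigl([\partial,i_{\varphi}]+\overline{\partial}\bigr)(\1-\overline{\varphi}\varphi)\Finv\sigma$.

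Finally I would analyze $\psi$ by the same principle. By \eqref{2.4.1}, $\psi$ acts as the identity on each $dz^i$ and sends each $d\overline{z}^j$ to $(\1-\overline{\varphi}\varphi)^{-1}\lc d\overline{z}^j-\overline{\varphi}\lc d\overline{z}^j$. Consequently $\psi$ never decreases the $p$-degree, and its type-preserving part is precisely $(\1-\overline{\varphi}\varphi)^{-1}\Finv$. Combining these observations with the fact that the intermediate form $\rho:=([\partial,i_{\varphi}]+\overline{\partial}+\partial)\tau$ has all pieces of $p$-degree $\geq p$, one sees that the $(p,q+1)$-projection of $\psi(\rho)$ depends only on $\rho^{(p,q+1)}$ and equals
\[(\1-\overline{\varphi}\varphi)^{-1}\Finv\bigl([\partial,i_{\varphi}]+\overline{\partial}\bigr)(\1-\overline{\varphi}\varphi)\Finv\sigma.\]
Since $(\1-\overline{\varphi}\varphi)^{-1}\Finv$ is pointwise invertible on $A^{p,q+1}(X_0)$ for $t$ small, the vanishing of the left-hand side is equivalent to the vanishing of $\bigl([\partial,i_{\varphi}]+\overline{\partial}\bigr)(\1-\overline{\varphi}\varphi)\Finv\sigma$, which is the desired equivalence.

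The principal obstacle is pure bookkeeping: keeping careful track of how the simultaneous-contraction operator $\Finv$ interacts with the bidegree splitting, and verifying that the only pieces of both $\tau$ and $\psi$ that survive under the projection onto $A^{p,q+1}(X_0)$ are their leading components. This becomes transparent once one notes that $\tau$ carries no piece of $p$-degree strictly below $p$ while $\psi$ never lowers the $p$-degree, which forces us to the unique contribution with $k=0$ in each decomposition; once this is in hand, the remaining identification is essentially a routine match of the resulting formulas.
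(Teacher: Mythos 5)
Your argument is correct and is essentially the paper's own proof: the paper likewise substitutes \eqref{2.3} and \eqref{2.4.1} into \eqref{2.5.1} and then ``carefully compares form types,'' which is exactly the bidegree bookkeeping you carry out, resting on the observation that $e^{-i_{\varphi}|-i_{\overline{\varphi}}}\circ e^{i_{\varphi}}$ maps $A^{p,q}(X_0)$ into $\bigoplus_{i\geq 0}A^{p+i,q-i}(X_0)$ and so never lowers the holomorphic degree. (One cosmetic slip: by \eqref{2.4.1} the $(1,0)$-part of the image of $d\overline{z}^j$ under $\psi$ is $-(\1-\overline{\varphi}\varphi)^{-1}\overline{\varphi}\lrcorner d\overline{z}^j$ rather than $-\overline{\varphi}\lrcorner d\overline{z}^j$, but since both raise the $p$-degree by one this does not affect your type analysis.)
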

\begin{proof}
Substituting (\ref{2.3}) and (\ref{2.4.1}) into (\ref{2.5.1}), one has
\begin{equation}
 \begin{aligned}\label{2.7}
    &d(e^{i_{\varphi}|i_{\b{\varphi}}}(\sigma))\\=&
    e^{i_{\varphi}|i_{\b{\varphi}}}\left(\left((\1-\b{\varphi}\varphi)^{-1}-(\1-\b{\varphi}\varphi)^{-1}\b{\varphi}\right)\Finv\left([\p,i_{\varphi}]
    +\b{\p}+\p\right)(\1-\b{\varphi}\varphi+\b{\varphi})\Finv\sigma\right).
 \end{aligned}
\end{equation}
From (\ref{2.4.1}), we know that
$$
  e^{-i_{\varphi}|-i_{\b{\varphi}}}\circ e^{i_{\varphi}}: A^{p,q}(X_0)\to \bigoplus_{i=0}^{\min\{q,n-p\}}A^{p+i,q-i}(X_0).
$$
Thus, by carefully comparing the form types in both sides  of
(\ref{2.7}), we have
$$\label{2.6}
   \b{\p}_t(e^{i_{\varphi}|i_{\b{\varphi}}}(\sigma))
   =e^{i_{\varphi}|i_{\b{\varphi}}}\left((\1-\b{\varphi}\varphi)^{-1}\Finv([\p,i_{\varphi}]+\b{\p})(\1-\b{\varphi}\varphi)\Finv\sigma\right),
$$
which implies the desired equivalence follows from the invertibility of the operators
$e^{i_{\varphi}|i_{\b{\varphi}}}$ and
$(\1-\b{\varphi}\varphi)^{-1}\Finv$.
\end{proof}

\subsection{Kuranishi family and Beltrami differentials}\label{reduction}
By (the proof of) Kuranishi's completeness theorem \cite{ku}, for any compact complex manifold $X_0$, there exists a complete holomorphic family
$\varpi:\mc{K}\to T$ of complex manifolds at the reference point $0\in T$ in the sense that for any differentiable family $\pi:\mc{X}\to B$ with $\pi^{-1}(s_0)=\varpi^{-1}(0)=X_0$, there is a sufficiently small neighborhood $E\subseteq B$ of $s_0$, and smooth maps $\Phi: \mathcal {X}_E\rightarrow \mathcal {K}$,  $\tau: E\rightarrow T$ with $\tau(s_0)=0$ such that the diagram commutes
$$\xymatrix{\mathcal {X}_E \ar[r]^{\Phi}\ar[d]_\pi& \mathcal {K}\ar[d]^\varpi\\
(E,s_0)\ar[r]^{\tau}  & (T,0),}$$
$\Phi$ maps $\pi^{-1}(s)$ biholomorphically onto $\varpi^{-1}(\tau(s))$ for each $s\in E$, and $$\Phi: \pi^{-1}(s_0)=X_0\rightarrow \varpi^{-1}(0)=X_0$$ is the identity map.
This family is called \emph{Kuranishi family} and constructed as follows. Let $\{\eta_\nu\}_{\nu=1}^m$ be a basis for $\mathbb{H}^{0,1}(X_0,T^{1,0}_{X_0})$, where some suitable Hermitian metric is fixed on $X_0$ and $m\geq 1$; Otherwise the complex manifold $X_0$ would be \emph{rigid}, i.e., for any differentiable family $\kappa:\mc{M}\to P$ with $s_0\in P$ and $\kappa^{-1}(s_0)=X_0$, there is a neighborhood $V \subseteq P$ of $s_0$ such that $\kappa:\kappa^{-1}(V)\to V$ is trivial. Then one can construct a holomorphic family
$$\label{phi-ps-pp}\varphi(t) = \sum_{|I|=1}^{\infty}\varphi_{I}t^I:=\sum_{j=1}^{\infty}\varphi_j(t),\ I=(i_1,\cdots,i_m),\ t=(t_1,\cdots,t_m)\in \mathbb{C}^m,$$
 {for $|t|< \rho$ a small positive constant,} of Beltrami differentials as follows:
$$\label{phi-ps-0}
 \varphi_1(t)=\sum_{\nu=1}^{m}t_\nu\eta_\nu
$$
and for $|I|\geq 2$,
$$\label{phi-ps}
  \varphi_I=\frac{1}{2}\db^*\G\sum_{J+L=I}[\varphi_J,\varphi_{L}],
$$
where $\G$ is the associated Green's operator.
It is obvious that $\varphi(t)$ satisfies the equation
$$\varphi(t)=\varphi_1+\frac{1}{2}\db^*\G[\varphi(t),\varphi(t)].$$
Let
$$T=\{t\ |\ \mathbb{H}[\varphi(t),\varphi(t)]=0 \},$$
where $\mathbb{H}$ is the associated harmonic projection.
Thus, for each $t\in T$, $\varphi(t)$ satisfies
\begin{equation}\label{int}
\b{\p}\varphi(t)=\frac{1}{2}[\varphi(t),\varphi(t)],
\end{equation}
and determines a complex structure $X_t$ on the underlying differentiable manifold of $X_0$. More importantly, $\varphi(t)$ represents the complete holomorphic family $\varpi:\mc{K}\to T$ of complex manifolds. Roughly speaking, Kuranishi family $\varpi:\mc{K}\to T$ contains all sufficiently small differentiable deformations of $X_0$.
We call $\varphi(t)$ the \emph{canonical family of Beltrami differentials} for this Kuranishi family.

By means of these, one can reduce our argument on the deformation invariance of Hodge numbers for a smooth family of complex manifolds to that of the Kuranishi family by shrinking $E$ if necessary, that is, one considers the Kuranishi family with the canonical family of Beltrami differentials constructed as above. From now on, one uses $\varphi(t)$ and  $\varphi$ interchangeably to denote this holomorphic family of integrable Beltrami differentials, and assumes $m=1$ for simplicity.

\section{Deformation invariance of Hodge numbers and its
applications}\label{def} Throughout this section, one just considers the Kuranishi family
$\pi:\mathcal{X}\rightarrow \Delta_{\epsilon}$  of $n$-dimensional complex manifolds over a small complex disk
with the general fibers $X_t:=\pi^{-1}(t)$ according to the reduction in Subsection \ref{reduction}
and fixes a Hermitian metric $g$ on the central fiber $X_0$.
As a direct application of the extension formulae developed in Section \ref{ext-formula}, we
obtain several deformation invariance theorems of Hodge numbers in this
section.

\subsection{Basic philosophy, main results and examples}\label{cf}

Now let us describe our basic philosophy to consider the deformation
invariance of Hodge numbers briefly. The Kodaira-Spencer's upper
semi-continuity theorem (\cite[Theorem $4$]{KS}) tells us that the
function
$$t\longmapsto h^{p,q}_{\db_t}(X_t):=\dim_{\mathbb{C}}H^{p,q}_{\db_t}(X_t)$$
is always upper semi-continuous for $t\in \Delta_{\varepsilon}$ and
thus, to approach the deformation invariance of $h^{p,q}_{\db_t}(X_t)$, we
only need to obtain the lower semi-continuity. Here our main
strategy is a modified iteration procedure, originally from
\cite{LSY} and developed in
\cite{Sun,SY,RZ,lry}, which is to look for an injective extension map from
$H^{p,q}_{\db}(X_0)$ to $H^{p,q}_{\db_t}(X_t)$.
More precisely, for a nice uniquely-chosen representative $\sigma_0$ of the initial Dolbeault
cohomology class 
in $H^{p,q}_{\db}(X_0)$, we try to
construct a convergent power series
$$\label{ps} \sigma_t=\sigma_0+\sum_{j+k=1}^\infty t^k t^{\bar{j}}\sigma_{k\bar{j}}\in
A^{p,q}(X_0),
$$
with $\sigma_t$ varying smoothly on $t$ such
that for each small $t$:

\begin{enumerate}
\item\label{S1} $e^{i_{\varphi}|i_{\overline{\varphi}}}(\sigma_t)\in A^{p,q}(X_t)$
is $\db_t$-closed with respect to the holomorphic structure on $X_t$;
\item\label{S2} The extension map $H^{p,q}_{\db}(X_0) \rightarrow H^{p,q}_{\db_t}(X_t):[\sigma_0]_{\db} \mapsto
[e^{i_{\varphi}|i_{\overline{\varphi}}}(\sigma_t)]_{\db_t}$ is injective.
\end{enumerate}

The key point is to solve the
obstruction equation, induced by the canonical family $\varphi(t)$
of Beltrami differentials, for the $\db_t$-closedness in \eqref{S1},
and verification of the injectivity of the extension map in \eqref{S2}.
Then we state the main theorem of this section, whose proof
will be postponed to Subsection \ref{sbs-pq}.
\begin{theorem}\label{inv-pq}
If the injectivity of the mappings $\iota_{BC,\p}^{p+1,q},\iota_{\db,A}^{p,q+1}$ on the central fiber
$X_0$ and the deformation invariance of the $(p,q-1)$-Hodge number $h^{p,q-1}_{\db_t}(X_t)$ holds,
then $h^{p,q}_{\db_t}(X_t)$ are deformation invariant.
\end{theorem}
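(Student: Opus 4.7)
The plan is to use Kodaira-Spencer's upper semi-continuity of $t \mapsto h^{p,q}_{\db_t}(X_t)$ and prove the matching lower semi-continuity by exhibiting an injective extension map
\[ H^{p,q}_{\db}(X_0) \longrightarrow H^{p,q}_{\db_t}(X_t),\qquad [\sigma_0]_{\db} \longmapsto \bigl[e^{i_\varphi|i_{\bar\varphi}}(\sigma_t)\bigr]_{\db_t}, \]
in the two-step philosophy spelled out before the statement. The core task is, starting from a well-chosen representative $\sigma_0$ of each class in $H^{p,q}_{\db}(X_0)$, to build a convergent power series $\sigma_t = \sigma_0 + \sum_{k+j\geq 1} t^k \bar t^j \sigma_{k\bar j}$ solving the obstruction equation of Proposition \ref{10extension-in}:
\[ ([\p, i_\varphi] + \db)(\1 - \bar\varphi\varphi)\Finv \sigma_t = 0.\]

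For the initial datum, I would exploit the injectivity of $\iota^{p+1,q}_{BC,\p}$ to replace any $\db$-closed representative by one that is simultaneously $\p$-closed in the same $\db$-class. Indeed, $\p\sigma_0 \in A^{p+1,q}(X_0)$ is $\p$- and $\db$-closed and tautologically $\p$-exact, so the injectivity of $\iota^{p+1,q}_{BC,\p}$ furnishes $\nu \in A^{p,q-1}(X_0)$ with $\p\sigma_0 = \p\db\nu$; modifying $\sigma_0$ by $-\db\nu$ preserves its $\db$-class and yields $\p\sigma_0 = 0$. With this choice and the Kuranishi expansion $\varphi = \sum_{l\geq 1}\varphi_l$, matching coefficients of $t^k \bar t^j$ reduces the obstruction equation to a recursion
\[ \db \sigma_{k\bar j} = -\mathcal{O}_{k\bar j},\qquad \mathcal{O}_{k\bar j} \in A^{p,q+1}(X_0),\]
whose right-hand side is an explicit polynomial in the lower-order $\sigma_{k'\bar j'}$ and $\varphi_l$. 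Using the integrability $\db\varphi = \tfrac{1}{2}[\varphi,\varphi]$ and the commutator Lemma \ref{aaaa}, one verifies inductively that $\mathcal{O}_{k\bar j}$ is $\db$-closed and lies in $\im\p + \im\db$; the injectivity of $\iota^{p,q+1}_{\db,A}$ then upgrades this Aeppli-exactness to $\db$-exactness, so one can choose the canonical solution $\sigma_{k\bar j} = -\db^* \G \mathcal{O}_{k\bar j}$ via the Green's operator $\G$ of the fixed metric on $X_0$.

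Convergence of the resulting power series $\sigma_t$ in a suitable H\"older or Sobolev norm follows by the standard elliptic estimates on $\db^*\G$ combined with the recursive structure of $\mathcal{O}_{k\bar j}$, in the vein of the iteration schemes of \cite{LSY,lry,RZ}. For the injectivity of the extension map, suppose $e^{i_\varphi|i_{\bar\varphi}}(\sigma_t) = \db_t \psi_t$ on $X_t$ for all small $t$; the deformation invariance of $h^{p,q-1}_{\db_t}(X_t)$ guarantees that any such $\psi_t$ can be organized into an $X_0$-smooth family of $(p,q-1)$-forms, after which applying $e^{-i_\varphi|-i_{\bar\varphi}}$ and reading off the $t=0$ coefficient produces $\sigma_0 \in \im \db$, as required.

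The hard part will be the inductive bookkeeping verifying $\db$-closedness and Aeppli-exactness of $\mathcal{O}_{k\bar j}$: every term produced by expanding $(\1 - \bar\varphi\varphi)\Finv$ composed with $[\p, i_\varphi] + \db$ must be traced to an explicit $\p$- or $\db$-image, and the two injectivity hypotheses invoked precisely at the bidegrees $(p,q+1)$ and $(p+1,q)$ where these images live. Keeping each $\sigma_{k\bar j}$ in $\im\db^*$ throughout is essential both for this cohomological bookkeeping and for the uniform elliptic bounds needed to close the convergence argument.
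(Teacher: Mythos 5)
Your overall architecture --- upper semi-continuity plus an injective extension map, a $d$-closed initial representative obtained from the injectivity of $\iota^{p+1,q}_{BC,\p}$, a power-series resolution of the obstruction equation of Proposition \ref{10extension-in}, and an injectivity argument resting on the differentiable dependence on $t$ of the Green's operator acting on $A^{p,q-1}(X_t)$ --- matches the paper's. But there is a genuine gap in the inductive step. Writing $\widetilde{\sigma}_t=(\1-\b{\varphi}\varphi)\Finv\sigma_t$, the order-$k$ coefficient of the obstruction equation reads
\[
\db\widetilde{\sigma}_k+\sum_{i=1}^k\p\big(\varphi_i\lc\widetilde{\sigma}_{k-i}\big)-\sum_{i=1}^k\varphi_i\lc\p\widetilde{\sigma}_{k-i}=0,
\]
so your right-hand side $\mathcal{O}_{k}$ contains the terms $\varphi_i\lc\p\widetilde{\sigma}_{k-i}$. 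Unless the lower-order coefficients are $\p$-closed, these terms are neither in $\im\,\p+\im\,\db$ nor amenable to the $\db$-closedness verification (the cancellation in \eqref{dbar-closed-c} uses $\p\widetilde{\sigma}_{k-i}=0$ at every stage), so the hypothesis on $\iota^{p,q+1}_{\db,A}$ cannot be brought to bear. Arranging $\p\sigma_0=0$ only at order zero does not propagate: the canonical solution $-\db^*\G_{\db}(\cdots)$ of the $\db$-equation has no reason to be $\p$-closed.

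The missing device is the reduction to the coupled system $\p\widetilde{\sigma}_t=0$ and $\db\widetilde{\sigma}_t+\p(\varphi\lc\widetilde{\sigma}_t)=0$ as in Proposition \ref{thmpq}: at each order one first solves $\db\widetilde{\sigma_k^1}=-\p\big(\sum_{i}\varphi_i\lc\widetilde{\sigma}_{k-i}\big)$ using the injectivity of $\iota^{p,q+1}_{\db,A}$ --- the right-hand side now being manifestly $\p$-exact and, by the inductive $\p$-closedness, $\db$-closed --- and then corrects by a $\db$-exact term $\db\widetilde{\sigma_k^2}$ with $\p\db\widetilde{\sigma_k^2}=-\p\widetilde{\sigma_k^1}$ so that $\widetilde{\sigma}_k=\widetilde{\sigma_k^1}+\db\widetilde{\sigma_k^2}$ is again $\p$-closed. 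It is in this second stage, repeated at every order and solved canonically by $-(\p\db)^*\G_{BC}\p\widetilde{\sigma_k^1}$, that the injectivity of $\iota^{p+1,q}_{BC,\p}$ is used --- not only for the initial representative as in your write-up. Your closing remark that each coefficient should be kept in $\im\,\db^*$ addresses the elliptic estimates but not this cohomological constraint; it is the $\p$-closedness, not membership in $\im\,\db^*$, that closes the induction.
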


There are three conditions involved in the theorem above, namely
the injectivity of the mappings $\iota^{p+1,q}_{BC,\p}$, $\iota^{p,q+1}_{\db,A}$ and
the deformation invariance of the $(p,q-1)$-Hodge number, to assure the
deformation invariance of the one of $(p,q)$-type. Resorting to Hodge,
Bott-Chern and Aeppli numbers of manifolds in the Kuranishi family of the Iwasawa manifold (cf. \cite[Appendix]{A}),
we find the following three examples that the deformation invariance of the $(p,q)$-Hodge number
fails when one of the three conditions is not true, while the other two hold. It indicates that the three
conditions above may not be omitted in order to state a theorem for the deformation invariance
of all the $(p,q)$-Hodge numbers.

Let $\mathbb{I}_3$ be the Iwasawa manifold of complex dimension $3$ with $\varphi^1,\varphi^2,\varphi^3$
denoted by the basis of the holomorphic one form $H^0(\mathbb{I}_3,\Omega^1)$ of $\mathbb{I}_3$, satisfying
the relation \[ d\varphi^1 =0,\ d \varphi^2=0,\ d\varphi^3 = - \varphi^1 \w \varphi^2. \]
And the convention $\varphi^{12\bar{1}\bar{3}}:= \varphi^1 \w \varphi^2 \w \overline{\varphi}^1 \w \overline{\varphi}^3$
will be used for simplicity.
\begin{example}[The case $(p,q)=(1,0)$]\label{ex10}
The injectivity of $\iota^{1,1}_{\db,A}$ holds on $\mathbb{I}_3$
with the deformation invariance of $h^{1,-1}_{\db_t}(X_t)$ trivially established but
$\iota^{2,0}_{BC,\p}$ is not injective. In this case, $h^{1,0}_{\db_t}(X_t)$ are deformation variant.
\end{example}
\begin{proof}
It is revealed from \cite[Appendix]{A} that $h^{1,1}_{\db}=6,h^{1,1}_{A}=8$ and $h^{2,0}_{BC}=3,h^{2,0}_{\p}=2$.
And thus $\iota^{2,0}_{BC,\p}$ is not injective. It is easy to check that
\[ \begin{aligned}
H^{1,1}_{\db}(X) &= \langle[\varphi^{1\bar{1}}]_{\db}, [\varphi^{1\bar{2}}]_{\db},[\varphi^{2\bar{1}}]_{\db},[\varphi^{2\bar{2}}]_{\db}, [\varphi^{3\bar{1}}]_{\db},[\varphi^{3\bar{2}}]_{\db}\rangle, \\
H^{1,1}_{A}(X) &= \langle[\varphi^{1\bar{1}}]_{A}, [\varphi^{1\bar{2}}]_{A},[\varphi^{2\bar{1}}]_{A},[\varphi^{2\bar{2}}]_{A}, [\varphi^{3\bar{1}}]_{A},[\varphi^{3\bar{2}}]_{A},[\varphi^{1\bar{3}}]_{A},[\varphi^{2\bar{3}}]_{A}\rangle, \\
\end{aligned}\]
which implies the injectivity of $\iota^{1,1}_{\db,A}$.
The deformation variance of $h^{1,0}_{\db_t}(X_t)$ can be read from \cite[Appendix]{A}.
\end{proof}

\begin{example}[The case $(p,q)=(2,0)$]\label{ex20}
The injectivity of $\iota^{3,0}_{BC,\p}$ holds on $\mathbb{I}_3$
with the deformation invariance of $h^{2,-1}_{\db_t}(X_t)$ trivially established but
$\iota^{2,1}_{\db,A}$ is not injective. In this case, $h^{2,0}_{\db_t}(X_t)$ are deformation variant.
\end{example}

\begin{proof}
We know that $h^{3,0}_{BC} =1, h^{3,0}_{\p}=1$ and $h^{2,1}_{\db}=6,h^{2,1}_{A}=6$ from \cite[Appendix]{A}.
The bases of respective cohomology groups can be illustrated as follow:
\[\begin{aligned}
H^{3,0}_{BC} &= \langle [\varphi^{123}]_{BC}\rangle, H^{3,0}_{\p}= \langle [\varphi^{123}]_{\p}\rangle; \\
H^{2,1}_{\db} &= \langle [\varphi^{12\bar{1}}]_{\db},[\varphi^{12\bar{2}}]_{\db},[\varphi^{13\bar{1}}]_{\db},[\varphi^{13\bar{2}}]_{\db},
[\varphi^{23\bar{1}}]_{\db},[\varphi^{23\bar{2}}]_{\db} \rangle, \\
H^{2,1}_{A} &= \langle \qquad \qquad \qquad\ [\varphi^{13\bar{1}}]_{A},[\varphi^{13\bar{2}}]_{A},
[\varphi^{23\bar{1}}]_{A},[\varphi^{23\bar{2}}]_{A}, [\varphi^{13\bar{3}}]_{A},[\varphi^{13\bar{2}}]_{A} \rangle,
\end{aligned}\]
which indicates the injectivity of $\iota^{3,0}_{BC,\p}$ and non-injectivity of $\iota^{2,1}_{\db,A}$.
The deformation variance of $h^{2,0}_{\db_t}(X_t)$ can be also got from \cite[Appendix]{A}.
\end{proof}

\begin{example}[The case $(p,q)=(2,3)$]\label{ex23}
The mapping $\iota^{3,3}_{BC,\p}$ is injective on $\mathbb{I}_3$
with the injectivity of $\iota^{2,4}_{\db,A}$ trivially established but
$h^{2,2}_{\db_t}(X_t)$ are deformation variant. In this case, $h^{2,3}_{\db_t}(X_t)$ are deformation variant.
\end{example}

\begin{proof}
It is obvious that $\iota^{3,3}_{BC,\p}$ is injective, since $h^{3,3}_{BC}=1,h^{3,3}_{\p}=1$ and
\[ H^{3,3}_{BC}=\langle [\varphi^{123\bar{1}\bar{2}\bar{3}}]_{BC} \rangle,
H^{3,3}_{\p}=\langle [\varphi^{123\bar{1}\bar{2}\bar{3}}]_{\p} \rangle.\]
And \cite[Appendix]{A} conveys the fact of the deformation variance of $h^{2,2}_{\db_t}(X_t)$ and $h^{2,3}_{\db_t}(X_t)$.
\end{proof}

It is observed that the injectivity of $\iota^{p+1,q}_{BC,\p}$ or $\iota^{p,q+1}_{\db,A}$ is equivalent
to a certain type of $\p\db$-lemma, for which we introduce the following notations:
\begin{notation}\label{notaion-class}\rm
We say a compact complex manifold $X$ satisfies $\mathbb{S}^{p,q}$
and $\mathbb{B}^{p,q}$, if for any $\db$-closed
$\p g\in A^{p,q}(X)$, the equation
\begin{equation}\label{dbar-equ}
\db x = \p g
\end{equation} has a solution 
and a $\p$-exact solution, respectively. Similarly,
a compact complex manifold $X$ is said to satisfy
$\mathcal{S}^{p,q}$
and $\mathcal{B}^{p,q}$, if
for any $\db$-closed $g\in A^{p-1,q}(X)$, the equation
\eqref{dbar-equ} has a solution 
and a $\p$-exact solution, respectively.

The following implications clearly hold
\[ \begin{array}{ccc}
 \mathbb{B}^{p,q} & \Rightarrow & \mathbb{S}^{p,q} \\
 \Downarrow &   & \Downarrow \\
\mathcal{B}^{p,q} & \Rightarrow & \mathcal{S}^{p,q}. \\
\end{array} \]
And it is apparent that a compact complex manifold $X$, where the $\p\db$-lemma holds, satisfies
$\mathbb{B}^{p,q}$ for any $(p,q)$. Here the \emph{$\p\db$-lemma}
refers to:  for every pure-type $d$-closed form on $X$, the
properties of $d$-exactness, $\partial$-exactness,
$\bar\partial$-exactness and $\partial\bar\partial$-exactness are
equivalent.
\end{notation}
It is easy to check that the following equivalent statements:
\[ \text{the injectivity of $\iota^{p,q}_{BC,\p}$ holds on $X$} \Leftrightarrow
\text{$X$ satisfies $\mathbb{B}^{p,q}$};\]
\[ \text{the injectivity of $\iota^{p,q}_{\db,A}$ holds on $X$} \Leftrightarrow
\text{$X$ satisfies $\mathbb{S}^{p,q}$}; \]
\[ \text{the surjectivity of $\iota^{p-1,q}_{BC,\db}$ holds on $X$} \Leftrightarrow
\text{$X$ satisfies $\mathcal{B}^{p,q}$}.\]
Details of the proofs of theorems in this section will frequently apply
Notation \ref{notaion-class} for the convenience of solving $\db$-equations.

The speciality of the types may lead to the weakening
of the conditions in Theorem \ref{inv-pq}, such as $(p,0)$ and $(0,q)$.
Hence, another two theorems follow, whose proofs will be given in Subsection \ref{p00q-section}.

\begin{theorem}\label{inv-p0}
If the injectivity of the mappings $\iota^{p+1,0}_{\db,A}$ and $\iota^{p,1}_{\db,A}$
on $X_0$ holds, then $h^{p,0}_{\db_t}(X_t)$ are independent of $t$.
\end{theorem}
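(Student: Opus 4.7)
The plan is to follow the iterative extension strategy set up in Subsection \ref{cf}: for each harmonic representative $\sigma_0 \in A^{p,0}(X_0)$ of a class in $H^{p,0}_{\db}(X_0)$, construct a power series
\[ \sigma_t = \sigma_0 + \sum_{k \geq 1} t^k \sigma_k \in A^{p,0}(X_0) \]
such that $e^{i_\varphi|i_{\b\varphi}}(\sigma_t) \in A^{p,0}(X_t)$ is $\db_t$-closed, and then verify that the resulting extension map on Dolbeault cohomology is injective. Because the canonical $\varphi(t)$ is holomorphic in $t$, only $t$-powers (and no $\bar t$-powers) are needed in the ansatz, since the equations for the $\bar t$-coefficients decouple and admit the trivial solution.

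Since $\sigma_t \in A^{p,0}(X_0)$ has no $d\bar z$-slots, the simultaneous contraction $(\1 - \b\varphi\varphi)\Finv$ acts as the identity on it, and Proposition \ref{extension-in} reduces the $\db_t$-closedness to the single equation $(\db + [\p, i_\varphi])\sigma_t = 0$ in $A^{p,1}(X_0)$, which expands into the recursion
\[ \db\sigma_k \;=\; -\sum_{\substack{l + m = k \\ l \geq 1}} [\p, i_{\varphi_l}]\sigma_m \qquad (k \geq 1). \]
The core idea is to solve this recursion while maintaining the auxiliary condition $\p\sigma_m = 0$ for every $m$. Under that constraint $[\p, i_{\varphi_l}]\sigma_m = \p(i_{\varphi_l}\sigma_m)$, so the right-hand side $\Omega_k := -\p\bigl(\sum_{l+m=k,\,l \geq 1} i_{\varphi_l}\sigma_m\bigr)$ is \emph{automatically $\p$-exact} in $A^{p,1}(X_0)$.

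The base case $\p\sigma_0 = 0$ uses only the injectivity of $\iota^{p+1,0}_{\db,A}$: the form $\p\sigma_0 \in A^{p+1,0}(X_0)$ is $\db$-closed (since $\db\p\sigma_0 = -\p\db\sigma_0 = 0$) and $\p$-exact, and Notation \ref{notaion-class} (noting $A^{p+1,-1}(X_0) = 0$) forces $\p\sigma_0 = 0$. For the inductive step, assume the recursion is solved through order $t^{k-1}$ with $\p\sigma_m = 0$ for $m < k$. A consistency computation---propagating the previously solved equations and invoking the Maurer-Cartan integrability $\db\varphi = \tfrac12[\varphi,\varphi]$ via Lemma \ref{aaaa}---shows that $\Omega_k$ is $\db$-closed; combined with its $\p$-exactness, the hypothesis $\mathbb{S}^{p,1}$ (equivalently the injectivity of $\iota^{p,1}_{\db,A}$) yields a solution $\sigma_k = \db^*\mathbb{G}\,\Omega_k \in A^{p,0}(X_0)$ of $\db\sigma_k = \Omega_k$. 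Applying $\p$ gives $\db(\p\sigma_k) = -\p\db\sigma_k = -\p\Omega_k = 0$, so $\p\sigma_k \in A^{p+1,0}(X_0)$ is again $\db$-closed and $\p$-exact; the injectivity of $\iota^{p+1,0}_{\db,A}$ once more forces $\p\sigma_k = 0$, closing the induction.

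Convergence of the power series on a small disk $\Delta_\epsilon$ follows from the classical elliptic a priori estimates in Sobolev or Hölder norms used in \cite{KS, LSY, Sun, SY, RZ, lry}. The extension map $[\sigma_0]_{\db} \mapsto [e^{i_\varphi|i_{\b\varphi}}(\sigma_t)]_{\db_t}$ is then injective because any $\db_t$-exact element of $A^{p,0}(X_t)$ must vanish ($A^{p,-1}(X_t) = 0$), so the image being trivial forces $e^{i_\varphi|i_{\b\varphi}}(\sigma_t) = 0$, and Lemma \ref{identification} then gives $\sigma_t = 0$, hence $\sigma_0 = 0$. Combined with Kodaira-Spencer's upper semi-continuity this yields the desired deformation invariance of $h^{p,0}_{\db_t}(X_t)$. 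The chief technical obstacle in carrying out the plan is the verification of the $\db$-closedness of $\Omega_k$ at each inductive step, which requires a careful bookkeeping of the cross-terms generated by the integrability condition on $\varphi$; everything else is either immediate from the two hypotheses or standard in the iteration framework of \cite{RwZ}.
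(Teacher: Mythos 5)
Your proposal is correct and follows essentially the same route as the paper's Proposition \ref{thmp0}: reduce by type considerations to $(\db+[\p,i_\varphi])\sigma_t=0$, solve the recursion $\db\sigma_k=-\p(\sum_{i=1}^k\varphi_i\lc\sigma_{k-i})$ via $\mathbb{S}^{p,1}$ while forcing $\p\sigma_k=0$ at every order through $\mathbb{S}^{p+1,0}$ (using $A^{p+1,-1}=0$), with the $\db$-closedness of the right-hand side checked exactly as in the paper's computation \eqref{dbar-closed-c}. Your direct injectivity argument from $A^{p,-1}(X_t)=0$ is the same observation the paper encodes by noting that the hypothesis of Proposition \ref{pq-pq-1} is vacuous when $q=0$.
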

\begin{theorem}\label{inv-0q}
If the surjectivity of the mapping $\iota^{0,q}_{BC,\db}$ on $X_0$
and the deformation invariance of $h^{0,q-1}_{\db_t}(X_t)$ holds, then
$h^{0,q}_{\db_t}(X_t)$ are independent of $t$.
\end{theorem}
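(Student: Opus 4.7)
The plan is to combine the Kodaira--Spencer upper semi-continuity theorem with a power-series construction of an injective extension map from $H^{0,q}_{\db}(X_0)$ into $H^{0,q}_{\db_t}(X_t)$, exactly along the lines of the basic philosophy recalled before Theorem \ref{inv-pq}. The $p=0$ case is significantly simpler than Theorem \ref{inv-pq} because a $(0,q)$-form has no $(1,0)$-slot on which $i_{\varphi}$ can act, so a large portion of the obstruction in Proposition \ref{extension-in} collapses automatically.

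I would start by choosing, for each cohomology class in $H^{0,q}_{\db}(X_0)$, a canonical representative. By the hypothesis of surjectivity of $\iota^{0,q}_{BC,\db}$, i.e.\ by the condition $\mathcal{B}^{1,q}$ of Notation \ref{notaion-class}, every class $[\sigma_0]_{\db}$ admits a representative $\sigma_0$ which is simultaneously $\p$-closed and $\db$-closed. This $\sigma_0$ seeds the iteration. Then I look for a smooth family $\sigma_t=\sigma_0+\sum_{k+j\geq 1}t^k\bar t^j\sigma_{k\bar j}\in A^{0,q}(X_0)$ such that $e^{i_{\varphi}|i_{\o\varphi}}(\sigma_t)$ is $\db_t$-closed. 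By Proposition \ref{extension-in}, this amounts to
\[
([\p,i_{\varphi}]+\db)(\1-\o\varphi\varphi)\Finv\sigma_t=0.
\]
Since $(\1-\o\varphi\varphi)\Finv\sigma_t$ still has no $(1,0)$-slot, the piece $i_{\varphi}\circ \p$ of $[\p,i_{\varphi}]$ is the only surviving contribution, so the obstruction reduces (up to sign) to the single $(0,q+1)$-valued equation
\[
\db\bigl[(\1-\o\varphi\varphi)\Finv\sigma_t\bigr]+i_{\varphi}\p\bigl[(\1-\o\varphi\varphi)\Finv\sigma_t\bigr]=0.
\]

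The core analytic step is to expand in powers of $t,\bar t$ and solve order by order. At order $(k,j)$ the equation takes the form $\db\sigma_{k\bar j}=\Psi_{k\bar j}$, where $\Psi_{k\bar j}\in A^{0,q+1}(X_0)$ is a polynomial expression in the lower-order $\sigma_{k'\bar j'}$ and the Taylor coefficients of $\varphi$. I would proceed by induction on $k+j$, maintaining simultaneously the hypotheses that each $\sigma_{k'\bar j'}$ already constructed is $\p$-closed and that $\Psi_{k\bar j}$ can be rewritten as $\p g_{k\bar j}$ with $g_{k\bar j}\in A^{0,q}(X_0)$ a $\db$-closed form. This rewriting relies on the integrability $\db\varphi=\tfrac12[\varphi,\varphi]$ combined with the $\p$-closedness of the already-constructed Taylor coefficients, which let one collect $\p$-derivatives to the outside. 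Given such a rewriting, the hypothesis $\mathcal{B}^{1,q}$ supplies a $\p$-exact solution $\sigma_{k\bar j}$ of $\db\sigma_{k\bar j}=\p g_{k\bar j}$, which is in particular $\p$-closed and thus propagates the inductive hypothesis to the next order.

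The deformation invariance of $h^{0,q-1}_{\db_t}(X_t)$ enters at a subtler point: it guarantees that the analogous extension map in degree $(0,q-1)$ is already an isomorphism, so that the primitives $g_{k\bar j}$ can be chosen consistently across orders without picking up a parasitic nonzero $(0,q-1)$-class that would otherwise obstruct the passage from one order to the next. Once the formal series is built, standard Kodaira--Spencer elliptic estimates on the compact fiber $X_0$, using the Green operator of $g$ and exploiting the $\p$-exact form of each $\sigma_{k\bar j}$, yield $C^{\infty}$-convergence for small $t$. The injectivity of the extension $[\sigma_0]_{\db}\mapsto[e^{i_{\varphi}|i_{\o\varphi}}(\sigma_t)]_{\db_t}$ holds trivially at $t=0$ and persists for small $t$ by continuity, completing the lower semi-continuity and hence the theorem. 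The main obstacle throughout is precisely the inductive rewriting $\Psi_{k\bar j}=\p g_{k\bar j}$ with $g_{k\bar j}$ $\db$-closed, together with the compatible choice of $\sigma_{k\bar j}$ preserving $\p$-closedness; without the $\p$-closed representative provided by $\mathcal{B}^{1,q}$ this rewriting fails at the very first step, while without the invariance of $h^{0,q-1}_{\db_t}$ the primitives at successive orders may differ by nontrivial $\db$-classes in degree $(0,q-1)$ and so cannot be pieced together.
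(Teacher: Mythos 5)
Your overall architecture (a canonical $d$-closed representative obtained from the surjectivity of $\iota^{0,q}_{BC,\db}$, then a $\db_t$-closed extension, then injectivity of the extension map) matches the paper's, but two of your steps do not hold up. First, the construction: your order-by-order scheme asks to rewrite the obstruction $\Psi_{k\bar j}\in A^{0,q+1}(X_0)$ as $\p g_{k\bar j}$ with $g_{k\bar j}\in A^{0,q}(X_0)$, which is type-inconsistent ($\p$ of a $(0,q)$-form is a $(1,q)$-form, never a $(0,q+1)$-form), and it invokes $\mathcal{B}^{1,q}$ at every order, whereas that hypothesis is needed only once, to produce the $d$-closed seed $\sigma_0$ via Lemma \ref{lemma1}. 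In fact no iteration and no convergence argument are needed at all: since the obstruction of Proposition \ref{extension-in} reduces for $(0,q)$-forms to
\[
\db\bigl((\1-\o{\varphi}\varphi)\Finv\sigma_t\bigr)-\varphi\lc\p\bigl((\1-\o{\varphi}\varphi)\Finv\sigma_t\bigr)=0,
\]
one simply sets $\sigma_t=(\1-\o{\varphi}\varphi)^{-1}\Finv\sigma_0$, so that $(\1-\o{\varphi}\varphi)\Finv\sigma_t=\sigma_0$ and both terms vanish because $\sigma_0$ is $d$-closed. That is the paper's entire construction (Proposition \ref{thm0q}).

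Second, and more seriously, you misplace the hypothesis on $h^{0,q-1}_{\db_t}(X_t)$ and leave the injectivity step unproved. Injectivity of $[\sigma_0]_{\db}\mapsto[e^{i_{\varphi}|i_{\o{\varphi}}}(\sigma_t)]_{\db_t}$ does not ``persist by continuity'': the target cohomology varies with $t$, and upper semi-continuity of $h^{0,q}$ gives nothing in this direction. The deformation invariance of $h^{0,q-1}_{\db_t}(X_t)$ is used precisely here (Proposition \ref{pq-pq-1}), not to ``choose primitives consistently'' in the construction: it guarantees, by Kodaira--Spencer, that the Green's operator $\G_t$ acting on $A^{0,q-1}(X_t)$ depends differentiably on $t$, so that from $e^{i_{\varphi}|i_{\o{\varphi}}}(\sigma_t)=\db_t\eta_t$ one derives $e^{i_{\varphi}|i_{\o{\varphi}}}(\sigma_t)=\db_t\G_t\dbs_t e^{i_{\varphi}|i_{\o{\varphi}}}(\sigma_t)$ and, letting $t\to0$, concludes that $\sigma_0$ is $\db$-exact. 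Nakamura's examples (cf.\ the discussion after Example \ref{qn-qn-1}) show that the extension can be entirely unobstructed while $h^{0,n}_{\db_t}$ still jumps, exactly because injectivity fails when $h^{0,n-1}_{\db_t}$ is not invariant; the step you wave away is where the second hypothesis does all of its work.
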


\begin{remark}\label{01-sgg}
In the case of $q=1$ of Theorem \ref{inv-0q}, the surjectivity of the mapping $\iota^{0,1}_{BC,\db}$
is equivalent to the \textbf{sGG} condition proposed by Popovici-Ugarte \cite{P1,PU},
from \cite[Theorem 2.1 (iii)]{PU}.
\end{remark}

Hence, the \textbf{sGG} manifolds can be examples of Theorem \ref{inv-0q},
where the Fr\"olicher spectral sequence does not  necessarily degenerate at the $E_1$-level, by
\cite[Proposition 6.3]{PU}. Inspired by the deformation invariance of the $(0,1),(0,2)$ and $(0,3)$-Hodge numbers of the Iwasawa
manifold $\mathbb{I}_3$ shown in \cite[Appendix]{A}, we prove

\begin{corollary}\label{cplx-prl}
Let $X = \Gamma \backslash G$ be a complex parallelizable nilmanifold of complex dimension $n$,
where $G$ is a simply connected nilpotent Lie group and $\Gamma$ is denoted by
a discrete and co-compact subgroup of $G$.
Then $X$ is an \textbf{sGG} manifold. In addition, the $(0,q)$-Hodge numbers of $X$
are deformation invariant for $1 \leq q \leq n$.
\end{corollary}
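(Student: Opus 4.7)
The plan is to derive both assertions from a single structural observation: on a complex parallelizable nilmanifold, every left-invariant $(0,q)$-form is automatically $\p$-closed. First I would invoke Sakane's theorem on compact complex parallelizable nilmanifolds, which asserts that on $X=\Gamma\backslash G$ with $G$ a complex nilpotent Lie group, the inclusion of the subcomplex of left-invariant forms into the Dolbeault complex is a quasi-isomorphism; in particular every class in $H^{0,q}_{\db}(X)$ admits a left-invariant representative.

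Next I would exploit the fact that $G$ carries a \emph{holomorphic} group structure. Choose a frame $\varphi^1,\dots,\varphi^n$ of left-invariant holomorphic $(1,0)$-forms; the Maurer-Cartan equation
\[
d\varphi^j \;=\; -\tfrac{1}{2}\sum_{k,l} C^j_{kl}\,\varphi^k\wedge\varphi^l
\]
expresses $d\varphi^j$ as a pure $(2,0)$-form. Conjugating gives $d\overline{\varphi^j}\in A^{0,2}(X)$, so $\p\overline{\varphi^j}=0$. Any left-invariant $(0,q)$-form is a $\mathbb{C}$-linear combination of wedges $\overline{\varphi^{j_1}}\wedge\cdots\wedge\overline{\varphi^{j_q}}$, hence is $\p$-closed. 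In particular a left-invariant $\db$-closed $(0,q)$-form is $d$-closed and so represents a Bott-Chern class mapping to its Dolbeault class. Combined with Sakane's theorem, this proves that $\iota^{0,q}_{BC,\db}$ is surjective for every $1\le q\le n$.

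The case $q=1$ is, by Remark \ref{01-sgg}, equivalent to the sGG condition, giving the first assertion. For the second I would argue by induction on $q$. The base $q=0$ is immediate, since $h^{0,0}_{\db_t}(X_t)=1$ for every small $t$ by connectedness of the fibers. The inductive step is exactly Theorem \ref{inv-0q}: its two hypotheses, namely surjectivity of $\iota^{0,q}_{BC,\db}$ on the central fiber $X_0$ and deformation invariance of $h^{0,q-1}_{\db_t}(X_t)$, are supplied by the previous paragraph and by the inductive hypothesis, respectively, yielding the invariance of $h^{0,q}_{\db_t}(X_t)$.

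The main obstacle is the careful invocation of Sakane's theorem: one must be sure it applies unconditionally in the complex nilpotent case so that invariant representatives are available for every Dolbeault class, not just in low degrees or for specific models such as the Iwasawa manifold. Once that is in hand, the rest of the argument reduces to a book-keeping of types via the Maurer-Cartan relations, with no need to touch the power-series machinery behind Theorem \ref{inv-0q} again.
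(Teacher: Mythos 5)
Your proposal is correct and follows essentially the same route as the paper: Sakane's quasi-isomorphism for invariant forms, the observation that $d\mathfrak{g}^{*(1,0)}\subseteq\bigwedge^{2}\mathfrak{g}^{*(1,0)}$ forces invariant $(0,q)$-forms to be $\p$-closed (hence surjectivity of $\iota^{0,q}_{BC,\db}$, i.e.\ condition $\mathcal{B}^{1,q}$), Remark \ref{01-sgg} for the \textbf{sGG} claim, and induction via Theorem \ref{inv-0q}. The only cosmetic difference is that you deduce surjectivity directly on $X$ from the existence of $d$-closed invariant representatives, whereas the paper also invokes the invariant computation of Bott-Chern cohomology to work entirely at the Lie-algebra level.
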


\begin{proof}
It is well known from \cite[Theorem 1]{Sak} and \cite[Theorem 3.8]{A} that the isomorphisms
\[ H^{p,q}_{BC}(X) \cong H^{p,q}_{BC}(\mathfrak{g},J), H^{p,q}_{\db}(X) \cong H^{p,q}_{\db}(\mathfrak{g},J), \]
hold on the complex parallelizable nilmanifold $X$, where $\mathfrak{g}$ is the corresponding Lie algebra of $G$
and $J$ denotes the complex parallelizable structure on $\mathfrak{g}$. Then from Theorem \ref{inv-0q},
the corollary amounts to the verification of the surjectivity of the mappings of $\iota_{BC,\db}^{0,q}$
on the level of the Lie algebra $(g,J)$ for $1 \leq q \leq n$, which is equivalent to that
the conditions $\mathcal{B}^{1,q}$ hold on the Lie algebra $(g,J)$ for $1 \leq q \leq n$.

Since $J$ is complex parallelizable, it yields that
$ d \mathfrak{g}^{*(1,0)} \subseteq \bigwedge^{2} \mathfrak{g}^{*(1,0)}$,
which implies that $\p \big( \bigwedge^{q} \mathfrak{g}^{*(0,1)} \big) =0$ for $1 \leq q \leq n$, where
$\mathfrak{g}^{*}_{\mathbb{C}} = \mathfrak{g}^{*} \otimes_{\mathbb{R}} \mathbb{C} = \mathfrak{g}^{*(1,0)} \oplus \mathfrak{g}^{*(0,1)}$
with respect to $J$.
Therefore, the conditions $\mathcal{B}^{1,q}$ for $1 \leq q \leq n$ are satisfied on the Lie algebra $(g,J)$
and the corollary follows.
\end{proof}

\begin{remark}
The deformation invariance for the $(0,2)$-Hodge number
of a complex parallelizable nilmanifold has been shown
in \cite[Corollary 4.3]{LUV}.
\end{remark}

Since nilmanifolds with complex parallelizable structures and abelian complex structures
are conjugate to some extent, it is tempting to consider the deformation invariance of
the $(p,0)$-Hodge numbers of nilmanifolds with abelian complex structures for $1 \leq p \leq n$
under the spirit of Corollary \ref{cplx-prl}. The following example, inspired by Console-Fino-Poon \cite[Section 6]{CFP},
is a holomorphic family of nilmanifolds of complex dimension $5$, whose central fiber is endowed with an abelian complex structure.
This family admits the deformation invariance of the $(p,0)$-Hodge numbers for $1 \leq p \leq 5$,
but not the $(1,1)$-Hodge number or $(1,1)$-Bott-Chern number, which shows the function of Theorem \ref{inv-p0} possibly beyond Kodaira-Spencer's squeeze \cite[Theorem 13]{KS} in this case.

\begin{example}\label{cfp}
\emph{Let $X_0$ be the nilmanifold determined by a ten-dimensional $3$-step nilpotent Lie algebra $\mathfrak{n}$ endowed with
the complex structure $J_{s,t}$ for $s=1,t=0$, as in \cite[Section 6]{CFP}. The natural decompositions with respect
to the complex structure $J_{1,0}$ yield
\[ \mathfrak{n}_{\mathbb{C}} = \mathfrak{n} \otimes_{\mathbb{R}} \mathbb{C} = \mathfrak{n}^{1,0} \oplus \mathfrak{n}^{0,1};
\mathfrak{n}^*_{\mathbb{C}} = \mathfrak{n}^* \otimes_{\mathbb{R}} \mathbb{C} = \mathfrak{n}^{*(1,0)} \oplus \mathfrak{n}^{*(0,1)}.\]
By contrast with the basis $\omega^1,\cdots,\omega^5$ of $\mathfrak{n}^{*(1,0)}$ used in \cite[Section 6]{CFP},
another basis $\tau^1,\cdots,\tau^5$ will be applied, with the transition formula given by
\[ \tau^1 = \omega^1, \tau^2 = (1+i)\omega^2 - \omega^3, \tau^3 = -(1+i)\omega^2, \tau^4 =\omega^4, \tau^5 = \omega^5. \]
Hence, the structure equation with respect to $\{\tau^k\}_{k=1}^5$ follows
\begin{equation}\label{str-equ-1}
\begin{cases}
d\tau^1 = d\tau^2 = d\tau^4= 0, \\
d\tau^3 = -\big( \tau^1 \w \bar{\tau}^1 + (1+i) \tau^1 \w \bar{\tau}^4 \big), \\
d\tau^5 = \frac{1}{2} \big( \tau^1 \w \bar{\tau}^3 + \tau^3 \w \bar{\tau}^1 - \tau^2 \w \bar{\tau}^2 \big). \\
\end{cases}
\end{equation}
It is easy to see $d \bar{\tau}^5 = -d \tau^5$, which implies $\p \bar{\tau}^5 =- \db {\tau^5}$.
Denote the basis of $\mathfrak{n}^{1,0}$ dual to $\{ \tau^k \}_{k=1}^5$ by $\theta_1, \cdots, \theta_5$.
The equation $d\omega(\theta,\theta') = - \omega([\theta,\theta'])$ for $\omega \in \mathfrak{n}^{*}_{\mathbb{C}}$
and $\theta,\theta' \in \mathfrak{n}_{\mathbb{C}}$, establishes the equalities
\[ [\bar{\theta}_1,\theta_4]=(1-i) \bar{\theta}_3,\ [\bar{\theta}_i,\theta_4]=0\ \ \text{for}\ 2 \leq i \leq 5.\]
According to \cite[Theorem 3.6]{CFP}, the linear operator $\db$ on $\mathfrak{n}^{1,0}$, defined in \cite[Section 3.2]{CFP} by
\[ \db: \mathfrak{n}^{1,0} \rightarrow \mathfrak{n}^{*(0,1)} \otimes \mathfrak{n}^{1,0}:\ \db_{\bar{U}}V = [\bar{U},V]^{1,0}\ \ \text{for}\ U,V \in \mathfrak{n}^{1,0},\]
produces an isomorphism $H^1(X_0,T^{1,0}_{X_0}) \cong H^1_{\db}(\mathfrak{n}^{1,0})$. Therefore, from Kodaira-Spencer deformation theory,
an analytic deformation $X_t$ of $X_0$ can be constructed by use of the integrable Beltrami differential
\[ \varphi(t) = t_1 \bar{\tau}^5 \otimes \theta_4 + t_2 \bar{\tau}^4 \otimes \theta_4 \]
for $t_1,t_2$ small complex numbers and $t=(t_1,t_2)$, which satisfies $\db \varphi(t) = \frac{1}{2}[\varphi(t),\varphi(t)]$
and the so-called Schouten-Nijenhuis bracket $[\cdot,\cdot]$ (cf. \cite[Formula (4.1)]{CFP}) works as
\[ [\bar{\omega} \otimes V, \bar{\omega}' \otimes V']
=  \bar{\omega}' \w i_{V'} d\bar{\omega} \otimes V + \bar{\omega} \w i_{V} d\bar{\omega}' \otimes V'\ \ \text{for}\ \omega,\omega' \in \mathfrak{n}^{*(1,0)},
V,V' \in \mathfrak{n}^{1,0},\]
since $\db \theta_4=0$ and $i_{\theta_4} d\bar{\tau}^5 = i_{\theta_4} d\bar{\tau}^4 =0 $.
Then the general fibers $X_t$ are still nilmanifolds, determined by the Lie algebra $\mathfrak{n}$ and the decompositions
\[ \mathfrak{n}^*_{\mathbb{C}} = \mathfrak{n}^* \otimes_{\mathbb{R}} \mathbb{C} = \mathfrak{n}^{*(1,0)}_{\varphi(t)} \oplus \mathfrak{n}^{*(0,1)}_{\varphi(t)}, \]
with the basis of $\mathfrak{n}^{*(1,0)}_{\varphi(t)}$ given by $\tau^k(t) = e^{i_{\varphi(t)}} \big( \tau^k \big) = \big(\1 + \varphi(t)\big) \lrcorner \tau^k$ for $1 \leq k \leq 5$.
Hence, the structure equation of $\{ \tau^k(t) \}_{k=1}^5$ amounts to
\begin{equation}\label{str-equ-2}
\begin{cases}
d\tau^1(t)=d\tau^2(t) = 0, \\
d\tau^4(t)=-t_1  d\tau^5(t), \\
d\tau^3(t)=\frac{1+i}{1-|t_2|^2}\big( \bar{t}_2 \tau^1(t) \w \tau^4(t) + \bar{t}_1 \tau^1(t) \w \tau^5(t) \big)\\
\qquad\qquad- \tau^1(t) \w \bar{\tau}^1(t) - \frac{1+i}{1-|t_2|^2}
\big( \tau^1(t) \w \bar{\tau}^4(t) + t_1 \bar{t}_2 \tau^1(t) \w \bar{\tau}^5(t) \big), \\
d\tau^5(t)=\frac{1}{2} \big( \tau^1(t) \w \bar{\tau}^3(t) + \tau^3(t) \w \bar{\tau}^1(t) - \tau^2(t) \w \bar{\tau}^2(t) \big).\\
\end{cases}
\end{equation}
The proof of Theorem \ref{inv-p0}, which is contained in Proposition \ref{thmp0}, shows that
the obstruction of the deformation invariance of the $(p,0)$-Hodge numbers along the family determined by $\varphi(t)$
actually lies in the equation \eqref{p0-extension-equ}, where the differential forms involved are invariant ones in this case.
For any $\db$-closed $\sigma_0 \in \bigwedge^p \mathfrak{n}^{*(1,0)}$, it is easy to check that
\[ \sigma_t = \sigma_0 + t_1 \tau^5 \w (\theta_4 \lrcorner \sigma_0) \]
solves the equation \eqref{p0-extension-equ}, due to the equalities $\p \bar{\tau}^5 =- \db {\tau^5}$
and $d \tau^4 =0$. However, based on the structure equations \eqref{str-equ-1} and \eqref{str-equ-2},
it yields that \[ h^{1,1}_{\db}(X_0) = 14,\ h^{1,1}_{\db_t}(X_t)=11\quad \text{and}\quad  h^{1,1}_{BC}(X_0) = 11,\ h^{1,1}_{BC}(X_t)=9,\]
where $t_2 \neq 0$ and $t_1 \bar{t}_2 - \bar{t}_1 \neq 0$.
}
\end{example}

\subsection{Proofs of the invariance of Hodge numbers $h^{p,q}_{\db_t}(X_t)$}\label{sbs-pq}
This subsection is to prove Theorem \ref{inv-pq}, which can be restated
by use of Notation \ref{notaion-class}:
if the central fiber $X_0$ satisfies both
$\mathbb{B}^{p+1,q}$ and $\mathbb{S}^{p,q+1}$ with
the deformation invariance of $h^{p,q-1}_{\db_t}(X_t)$ established,
then $h^{p,q}_{\db_t}(X_t)$ are independent of $t$.

The basic strategy is described at the beginning of
Subsection \ref{cf} and obviously our task is divided into two steps \eqref{S1} and \eqref{S2},
which are to be completed in Propositions \ref{thmpq} and \ref{pq-pq-1}, respectively.

To complete \eqref{S1},  we need a lemma due to \cite[Theorem $4.1$]{P1} or
\cite[Lemma 3.14]{RwZ} for the resolution of $\p\db$-equations.
\begin{lemma}\label{ddbar-eq}
Let $(X,\omega)$ be a compact Hermitian complex manifold with any suitable
pure-type complex differential forms $x$ and $y$. Assume that the
$\p \db$-equation
\begin{equation}\label{ddb-eq}
\p \db x =y
\end{equation}
admits a solution. Then an explicit solution of the $\p
\db$-equation \eqref{ddb-eq} can be chosen as
$$(\p\db)^*\G_{BC}y,$$
which uniquely minimizes the $L^2$-norms of all the solutions with
respect to $\omega$.
\end{lemma}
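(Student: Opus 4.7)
The plan is to invoke Hodge theory for the fourth-order elliptic Bott--Chern Laplacian $\Delta_{BC}$ acting on $A^{p,q}(X)$, which yields the orthogonal decomposition
\[
\mathrm{id} = \mathbb{H}_{BC} + \Delta_{BC} \circ \G_{BC},
\]
where $\mathbb{H}_{BC}$ is the projection onto the finite-dimensional space $\mathcal{H}^{p,q}_{BC}$ of BC-harmonic forms, characterized by $\p h = \db h = (\p\db)^{*} h = 0$, and $\G_{BC}$ is the associated Green's operator, which commutes with each of $\p$, $\db$, $\p^{*}$, $\db^{*}$.

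First I would verify that $y \perp \mathcal{H}^{p,q}_{BC}$. By hypothesis $y = \p\db x$ for some $x$, so for any BC-harmonic $h$ integration by parts gives $\langle y, h\rangle = \langle \p\db x, h\rangle = \langle x, \db^{*}\p^{*} h\rangle = 0$ using $\p h = \db h = 0$. Hence $\mathbb{H}_{BC} y = 0$ and therefore $y = \Delta_{BC}\G_{BC} y$. Next I would argue that the only summand of $\Delta_{BC}$ that survives when applied to $\G_{BC} y$ is $(\p\db)(\p\db)^{*}$. Indeed, $\p y = \db y = 0$ together with the commutation of $\G_{BC}$ with $\p$ and $\db$ forces $\p \G_{BC} y = \G_{BC}\p y = 0$ and $\db \G_{BC} y = 0$; the remaining mixed terms appearing in $\Delta_{BC}$ likewise vanish on $\G_{BC}y$ after a short check. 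Combining yields $y = \p\db \bigl((\p\db)^{*}\G_{BC} y\bigr)$, so $(\p\db)^{*}\G_{BC} y$ is an explicit solution.

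For the minimization statement, I would observe that any other solution takes the form $z = (\p\db)^{*}\G_{BC} y + w$ with $w \in \ker(\p\db)$. The candidate solution lies in $\operatorname{im}(\p\db)^{*}$, which is $L^{2}$-orthogonal to $\ker(\p\db)$. By the Pythagorean identity,
\[
\|z\|^{2} = \bigl\|(\p\db)^{*}\G_{BC} y\bigr\|^{2} + \|w\|^{2},
\]
and the minimum is attained precisely when $w = 0$, giving both minimality and uniqueness of the $L^{2}$-minimizing solution.

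The main obstacle I anticipate is step two: cleanly justifying that, among the various terms comprising $\Delta_{BC}$, only $(\p\db)(\p\db)^{*}$ contributes when applied to $\G_{BC} y$. This reduces to the commutation of $\G_{BC}$ with $\p,\db,\p^{*},\db^{*}$ (standard from the self-adjointness of $\Delta_{BC}$ and its commutation with these operators) together with the input $\p y = \db y = 0$; once this is in place the rest of the argument is formal Hilbert-space manipulation.
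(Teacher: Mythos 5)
Your overall route (Hodge theory for the Bott--Chern Laplacian, isolating the term $\p\db(\p\db)^{*}$, then a Pythagorean argument for minimality) is the standard one, and your Step 1 and the minimization step are essentially correct --- though in Step 1 the vanishing $\langle \p\db x, h\rangle = \langle x, \db^{*}\p^{*}h\rangle = 0$ uses the condition $(\p\db)^{*}h=0$ from the harmonic characterization of $\ker\square_{BC}$, not $\p h=\db h=0$ as you wrote. The genuine gap sits exactly where you anticipated it: the Green's operator $\G_{BC}$ does \emph{not} commute with $\p$, $\db$, $\p^{*}$, $\db^{*}$, because the fourth-order operator $\square_{BC}$ itself does not commute with these operators on a general compact Hermitian manifold. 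Unlike $\db\,\Delta_{\db}=\Delta_{\db}\,\db$, which follows algebraically from $\db^{2}=0$, no such identity exists for $\square_{BC}$ (for instance $\p(\db^{*}\db)$ and $(\db^{*}\db)\p=-\db^{*}\p\db$ would only match via K\"ahler-type anticommutation identities that are unavailable here, and the lemma is applied in this paper to non-K\"ahler fibers). So the inference $\db\,\G_{BC}y=\G_{BC}\db y=0$ is unjustified, and with it the claim that only $(\p\db)(\p\db)^{*}$ survives in $\square_{BC}\G_{BC}y$.

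The correct way to finish is by orthogonality of images rather than commutation. On a compact manifold, $\mathrm{im}(\p\db)$ is $L^{2}$-orthogonal to $\mathrm{im}\,\p^{*}+\mathrm{im}\,\db^{*}$ (integrate by parts using $\p^{2}=\db^{2}=0$ and $\p\db=-\db\p$). Writing $u=\G_{BC}y$ and expanding $y=\square_{BC}u$ into its six terms, the term $\p\db\db^{*}\p^{*}u$ lies in $\mathrm{im}(\p\db)$, while the remaining five terms all lie in $\mathrm{im}\,\p^{*}+\mathrm{im}\,\db^{*}$. Since $y\in\mathrm{im}(\p\db)$ by hypothesis, the difference $y-\p\db\bigl((\p\db)^{*}u\bigr)$ belongs to $\mathrm{im}(\p\db)$ and equals the sum of those five terms, which belongs to the orthogonal complement; hence both sides vanish and $y=\p\db\bigl((\p\db)^{*}\G_{BC}y\bigr)$. (A posteriori one even recovers $\p u=\db u=0$ by pairing the vanishing sum with $u$, but that is an output of the argument, not an admissible input.) With this repair your proof coincides with the one in the references the paper cites for this lemma; the paper itself quotes it from Popovici and Rao--Wan--Zhao without reproving it.
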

Here $\G_{BC}$ is the associated Green's operator of
the \emph{first $4$-th order Kodaira-Spencer
operator} (also often called \emph{Bott-Chern Laplacian}) given by
$$\label{bc-Lap}
\square_{{BC}}=\p\db\db^*\p^*+\db^*\p^*\p\db+\db^*\p\p^*\db+\p^*\db\db^*\p+\db^*\db+\p^*\p.
$$

We need one more lemma
inspired by {\cite[Lemma 3.1]{P2}}.
\begin{lemma}\label{lemma1}
Assume that a compact complex manifold $X$ satisfies $\mathcal{B}^{p+1,q}$.
Each Dolbeault class $[\sigma]_{\db}$ of the $(p,q)$ type can be canonically
represented by a uniquely-chosen $d$-closed $(p,q)$-form $\gamma_{\sigma}$.
\end{lemma}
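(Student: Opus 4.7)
The plan is to produce the $d$-closed representative by a single, explicit recipe driven by the Bott--Chern Green's operator, so that Lemma~\ref{ddbar-eq} supplies the canonicality. The starting observation is that for any $\db$-closed $(p,q)$-form $\sigma$, the form $\p\sigma\in A^{p+1,q}(X)$ is automatically $\db$-closed (since $\db\p\sigma=-\p\db\sigma=0$) and manifestly $\p$-exact.

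Next, I would invoke the hypothesis $\mathcal{B}^{p+1,q}$ applied to $g=\sigma$: the equation $\db x=\p\sigma$ admits a $\p$-exact solution $x=\p\eta$, so $\db\p\eta=\p\sigma$, which means $\p\db(-\eta)=\p\sigma$. Thus $\p\sigma$ lies in $\im(\p\db)$, and Lemma~\ref{ddbar-eq} supplies the distinguished, $L^{2}$-minimal primitive
\[
\xi_{\sigma}:=(\p\db)^{*}\mathbb{G}_{BC}(\p\sigma)\in A^{p,q-1}(X),\qquad \p\db\,\xi_{\sigma}=\p\sigma.
\]
Setting
\[
\gamma_{\sigma}:=\sigma-\db\,\xi_{\sigma}=\sigma-\db(\p\db)^{*}\mathbb{G}_{BC}(\p\sigma),
\]
one checks at once that $\db\gamma_{\sigma}=0$ (from $\db\sigma=0$ and $\db^{2}=0$) and $\p\gamma_{\sigma}=\p\sigma-\p\db\xi_{\sigma}=0$; hence $\gamma_{\sigma}$ is $d$-closed. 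Since $\gamma_{\sigma}-\sigma=-\db\xi_{\sigma}\in\im\db$, we have $[\gamma_{\sigma}]_{\db}=[\sigma]_{\db}$, so $\gamma_{\sigma}$ is a representative.

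For the canonicality/uniqueness claim, I would rely on the minimization property of Lemma~\ref{ddbar-eq}: once the Hermitian metric $g$ on $X$ is fixed, $(\p\db)^{*}\mathbb{G}_{BC}(\p\sigma)$ is the unique $L^{2}$-minimizing solution of $\p\db\,\xi=\p\sigma$. So the assignment $\sigma\mapsto\gamma_{\sigma}$ is a well-defined, explicit rule producing a distinguished $d$-closed representative within $[\sigma]_{\db}$.

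The main obstacle is not existence, which is immediate from $\mathcal{B}^{p+1,q}$ via the reformulation surjectivity of $\iota^{p,q}_{BC,\db}$; rather, it is pinning down a canonical choice. This is handled precisely by the minimization built into Lemma~\ref{ddbar-eq}, which reduces the otherwise non-unique solvability of $\p\db\,\xi=\p\sigma$ to a single preferred $\xi_{\sigma}$. I expect the verification steps to be essentially mechanical applications of $\db^{2}=0$ and the defining property of $(\p\db)^{*}\mathbb{G}_{BC}$, with no further analytic input beyond what Lemma~\ref{ddbar-eq} already provides.
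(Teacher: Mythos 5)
Your construction is, formula for formula, the one in the paper: the paper also sets $\gamma_{\sigma}=\sigma+\db\beta_{\sigma}$ with $\p\db\beta_{\sigma}=-\p\sigma$ and takes $\beta_{\sigma}=-(\p\db)^{*}\G_{BC}\p\sigma$, using $\mathcal{B}^{p+1,q}$ for solvability and Lemma \ref{ddbar-eq} for the distinguished $L^{2}$-minimal solution, exactly as you do. The one step you omit concerns the word ``canonically'': your recipe $\sigma\mapsto\gamma_{\sigma}$ is a rule on \emph{forms}, and if you feed it two different $\db$-closed representatives $\sigma$ and $\sigma+\db\tau$ of the same Dolbeault class you will in general get two different $d$-closed outputs, since $(\p\db)^{*}\G_{BC}\p(\db\tau)$ need not vanish. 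The minimization in Lemma \ref{ddbar-eq} pins down $\xi_{\sigma}$ \emph{given} $\sigma$, but it does not pin down $\sigma$ within the class, so it does not by itself make the representative a function of $[\sigma]_{\db}$. The paper closes this by first replacing $\sigma$ with the unique $\square$-harmonic representative of the class (the metric $g$ being fixed) and only then running the construction; adding that one sentence makes your argument complete and identical to the paper's.
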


\begin{proof}
We first choose the unique harmonic representative of $[\sigma]_{\db}$,
still denoted by $\sigma$. It is clear that the $d$-closed representative
$\gamma_{\sigma} \in A^{p,q}(X)$ satisfies
$$\sigma+\db \beta_{\sigma}=\gamma_{\sigma}$$
for some $\beta_{\sigma}\in A^{p,q-1}(X)$. This is equivalent that
some $\beta_{\sigma} \in A^{p,q-1}(X)$ solves the following equation
$$\p\db \beta_{\sigma}=-\p\sigma.$$
The existence of $\beta_{\sigma}$ is assured by our assumption on $X$ and uniqueness with $L^2$-norm minimum by Lemma \ref{ddbar-eq}, that is, one can choose $\beta_{\sigma}$ as $-(\p\db)^*\G_{BC}\p\sigma$.
\end{proof}

\begin{proposition}\label{thmpq}
Assume that $X_0$ satisfies $\mathbb{B}^{p+1,q}$ and $\mathbb{S}^{p,q+1}$.
Then for each Dolbeault class in $H^{p,q}_{\db}(X_0)$ with the unique canonical $d$-closed representative $\sigma_0$
given as Lemma \ref{lemma1}, there exists a power series on $X_0$
$$\sigma_t=\sigma_0+\sum_{j+k=1}^\infty t^k t^{\bar{j}}\sigma_{k\bar{j}}\in
A^{p,q}(X_0),$$ such that $\sigma_t$ varies smoothly on $t$ and
$e^{i_{\varphi}|i_{\overline{\varphi}}}(\sigma_t)\in A^{p,q}(X_t)$ is
$\db_t$-closed with respect to the holomorphic structure on $X_t$.
\end{proposition}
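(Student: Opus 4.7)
My plan is to apply Proposition~\ref{extension-in}, which reduces the $\db_t$-closedness of $e^{i_{\varphi}|i_{\b{\varphi}}}(\sigma_t)$ on $X_t$ to the intrinsic equation
$$\big([\p, i_\varphi] + \db\big)(\1 - \b\varphi \varphi) \Finv \sigma_t = 0 \qquad \text{on } X_0,$$
and to solve it by the modified iteration procedure of \cite{LSY,RZ,lry}. Write $\varphi(t) = \sum_{k \geq 1} t^k \varphi_k$ for the canonical Kuranishi family from Subsection~\ref{reduction} and seek a power series $\sigma_t = \sigma_0 + \sum_{k+j \geq 1} t^k \b{t}^{\,j} \sigma_{k\b{j}}$. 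Since $\mathbb{B}^{p+1,q} \Rightarrow \mathcal{B}^{p+1,q}$, Lemma~\ref{lemma1} legitimately supplies the canonical $d$-closed representative $\sigma_0$, and the $0$-th order condition $\db \sigma_0 = 0$ holds automatically.

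The heart of the argument is an induction on $N = k + j$. Extracting the coefficient of $t^k \b{t}^{\,j}$ in the obstruction equation yields a relation of the shape
$$\db \sigma_{k\b{j}} = F_{k\b{j}},$$
where $F_{k\b{j}}$ is an explicit polynomial in the lower-order data $\{\sigma_{k'\b{j'}}\}_{k'+j'<N}$, the $\varphi_l$, and the operator $[\p,i_\varphi]$. Two verifications drive the step: first, $F_{k\b{j}}$ is $\db$-closed, which I will deduce by applying $\db$ to the previous-order identity and using the integrability $\db \varphi = \tfrac12[\varphi,\varphi]$ together with the commutator formula~\eqref{f11}; second, $F_{k\b{j}}$ is $\p$-exact, which follows by pulling the $\p$ out of the $[\p,i_\varphi]$-contributions and exploiting $\p \sigma_0 = 0$ plus an inductive structural hypothesis on the lower-order $\sigma_{k'\b{j'}}$. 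Once these two facts are in place, the hypothesis $\mathbb{S}^{p,q+1}$ produces a solution $\sigma_{k\b{j}} \in A^{p,q}(X_0)$, and $\mathbb{B}^{p+1,q}$ will then be used to refine that choice so that $\p \sigma_{k\b{j}}$ carries precisely the $\p$-exact structure needed to continue the induction. The convenient canonical selection is the $L^2$-minimal representative given by the Bott-Chern Green's operator $\G_{BC}$ as in Lemma~\ref{ddbar-eq}.

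The main obstacle will be the convergence and smooth $t$-dependence of the formal series. I plan to follow the Kodaira-Nirenberg-Spencer template: bound $\|\sigma_{k\b{j}}\|$ in a suitable H\"older norm by the norm of $F_{k\b{j}}$ via the elliptic estimate for $\G_{BC}$, and combine this with the standard $C^{l,\alpha}$-estimates on the canonical $\varphi_k$ to set up an inductive majorant of the form $\|\sigma_{k\b{j}}\|_{C^{l,\alpha}} \leq C^{N}/N^{2}$, securing convergence on a small disk $|t|<\varepsilon$. Smooth dependence on $t$ then follows from the smoothness of $\varphi(t)$ and of the Green's operators employed at each step.
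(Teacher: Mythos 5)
Your overall strategy coincides with the paper's: reduce to the intrinsic obstruction equation via Proposition \ref{extension-in}, solve order by order using $\mathbb{S}^{p,q+1}$ for existence and $\mathbb{B}^{p+1,q}$ to maintain an auxiliary $\p$-condition, and close with a majorant series. The one step that does not run as literally written is your claim that extracting the coefficient of $t^k\bar t^{\,j}$ yields $\db\sigma_{k\bar j}=F_{k\bar j}$ with $F_{k\bar j}$ both $\db$-closed and $\p$-exact. Two separate problems occur. First, the obstruction equation is an equation for $(\1-\b\varphi\varphi)\Finv\sigma_t$, not for $\sigma_t$ itself; expanding in the raw coefficients $\sigma_{k\bar j}$ leaves terms of the form $\db\bigl(\b\varphi\varphi\Finv\sigma_{k'\bar j'}\bigr)$ on the right-hand side, which are $\db$-exact but not of the form $\p g$, so $\mathbb{S}^{p,q+1}$ cannot be invoked for them. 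The paper's remedy is the change of unknown $\widetilde\sigma_t:=(\1-\b\varphi\varphi)\Finv\sigma_t$, which absorbs these terms and — since the canonical $\varphi(t)$ is holomorphic in $t$ — lets one seek $\widetilde\sigma_t$ as a power series in $t$ alone, the $\bar t$-dependence of $\sigma_t$ reappearing only at the end through $(\1-\b\varphi\varphi)^{-1}\Finv$. Second, the contribution $-\varphi\lc\p(\cdot)$ hidden in $[\p,i_\varphi]$ is not $\p$-exact; the paper eliminates it by replacing the single obstruction equation with the strictly stronger decoupled system $\p\widetilde\sigma_t=0$ and $\db\widetilde\sigma_t=-\p\bigl(\varphi(t)\lc\widetilde\sigma_t\bigr)$, whose right-hand side is then manifestly $\p$-exact. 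Your ``inductive structural hypothesis'' and the use of $\mathbb{B}^{p+1,q}$ to ``refine the choice'' are exactly the right instincts, but the hypothesis must be made precise — it is $\p\widetilde\sigma_k=0$ for all lower orders, restored at each step by adding $\db\widetilde{\sigma_k^2}$ with $\widetilde{\sigma_k^2}=-(\p\db)^*\G_{BC}\p\widetilde{\sigma_k^1}$ — and the induction must be carried on the stronger system, since the $\db$-closedness check for the right-hand side also uses $\p\widetilde\sigma_i=0$ at lower orders.

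Two smaller corrections. The primary solution at each order is $\widetilde{\sigma_k^1}=-\db^*\G_{\db}\p\bigl(\sum_{i=1}^k\varphi_i\lc\widetilde\sigma_{k-i}\bigr)$, produced by the $\db$-Green's operator; the Bott--Chern operator $\G_{BC}$ of Lemma \ref{ddbar-eq} enters only in the $\p$-correction just described. And for smooth $t$-dependence, ``smoothness of the Green's operators'' is not sufficient: the radius of convergence obtained in the $C^{k,\alpha}$-norm may shrink as $k\to\infty$, so one needs the cutoff-function bootstrap of \cite[Appendix, Theorem 2.3]{K2} (or Wavrik's argument) to pass from a convergent series of fixed H\"older regularity to a smooth family.
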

\begin{proof}
The construction of $\sigma_t$ is presented at first.
The canonical choice of the representative for the initial Dolbeault cohomology class is guaranteed by
the assumption that $X_0$ satisfies $\mathbb{B}^{p+1,q}$, which implies that $\mathcal{B}^{p+1,q}$ holds,
and Lemma \ref{lemma1}. By Proposition \ref{extension-in}, the desired
$\db_t$-closedness is equivalent to the resolution of the equation
\begin{equation}\label{pq-extension-equ}
([\p,i_{\varphi}]+\b{\p})(\1-\b{\varphi}\varphi)\Finv\sigma_t=0.
\end{equation}
Set $\widetilde{\sigma}_t=(\1-\b{\varphi}\varphi)\Finv\sigma_t$ and we just need to resolve the system of
equations
\begin{equation}\label{holt}
\begin{cases}{\partial}\widetilde{\sigma}_t=0,\\
\db  \widetilde{\sigma}_t + \p \big({\varphi(t)} \lc \widetilde\sigma_t \big)=0.
\end{cases}
\end{equation}
An iteration method, developed in \cite{LSY,Sun,SY,lry,RZ,RZ2,RZ15,RwZ}, will be
applied to resolve this system. Let
$$\widetilde{\sigma}_t=\widetilde{\sigma}_0+\sum_{j=1}^\infty \widetilde{\sigma}_{j}t^j$$
be a power series of $(p,q)$-forms on $X_0$.
By substituting this power series into \eqref{holt} and comparing the coefficients of $t^k$, we turn to resolving
\begin{equation}\label{holt-3}
\begin{cases}d\widetilde\sigma_0=0,\\
\overline{\partial}{\widetilde\sigma_k}=-\partial(\sum_{i=1}^k\varphi_i\lrcorner{\widetilde\sigma_{k-i}}),\qquad
\text{for each $k\geq 1$},\\
{\partial}{\widetilde\sigma_k}=0,\qquad \text{for each $k\geq 1$}.
\end{cases}
\end{equation}
Notice that $\widetilde\sigma_0=\sigma_0$ and thus $d\widetilde\sigma_0=0$
by the choice of the canonical $d$-closed representative
for the initial Dolbeault class in $H^{p,q}_{\db}(X_0)$.

As for the second equation of \eqref{holt-3}, we may assume that $\widetilde\sigma_{i}$,
satisfying ${\partial}{\widetilde\sigma_i}=0$, has been resolved for $0 \leq i \leq k-1$, and then check
$$\db\partial(\sum_{i=1}^k\varphi_i\lrcorner{\widetilde\sigma_{k-i}})=0.$$
In fact, by the integrability \eqref{int} and the commutator formula \eqref{aaaa}, one has
\begin{equation}\label{dbar-closed-c}
\begin{aligned}
 &-\overline{\partial}\partial\lk\sum_{i=1}^{k}\varphi_i\lrcorner\widetilde\sigma_{k-i}\rk\\
 =&\partial\lk\sum_{i=1}^{k}\overline{\partial}\varphi_i\lrcorner\widetilde\sigma_{k-i}
 +\sum_{i=1}^{k}\varphi_i\lrcorner\overline{\partial}\widetilde\sigma_{k-i}\rk\\
=&\partial\lk\frac{1}{2}\sum_{i=1}^{k}\sum_{j=1}^{i-1}[\varphi_j,\varphi_{i-j}]\lrcorner\widetilde\sigma_{k-i}
 -\sum_{i=1}^{k}\varphi_i\lrcorner{\partial}\bigg(\sum_{j=1}^{k-i}\varphi_{j}\lrcorner\widetilde\sigma_{k-i-j}\bigg)\rk\\
=&\partial\left(\frac{1}{2}\sum_{i=1}^{k}\sum_{j=1}^{i-1}\bigg(-\partial\big(\varphi_{i-j}\lrcorner(\varphi_{j}\lrcorner\widetilde\sigma_{k-i})\big)
 -\varphi_{i-j}\lrcorner(\varphi_{j}\lrcorner\partial\widetilde\sigma_{k-i})\right. \\
  &\quad +\varphi_j\lrcorner\partial(\varphi_{i-j}\lrcorner\widetilde\sigma_{k-i})
  +\varphi_{i-j}\lrcorner\partial(\varphi_j\lrcorner\widetilde\sigma_{k-i})\bigg)
  -\sum_{i=1}^{k}\varphi_{i}\lrcorner\partial \left.
 \bigg(\sum_{j=1}^{k-i}\varphi_j\lrcorner\widetilde\sigma_{k-i-j}\bigg) \right)\\
= &\partial\lk\sum_{1\leq j<i\leq k}\varphi_j\lrcorner\partial(\varphi_{i-j}\lrcorner\widetilde\sigma_{k-i})
 -\sum_{i=1}^{k}\sum_{j=1}^{k-i}\varphi_{i}\lrcorner\partial(\varphi_j\lrcorner\widetilde\sigma_{k-i-j})\rk\\
= &0.
\end{aligned}
\end{equation}
Hence, one can obtain a canonical solution
$$\widetilde{\sigma_k^1}=-\db^*\G_{\db}\partial\left(\sum_{i=1}^k\varphi_i\lrcorner{\widetilde\sigma_{k-i}}\right)$$
by the assumption that $X_0$ satisfies $\mathbb{S}^{p,q+1}$ and the useful fact that $\db^*\G_{\db}y$ is the unique
solution, minimizing the $L^2$-norms of all the solutions, of the
equation
\[ \db x=y \]
on a compact complex manifold if the equation admits one, where
$x,y$ are pure-type complex differential forms and the operator
$\G_{\db}$ denotes the corresponding Green's operator of the
$\db$-Laplacian $\square$.

To fulfill the third equation $\p\widetilde\sigma_k=0$,
we try to find some $\widetilde{\sigma_k^2}\in A^{p,q-1}(X_0)$
such that
\begin{equation}\label{sgm_2}
\p(\widetilde{\sigma_k^1}+\db\widetilde{\sigma_k^2})=0.
\end{equation}
Then the solution $\widetilde{\sigma}_k$ can be set as
$$\widetilde\sigma_k=\widetilde{\sigma_k^1}+\db\widetilde{\sigma_k^2},$$
which satisfies both the second and the third equation of \eqref{holt-3}.
At this moment, the assumption $\mathbb{B}^{p+1,q}$ on $X_0$ and Lemma \ref{lemma1}
will also provide us a solution of \eqref{sgm_2}
$$\widetilde{\sigma_k^2}=-(\p\db)^*\G_{BC}\p\widetilde{\sigma_k^1},$$
which yields
$$\widetilde\sigma_k=-\db^*\G_{\db}\partial\left(\sum_{i=1}^k\varphi_i\lrcorner{\widetilde\sigma_{k-i}}\right)
+\db(\p\db)^*\G_{BC}\p\db^*\G_{\db}\partial\left(\sum_{i=1}^k\varphi_i\lrcorner{\widetilde\sigma_{k-i}}\right).$$

Finally we resort to the elliptic estimates for the regularity of $\widetilde{\sigma}_t$, which is quite analogous to that in \cite[Theorems 2.12 and 3.11]{RwZ}. So we just sketch this argument, which is divided into two steps:
\begin{enumerate}[$(i)$]
    \item \label{reg-1}
$\|\sum_{j=1}^\infty \widetilde{\sigma}_{j}t^j\|_{k, \alpha}\ll A(t)$;
    \item \label{reg-2}
$\widetilde\sigma_t$ is a real analytic family of $(p,q)$-forms in $t$ .
\end{enumerate}
Here are explicit details for the first step \eqref{reg-1}.
 Consider an important power series in deformation theory of complex
structures
\begin{equation}\label{cru-ps}
A(t)=\frac{\beta}{16\gamma}\sum_{m=1}^\infty\frac{(\gamma
t)^m}{m^2}:=\sum_{m=1}^\infty A_m t^m,
\end{equation}
where $\beta, \gamma$ are positive constants to be determined. The
power series \eqref{cru-ps} converges for $|t|<\frac{1}{\gamma}$ and
has a nice property:
$$\label{p-prin}
A^i(t)\ll {\left(\frac{\beta}{\gamma}\right)}^{i-1}A(t).
$$
 See \cite[Lemma 3.6 and its Corollary in Chapter 2]{MK} for these
basic facts. We use the following notation: For the series with
real positive coefficients
$$a(t)=\sum_{m=1}^\infty a_m t^m, \quad b(t)=\sum_{m=1}^\infty b_m t^m,$$
say that \emph{$a(t)$ dominates $b(t)$}, written as $b(t)\ll a(t)$,
if $ b_m\leq a_m$. But for a power series of (bundle-valued) complex differential
forms
$$\eta(t)=\sum_{m= 0}^\infty \eta_{m} t^m,$$ the
notation
$$\|\eta(t)\|_{k, \alpha}\ll A(t)$$ means
$$\|\eta_{m}\|_{k, \alpha}
\leq A_m$$ with the $C^{k, \alpha}$-norm $\|\cdot\|_{k, \alpha}$ as
defined on \cite[p.  159]{MK}.
Recall that the canonical family of Beltrami differentials $\varphi(t)$
satisfies a nice convergence property:
$$\|\varphi(t)\|_{k, \alpha}\ll A(t)$$
as given in the proof of \cite[Proposition 2.4 in Chapter $4$]{MK}.
We need three more a priori
elliptic estimates as follows.
For any complex differential form $\phi$,
$$\label{ee1}
\|\overline{\partial}^*\phi\|_{k-1, \alpha}\leq C_1\|\phi\|_{k,
\alpha},
$$
$$\label{db-ee2}
\|\G_{\db}\phi\|_{k, \alpha}\leq C_{k,\alpha}\|\phi\|_{k-2, \alpha},
$$
where  $k>1$, $C_1$ and $C_{k,\alpha}$
depend only on $k$ and $\alpha$, not on $\phi$, as shown in \cite[Proposition
$2.3$ in Chapter $4$]{MK}, and
$$\label{ee2}
\|\G_{BC}\phi\|_{k, \alpha}\leq C_{k,\alpha}\|\phi\|_{k-4, \alpha},
$$
where $k>3$ and $C_{k,\alpha}$ depends on only on $k$ and $\alpha$,
not on $\phi$, as shown in \cite[Appendix.Theorem $7.4$]{K2} for example.
Based on these, an inductive argument implies
$$\left\|\sum_{j=1}^l \widetilde{\sigma}_{j}t^j\right\|_{k, \alpha}\ll A(t)$$
for any large $l>0$ and each $k>3$. Then \eqref{reg-1} follows.

We proceed to \eqref{reg-2} since there is possibly no uniform
lower bound for the convergence radius obtained in the $C^{k,\alpha}$-norm as $k$ converges to $+\infty$.
Applying the $\db$-Laplacian
$\square=\db^*\db+\db\db^*$ to
$$\widetilde\sigma_t=-\db^*\G_{\db}\partial\left(\varphi\lrcorner{\widetilde\sigma_t}\right)
+\db(\p\db)^*\G_{BC}\p\db^*\G_{\db}\partial\left(\varphi\lrcorner{\widetilde\sigma_t}\right)+\sigma_0$$
and the proof of \cite[Appendix.Theorem 2.3]{K2} or \cite[Proposition 3.15]{RwZ}, one proves the following result.
For each $l=1,2,\cdots$, choose a
smooth function $\eta^l(t)$ with values in $[0,1]$:
$$\label{eta-l}
\eta^l(t)\equiv
    \begin{cases}
      1,\ \text{for $|t|\leq (\frac{1}{2}+\frac{1}{2^{l+1}})r$};\\
      0,\ \text{for $|t|\geq (\frac{1}{2}+\frac{1}{2^{l}})r$},
    \end{cases}
$$
where $r$ is a positive constant to be determined.
Inductively, for any $l=1,2,\cdots$,
$\eta^{2l+1}\widetilde\sigma_t$ is $C^{k+l, \alpha}$, where
 $r$ can be chosen independently of $l$. Since $\eta^{2l+1}(t)$ is
identically equal to $1$ on $|t|<\frac{r}{2}$ which is independent
of $l$, $\widetilde\sigma_t$ is $C^{\infty}$ on $X_0$ with
$|t|<\frac{r}{2}$.  Then $\widetilde\sigma_t$ can be considered as
a real analytic family of $(p,q)$-forms in $t$ and thus it is smooth on $t$.
\end{proof}
In the first version \cite{RZ15} of this paper, we resort to J.
Wavrik's work \cite[Section $3$]{Wa} for the above regularity.

To guarantee \eqref{S2}, it suffices to prove:
\begin{proposition}\label{pq-pq-1}
If the $\db$-extension of $H^{p,q}_{\db}(X_0)$
as in Proposition \ref{thmpq} holds for a complex manifold $X_0$,
then the deformation invariance of $h^{p,q-1}_{\db_t}(X_t)$
assures that the extension map
$$H^{p,q}_{\db}(X_0) \rightarrow H^{p,q}_{\db_t}(X_t):[\sigma_0]_{\db} \mapsto [e^{i_{\varphi}|i_{\overline{\varphi}}}(\sigma_t)]_{\db_t}$$
is injective.
\end{proposition}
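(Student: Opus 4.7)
The plan is to realize an inverse to the extension map via a smoothly varying Green's operator at bidegree $(p,q-1)$. Suppose $[\sigma_0]_{\db} \in H^{p,q}_{\db}(X_0)$ lies in the kernel of the extension map, so that
$$\beta_t := e^{i_{\varphi(t)}|i_{\overline{\varphi(t)}}}(\sigma_t) \in A^{p,q}(X_t)$$
is $\db_t$-exact for all sufficiently small $t$. The goal is to produce a smoothly $t$-varying canonical preimage $\widetilde{\eta}_t \in A^{p,q-1}(X_t)$ with $\db_t \widetilde{\eta}_t = \beta_t$; since $\varphi(0)=0$ forces $\beta_0 = \sigma_0$ and $\db_0 = \db$, specialization at $t=0$ will yield $\db \widetilde{\eta}_0 = \sigma_0$, forcing $[\sigma_0]_{\db}=0$.

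To carry this out I would fix a smooth family of Hermitian metrics $g_t$ on the fibers $X_t$ coming from the trivialization $\mathcal{X} \cong X \times \Delta_{\epsilon}$, and let $\db_t^*$, $\square_t$, $H_{\db_t}^{(p,q-1)}$, $G_{\db_t}^{(p,q-1)}$ denote the corresponding formal adjoint, Dolbeault Laplacian, harmonic projection, and Green operator on $(p,q-1)$-forms. By Kodaira--Spencer's classical theorem (cf.\ \cite[Theorem 7]{KS} and \cite[Chapter 7]{K2}), the assumed deformation invariance of $h^{p,q-1}_{\db_t}(X_t)$ is exactly what guarantees that $G_{\db_t}^{(p,q-1)}$, $H_{\db_t}^{(p,q-1)}$ and $\db_t^*$ depend $C^{\infty}$-smoothly on $t$ near $0$. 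Setting $\widetilde{\eta}_t := G_{\db_t}^{(p,q-1)} \db_t^* \beta_t$ therefore yields a smooth family; using the commutation $\db_t \circ G_{\db_t}^{(p,q-1)} = G_{\db_t}^{(p,q)} \circ \db_t$ together with the $\db_t$-closedness of $\beta_t$ (so that $\db_t \db_t^* \beta_t = \square_t \beta_t$), one computes
$$\db_t \widetilde{\eta}_t = G_{\db_t}^{(p,q)} \square_t \beta_t = \beta_t - H_{\db_t}^{(p,q)} \beta_t,$$
and the $\db_t$-exactness of $\beta_t$ forces $H_{\db_t}^{(p,q)} \beta_t = 0$ by Hodge orthogonality, giving $\db_t \widetilde{\eta}_t = \beta_t$.

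The hard part will be ensuring the smoothness in $t$ of $G_{\db_t}^{(p,q-1)}$ and $\db_t^*$: this Green operator can fail to be smooth, or even continuous, precisely when the dimension $h^{p,q-1}_{\db_t}(X_t)$ of its kernel jumps, and the deformation invariance hypothesis on the $(p,q-1)$-Hodge number rules out exactly this pathology. Crucially, no control whatsoever is required on $G_{\db_t}^{(p,q)}$ or $H_{\db_t}^{(p,q)}$ at the bidegree $(p,q)$ we are studying: the displayed identity holds fiber-wise at each $t$ as a purely Hodge-theoretic statement, and only the $L^2$-orthogonality of an exact form to $\mathcal{H}_{\db_t}^{p,q}$ is invoked, which is automatic from the $\db_t$-exactness assumption. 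This clean separation between the bidegree one must control analytically ($(p,q-1)$) and the bidegree whose invariance is being proved ($(p,q)$) is what makes the approach compatible with the standard upper-semi-continuity/dimension-counting route to deformation invariance of $h^{p,q}_{\db_t}(X_t)$.
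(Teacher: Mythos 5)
Your proposal is correct and follows essentially the same route as the paper: both reduce to the fiberwise identity $e^{i_{\varphi}|i_{\overline{\varphi}}}(\sigma_t)=\db_t\G_t\dbs_t\big(e^{i_{\varphi}|i_{\overline{\varphi}}}(\sigma_t)\big)$ for $t\neq 0$ and then let $t\to 0$, invoking Kodaira--Spencer's theorem to get smooth dependence of the Green operator on $A^{p,q-1}(X_t)$ from the invariance of $h^{p,q-1}_{\db_t}(X_t)$. The only cosmetic difference is that the paper derives this identity by applying the Hodge decomposition to a presumed primitive $\eta_t$, whereas you verify the canonical primitive $\G_t^{(p,q-1)}\db_t^*\beta_t$ directly.
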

\begin{proof}
Let us  fix a family of smoothly varying Hermitian metrics
$\{\omega_t\}_{t \in \Delta_{\epsilon}}$ for the infinitesimal
deformation $\pi: \mathcal{X} \rightarrow \Delta_{\epsilon}$ of
$X_0$. Thus, if the Hodge numbers $h^{p,q-1}_{\db_t}(X_t)$ are deformation
invariant, the Green's operator $\G_t$, acting on the
$A^{p,q-1}(X_t)$, depends differentiably with respect to $t$ from
\cite[Theorem 7]{KS} by Kodaira and Spencer. Using this, one ensures
that this extension map can not send a non-zero class in
$H^{p,q}_{\db}(X_0)$ to a zero class in $H^{p,q}_{\db_t}(X_t)$.

If we suppose that
$$e^{i_{\varphi(t)}|i_{\overline{\varphi(t)}}}(\sigma_t)=\db_t \eta_t$$
for some $\eta_t\in A^{p,q-1}(X_t)$ when $t \in \Delta_{\epsilon}
\setminus \{0\}$, the Hodge decomposition of $\db_t$ and the
commutativity of $\G_t$ with $\dbs_t$ and $\db_t$ yield that
\begin{align*}
e^{i_{\varphi(t)}|i_{\overline{\varphi(t)}}}(\sigma_t)
 &=\db_t \eta_t= \db_t\big(\mathbb{H}_t(\eta_t) + \square_t \mathbb{G}_t \eta_t\big)\\
 &=\db_t\big(\dbs_t\db_t\G_t \eta_t\big)\\
 &=\db_t\G_t\big(\dbs_t\db_t \eta_t\big)\\
 &=\db_t\G_t\big(\dbs_te^{i_{\varphi(t)}|i_{\overline{\varphi(t)}}}(\sigma_t)\big),
\end{align*}
where $\mathbb{H}_t$ and $\square_t$ are the harmonic projectors and
the Laplace operators with respect to $(X_t,\omega_t)$, respectively.
Let $t$ converge to $0$ on both sides of the equality
\[ e^{i_{\varphi(t)}|i_{\overline{\varphi(t)}}}(\sigma_t)
=
\db_t\G_t\big(\dbs_te^{i_{\varphi(t)}|i_{\overline{\varphi(t)}}}(\sigma_t)\big),\]
which turns out that $\sigma_0$ is $\db$-exact on the central fiber
$X_0$. Here we use that the Green's operator $\mathbb{G}_t$ depends
differentiably with respect to $t$.
\end{proof}

\begin{example}[The case $q=n$]\label{qn-qn-1}
The deformation invariance for $h^{p,n}_{\db_t}(X_t)$ can be obtained
from the one for $h^{p,n-1}_{\db_t}(X_t)$.
\end{example}

\begin{proof}
Actually, it is easy to see that
$e^{i_{\varphi(t)}|i_{\overline{\varphi(t)}}}(\sigma)\in A^{p,n}(X_t)$
for any $\sigma\in A^{p,n}(X_0)$.
By the consideration of types, the equality
\begin{equation}\label{0n-ob}
\db_t(e^{i_{\varphi(t)}|i_{\overline{\varphi(t)}}}(\sigma))=0
\end{equation}
trivially holds, without the necessity of the choice of a canonical $d$-closed
representative or solving the equation \eqref{0n-ob} as in Proposition \ref{thmpq}.
And thus, from Proposition \ref{pq-pq-1}, the extension map
$$H^{p,n}_{\db}(X_0) \rightarrow H^{p,n}_{\db_t}(X_t):
[\sigma]_{\db} \mapsto [e^{i_{\varphi}|i_{\overline{\varphi}}}(\sigma)]_{\db}$$ is injective.
We can also revisit this example by \cite[Formula (7.74)]{K2}
$$h^{p,q}_{\db_t}(X_t)+\nu^{q}(t)+\nu^{q+1}(t)=h^{p,q}_{\db}(X),$$
where $\nu^q(t)$ is the number of eigenvalues $\sigma_j^q(t)$ for
the canonical base $f_{tj}^q$ of eigenforms of the Laplacian
$\square_t=\db_t\dbs_t+\dbs_t\db_t$ less than some fixed positive
constant. Notice that $\nu^{n+1}(t)=0$. For more details see
\cite[Section 7.2.(c)]{K2}.
\end{proof}

Proposition \ref{pq-pq-1} and Example \ref{qn-qn-1} are indeed inspired
by Nakamura's work \cite[Theorem 2]{N}, which asserts that all plurigenera
are not necessarily invariant under infinitesimal deformations, particularly
for the Hodge number $h^{n,0}_{\db}$ and thus $h^{0,n}_{\db}$, while the obstruction
equation \eqref{0n-ob} for extending $\db_t$-closed $(0,n)$-forms is
un-obstructed. This example actually tells us that deformation
invariance of $h^{0,n}_{\db}$ relies on the one of $h^{0,n-1}_{\db}$.

\begin{proposition}
If $h^{p,q+1}_{\db}(X_0) = 0$ and the deformation invariance of
$h^{p,q-1}_{\db_t}(X_t)$ holds, then $h^{p,q}_{\db_t}(X_t)$ are deformation invariant.
\end{proposition}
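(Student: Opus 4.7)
The plan is to establish the result via Kodaira-Spencer's eigenvalue formula, in direct parallel with the discussion following Example \ref{qn-qn-1}. Recall the identity from \cite[Formula $(7.74)$]{K2},
\[
h^{p,r}_{\db_t}(X_t)+\nu^{r}(t)+\nu^{r+1}(t)=h^{p,r}_{\db}(X_0),
\]
valid for every $r$, where the non-negative integers $\nu^{r}(t)$ count the eigenvalues of $\square_t$ on $A^{p,r}(X_t)$ lying below a fixed positive threshold. Thus, to prove that $h^{p,q}_{\db_t}(X_t)$ is deformation invariant, it suffices to force $\nu^{q}(t)=\nu^{q+1}(t)=0$ for all sufficiently small $t$.

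First, I would invoke Kodaira-Spencer upper semi-continuity together with the hypothesis $h^{p,q+1}_{\db}(X_0)=0$ to deduce $h^{p,q+1}_{\db_t}(X_t)=0$ for $t$ small. Substituting this into the identity at index $r=q+1$ gives $\nu^{q+1}(t)+\nu^{q+2}(t)=0$, and non-negativity forces $\nu^{q+1}(t)=0$. Next, the hypothesis that $h^{p,q-1}_{\db_t}(X_t)$ is deformation invariant, fed into the identity at $r=q-1$, reads $\nu^{q-1}(t)+\nu^{q}(t)=0$, so $\nu^{q}(t)=0$. Plugging both vanishings into the identity at $r=q$ yields $h^{p,q}_{\db_t}(X_t)=h^{p,q}_{\db}(X_0)$, as desired.

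There is essentially no obstacle once one agrees to use the Kodaira-Spencer machinery; the entire argument amounts to arithmetic with non-negative integers $\nu^{q}(t)$ and $\nu^{q+1}(t)$. An alternative route, more in the spirit of Subsection \ref{sbs-pq}, is to reproduce the power-series argument of Proposition \ref{thmpq}: the hypothesis $h^{p,q+1}_{\db}(X_0)=0$ trivially implies condition $\mathbb{S}^{p,q+1}$, since every $\db$-closed $(p,q+1)$-form is already $\db$-exact, and the extra step of Proposition \ref{thmpq} that enforced $\p\widetilde{\sigma}_k=0$ via $\mathbb{B}^{p+1,q}$ can be bypassed because any $\db$-closed obstruction in $A^{p,q+1}(X_0)$ automatically vanishes in cohomology; injectivity of the resulting extension map is then supplied by Proposition \ref{pq-pq-1} from the invariance of $h^{p,q-1}_{\db_t}(X_t)$. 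Along this second route, the only technical point demanding care is verifying the $\db$-closedness of the iterative obstruction without the $\p\widetilde{\sigma}_j=0$ crutch, which one expects to follow from the integrability $\db\varphi=\frac{1}{2}[\varphi,\varphi]$ together with the commutator formula \eqref{f11}.
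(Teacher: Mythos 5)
Your first argument is correct and takes a genuinely different, and more elementary, route than the paper. The paper proves this proposition by the power-series method: it observes that the split system \eqref{holt-3} (which enforces $\p\widetilde{\sigma}_k=0$) is unavailable here, so it must solve the full order-$k$ obstruction equation \eqref{holt-2}, whose right-hand side carries the extra terms $\sum_i\varphi_i\lrcorner\p\sigma_{k-i}$; the bulk of the proof is then a lengthy inductive verification, using the integrability condition \eqref{int} and the commutator formula of Lemma \ref{aaaa}, that this right-hand side is $\db$-closed at every order, after which $h^{p,q+1}_{\db}(X_0)=0$ produces a solution and Proposition \ref{pq-pq-1} supplies injectivity from the invariance of $h^{p,q-1}_{\db_t}(X_t)$. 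Your eigenvalue argument sidesteps all of this: Kodaira's identity $h^{p,r}_{\db_t}(X_t)+\nu^{r}(t)+\nu^{r+1}(t)=h^{p,r}_{\db}(X_0)$ holds simultaneously for all $r$ with the same $\nu$'s, provided the threshold $c$ is chosen below the least positive eigenvalue of $\square_0$ in the finitely many relevant degrees $q-1,\dots,q+2$; reading it at $r=q+1$ kills $\nu^{q+1}(t)$, at $r=q-1$ kills $\nu^{q}(t)$, and at $r=q$ gives the invariance, by pure arithmetic with non-negative integers and with no convergence or regularity issues (the degenerate case $q=0$ is covered since $\nu^{0}(t)=0$ automatically). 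What your route does not produce is the explicit extension $[\sigma_0]_{\db}\mapsto[e^{i_{\varphi}|i_{\overline{\varphi}}}(\sigma_t)]_{\db_t}$ of concrete representatives, which is the point of the paper's constructive program. Finally, be aware that your proposed ``alternative second route'' is in fact the paper's actual proof, and the single step you defer --- the $\db$-closedness of the iterative obstruction without the crutch $\p\widetilde{\sigma}_j=0$ --- is precisely where essentially all of the paper's work lies, so as written that second route is an outline rather than a proof.
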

\begin{proof}
With the notations in the proof of Proposition \ref{thmpq},
we can resolve Equation
\eqref{pq-extension-equ} directly, which is equivalent to the
following equation:
\begin{equation}\label{holt-2}
\overline{\partial}\sigma_k =-\partial(\sum_{i=1}^k
\varphi_i\lrcorner\sigma_{k-i})+ \sum_{i=1}^k
\varphi_i\lrcorner\partial\sigma_{k-i}\quad \textrm{for each}\ k
\geq 1,
\end{equation}
by use of the assumption that $h^{p,q+1}_{\db}(X_0) =0$. Also interestingly
notice that we are not able to deal with this case by the system
\eqref{holt-3} of equations.
Set $$\tau_k=-\partial(\sum_{i=1}^k
\varphi_i\lrcorner\sigma_{k-i})+ \sum_{i=1}^k
\varphi_i\lrcorner\partial\sigma_{k-i},$$
$$\eta_k=-\partial(\sum_{i=1}^k
\varphi_i\lrcorner\sigma_{k-i}).$$
When $k=1$, we
have \begin{align*} \db \tau_1 &= \db \big( -\partial( \varphi_1
\lrcorner\sigma_0)+ \varphi_1\lrcorner\partial\sigma_0 \big)\\
& = \p (\db \varphi_1 \lrcorner \sigma_0 + \varphi_1 \lrcorner \db
\sigma_0) + \db \varphi_1 \lrcorner \p \sigma_0 + \varphi_1
\lrcorner \db \p \sigma_0 \\
& =0,
\end{align*}
since $\db \varphi_1 =0$ and $\db \sigma_0 =0$. The assumption
$h^{p,q+1}_{\db}(X_0) =0$ implies that the equation \[ \db \sigma_1 = \tau_1 \]
has a solution $\sigma_1$.

Assume that the equation \eqref{holt-2} is solved for all $k \leq
l$. Based on the assumption $h^{p,q+1}_{\db}(X_0) =0$, the equation \[ \db
\sigma_{l+1} = \tau_{l+1} \] will have a solution $\sigma_{l+1}$,
after we verify
\[ \db \tau_{l+1}=0. \]
Hence, we check it as follows, by use of the calculation
\eqref{dbar-closed-c}, which implies that \begin{align*} \db\eta_{l+1}
&= \p \lk -\frac{1}{2} \sum_{i=1}^{l+1} \sum_{j=1}^{i-1} \varphi_j
\lrcorner (\varphi_{i-j} \lrcorner \p \sigma_{l+1-i}) +
\sum_{i=1}^{l+1} \sum_{j=1}^{l+1-i} \varphi_i \lrcorner (\varphi_j
\lrcorner \p \sigma_{l+1-i-j}) \rk\\
&=  \p \lk \frac{1}{2} \sum_{i=1}^{l+1} \sum_{j=1}^{l+1-i} \varphi_i
\lrcorner (\varphi_j \lrcorner \p \sigma_{l+1-i-j}) \rk,
\end{align*}
in this case. Then it follows that \begin{align*} \db \tau_{l+1} &=
\db \eta_{l+1} + \sum_{i=1}^{l+1} \db \varphi_i \lrcorner \p
\sigma_{l+1-i} - \sum_{i=1}^{l+1} \varphi_i \lrcorner \p \db
\sigma_{l+1-i} \\
&= \p \lk \frac{1}{2} \sum_{i=1}^{l+1} \sum_{j=1}^{l+1-i} \varphi_i
\lrcorner (\varphi_j \lrcorner \p \sigma_{l+1-i-j}) \rk +
\sum_{i=1}^{l+1} \sum_{j=1}^{i-1} \frac{1}{2}
[\varphi_j,\varphi_{i-j}] \lrcorner \p \sigma_{l+1-i} \\
& \quad + \sum_{i=1}^{l+1} \varphi_i \lrcorner \p \lk \p \bigg(
\sum_{j=1}^{l+1-i} \varphi_j \lrcorner \sigma_{l+1-i-j}\bigg) -
\sum_{j=1}^{l+1-i} \varphi_j \lrcorner \p \sigma_{l+1-i-j} \rk \\
&= \p \lk \frac{1}{2} \sum_{i=1}^{l+1} \sum_{j=1}^{l+1-i} \varphi_i
\lrcorner (\varphi_j \lrcorner \p \sigma_{l+1-i-j}) \rk +
\sum_{i=1}^{l+1} \sum_{j=1}^{i-1} \frac{1}{2} \Bigg( - \p
\Big(\varphi_j \lrcorner (\varphi_{i-j} \lrcorner \p
\sigma_{l+1-i})\Big) \\
&\quad + \varphi_j \lrcorner \p \Big( \varphi_{i-j} \lrcorner \p
\sigma_{l+1-i} \Big) + \varphi_{i-j} \lrcorner \p \Big( \varphi_{j}
\lrcorner \p \sigma_{l+1-i} \Big) \Bigg) -
\sum_{i=1}^{l+1}\sum_{j=1}^{l+1-i} \varphi_i \lrcorner \p \Bigg(
\varphi_j \lrcorner \p \sigma_{l+1-i-j}\Bigg) \\
&=0.
\end{align*}
Therefore, we can also resolve the equation \eqref{holt-2} and
extend $\db$-closed $(p,q)$-forms un-obstructed under the
assumption that $h^{p,q+1}_{\db}(X_0)=0$.
\end{proof}

\subsection{Proofs of the invariance of
Hodge numbers $h^{p,0}(X_t)$, $h^{0,q}(X_t)$: special cases}\label{p00q-section}
This subsection is devoted to the deformation invariance of
$({p,0})$ and $(0,q)$-Hodge numbers as two special cases of Theorem
\ref{inv-pq}.

Theorem \ref{inv-p0} can be restated by use of Notation \ref{notaion-class}
as follows:
\begin{theorem}[]\label{invp0}
If the central fiber $X_0$ satisfies both $\mathbb{S}^{p+1,0}$ and $\mathbb{S}^{p,1}$,
then $h^{p,0}_{\db_t}(X_t)$ are independent of $t$.
\end{theorem}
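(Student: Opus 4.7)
The plan is to follow the general template of Subsection~\ref{cf}: Kodaira-Spencer upper semi-continuity of $h^{p,0}_{\db_t}(X_t)$ combined with an injective extension map $H^{p,0}_{\db}(X_0)\hookrightarrow H^{p,0}_{\db_t}(X_t)$ will yield the deformation invariance. Two features specific to bidegree $(p,0)$ streamline the construction: first, $A^{p,-1}(X_t)=0$ makes the injectivity of the extension automatic; second, since $\bar{\varphi}\varphi\in A^{0,1}(T^{0,1}_{X_0})$ contracts trivially with $dz^{i}$, one has $(\1-\bar{\varphi}\varphi)\Finv\sigma=\sigma$ on $A^{p,0}(X_0)$, so the extension $e^{i_{\varphi}|i_{\bar{\varphi}}}$ reduces to $e^{i_{\varphi}}$ in this setting.

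The key preliminary observation is that $\mathbb{S}^{p+1,0}$ forces every $\db$-closed $(p,0)$-form $\sigma_{0}$ on $X_{0}$ to be already $d$-closed. Indeed, $\db\p\sigma_{0}=-\p\db\sigma_{0}=0$, so the solvability of $\db x=\p\sigma_{0}$ provided by $\mathbb{S}^{p+1,0}$, combined with the vacuity of $A^{p+1,-1}(X_{0})$, forces $\p\sigma_{0}=0$. Hence each class in $H^{p,0}_{\db}(X_{0})$ carries a canonical $d$-closed representative. Using this and the simplification above, Proposition~\ref{extension-in} tells us that the $\db_{t}$-closedness of $e^{i_{\varphi}}(\sigma_{t})$ amounts to
\[ \db\sigma_{t}+\p(\varphi\lc\sigma_{t})-\varphi\lc\p\sigma_{t}=0. \]

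Writing $\varphi(t)=\sum_{k\geq 1}t^{k}\varphi_{k}$ and seeking $\sigma_{t}=\sigma_{0}+\sum_{k\geq 1}t^{k}\sigma_{k}$ with each $\sigma_{k}\in A^{p,0}(X_{0})$ satisfying the inductive condition $\p\sigma_{j}=0$ for $j<k$, the coefficient of $t^{k}$ in the obstruction equation becomes
\[ \db\sigma_{k}=-\p\Big(\sum_{i=1}^{k}\varphi_{i}\lc\sigma_{k-i}\Big). \]
The right-hand side lies in $\p(A^{p-1,1}(X_{0}))$ and is $\db$-closed by a computation identical in spirit to~\eqref{dbar-closed-c}, using the Nijenhuis integrability $\db\varphi_{i}=\tfrac{1}{2}\sum_{j}[\varphi_{j},\varphi_{i-j}]$ together with the inductive hypotheses. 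The assumption $\mathbb{S}^{p,1}$ then supplies a solution $\sigma_{k}\in A^{p,0}(X_{0})$, for which I would take the canonical choice $\sigma_{k}=-\db^{*}\G_{\db}\p(\sum_{i=1}^{k}\varphi_{i}\lc\sigma_{k-i})$. To close the induction, apply $\p$ to the equation: $\db(\p\sigma_{k})=-\p\db\sigma_{k}=0$, and once again $\mathbb{S}^{p+1,0}$ together with the vacuity of $A^{p+1,-1}(X_{0})$ forces $\p\sigma_{k}=0$. Convergence and smoothness of $\sigma_{t}$ in $t$ follow from the elliptic-estimate argument of Proposition~\ref{thmpq} applied verbatim.

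Injectivity of the extension $[\sigma_{0}]_{\db}\mapsto[e^{i_{\varphi}}(\sigma_{t})]_{\db_{t}}$ is immediate: $A^{p,-1}(X_{t})=0$ means that $e^{i_{\varphi}}(\sigma_{t})=\db_{t}\eta_{t}$ forces $e^{i_{\varphi}}(\sigma_{t})=0$, whence $\sigma_{t}=0$ and $\sigma_{0}=0$. The main obstacle in the argument is the dual role of $\mathbb{S}^{p+1,0}$: first, to make the initial representative $d$-closed, and then at every step to enforce $\p\sigma_{k}=0$, which is exactly what keeps the iteration equation in the simple form $\db\sigma_{k}=\p(\cdots)$ amenable to $\mathbb{S}^{p,1}$; without propagating the condition $\p\sigma_{j}=0$, the spurious term $\varphi\lc\p\sigma_{t}$ would produce right-hand sides that are not $\p$-exact and the iteration would stall.
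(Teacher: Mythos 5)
Your argument is correct and follows essentially the same route as the paper's proof (Proposition \ref{thmp0}): the reduction of the obstruction equation of Proposition \ref{extension-in} to $([\p,i_{\varphi}]+\db)\sigma_t=0$ on $(p,0)$-forms, the iteration $\db\sigma_k=-\p(\sum_{i=1}^k\varphi_i\lc\sigma_{k-i})$ solved via $\mathbb{S}^{p,1}$ with $\db$-closedness of the right-hand side checked as in \eqref{dbar-closed-c}, and the repeated use of $\mathbb{S}^{p+1,0}$ to enforce $\p\sigma_k=0$ all match the paper's system \eqref{holt-1}. Your direct injectivity observation via $A^{p,-1}(X_t)=0$ is just the $q=0$ instance of the paper's Proposition \ref{pq-pq-1}, where the invariance of $h^{p,-1}$ is trivial.
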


According to the philosophy described in Section \ref{cf}, 
Theorem \ref{invp0} amounts to:
\begin{proposition}\label{thmp0}
Assume that $X_0$ satisfies $\mathbb{S}^{p+1,0}$ and $\mathbb{S}^{p,1}$.
Then for any holomorphic $(p,0)$-form $\sigma_0$ on ${X_0}$, there
exits a power series
$$\sigma_t=\sigma_0+\sum_{k=1}^\infty t^k\sigma_k \in
A^{p,0}(X_0),$$ such that $\sigma_t$ varies smoothly on $t$ and $e^{i_{\varphi(t)}}(\sigma_t)\in
A^{p,0}(X_t)$ is holomorphic with respect to the holomorphic structure
on $X_t$.
\end{proposition}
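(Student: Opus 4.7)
My plan is to specialize the strategy of Proposition \ref{thmpq} to the $(p,0)$ setting, where both the simultaneous contraction and the extension map collapse drastically. Since $\sigma_t\in A^{p,0}(X_0)$ carries no antiholomorphic indices, one has $(\1-\b{\varphi}\varphi)\Finv\sigma_t=\sigma_t$ and $e^{i_{\varphi}|i_{\b{\varphi}}}(\sigma_t)=e^{i_{\varphi}}(\sigma_t)$, so Proposition \ref{extension-in} reduces the $\db_t$-closedness of $e^{i_{\varphi}}(\sigma_t)$ to the single equation $([\p,i_{\varphi}]+\db)\sigma_t=0$. I would replace this by the stronger system $\p\sigma_t=0$ together with $\db\sigma_t+\p(\varphi\lc\sigma_t)=0$, which, after expanding $\sigma_t=\sigma_0+\sum_{k\ge 1}t^k\sigma_k$ and $\varphi(t)=\sum_{i\ge 1}t^i\varphi_i$, becomes, for each $k\ge 1$,
\[
\db\sigma_k=-\p\Big(\sum_{i=1}^{k}\varphi_i\lc\sigma_{k-i}\Big),\qquad \p\sigma_k=0.
\]
The base case $\p\sigma_0=0$ is automatic: $\p\sigma_0$ is a $\db$-closed, $\p$-exact $(p+1,0)$-form and is thus killed by the hypothesis $\mathbb{S}^{p+1,0}$, which at this bidegree is equivalent to the vanishing of every $\db$-closed $\p$-exact $(p+1,0)$-form.

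For the inductive step I would assume $\sigma_0,\ldots,\sigma_{k-1}$ solve the system and set $g_k:=-\sum_{i=1}^{k}\varphi_i\lc\sigma_{k-i}\in A^{p-1,1}(X_0)$, so the equation at step $k$ reads $\db\sigma_k=\p g_k$. The compatibility $\db\p g_k=0$ required by $\mathbb{S}^{p,1}$ follows from the integrability $\db\varphi_i=\tfrac{1}{2}\sum_{j=1}^{i-1}[\varphi_j,\varphi_{i-j}]$, the commutator identity \eqref{f11} of Lemma \ref{aaaa}, and the inductive relations $\p\sigma_{k-i}=0$ and $\db\sigma_{k-i}=-\p(\sum_j\varphi_j\lc\sigma_{k-i-j})$; the resulting cancellation is essentially the one already carried out in \eqref{dbar-closed-c}. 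Then $\mathbb{S}^{p,1}$ supplies the canonical minimal-$L^2$ solution
\[
\sigma_k=\dbs\G_{\db}(\p g_k),
\]
and applying $\p$ to $\db\sigma_k=\p g_k$ shows that $\p\sigma_k$ is a $\db$-closed, $\p$-exact $(p+1,0)$-form, whence a second invocation of $\mathbb{S}^{p+1,0}$ forces $\p\sigma_k=0$ and closes the induction.

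For the smoothness of $t\mapsto\sigma_t$ I would follow the two-step elliptic argument used at the end of the proof of Proposition \ref{thmpq}: a dominating-series comparison against $A(t)=\tfrac{\beta}{16\gamma}\sum_{m\ge 1}(\gamma t)^m/m^2$, combined with the a priori estimates for $\dbs$ and $\G_{\db}$ and the fact that $\|\varphi(t)\|_{k,\alpha}\ll A(t)$, gives convergence in every $C^{k,\alpha}$-norm on a small disk, and the cutoff-function bootstrap in $t$ then upgrades this to a $C^{\infty}$ family. The main obstacle is really only conceptual bidegree bookkeeping: one must recognize that at bidegree $(p+1,0)$ there is no room for a $\db$-exact correction, so $\mathbb{S}^{p+1,0}$ is precisely what enforces $\p\sigma_k=0$ automatically, and one must carefully verify the compatibility $\db\p g_k=0$ at each step. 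Once these two points are in place, the construction is strictly simpler than that of Proposition \ref{thmpq}.
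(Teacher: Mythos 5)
Your proposal is correct and follows essentially the same route as the paper: type considerations collapse $(\1-\b{\varphi}\varphi)\Finv\sigma_t$ to $\sigma_t$, the equation is strengthened to the system $\p\sigma_k=0$, $\db\sigma_k=-\p(\sum_{i=1}^k\varphi_i\lc\sigma_{k-i})$, the compatibility is the computation of \eqref{dbar-closed-c}, $\mathbb{S}^{p,1}$ gives the canonical solution $-\db^*\G_{\db}\p(\sum_i\varphi_i\lc\sigma_{k-i})$, and $\mathbb{S}^{p+1,0}$ (read as the vanishing of $\db$-closed $\p$-exact $(p+1,0)$-forms) forces $\p\sigma_0=0$ and $\p\sigma_k=0$, with the regularity handled exactly as in Proposition \ref{thmpq}.
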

\begin{proof}
With the notations in the proof of Proposition \ref{thmpq}, we just present
the construction of $\sigma_t$ since the regularization argument is quite similar.
Obviously, under the assumption $\mathbb{S}^{p+1,0}$ on $X_0$,
the holomorphic $(p,0)$-form $\sigma_0$ is actually $d$-closed.
By Proposition \ref{extension-in} and type-consideration, the desired
holomorphicity is equivalent to the resolution of the equation
\begin{equation}\label{p0-extension-equ}
([\p,i_{\varphi}]+\b{\p})(\1-\b{\varphi}\varphi)\Finv\sigma_t=([\p,i_{\varphi}]+\b{\p})\sigma_t=0.
\end{equation}
Let
$${\sigma}_t={\sigma}_0+\sum_{j=1}^\infty {\sigma}_{j}t^j$$
be a power series of $(p,0)$-forms on $X_0$.

We will also resolve \eqref{p0-extension-equ} by an iteration method.
It suffices to consider the system of equations
\begin{equation}\label{holt-1}
\begin{cases}\db \sigma_0=0,\\
\db\sigma_k = -\partial(\sum_{i=1}^k
\varphi_i\lrcorner\sigma_{k-i}),\qquad
\text{for each $k\geq 1$},\\
\p \sigma_k =0,\qquad \text{for each $k\geq 0$},
\end{cases}
\end{equation}
after the comparison of the coefficients of $t^k$.

As for the second equation of \eqref{holt-1}, we may also assume that, for $i=0,\cdots,k-1,$
$\widetilde\sigma_{i}$ with ${\partial}{\widetilde\sigma_i}=0$ has been resolved, and then check
$$\db\partial(\sum_{i=1}^k\varphi_i\lrcorner{\sigma_{k-i}})=0$$
as reasoned in \eqref{dbar-closed-c}.
The assumption $\mathbb{S}^{p,1}$ enables us to obtain a canonical solution
$${\sigma_k}=-\db^*\G_{\db}\partial\left(\sum_{i=1}^k\varphi_i\lrcorner{\sigma_{k-i}}\right).$$
Meanwhile, the third equation $\p\sigma_k=0$ holds, due to the assumption $\mathbb{S}^{p+1,0}$ and
the equality
$$\db\p\sigma_k=\p\partial(\sum_{i=1}^k
\varphi_i\lrcorner\sigma_{k-i})=0.$$
\end{proof}

\begin{corollary}[The case of $(p,q)=(1,0)$]\label{cor10}
If the central fiber $X_0$ satisfies both $\mathbb{S}^{2,0}$ and $\mathcal{S}^{1,1}$, then
$h^{1,0}_{\db_t}(X_t)$ are independent of $t$.
\end{corollary}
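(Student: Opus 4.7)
The plan is to rerun the iteration of Proposition \ref{thmp0} with $p=1$, but to leverage the weaker assumption $\mathcal{S}^{1,1}$ via a sharper closedness computation on the right-hand side. Writing $\sigma_t = \sigma_0 + \sum_{k\geq 1} t^k \sigma_k \in A^{1,0}(X_0)$ and expanding the obstruction equation \eqref{p0-extension-equ} in powers of $t$, one is reduced to the system
\[
\db\sigma_k = -\p g_k, \qquad \p\sigma_k = 0, \qquad g_k := \sum_{i=1}^k \varphi_i\lrcorner \sigma_{k-i}\in A^{0,1}(X_0),
\]
with initial data $\sigma_0$ a holomorphic $(1,0)$-form. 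Since $\mathcal{S}^{1,1}$ solves $\db x=\p g$ only when $g$ itself is $\db$-closed (not merely when $\p g$ is), the heart of the argument is to verify $\db g_k = 0$ at every inductive stage.

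The crucial computation proceeds by induction on $k$. Expanding $\db g_k = \sum_i (\db\varphi_i)\lrcorner \sigma_{k-i} + \sum_i \varphi_i\lrcorner \db\sigma_{k-i}$, I plug in the integrability $\db\varphi_i = \tfrac{1}{2}\sum_{j=1}^{i-1}[\varphi_j,\varphi_{i-j}]$ together with the inductive hypotheses $\db\sigma_{k-i} = -\p g_{k-i}$ for $i<k$ and $\db\sigma_0 = 0$. In each bracket term $[\varphi_j,\varphi_{i-j}]\lrcorner \sigma_{k-i}$, Lemma \ref{aaaa} produces four pieces; two of them vanish for free, because $\sigma_{k-i}$ has bidegree $(1,0)$ (so $\varphi_{i-j}\lrcorner(\varphi_j\lrcorner \sigma_{k-i})\in A^{-1,2} = 0$) and because the inductive hypothesis gives $\p\sigma_{k-i}=0$ (killing $\varphi_{i-j}\lrcorner(\varphi_j\lrcorner \p\sigma_{k-i})$). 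After the $j\leftrightarrow i-j$ symmetrization, the remaining bracket sum becomes $\sum_{m,n\geq 1,\, m+n\leq k}\varphi_m\lrcorner\p(\varphi_n\lrcorner\sigma_{k-m-n})$ under the reindexing $(m,n)=(j,i-j)$, which precisely cancels the inductive contribution $-\sum_{m,n\geq 1,\, m+n\leq k}\varphi_m\lrcorner\p(\varphi_n\lrcorner\sigma_{k-m-n})$, yielding $\db g_k=0$.

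With $\db g_k = 0$, the condition $\mathcal{S}^{1,1}$ furnishes a canonical solution $\sigma_k = -\db^* \G_\db \p g_k$ of $\db\sigma_k = -\p g_k$. The equality $\p\sigma_k = 0$ then follows from $\mathbb{S}^{2,0}$ exactly as in Proposition \ref{thmp0}: $\p\sigma_k \in A^{2,0}(X_0)$ is $\db$-closed, so $\mathbb{S}^{2,0}$ exhibits it as $\db x$ for some $x \in A^{2,-1}(X_0) = 0$, forcing $\p\sigma_k=0$. Convergence and smooth $t$-dependence of $\sigma_t$ follow verbatim from the same elliptic estimates used in Proposition \ref{thmp0}. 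Finally, the extension map $H^{1,0}_{\db}(X_0)\to H^{1,0}_{\db_t}(X_t)$, $[\sigma_0]_{\db}\mapsto [e^{i_\varphi}(\sigma_t)]_{\db_t}$, is automatically injective, because the only $\db_t$-exact $(1,0)$-form is zero (as $A^{1,-1}(X_t) = 0$) and $e^{i_\varphi}$ is an isomorphism by Lemma \ref{identification}; combined with Kodaira-Spencer upper semi-continuity, this gives $h^{1,0}_{\db_t}(X_t) = h^{1,0}_{\db}(X_0)$.

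The principal obstacle is precisely the inductive $\db$-closedness of $g_k$: the entire weakening from $\mathbb{S}^{1,1}$ to $\mathcal{S}^{1,1}$ rests on the two cancellations produced by the bidegree $(1,0)$ of $\sigma_{k-i}$ together with the inductive $\p$-closedness, which together make the integrability-induced bracket terms balance the inductively-introduced $\db$-terms exactly. This mechanism is specific to $p=1$ and has no direct analogue for $p\geq 2$, where $\varphi_{i-j}\lrcorner(\varphi_j\lrcorner\sigma_{k-i})$ no longer vanishes on bidegree grounds, which is why the corollary singles out the $(1,0)$-case.
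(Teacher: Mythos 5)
Your proposal is correct and follows essentially the same route as the paper: the paper proves Corollary \ref{cor10} by invoking Theorem \ref{invp0} and then weakening $\mathbb{S}^{1,1}$ to $\mathcal{S}^{1,1}$ via exactly the inductive computation you give, namely that $\db\big(\sum_{i=1}^k\varphi_i\lrcorner\sigma_{k-i}\big)=0$ because the double contraction $\varphi_{i-j}\lrcorner(\varphi_j\lrcorner\sigma_{k-i})$ vanishes for bidegree reasons when $\sigma_{k-i}$ is of type $(1,0)$ and the term involving $\p\sigma_{k-i}$ is killed by the inductive $\p$-closedness, after which the symmetrized bracket terms cancel the inductively introduced ones. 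The remaining steps (canonical solution via $\db^*\G_{\db}$, $\p$-closedness from $\mathbb{S}^{2,0}$, regularity, and automatic injectivity for $(1,0)$-forms) match the paper's treatment in Proposition \ref{thmp0}.
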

\begin{proof}
From Theorem \ref{invp0}, $h^{1,0}_{\db_t}(X_t)$ are independent of $t$ when $X_0$
satisfies $\mathbb{S}^{2,0}$ and $\mathbb{S}^{1,1}$. The condition
$\mathbb{S}^{1,1}$ can be replaced by a weaker one $\mathcal{S}^{1,1}$.

A close observation to \eqref{dbar-closed-c} and
the fact that $\sigma_i$ are all of the special type $(1,0)$ show that
\[ \begin{aligned}
\db(\sum_{i=1}^k \varphi_i\lrcorner\sigma_{k-i})
 &=\frac{1}{2}\sum_{i=1}^{l+1}\sum_{j=1}^{i-1}\bigg(-\partial\big(\varphi_{j}\lrcorner(\varphi_{i-j}\lrcorner\sigma_{l+1-i})\big)
 -\varphi_{j}\lrcorner(\varphi_{i-j}\lrcorner\partial\sigma_{l+1-i}) \\
  & \quad +\varphi_j\lrcorner\partial(\varphi_{i-j}\lrcorner\sigma_{l+1-i})
  +\varphi_{i-j}\lrcorner\partial(\varphi_j\lrcorner\sigma_{l+1-i})\bigg)
  -\sum_{i=1}^{l+1}\varphi_{i}\lrcorner\partial \left.
 \bigg(\sum_{j=1}^{l+1-i}\varphi_j\lrcorner\sigma_{l+1-i-j} \right)\\
 &=\sum_{1\leq j<i\leq l+1}\varphi_j\lrcorner\partial(\varphi_{i-j}\lrcorner\sigma_{l+1-i})
 -\sum_{i=1}^{l+1}\sum_{j=1}^{l+1-i}\varphi_{i}\lrcorner\partial(\varphi_j\lrcorner\sigma_{l+1-i-j})\\
 &=0\\
\end{aligned} \] for $k \geq 1$, by the induction method.
Hence, it suffices to use the condition $\mathcal{S}^{1,1}$ to
solve the second one of the system \eqref{holt-1} of equations.
\end{proof}

Actually, by Example \ref{qn-qn-1}, we can get a more general result
that the deformation invariance for $h^{p,0}$ of an $n$-dimensional
compact complex manifold $X$ can be obtained from the one for
$h^{p,1}$.

\begin{corollary}[The case $(p,q)=(n-1,0)\ \text{or}\ (n,0)$]\label{p0-reduced}
For $p=n-1$ or $n$, the condition $\mathbb{S}^{p,1}$ on $X_0$ assures the deformation
invariance of $h^{p,0}_{\db_t}(X_t)$.
\end{corollary}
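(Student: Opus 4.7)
The strategy is to invoke Theorem \ref{invp0}, which gives deformation invariance of $h^{p,0}_{\db_t}(X_t)$ under the two conditions $\mathbb{S}^{p+1,0}$ and $\mathbb{S}^{p,1}$ on $X_0$. Since the hypothesis $\mathbb{S}^{p,1}$ is exactly what we are given, the proof reduces to showing that, in the two top cases $p=n$ and $p=n-1$, the auxiliary condition $\mathbb{S}^{p+1,0}$ comes for free.

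When $p=n$ this is trivial: the space $A^{n+1,0}(X_0)$ is zero, so the defining requirement of $\mathbb{S}^{n+1,0}$ is vacuous, and Theorem \ref{invp0} produces the invariance of $h^{n,0}_{\db_t}(X_t)$ from $\mathbb{S}^{n,1}$ alone.

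When $p=n-1$ one must verify that $\mathbb{S}^{n,0}$ holds automatically on every compact complex manifold. Because $A^{n,-1}(X_0)=0$, the condition is equivalent to the assertion that every $\db$-closed, $\p$-exact form $\eta=\p g\in A^{n,0}(X_0)$ vanishes. The key computation is the Stokes-type identity
\[
d(g\wedge\bar{\eta})=\p g\wedge\bar{\eta}+(-1)^{n-1}g\wedge\p\bar{\eta}+\db g\wedge\bar{\eta}+(-1)^{n-1}g\wedge\db\bar{\eta}=\eta\wedge\bar{\eta},
\]
in which the three unwanted summands vanish because $\p\bar{\eta}=\overline{\db\eta}=0$, $\db\bar{\eta}=\overline{\p^{2}g}=0$, and $\db g\wedge\bar{\eta}\in A^{n-1,n+1}(X_0)=0$ by bidegree. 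Integrating over the compact manifold $X_0$ yields $\int_{X_0}\eta\wedge\bar{\eta}=0$, and since $i^{n^{2}}\eta\wedge\bar{\eta}$ is pointwise equal to $2^{n}|f|^{2}\,\mathrm{vol}\geq 0$ in local coordinates with $\eta=f\,dz^{1}\wedge\cdots\wedge dz^{n}$, this forces $\eta\equiv 0$. Hence $\mathbb{S}^{n,0}$ is automatic and Theorem \ref{invp0} with $p=n-1$ delivers the conclusion from $\mathbb{S}^{n-1,1}$ alone.

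The only step requiring any genuine work is the Stokes-positivity argument for the vacuous case of $\mathbb{S}^{n,0}$; the rest of the proof is just a matter of reading off the hypotheses of Theorem \ref{invp0} in the two top degrees, where either $A^{n+1,0}=0$ (removing the $\p$-closedness constraint on $\sigma_{k}$) or $A^{n,-1}=0$ (collapsing the $\db$-solvability requirement in $\mathbb{S}^{n,0}$ to a pure vanishing statement that the integration trick resolves).
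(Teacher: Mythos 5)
Your proposal is correct and follows essentially the same route as the paper: both reduce to Theorem \ref{invp0} by observing that $\mathbb{S}^{n+1,0}$ is vacuous and that $\mathbb{S}^{n,0}$ holds on every compact complex manifold because a $\db$-closed $\p$-exact $(n,0)$-form must vanish. The only difference is that the paper dispatches this last point by citing the analogy with Kodaira's theorem that holomorphic $(n-1)$-forms are $d$-closed, whereas you write out the underlying Stokes-plus-positivity computation $\int_{X_0}\eta\w\bar{\eta}=\int_{X_0}d(g\w\bar{\eta})=0$ explicitly — which is exactly the intended argument.
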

\begin{proof} Analogously to Kodaira \cite[Theorem 1]{K} or \cite[Lemma 1.2]{N}
that any holomorphic $(n-1)$-form on an $n$-dimensional
compact complex manifold is $d$-closed, one is able to prove that any $d$-closed $\p$-exact $(n,0)$-form is
zero. Hence, any compact complex manifold $X_0$ satisfies $\mathbb{S}^{n,0}$ and thus this corollary is proved by Theorem
\ref{invp0}.
\end{proof}

One restates Theorem \ref{inv-0q} by use of Notation \ref{notaion-class}:
\begin{theorem}[]\label{inv0q}
If the central fiber $X_0$ satisfies $\mathcal{B}^{1,q}$ with the deformation invariance of $h^{0,q-1}_{\db_t}(X_t)$ established,
then $h^{0,q}_{\db_t}(X_t)$ are independent of $t$.
\end{theorem}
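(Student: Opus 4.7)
The plan is to follow the iteration philosophy of Subsection~\ref{cf}, which in the $(0,q)$-case simplifies dramatically. For each class $[\sigma_0]_{\db}\in H^{0,q}_{\db}(X_0)$ I will construct a smoothly varying $\sigma_t\in A^{0,q}(X_0)$ with $\sigma_t|_{t=0}=\sigma_0$ such that $e^{i_{\varphi}|i_{\b\varphi}}(\sigma_t)\in A^{0,q}(X_t)$ is $\db_t$-closed, and then conclude via Proposition~\ref{pq-pq-1} together with the assumed invariance of $h^{0,q-1}_{\db_t}(X_t)$ that the extension map $H^{0,q}_{\db}(X_0)\to H^{0,q}_{\db_t}(X_t)$ is injective. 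Combined with Kodaira-Spencer upper semi-continuity, this yields the invariance of $h^{0,q}_{\db_t}(X_t)$.

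The first step is to replace the initial representative by a canonical $d$-closed one. The surjectivity of $\iota^{0,q}_{BC,\db}$ is exactly the condition $\mathcal{B}^{1,q}$; applying it to $g=-\sigma_0$ (the $\db$-harmonic representative, which is $\db$-closed) produces some $u\in A^{0,q-1}(X_0)$ with $\db(\p u)=-\p\sigma_0$, equivalently $\p\db u=\p\sigma_0$. Then $\sigma_0-\db u$ is simultaneously $\db$-closed and $\p$-closed, hence $d$-closed, and still represents $[\sigma_0]_\db$; Lemma~\ref{ddbar-eq} supplies the canonical $L^2$-minimal choice $u=-(\p\db)^*\G_{BC}\p\sigma_0$, in complete parallel with Lemma~\ref{lemma1}. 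From now on let $\sigma_0$ denote this $d$-closed representative.

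With a $d$-closed $\sigma_0$ in hand, the construction of the extension is almost tautological. Define $\sigma_t:=(\1-\b\varphi\varphi)^{-1}\Finv\sigma_0\in A^{0,q}(X_0)$ by the Neumann series in $\b\varphi\varphi$, which converges for small $t$; since $\varphi(t)$ is holomorphic in $t$, $\sigma_t$ is manifestly smooth in $(t,\bar t)$ and no separate regularity argument is needed. Then $\tilde\sigma_t:=(\1-\b\varphi\varphi)\Finv\sigma_t=\sigma_0$ is $d$-closed and independent of $t$, and by Proposition~\ref{extension-in} the $\db_t$-closedness of $e^{i_{\varphi}|i_{\b\varphi}}(\sigma_t)$ amounts to
\[([\p,i_\varphi]+\db)\sigma_0=\p(\varphi\lrcorner\sigma_0)-\varphi\lrcorner\p\sigma_0+\db\sigma_0=0,\]
which is immediate: $\varphi\lrcorner\sigma_0=0$ since $\varphi$ is $T^{1,0}_{X_0}$-valued while $\sigma_0\in A^{0,q}$, and $\p\sigma_0=\db\sigma_0=0$ by the choice of representative. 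Thus the entire iteration of the general $(p,q)$-case (Proposition~\ref{thmpq}) collapses trivially, with $\tilde\sigma_k=0$ for all $k\geq 1$.

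The main obstacle, and the real content of the theorem, is not the extension itself but the injectivity of the resulting map. This is precisely supplied by Proposition~\ref{pq-pq-1}: the assumed invariance of $h^{0,q-1}_{\db_t}(X_t)$ ensures, by \cite[Theorem~7]{KS}, that the Green's operator $\G_t$ on $A^{0,q-1}(X_t)$ varies smoothly in $t$, and the Hodge-decomposition argument of that proposition then rules out a nonzero class in $H^{0,q}_{\db}(X_0)$ being sent to zero in $H^{0,q}_{\db_t}(X_t)$. Comparing with Theorem~\ref{inv-pq}, both the $\mathbb{B}^{p+1,q}$ and the $\mathbb{S}^{p,q+1}$ hypotheses disappear in this special $p=0$ case precisely because $(0,q)$-forms are annihilated by $i_\varphi$, collapsing the obstruction equation to the trivial identity above, and only the $\mathcal{B}^{1,q}$ hypothesis needed for the initial choice of a $d$-closed representative survives.
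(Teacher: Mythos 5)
Your proof is correct and follows essentially the same route as the paper: the condition $\mathcal{B}^{1,q}$ yields the canonical $d$-closed representative $\sigma_0$ via Lemma \ref{lemma1}, the choice $\sigma_t=(\1-\b{\varphi}\varphi)^{-1}\Finv\sigma_0$ makes the obstruction equation of Proposition \ref{extension-in} collapse to $\db\sigma_0-\varphi\lrcorner\p\sigma_0=0$ for type reasons, and injectivity is delegated to Proposition \ref{pq-pq-1} exactly as in the paper's Proposition \ref{thm0q}. Nothing is missing.
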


For Theorem \ref{inv0q}, it suffices to prove:
\begin{proposition}\label{thm0q}
Assume that $X_0$ satisfies $\mathcal{B}^{1,q}$. Then for each Dolbeault class in $H^{0,q}_{\db}(X_0)$
with the unique canonical $d$-closed representative $\sigma_0$ given as Lemma \ref{lemma1},
there exists $\sigma_t\in A^{0,q}(X_0)$ varying smoothly on $t$ and
$e^{i_{\overline{\varphi}}}(\sigma_t)\in A^{0,q}(X_t)$ is
$\db_t$-closed with respect to the holomorphic structure on $X_t$.
\end{proposition}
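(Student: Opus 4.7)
The plan is to mirror the proof of Proposition \ref{thmpq} while exploiting a substantial simplification peculiar to the $(0,q)$-case: for every $\sigma\in A^{0,q}(X_0)$ the contraction $i_\varphi\sigma$ vanishes identically, since $\varphi\in A^{0,1}(T^{1,0}_{X_0})$ only pairs with $(1,0)$-components which $\sigma$ lacks. Under this type miracle, the obstruction equation of Proposition \ref{extension-in} collapses completely, so neither a Kuranishi-style iteration nor elliptic regularity will be required; the sole nontrivial ingredient is the canonical choice of representative, guaranteed by the hypothesis $\mathcal{B}^{1,q}$.

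Concretely, I would first invoke Lemma \ref{lemma1}---applicable because $\mathcal{B}^{1,q}$ is, by the observations following Notation \ref{notaion-class}, equivalent to the surjectivity of $\iota^{0,q}_{BC,\db}$---to fix the canonical $d$-closed representative $\sigma_0\in A^{0,q}(X_0)$ of the given Dolbeault class. Then I simply declare
\[
\sigma_t \ :=\ \bigl(\1-\overline{\varphi(t)}\varphi(t)\bigr)^{-1}\Finv\sigma_0 \ \in\ A^{0,q}(X_0),
\]
which is well defined and smooth (indeed real-analytic) in $t$ for $|t|$ small, because $\1-\overline\varphi\varphi$ is an invertible endomorphism of $A^{0,q}(X_0)$ on this range. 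By construction the auxiliary form $\widetilde\sigma_t:=(\1-\overline\varphi\varphi)\Finv\sigma_t$ equals $\sigma_0$ identically.

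It remains to verify the obstruction. By Proposition \ref{extension-in}, $\db_t$-closedness of $e^{i_\varphi|i_{\overline\varphi}}(\sigma_t)$ on $X_t$ amounts to $([\p,i_\varphi]+\db)\widetilde\sigma_t=0$. Because $\widetilde\sigma_t=\sigma_0$ is of type $(0,q)$ we have $i_\varphi\sigma_0=0$, and because $\sigma_0$ is $d$-closed we have $\p\sigma_0=\db\sigma_0=0$; hence
\[
([\p,i_\varphi]+\db)\sigma_0 \ =\ \p(i_\varphi\sigma_0)-i_\varphi(\p\sigma_0)+\db\sigma_0 \ =\ 0,
\]
exactly as required. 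Noting that for a $(0,q)$-form $\sigma_t$ the extension map reduces to $e^{i_\varphi|i_{\overline\varphi}}(\sigma_t)=e^{i_{\overline\varphi}}(\sigma_t)$, the construction is complete. The ``hard part'' is therefore not a PDE obstruction but the purely cohomological input of Lemma \ref{lemma1}, whose availability is precisely what the hypothesis $\mathcal{B}^{1,q}$ supplies.
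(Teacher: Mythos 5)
Your proposal is correct and follows essentially the same route as the paper: both invoke Lemma \ref{lemma1} (via $\mathcal{B}^{1,q}$, i.e.\ surjectivity of $\iota^{0,q}_{BC,\db}$) to get the canonical $d$-closed representative $\sigma_0$, set $\sigma_t=(\1-\overline{\varphi}\varphi)^{-1}\Finv\sigma_0$, and observe that by Proposition \ref{extension-in} and type considerations the obstruction $([\p,i_{\varphi}]+\db)(\1-\overline{\varphi}\varphi)\Finv\sigma_t=\db\sigma_0-\varphi\lc\p\sigma_0$ vanishes since $\sigma_0$ is $d$-closed. You merely make explicit the type argument ($i_\varphi\sigma_0=0$) and the smoothness in $t$ that the paper leaves implicit.
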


\begin{proof}
We just need to present
the construction of $\sigma_t$. By Proposition \ref{extension-in} and type-consideration, the desired
$\db_t$-closedness is equivalent to the resolution of the equation
$$
([\p,i_{\varphi}]+\b{\p})(\1-\b{\varphi}\varphi)\Finv\sigma_t
=\b{\p}((\1-\b{\varphi}\varphi)\Finv\sigma_t)
-\varphi\lc\p((\1-\b{\varphi}\varphi)\Finv\sigma_t)
=0.
$$
Therefore, it suffices to take $\sigma_t=(\1-\b{\varphi}\varphi)^{-1}\Finv\sigma_0$.
\end{proof}

\begin{corollary}[]\label{}
All the Hodge numbers on a compact complex surface $X$ are deformation
invariant.
\end{corollary}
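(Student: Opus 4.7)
The plan is to combine Kodaira--Spencer's upper semi-continuity of Hodge numbers with the topological invariance of Betti numbers and the classical $E_1$-degeneration of the Fr\"olicher spectral sequence on any compact complex surface. By \cite[Theorem 4]{KS}, for each pair $(p,q)$ the function $t\mapsto h^{p,q}_{\db_t}(X_t)$ is upper semi-continuous, so $h^{p,q}_{\db_t}(X_t)\leq h^{p,q}_{\db}(X_0)$ for $t$ near $0$. Meanwhile, Ehresmann's theorem identifies the underlying smooth manifold of each fiber with that of $X_0$, so the Betti numbers $b_k(X_t)$ are independent of $t$.

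Next I would invoke the classical fact that the Fr\"olicher spectral sequence of any compact complex surface degenerates at $E_1$, so that
$$
b_k(X_t)=\sum_{p+q=k}h^{p,q}_{\db_t}(X_t)\qquad \textrm{for every $k$ and every $t$.}
$$
Summing the upper semi-continuity bounds over $p+q=k$ and comparing with
$$
\sum_{p+q=k}h^{p,q}_{\db_t}(X_t)=b_k(X_t)=b_k(X_0)=\sum_{p+q=k}h^{p,q}_{\db}(X_0),
$$
each inequality in the sum must be an equality, yielding $h^{p,q}_{\db_t}(X_t)=h^{p,q}_{\db}(X_0)$ for every $(p,q)$.

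An alternative route more in line with the machinery developed in this paper would proceed by Serre duality on each fiber ($h^{p,q}_{\db_t}=h^{2-p,2-q}_{\db_t}$), by Example~\ref{qn-qn-1} to reduce the case $q=n=2$ to $q=1$, and by Theorems~\ref{inv-p0}, \ref{inv-0q} and~\ref{inv-pq} to handle $h^{1,0}$, $h^{0,1}$ and $h^{1,1}$ respectively. The main obstacle in this second route is to verify \emph{automatically} on an arbitrary compact complex surface the $\p\db$-type hypotheses (such as $\mathbb{S}^{1,1}$, $\mathcal{B}^{1,1}$, $\mathbb{B}^{2,1}$ and $\mathbb{S}^{1,2}$) that appear in those theorems. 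In essence this amounts to the same Fr\"olicher degeneration used in the first route, so the classical surface-theoretic fact must be invoked at some point regardless of which approach one adopts.
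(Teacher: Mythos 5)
Your first route is correct and is genuinely different from the one the paper takes. You combine Kodaira--Spencer upper semi-continuity with the topological invariance of Betti numbers and the classical $E_1$-degeneration of the Fr\"olicher spectral sequence on compact complex surfaces, so that $\sum_{p+q=k}h^{p,q}_{\db_t}(X_t)=b_k$ is constant while each summand can only drop; this forces every $h^{p,q}_{\db_t}(X_t)$ to be constant. The paper instead argues through its own machinery: it invokes the weight-$2$ $\p\db$-lemma on surfaces (from \cite[Section IV.2]{BHPV}) to verify the hypotheses of Corollary \ref{cor10} (giving invariance of $h^{1,0}$) and, via Remark \ref{01-sgg}, the \textbf{sGG}-type surjectivity needed for Theorem \ref{inv-0q} with $q=1$ (giving invariance of $h^{0,1}$); the remaining Hodge numbers are then obtained by Serre duality together with the deformation invariance of the Euler--Poincar\'e characteristics $\chi(\mathcal{O})$ and $\chi(\Omega^1)$ (\cite[Theorem 14]{KS}), \emph{not} by Theorem \ref{inv-pq}. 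This matters for your second route: handling $h^{1,1}$ through Theorem \ref{inv-pq} would require verifying $\mathbb{B}^{2,1}$ and $\mathbb{S}^{1,2}$ on an arbitrary (possibly non-K\"ahler) surface, which is exactly the obstacle you flag and which the paper sidesteps with the Euler-characteristic argument. Your first route is more elementary and self-contained; the paper's route has the expository virtue of exercising the extension theorems developed in Section \ref{def}, but both ultimately rest on the same classical surface theory from \cite{BHPV}.
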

\begin{proof}
From these standard results in
\cite[Section $IV.2$]{BHPV}, the $\p\db$-lemma holds on
$X$ for weight $2$, and thus the Hodge numbers $h^{1,0}(X_t)$, $h^{0,1}(X_t)$ of the
small deformation of $X$ is independent of $t$ by Corollary \ref{cor10} and Remark \ref{01-sgg}, respectively.
The deformation invariance of the remaining Hodge numbers is
obtained by Serre duality and the deformation invariance of the
Euler-Poincar\'e characteristic (see, for example, \cite[Theorem
14]{KS}).
\end{proof}

\section{The Gauduchon cone $\mathcal{G}_X$}\label{Gau} In this
section we will study the Gauduchon cone and its relation with the
balanced one, to explore the deformation properties of an
\textbf{sGG} manifold proposed by Popovici \cite{P1}.

Let us first recall some notations. Aeppli cohomology groups
$H^{p,q}_{\mathrm{A}}(X,\mathbb{C})$ and Bott-Chern cohomology
groups $H^{p,q}_{\mathrm{BC}}(X,\mathbb{C})$ are defined on any
compact complex manifold $X$, even on non-compact ones (cf. for
instance, \cite{A,P1}). Accordingly, the  \emph{real Aeppli
cohomology group} $H^{p,p}_{\mathrm{A}}(X,\mathbb{R})$ is defined by
\[ H^{p,p}_{\mathrm{A}}(X,\mathbb{R}):= \frac{\Big\{ \text{$\p\db$-closed smooth real $(p,p)$-forms} \Big\}}
{ \Big\{ \p \eta + \overline{\p \eta}\ \big|\ \text{$\eta$ is a
smooth complex valued $(p-1,p)$-forms} \Big\}}. \] And the
\emph{real Bott-Chern cohomology group}
$H^{p,p}_{\mathrm{BC}}(X,\mathbb{R})$ is given by
\[ H^{p,p}_{\mathrm{BC}}(X,\mathbb{R}) := \frac{\Big\{
d\textrm{-closed}~\textrm{smooth}~\textrm{real}~(p,p)
\textrm{-forms} \Big\}} { \Big\{ \sqrt{-1} \p \db \eta\ \big|\ \eta\
\textrm{is a smooth}~\textrm{real}~(p-1,p-1)\textrm{-forms} \Big\}}.
\] Also, similar types of currents can represent Aeppli classes or Bott-Chern
ones. By \cite[Lemme 2.5]{Sc} or \cite[Theorem 2.1.(iii)]{P1}, a
canonical non-degenerate duality between
$H^{n-p,n-p}_{\mathrm{A}}(X,\mathbb{C})$ and
$H^{p,p}_{\mathrm{BC}}(X,\mathbb{C})$ is given by
\[ \begin{array}{ccc}
H^{n-p,n-p}_{\mathrm{A}}(X,\mathbb{C}) \times
H^{p,p}_{\mathrm{BC}}(X,\mathbb{C}) & \longrightarrow & \mathbb{C}\\
\left(\big[\Omega\big]_{\mathrm{A}} ,
\big[\omega\big]_{\mathrm{BC}}\right) & \longmapsto & \int_{X}
\Omega \w \omega.
\end{array} \]
The pairing $(\bullet,\bullet)$, restricted to real cohomology
groups, also becomes the duality between the two corresponding
groups.

The  \emph{Gauduchon cone} $\mathcal{G}_X$ is defined by \[
\mathcal{G}_X = \left\{\big[\Omega\big]_{\mathrm{A}} \in
H^{n-1,n-1}_{\mathrm{A}}(X,\mathbb{R})\ \Big|\ \Omega\ \textrm{is}\
\textrm{a}\ \p\db\textrm{-closed}\ \textrm{positive}\
(n-1,n-1)\textrm{-form} \right\},\] where $\omega =
\Omega^{\frac{1}{n-1}}$ is called a  \emph{Gauduchon metric}. It is
a known fact in linear algebra, by Michelsohn \cite[the part after
Lemma 4.8]{M}, that for every positive $(n-1,n-1)$-form $\Gamma$ on
$X$, there exists a unique positive $(1,1)$-form $\gamma$ such that
$\gamma^{n-1}=\Gamma$. Thus, the symbol $\Omega^{\frac{1}{n-1}}$
makes sense. Gauduchon metric exists on any compact complex
manifold, thanks to Gauduchon's work \cite{G}. Hence, the Gauduchon
cone $\mathcal{G}_X$ is never empty. Similarly, the \emph{K\"{a}hler
cone} $\mathcal{K}_X$ and the  \emph{balanced cone} $\mathcal{B}_X$
are defined as
\[ \mathcal{K}_X =
\left\{\big[\omega\big]_{\mathrm{BC}} \in
H^{1,1}_{\mathrm{BC}}(X,\mathbb{R})\ \Big|\ \omega\ \textrm{is}\
\textrm{a}\ d\textrm{-closed}\ \textrm{positive}\
(1,1)\textrm{-form} \right\}, \]
\[ \mathcal{B}_X =
\left\{\big[\Omega\big]_{\mathrm{BC}} \in
H^{n-1,n-1}_{\mathrm{BC}}(X,\mathbb{R})\ \Big|\ \Omega\ \textrm{is}\
\textrm{a}\ d\textrm{-closed}\ \textrm{positive}\
(n-1,n-1)\textrm{-form} \right\}, \] where $\Omega^{\frac{1}{n-1}}$
is called a  \emph{balanced metric}. And the three cones are open
convex cones (cf. \cite[Observation 5.2]{P1} for the Gauduchon
cone).

The numerically effective (shortly  \emph{nef}) cone, can be defined
as
\[ \left\{\big[\omega\big]_{\mathrm{BC}} \in
H^{1,1}_{\mathrm{BC}}(X,\mathbb{R}) \Big|\forall \epsilon>0,
\exists\ \textrm{a smooth real}\ (1,1)\textrm{-form}\
\alpha_{\epsilon}\!\in\!\big[\omega\big]_{\mathrm{BC}},\,
\textrm{such that}\ \alpha_{\epsilon} \geq - \epsilon
\widetilde{\omega} \right\},\] where $\widetilde{\omega}$ is a fixed
Hermitian metric on the compact complex manifold $X$. And the nef
cone is a closed convex cone by \cite[Proposition 6.1]{D1}. When $X$
is K\"{a}hler, the nef cone is the closure of the K\"{a}hler cone
$\mathcal{K}_X$. Thus, we will use the symbol
$\overline{\mathcal{K}}_X$ for the nef cone in any situation.
Similar definitions adapt to $\overline{\mathcal{B}}_X$ and
$\overline{\mathcal{G}}_X$, which are also closed convex cones.
There are many studies, such as \cite{D1,DP,DPS,BDPP,WYZ,FX,P1,PU}
on these cones and their relations.

\begin{definition}\rm
Degenerate cones.

We say that the  \emph{Gauduchon cone $\mathcal{G}_X$ degenerates}
when $\mathcal{G}_X = H^{n-1,n-1}_{\mathrm{A}}(X,\mathbb{R})$, which
comes from \cite[Section 5]{P1}. Similarly, the  \emph{balanced cone
$\mathcal{B}_X$ degenerates} if the equality $\mathcal{B}_X =
H^{n-1,n-1}_{\mathrm{BC}}(X,\mathbb{R})$ holds.
\end{definition}

\subsection{The K\"{a}hler case of $\mathcal{G}_X$}\label{Gcone-Kahler}
We will consider various cones on K\"{a}hler manifolds at first.
Thus, let $X$ be a compact K\"{a}hler manifold.

\begin{lemma}\label{Gcone}
The Gauduchon cone $\mathcal{G}_X$ does not  degenerate on the compact
K\"{a}hler manifold $X$. Moreover, $\mathcal{G}_{X}$ lies in one
open half semi-space determined by some linear subspace of
codimension one in $H^{n-1,n-1}_{\mathrm{A}}(X,\mathbb{R})$.
\end{lemma}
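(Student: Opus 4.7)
The plan is to use a Kähler class on $X$ to produce a linear functional on $H^{n-1,n-1}_{\mathrm{A}}(X,\mathbb{R})$ that is strictly positive on $\mathcal{G}_X$, whose kernel will then serve as the desired codimension-one linear subspace. Concretely, fix a Kähler metric $\omega_0$ on $X$, giving a class $[\omega_0]_{\mathrm{BC}} \in H^{1,1}_{\mathrm{BC}}(X,\mathbb{R})$. The Schweitzer duality quoted in the excerpt (from \cite[Lemme 2.5]{Sc} or \cite[Theorem 2.1.(iii)]{P1}) then supplies a well-defined $\mathbb{R}$-linear map
$$L\colon H^{n-1,n-1}_{\mathrm{A}}(X,\mathbb{R}) \longrightarrow \mathbb{R},\qquad [\Omega]_{\mathrm{A}} \longmapsto \int_X \Omega \wedge \omega_0.$$

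First I will check that $L$ is non-trivial. Since $\omega_0$ is $d$-closed, $\omega_0^{n-1}$ is $\p\db$-closed and represents a class $[\omega_0^{n-1}]_{\mathrm{A}} \in H^{n-1,n-1}_{\mathrm{A}}(X,\mathbb{R})$, and $L([\omega_0^{n-1}]_{\mathrm{A}}) = \int_X \omega_0^n > 0$. Consequently $\ker L$ is a linear subspace of codimension one in $H^{n-1,n-1}_{\mathrm{A}}(X,\mathbb{R})$, separating the real Aeppli group into two open half-spaces. Next I will show $\mathcal{G}_X$ lies entirely in the half-space $\{L>0\}$: given any $[\Omega]_{\mathrm{A}} \in \mathcal{G}_X$, choose a positive $\p\db$-closed representative $\Omega$ (so $\omega := \Omega^{\frac{1}{n-1}}$ is a Gauduchon metric). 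Then $\Omega \wedge \omega_0$ is a strictly positive $(n,n)$-form at every point, because pointwise $\Omega$ is a strictly positive $(n-1,n-1)$-form and $\omega_0$ a strictly positive $(1,1)$-form, so $L([\Omega]_{\mathrm{A}}) = \int_X \Omega \wedge \omega_0 > 0$. This proves the half-space assertion.

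Finally, non-degeneracy of $\mathcal{G}_X$ drops out for free: were $\mathcal{G}_X = H^{n-1,n-1}_{\mathrm{A}}(X,\mathbb{R})$, then in particular $-[\omega_0^{n-1}]_{\mathrm{A}} \in \mathcal{G}_X$, forcing $L(-[\omega_0^{n-1}]_{\mathrm{A}}) > 0$ and contradicting the positivity already established on the class $[\omega_0^{n-1}]_{\mathrm{A}}$ itself. There is no serious obstacle here; the only conceptual point to get right is the strict positivity of the pointwise product of a positive $(n-1,n-1)$-form and a positive $(1,1)$-form, which is standard linear algebra of positive forms, so the pairing against the Kähler class is genuinely strict on $\mathcal{G}_X$ rather than merely non-negative.
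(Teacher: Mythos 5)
Your proposal is correct and takes essentially the same route as the paper: both pair Aeppli classes against the K\"ahler class $[\omega_X]_{\mathrm{BC}}$ to obtain a non-zero linear functional $L([\Omega]_{\mathrm{A}})=\int_X\Omega\wedge\omega_X$ whose kernel is the codimension-one subspace and which is strictly positive on $\mathcal{G}_X$ by pointwise positivity of $\Omega\wedge\omega_X$. The paper phrases the non-degeneracy step via the Aeppli-exactness of $\tilde{\omega}^{n-1}$ rather than via $-[\omega_X^{n-1}]_{\mathrm{A}}$, but this is the same contradiction in a different guise.
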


\begin{proof}
$X$ carries a K\"{a}hler metric $\omega_{X}$. Then
$\big[\omega_{X}\big]_{\mathrm{BC}}$ lives in the K\"{a}hler cone
$\mathcal{K}_X$, which can not  be the zero class of
$H^{1,1}_{\mathrm{BC}}(X, \mathbb{R})$. This implies that
\[\dim_{\mathbb{R}}H^{n-1,n-1}_{\mathrm{A}}(X,\mathbb{R}) =
\dim_{\mathbb{R}}H^{1,1}_{\mathrm{BC}}(X,\mathbb{R}) \geq 1.\] Thus,
the Gauduchon cone $\mathcal{G}_X$ is a non-empty open cone in a
vector space with the dimension at least one, which implies that
$\mathcal{G}_{X}$ must contain a non-zero class.

Meanwhile, the Gauduchon $\mathcal{G}_{X}$ can not  degenerate. If
$\mathcal{G}_X$ degenerates, i.e., $0\in \mathcal{G}_X =
H_{\mathrm{A}}^{n-1,n-1}(X,\mathbb{R})$, $X$ carries a Hermitian
metric $\widetilde{\omega}$ such that $\widetilde{\omega}^{n-1}$ is
the type of $\db \psi + \p \overline{\psi}$, where $\psi$ is a
smooth $(n-1,n-2)$-form on $X$. It is easy to check that
$\widetilde{\omega}^{n-1} \w \omega_{X}$ is $d$-exact but $\int_{X}
\widetilde{\omega}^{n-1} \w \omega_{X} > 0$, where a contradiction
emerges. As an easy consequence of this, the Gauduchon cone
$\mathcal{G}_X$ can not  contain the origin of
$H_{\mathrm{A}}^{n-1,n-1}(X,\mathbb{R})$.

It is easy to see that the K\"{a}hler class $\big[ \omega_{X}
\big]_{\mathrm{BC}}$ determines one open half semi-space
$\mathbf{H}^{+}_{\omega_{X}}$ in
$H^{n-1,n-1}_{\mathrm{A}}(X,\mathbb{R})$ given by
\[ \mathbf{H}^{+}_{\omega_{X}} = \left\{ \big[\Omega\big]_{\mathrm{A}} \in
H^{n-1,n-1}_{\mathrm{A}}(X,\mathbb{R})\ \Big|\ \int_{X} \Omega \w
\omega_{X} > 0 \right\}, \] which is clearly cut out by the linear
subspace of codimension one
\[\mathbf{H}_{\omega_X} =\left\{\big[\Omega\big]_{\mathrm{A}} \in
H^{n-1,n-1}_{\mathrm{A}}(X,\mathbb{R})\ \Big|\ \int_{X} \Omega \w
\omega_{X} = 0 \right\}. \] And the Gauduchon cone $\mathcal{G}_{X}$
obviously lies in $\mathbf{H}^{+}_{\omega_{X}}$. Hence the lemma is
proved.
\end{proof}

\begin{remark}\rm
It is well known that neither the K\"{a}hler cone $\mathcal{K}_X$
nor the balanced cone $\mathcal{B}_X$ degenerates on the K\"{a}hler
manifold $X$ .
\end{remark}

It is known that the quotient topology of Bott-Chern groups induced
by the Fr\'{e}chet topology of smooth forms or the weak topology of
currents is Hausdorff (cf. \cite[the part before Definition
1.3]{D1}). And every Hausdorff finite-dimensional topological real
vector space is isomorphic to $\mathbb{R}^n$ with the Euclidean
topology. Then it is harmless to fix an inner product $\langle
\bullet, \bullet \rangle$ on the real vector space
$H^{1,1}_{\mathrm{BC}}(X,\mathbb{R})$, which induces the given
topology on $H^{1,1}_{\mathrm{BC}}(X,\mathbb{R})$. The space
$H^{n-1,n-1}_{\mathrm{A}}(X,\mathbb{R})$ can be viewed as the vector
space of continuous linear functionals on
$\Big(H^{1,1}_{\mathrm{BC}}(X,\mathbb{R}),\langle \bullet, \bullet
\rangle\Big)$. By the finite-dimensional case of Riesz
representation theorem, there is a canonical isomorphism from
$H^{n-1,n-1}_{\mathrm{A}}(X,\mathbb{R})$ to
$H^{1,1}_{\mathrm{BC}}(X,\mathbb{R})$ with $\big[ \Omega
\big]_{\mathrm{A}}$ to $\big[ \omega_{\Omega}\big]_{\mathrm{BC}}$.
That is, for any $\big[\Omega\big]_{\mathrm{A}} \in
H^{n-1,n-1}_{\mathrm{A}}(X,\mathbb{R})$, there exists a unique
$\big[ \omega_{\Omega}\big]_{\mathrm{BC}} \in
H^{1,1}_{\mathrm{BC}}(X,\mathbb{R})$, such that
\[ \left(\big[\Omega\big]_{\mathrm{A}} ,
\big[\omega\big]_{\mathrm{BC}}\right) = \left\langle \big[ \omega
\big]_{\mathrm{BC}}, \big[\omega_{\Omega}\big]_{\mathrm{BC}}
\right\rangle \] for any $\big[ \omega \big]_{\mathrm{BC}} \in
H^{1,1}_{\mathrm{BC}}(X,\mathbb{R})$. Thus, this isomorphism enables
us to define the dual inner product on
$H^{n-1,n-1}_{\mathrm{A}}(X,\mathbb{R})$ by the equality \[ \left
\langle \big[\Omega_1
\big]_{\mathrm{A}},\big[\Omega_2\big]_{\mathrm{A}} \right \rangle :=
\left \langle \big[\omega_{\Omega_1} \big]_{\mathrm{BC}},\big[
\omega_{\Omega_2}\big]_{\mathrm{BC}} \right \rangle. \] Let $\left
\{\big[ \omega_i\big]_{\mathrm{BC}}\right\}_{i=1}^m$ be an
orthonormal basis of $H^{1,1}_{\mathrm{BC}}(X,\mathbb{R})$. Then,
$\left\{\big[ \Omega_{\omega_i}\big]_{\mathrm{A}}\right\}_{i=1}^m$,
the inverse image of $\left \{\big[
\omega_i\big]_{\mathrm{BC}}\right\}_{i=1}^m$ under the above
canonical isomorphism, is also an orthonormal one of
$H^{n-1,n-1}_{\mathrm{A}}(X,\mathbb{R})$ under the dual metric. And
$\left \{\big[ \omega_i\big]_{\mathrm{BC}}, \big[
\Omega_{\omega_i}\big]_{\mathrm{A}} \right\}_{i=1}^m$ become dual
bases with respect to $(\bullet,\bullet)$.

\begin{definition}\label{cone}\rm
The open circular cone $\mathcal{C}(v,\theta)$.

Let $\big(V_{\mathbb{R}}, \langle \bullet, \bullet \rangle \big)$ be
a real vector space $V_{\mathbb{R}}$, which equips with an inner
product $\langle \bullet, \bullet \rangle$. Denote the induced norm
by $\| \bullet \|$. The  \emph{open circular cone}
$\mathcal{C}(v,\theta)$
 is determined by a non-zero vector $v$ in $V_{\mathbb{R}}$ and an
 angle $\theta \in \big[0,\frac{\pi}{2} \big]$, given by
\[ \mathcal{C}(v,\theta) = \left\{  w \in V_{\mathbb{R}} \setminus
0\ \Big|\ \frac{\langle w, v \rangle}{\|w\|\|v\|} > \cos \theta
\right\}. \] And $2\theta$ is called the  \emph{cone angle}.
 It is clear that the cone $\mathcal{C}(v,\theta)$
does not  change if $v$ is replaced by any vector in $\mathbb{R}^{
\scriptsize
> 0}v$.
\end{definition}

As stated in the proof of Lemma \ref{Gcone}, the Gauduchon cone
$\mathcal{G}_{X}$ must contain a non-zero class. Let us  fix a nonzero
class $\big[ \Omega_0 \big]_{\mathrm{A}} \in \mathcal{G}_{X}$.

\begin{proposition}\label{Gcone_2} On a compact K\"{a}hler manifold
$X$, there exists a small angle $\tilde{\theta} \in \big(0,
\frac{\pi}{2}\big)$ such that \[ \mathcal{C}\Big( \big[ \Omega_0
\big]_{\mathrm{A}},\tilde{\theta} \Big) \subseteq \mathcal{G}_X
\subseteq \mathcal{C}\Big( \big[\Omega_{\omega_X}\big]_{\mathrm{A}},
\frac{\pi}{2}- \tilde{\theta} \Big),\] where the class
$\big[\Omega_{\omega_X}\big]_{\mathrm{A}}$ in
$H^{n-1,n-1}_{\mathrm{A}}(X,\mathbb{R})$ denotes the inverse image
of the K\"{a}hler class $\big[\omega_X\big]_{\mathrm{BC}}$ under the
canonical isomorphism discussed before Definition \ref{cone}.
\end{proposition}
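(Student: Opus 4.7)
The plan is to prove the two inclusions separately and then choose $\tilde\theta$ at the end to satisfy both. For the first inclusion $\mathcal{C}([\Omega_0]_{\mathrm{A}}, \tilde\theta) \subseteq \mathcal{G}_X$, I use that $\mathcal{G}_X$ is a non-empty open convex cone (recorded after the definition of the Gauduchon cone, cf.\ \cite[Observation 5.2]{P1}) containing the non-zero class $[\Omega_0]_{\mathrm{A}}$. Openness gives an open ball about $[\Omega_0]_{\mathrm{A}}$ inside $\mathcal{G}_X$, and since $\mathcal{G}_X$ is a cone, the positive real cone over that ball is an open circular cone $\mathcal{C}([\Omega_0]_{\mathrm{A}}, \tilde\theta_1) \subseteq \mathcal{G}_X$ for some $\tilde\theta_1 \in (0, \pi/2)$.

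For the second inclusion I dualize through the K\"ahler cone. Since $\mathcal{K}_X \subseteq H^{1,1}_{\mathrm{BC}}(X,\mathbb{R})$ is open and contains $[\omega_X]_{\mathrm{BC}}$, the same openness argument yields $\tilde\theta_2 \in (0,\pi/2)$ with $\mathcal{C}([\omega_X]_{\mathrm{BC}}, \tilde\theta_2) \subseteq \mathcal{K}_X$. Running the argument of Lemma \ref{Gcone} with an arbitrary $[\omega]_{\mathrm{BC}} \in \mathcal{K}_X$ in place of $[\omega_X]_{\mathrm{BC}}$ shows that every K\"ahler class pairs strictly positively with every Gauduchon class via $(\bullet,\bullet)$. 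Translating through the canonical isometry $H^{n-1,n-1}_{\mathrm{A}}(X,\mathbb{R}) \to H^{1,1}_{\mathrm{BC}}(X,\mathbb{R})$ fixed before Definition \ref{cone}, which sends $[\Omega_{\omega_X}]_{\mathrm{A}}$ to $[\omega_X]_{\mathrm{BC}}$ and identifies the pairing $(\bullet,\bullet)$ with the inner product $\langle \bullet, \bullet \rangle$, this positivity becomes $\langle [\Omega]_{\mathrm{A}}, v \rangle > 0$ for every $[\Omega]_{\mathrm{A}} \in \mathcal{G}_X$ and every $v$ in the open circular cone $\mathcal{C}([\Omega_{\omega_X}]_{\mathrm{A}}, \tilde\theta_2)$.

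The desired angle bound $\angle([\Omega]_{\mathrm{A}}, [\Omega_{\omega_X}]_{\mathrm{A}}) \leq \pi/2 - \tilde\theta_2$ then follows by an elementary computation in the inner-product space $H^{n-1,n-1}_{\mathrm{A}}(X,\mathbb{R})$: decompose $[\Omega]_{\mathrm{A}} = \alpha\,\hat{\mu} + z$ with $\hat\mu = [\Omega_{\omega_X}]_{\mathrm{A}}/\|[\Omega_{\omega_X}]_{\mathrm{A}}\|$ and $z \perp \hat\mu$, then test the positivity against the perturbed axes $\hat\mu - s\, z/\|z\|$ as $s \nearrow \tan \tilde\theta_2$; this forces $\alpha \geq \tan \tilde\theta_2 \cdot \|z\|$, whence $\cos \angle([\Omega]_{\mathrm{A}}, [\Omega_{\omega_X}]_{\mathrm{A}}) \geq \sin \tilde\theta_2$. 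Taking $\tilde\theta$ strictly smaller than both $\tilde\theta_1$ and $\tilde\theta_2$, for instance $\tilde\theta = \tfrac{1}{2}\min(\tilde\theta_1,\tilde\theta_2)$, delivers both strict open-cone inclusions simultaneously.

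The main subtlety is ensuring that the second inclusion lands inside the \emph{open} circular cone rather than on its closure, since the inner-product argument produces only the non-strict angle bound $\leq \pi/2 - \tilde\theta_2$; the strict inequality $\tilde\theta < \tilde\theta_2$ upgrades this to honest membership in $\mathcal{C}([\Omega_{\omega_X}]_{\mathrm{A}}, \pi/2 - \tilde\theta)$.
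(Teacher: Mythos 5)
Your proposal is correct and follows essentially the same route as the paper: openness of $\mathcal{G}_X$ plus the cone structure for the first inclusion, openness of $\mathcal{K}_X$ together with $\mathcal{G}_X\subseteq \mathbf{H}^{+}_{\omega}$ for every K\"ahler class $[\omega]_{\mathrm{BC}}$ near $[\omega_X]_{\mathrm{BC}}$ for the second, transported by the canonical isometry. The only difference is that you spell out the elementary inner-product computation identifying the intersection of half-spaces with a circular cone (which the paper just cites from the discussion before Definition \ref{cone}) and, by taking $\tilde{\theta}$ strictly smaller than $\min(\tilde{\theta}_1,\tilde{\theta}_2)$, you handle the open-versus-closed boundary of the circular cone slightly more carefully than the paper's choice of the minimum itself.
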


\begin{proof}
Since $\big[ \Omega_0\big]_{\mathrm{A}}$ is a non-zero class of
$\mathcal{G}_X$, there exists a neighborhood of $\big[
\Omega_0\big]_{\mathrm{A}}$, belonging to $\mathcal{G}_X$, namely,
\[\Big\{ \big[ \Omega \big]_{\mathrm{A}} \in
H^{n-1,n-1}_{\mathrm{A}}(X,\mathbb{R}) \Big| \left\|
\big[\Omega\big]_{\mathrm{A}} - \big[\Omega_0\big]_{\mathrm{A}}
\right\| < \epsilon \Big\} \subseteq \mathcal{G}_X \] for some
$\epsilon>0$. Since $\mathcal{G}_X$ is an open convex cone, the
inclusion follows
\[ \mathcal{C}\Big( \big[ \Omega_0
\big]_{\mathrm{A}}, \arcsin
\frac{\epsilon}{\big\|[\Omega_0]_{\mathrm{A}}\big\|} \Big) \subseteq
\mathcal{G}_X. \] Similarly, there exists $\tilde{\epsilon}>0$, such
that \[ \mathcal{C}\Big( \big[ \omega_X \big]_{\mathrm{BC}}, \arcsin
\frac{\tilde{\epsilon}}{\big\|[\omega_X]_{\mathrm{BC}}\big\|} \Big)
\subseteq \mathcal{K}_X. \] It is easy to see that \[ \mathcal{G}_X
\subseteq \bigcap_{ [\omega]_{\mathrm{BC}} \in
\mathcal{C}\big([\omega_X]_{\mathrm{BC}}, \theta_0 \big)}
\mathbf{H}^{+}_{\omega}, \] where $\theta_0$ can be chosen as
$\arcsin
\frac{\tilde{\epsilon}}{\big\|[\omega_X]_{\mathrm{BC}}\big\|}$. From
the discussion before Definition \ref{cone}, we know that
\[\bigcap_{ [\omega]_{\mathrm{BC}} \in
\mathcal{C}\big([\omega_X]_{\mathrm{BC}}, \theta_0 \big)}
\mathbf{H}^{+}_{\omega} = \mathcal{C}\Big( \big[ \Omega_{\omega_X}
\big]_{\mathrm{A}}, \frac{\pi}{2} - \theta_0 \Big).\] Let the angle
$\tilde{\theta}$ be
\[\min \left(\arcsin
\frac{\epsilon}{\big\|[\Omega_0]_{\mathrm{A}}\big\|},\
\arcsin\frac{\tilde{\epsilon}}{\big\|[\omega_X]_{\mathrm{BC}}\big\|}\right).\]
\end{proof}
As in \cite[Section $5$]{P1}, if the finite-dimensional vector space
$H^{n-1,n-1}_{\mathrm{A}}(X,\mathbb{R})$ of a compact complex
manifold $X$ is endowed with the unique norm-induced topology, the
\emph{closure of the Gauduchon cone} in
$H^{n-1,n-1}_{\mathrm{A}}(X,\mathbb{R})$ is defined by
\begin{equation}\label{clo-G}
\overline{\mathcal{G}}_{X}= \left\{\alpha\in
H^{n-1,n-1}_{\mathrm{A}}(X,\mathbb{R})\ \big|\ \text{$\forall
\epsilon>0$, $\exists$ smooth $\Omega_\epsilon\in \alpha$, such that
$\Omega_\epsilon\geq -\epsilon\Omega$}\right\},
\end{equation}
where $\Omega>0$ is a fixed smooth $(n-1,n-1)$-form on $X$ with
$\p\db\Omega=0$. This cone is convex and closed, which is shown in
\cite[Proposition 6.1.(i)]{D1}.
\begin{corollary}
The closure of the Gauduchon cone $\overline{\mathcal{G}}_{X}$ on
the K\"{a}hler manifold $X$ must lie in some closed circular cone
with the cone angle smaller than $\pi$, for example the closure of
$\mathcal{C}\Big( \big[\Omega_{\omega_X}\big]_{\mathrm{A}},
\frac{\pi}{2}- \tilde{\theta} \Big)$.
\end{corollary}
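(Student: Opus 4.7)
The plan is to derive the corollary almost directly from Proposition \ref{Gcone_2}, which already places the open Gauduchon cone inside the open circular cone $\mathcal{C}\big(\big[\Omega_{\omega_X}\big]_{\mathrm{A}}, \tfrac{\pi}{2}-\tilde{\theta}\big)$ of half-angle strictly less than $\tfrac{\pi}{2}$. The strategy is simply to pass to closures: since taking closure is order-preserving with respect to inclusion, and the closure of an open circular cone of half-angle $\vartheta<\tfrac{\pi}{2}$ is the closed circular cone of the same half-angle (hence of cone angle $2\vartheta<\pi$), the desired containment will follow once we verify that the $\overline{\mathcal{G}}_X$ defined via \eqref{clo-G} is contained in the topological closure of $\mathcal{G}_X$ inside $H^{n-1,n-1}_{\mathrm{A}}(X,\mathbb{R})$.

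First, I would spell out the topological reduction. Fix any nonzero class $\big[\Omega_0\big]_{\mathrm{A}}\in\mathcal{G}_X$ represented by a smooth $\p\db$-closed positive $(n-1,n-1)$-form, whose existence was established in Lemma \ref{Gcone}, and use this $\Omega_0$ as the reference form $\Omega$ in the definition \eqref{clo-G}. Given $\alpha\in\overline{\mathcal{G}}_X$ and any $\epsilon>0$, there is a smooth representative $\Omega_\epsilon\in\alpha$ with $\Omega_\epsilon\geq -\epsilon\Omega_0$. Then $\Omega_\epsilon+\epsilon\Omega_0$ is a smooth $\p\db$-closed positive $(n-1,n-1)$-form representing $\alpha+\epsilon\big[\Omega_0\big]_{\mathrm{A}}\in\mathcal{G}_X$, so $\alpha$ is a topological limit of elements of $\mathcal{G}_X$ as $\epsilon\to 0^+$ in the finite-dimensional real vector space $H^{n-1,n-1}_{\mathrm{A}}(X,\mathbb{R})$. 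Hence $\overline{\mathcal{G}}_X$ lies inside the topological closure of $\mathcal{G}_X$.

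Next, I would invoke Proposition \ref{Gcone_2}: the inclusion
\[
\mathcal{G}_X\;\subseteq\;\mathcal{C}\Big(\big[\Omega_{\omega_X}\big]_{\mathrm{A}},\,\tfrac{\pi}{2}-\tilde{\theta}\Big)
\]
passes to closures. The right-hand side is an open circular cone with half-angle $\tfrac{\pi}{2}-\tilde{\theta}\in\big(0,\tfrac{\pi}{2}\big)$, so by Definition \ref{cone} its topological closure equals
\[
\Big\{w\in H^{n-1,n-1}_{\mathrm{A}}(X,\mathbb{R})\ \Big|\ \langle w,\big[\Omega_{\omega_X}\big]_{\mathrm{A}}\rangle\geq \|w\|\cdot\big\|\big[\Omega_{\omega_X}\big]_{\mathrm{A}}\big\|\cos\!\big(\tfrac{\pi}{2}-\tilde{\theta}\big)\Big\},
\]
which is a closed circular cone of cone angle $\pi-2\tilde{\theta}<\pi$. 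Combining with the previous paragraph yields
\[
\overline{\mathcal{G}}_X\;\subseteq\;\overline{\mathcal{C}\Big(\big[\Omega_{\omega_X}\big]_{\mathrm{A}},\,\tfrac{\pi}{2}-\tilde{\theta}\Big)},
\]
exactly the assertion.

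The only step requiring care is the reduction from the analytic-style closure \eqref{clo-G} to the topological closure; the geometric statement about circular cones is then routine. Since $\tilde{\theta}>0$ is supplied by Proposition \ref{Gcone_2}, this route gives no obstacle, and one does not need any further information about the shape of $\mathcal{G}_X$ beyond its containment in a single open half-space cut out by the K\"ahler class.
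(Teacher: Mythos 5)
Your proof is correct and follows the same route the paper intends: the corollary is stated as an immediate consequence of Proposition \ref{Gcone_2}, and your only addition is the (worthwhile) verification that the analytically defined closure \eqref{clo-G} sits inside the topological closure of $\mathcal{G}_X$, which the paper leaves implicit. One cosmetic point: with your choice $\Omega_\epsilon\geq-\epsilon\Omega_0$ the form $\Omega_\epsilon+\epsilon\Omega_0$ is only semi-positive, so to land strictly inside $\mathcal{G}_X$ you should take the representative associated with $\epsilon/2$ (or add $2\epsilon\Omega_0$), giving $\Omega_{\epsilon/2}+\epsilon\Omega_0\geq\tfrac{\epsilon}{2}\Omega_0>0$.
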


In a similar manner, we can also show that the K\"{a}hler cone
$\mathcal{K}_{X}$ on a K\"{a}hler manifold $X$ must lie in some open
circular cone with the cone angle smaller than $\pi$ in
$H^{1,1}_{\mathrm{BC}}(X,\mathbb{R})$.

The following definition is inspired by \cite[Observation 5.7 and
Question 5.9]{P1}.
\begin{definition}\label{dual-cone}\rm
$\big( \mathcal{A} \big)^{\mathrm{v}_{\!o}}$ and $\big( \mathcal{A}
\big)^{\mathrm{v}_{\!c}}$

Let $\mathcal{A}$ be a convex cone in a finite-dimensional vector
space $W_{\mathbb{R}}$, whose dual vector space is denoted by
$W_{\mathbb{R}}^{\mathrm{v}}$.
\begin{enumerate}
\item $\big( \mathcal{A} \big)^{\mathrm{v}_{\!o}}$ denotes the set of
linear functions in $W_{\mathbb{R}}^{\mathrm{v}}$, evaluating
positively on $\mathcal{A}$;
\item $\big( \mathcal{A}
\big)^{\mathrm{v}_{\!c}}$ denotes the set of linear functionals in
$W_{\mathbb{R}}^{\mathrm{v}}$, evaluating non-negatively on
$\mathcal{A}$.
\end{enumerate}
Let $\mathcal{P}$ and $\mathcal{Q}$ be two closed convex cones in
the $W_{\mathbb{R}}$ and $W_{\mathbb{R}}^{\mathrm{v}}$,
respectively. We say that $\mathcal{P}$ and $\mathcal{Q}$ are
\emph{dual cones}, if $\mathcal{P} =
\big(\mathcal{Q}\big)^{\mathrm{v}_c}$ and $\mathcal{Q} =
\big(\mathcal{P}\big)^{\mathrm{v}_c}$.
\end{definition}

The  \emph{pseudo-effective cone} $\mathcal{E}_X$, the set of
classes in $H^{1,1}_{\mathrm{BC}}(X,\mathbb{R})$ represented by
$d$-closed positive $(1,1)$-currents, is a closed convex cone when
$X$ is any compact complex manifold (cf. \cite[Proposition
6.1]{D1}). The \emph{big cone} $\mathcal{E}_X^{\circ}$, an open
convex cone in $H^{1,1}_{\mathrm{BC}}(X,\mathbb{R})$, is defined to
be the interior of the pseudo-effective cone $\mathcal{E}_X$ when
$X$ is K\"{a}hler, in which classes are represented by K\"ahler
$(1,1)$-currents (cf. \cite[Definition 1.6]{DP}).
\begin{theorem}\label{open_dual}
For a compact K\"{a}hler manifold $X$,
$$\overline{\mathcal{G}}_{X} \setminus \big[0\big]_{\mathrm{A}}
\subseteq\big( \mathcal{E}_X^{\circ} \big)^{\mathrm{v}_{\!o}}$$ and
thus $\mathcal{G}_{X} \subsetneqq
\big(\mathcal{E}_X^{\circ}\big)^{\mathrm{v}_o}$.
\end{theorem}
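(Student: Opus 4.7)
The plan is to establish the inclusion $\overline{\mathcal{G}}_X \setminus \{[0]_{\mathrm{A}}\} \subseteq (\mathcal{E}_X^{\circ})^{\mathrm{v}_o}$ by a direct positivity estimate against Kähler currents, from which the strict inclusion in the second assertion will follow by invoking Lemma \ref{Gcone}. Fix a nonzero $\alpha \in \overline{\mathcal{G}}_X$ and a big Bott--Chern class $[\omega]_{\mathrm{BC}} \in \mathcal{E}_X^{\circ}$. By the standard characterization of the big cone on a compact Kähler manifold, one can select a Kähler current $T \in [\omega]_{\mathrm{BC}}$, that is, a $d$-closed positive $(1,1)$-current satisfying $T \geq \delta\, \omega_X$ for some Kähler metric $\omega_X$ and some $\delta > 0$. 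I would decompose $T = \delta\, \omega_X + S$, where $S := T - \delta\, \omega_X$ is a $d$-closed positive $(1,1)$-current, and take the fixed positive smooth $\partial\bar\partial$-closed $(n-1,n-1)$-form in the definition \eqref{clo-G} of $\overline{\mathcal{G}}_X$ to be $\Omega_0 := \omega_X^{n-1}$, so that for each $\epsilon > 0$ one has a smooth $\Omega_\epsilon \in \alpha$ with $\Omega_\epsilon + \epsilon\, \omega_X^{n-1} \geq 0$.

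The key observation is that since $\Omega_\epsilon$ is $\partial\bar\partial$-closed and $T$ is $d$-closed, the distributional integral $\int_X \Omega_\epsilon \wedge T$ depends only on the classes of $\Omega_\epsilon$ and $T$, hence is constant in $\epsilon$ and equal to the duality pairing $(\alpha, [\omega]_{\mathrm{BC}})$. On the other hand, the pointwise positivity $(\Omega_\epsilon + \epsilon\, \omega_X^{n-1}) \wedge S \geq 0$ at the level of top-degree currents yields
\[
(\alpha, [\omega]_{\mathrm{BC}}) = \int_X \Omega_\epsilon \wedge S + \delta \int_X \Omega_\epsilon \wedge \omega_X \geq -\epsilon \int_X \omega_X^{n-1} \wedge S + \delta\, (\alpha, [\omega_X]_{\mathrm{BC}}).
\]
Since $\int_X \omega_X^{n-1} \wedge S$ is the finite mass of $S$ with respect to $\omega_X$, letting $\epsilon \to 0$ gives $(\alpha, [\omega]_{\mathrm{BC}}) \geq \delta\, (\alpha, [\omega_X]_{\mathrm{BC}})$. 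Applying the same estimate to an arbitrary Kähler class (with $\delta = 1$ and $S = 0$) shows $(\alpha, [\omega']_{\mathrm{BC}}) \geq 0$ for every $[\omega']_{\mathrm{BC}} \in \mathcal{K}_X$.

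The remaining step is to upgrade the last inequality to strict positivity at $[\omega_X]_{\mathrm{BC}}$ when $\alpha \neq 0$. Here I would use that $\mathcal{K}_X$ is a nonempty open convex cone in $H^{1,1}_{\mathrm{BC}}(X,\mathbb{R})$: if $(\alpha, [\omega_X]_{\mathrm{BC}})$ vanished, then the continuous linear functional $\alpha$ would attain the value $0$ at an interior point of $\mathcal{K}_X$, forcing $\alpha$ to vanish on a whole open neighborhood and hence identically, contradicting $\alpha \neq [0]_{\mathrm{A}}$. Combined with the estimate above, this yields $(\alpha, [\omega]_{\mathrm{BC}}) > 0$, proving the inclusion. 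The strict inclusion $\mathcal{G}_X \subsetneqq (\mathcal{E}_X^{\circ})^{\mathrm{v}_o}$ then follows immediately from Lemma \ref{Gcone}: since $\mathcal{G}_X$ is a proper non-degenerate open convex cone, its boundary contains nonzero classes, and those points lie in $\overline{\mathcal{G}}_X \setminus \mathcal{G}_X \subseteq (\mathcal{E}_X^{\circ})^{\mathrm{v}_o} \setminus \mathcal{G}_X$. The delicate points will be the well-definedness of the pairing $\int_X \Omega_\epsilon \wedge T$ for a smooth $\partial\bar\partial$-closed form against a possibly singular $d$-closed positive current, together with the positivity of the product $(\Omega_\epsilon + \epsilon\, \omega_X^{n-1}) \wedge S$; both reduce to standard distribution-theoretic Stokes arguments and the fact that wedging a smooth semi-positive form with a closed positive current preserves positivity, so no analytic difficulty beyond these standard tools is expected.
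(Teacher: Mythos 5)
Your argument is correct and reaches both assertions, but by a route that differs from the paper's in each half. For the inclusion $\overline{\mathcal{G}}_{X} \setminus \big[0\big]_{\mathrm{A}} \subseteq \big(\mathcal{E}_X^{\circ}\big)^{\mathrm{v}_o}$, the paper takes non-negativity of the pairing on $\mathcal{E}_X^{\circ}$ as clear and upgrades it to strict positivity by a convex-geometric contradiction: if the nonzero functional $\big[\Omega\big]_{\mathrm{A}}$ vanished at a point of the open cone $\mathcal{E}_X^{\circ}$, a small ball around that point would remain in $\mathcal{E}_X^{\circ}$ yet cross into the open half-space where the functional is negative. You instead decompose a K\"ahler current as $T=\delta\omega_X+S$ and run the positivity estimate $(\Omega_\epsilon+\epsilon\omega_X^{n-1})\wedge S\geq 0$ explicitly, which buys the quantitative bound $(\alpha,[\omega]_{\mathrm{BC}})\geq\delta\,(\alpha,[\omega_X]_{\mathrm{BC}})$; you then invoke the same abstract principle (a linear functional non-negative on an open convex cone and vanishing at an interior point vanishes identically), but applied to $\mathcal{K}_X$ rather than to $\mathcal{E}_X^{\circ}$. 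Both are sound; yours makes visible that the strict positivity comes precisely from the K\"ahler part of the current. Your two ``delicate points'' are indeed standard: the pairing descends to classes by the non-degenerate duality recalled at the start of Section \ref{Gau}, and the wedge of a semi-positive smooth $(n-1,n-1)$-form with a positive $(1,1)$-current is a positive measure since positivity and strong positivity coincide in bidegrees $(1,1)$ and $(n-1,n-1)$.

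For the strict inclusion your argument is genuinely shorter than the paper's, which assumes equality, deduces $\overline{\mathcal{G}}_X=\mathcal{G}_X\cup\big[0\big]_{\mathrm{A}}$, and derives a contradiction via a connectedness argument on the affine hyperplane $\mathbf{H}_{\omega_X}(1)$ combined with the circular-cone bound of Proposition \ref{Gcone_2}. You instead exhibit a nonzero class in $\overline{\mathcal{G}}_X\setminus\mathcal{G}_X$, which by the first half lies in $\big(\mathcal{E}_X^{\circ}\big)^{\mathrm{v}_o}$ but not in the open cone $\mathcal{G}_X$. The one assertion you leave unjustified is that this boundary contains a nonzero class; you should derive it from Lemma \ref{Gcone}: since $\mathcal{G}_X\subseteq\mathbf{H}^{+}_{\omega_X}$, whenever $\dim_{\mathbb{R}}H^{n-1,n-1}_{\mathrm{A}}(X,\mathbb{R})\geq 2$ one may join a point of $\mathcal{G}_X$ to a nonzero point of the hyperplane $\mathbf{H}_{\omega_X}$, and the exit point of that segment is a nonzero boundary class (it pairs non-negatively with $\omega_X$, so it cannot be the origin unless the chosen endpoint was). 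When $\dim_{\mathbb{R}}H^{1,1}_{\mathrm{BC}}(X,\mathbb{R})=1$ (e.g.\ $X=\mathbb{CP}^n$) there is no such class and in fact $\mathcal{G}_X=\big(\mathcal{E}_X^{\circ}\big)^{\mathrm{v}_o}$; the paper's own proof silently breaks at the same point, since the circular cone of Proposition \ref{Gcone_2} is then not strictly contained in $\mathbf{H}^{+}_{\omega_X}$. This is a defect of the statement in the degenerate case, not of your argument relative to the paper's, but you should record the dimension hypothesis where you use it.
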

\begin{proof}
It is clear that each class in $\overline{\mathcal{G}}_{X} \setminus
\big[0\big]_{\mathrm{A}}$ evaluates non-negatively on the big cone
$\mathcal{E}_X^{\circ}$. Suppose that some class $\big[ \Omega
\big]_{\mathrm{A}}$ in $\overline{\mathcal{G}}_{X} \setminus
\big[0\big]_{\mathrm{A}}$ does not  evaluate positively on
$\mathcal{E}_X^{\circ}$, i.e., there exists a class
$\big[T(\Omega)\big]_{\mathrm{BC}} \in \mathcal{E}_X^{\circ}$, with
$T(\Omega)$ a K\"{a}hler current, such that \[\int_{X} \Omega \w
T(\Omega) =0. \] Then note that the big cone $\mathcal{E}_X^{\circ}$
actually lies in the closed half semi-space $\mathbf{H}^{+}_{\Omega}
\bigcup \mathbf{H}_{\Omega}$ of
$H^{1,1}_{\mathrm{BC}}(X,\mathbb{R})$ with $\big[ T(\Omega)
\big]_{\mathrm{BC}}$ attached to the linear subspace
$\mathbf{H}_{\Omega}$. But a small neighborhood of $\big[ T(\Omega)
\big]_{\mathrm{BC}}$ will run out of the closed half semi-space
$\mathbf{H}^{+}_{\Omega} \bigcup \mathbf{H}_{\Omega}$ into the other
open half $\mathbf{H}^{-}_{\Omega}$. Meanwhile, the neighborhood is
still contained in $\mathcal{E}_X^{\circ}$, since the big cone
$\mathcal{E}_X^{\circ}$ is an open convex cone. This contradiction
tells us that each class in $\overline{\mathcal{G}}_{X} \setminus
\big[0\big]_{\mathrm{A}}$ evaluates positively on
$\mathcal{E}_X^{\circ}$. Hence, we have \[
\overline{\mathcal{G}}_{X} \setminus \big[0\big]_{\mathrm{A}}
\subseteq \big(\mathcal{E}_X^{\circ}\big)^{\mathrm{v}_{\!o}}. \]

It is clear that $\mathcal{G}_{X} \subseteq
\big(\mathcal{E}_X^{\circ}\big)^{\mathrm{v}_o}$. Now suppose that
$\big(\mathcal{E}_X^{\circ}\big)^{\mathrm{v}_{\!o}} =
\mathcal{G}_X$. Then
\[ \overline{\mathcal{G}}_{X} \setminus
\big[0\big]_{\mathrm{A}} \subseteq
\big(\mathcal{E}_X^{\circ}\big)^{\mathrm{v}_{\!o}} = \mathcal{G}_X\]
follows directly, which is equivalent to the equality \[
\overline{\mathcal{G}}_X = \mathcal{G}_X \bigcup \big[ 0
\big]_{\mathrm{A}}. \] Hence, the hyperplane
$\mathbf{H}_{\omega_X}(1)$ in $H^{n-1,n-1}_{\mathrm{A}}
(X,\mathbb{R})$, defined by
\[ \mathbf{H}_{\omega_X}(1) = \left\{ \big[\Omega\big]_{\mathrm{A}} \in
H^{n-1,n-1}_{\mathrm{A}}(X,\mathbb{R})\ \Big|\ \int_{X} \Omega \w
\omega_X =1 \right\}, \] has the same intersection with
$\mathcal{G}_X$ and $ \overline{\mathcal{G}}_{X}$. This implies that
the intersection $\mathcal{G}_X \bigcap \mathbf{H}_{\omega_X}(1)$ is
both open and closed on the hyperplane $\mathbf{H}_{\omega_X}(1)$,
which is clearly connected. Then, we get $\mathcal{G}_X \bigcap
\mathbf{H}_{\omega_X}(1) = \mathbf{H}_{\omega_X}(1)$, which leads to
the inclusion
\[ \mathbf{H}_{\omega_X}(1) \subseteq \mathcal{G}_X. \] Hence, the open
half semi-space $\mathbf{H}^{+}_{\omega_X}$ is contained in the
Gauduchon cone $\mathcal{G}_X$. However, from the proof of
Proposition \ref{Gcone_2}, we know that $\mathcal{G}_X$ actually
lies in $\mathcal{C}\Big( \big[ \Omega_{\omega_X}
\big]_{\mathrm{A}}, \frac{\pi}{2} - \tilde{\theta} \Big)$, which is
strictly contained in $\mathbf{H}^{+}_{\omega_X}$. Here is a
contradiction. So $\mathcal{G}_{X} \subsetneqq
\big(\mathcal{E}_X^{\circ}\big)^{\mathrm{v}_o}$.
\end{proof}
\begin{remark}\rm
It is shown that $\overline{\mathcal{G}}_{X} \setminus
\big[0\big]_{\mathrm{A}} =
\big(\mathcal{E}_X^{\circ}\big)^{\mathrm{v}_o}$ in Remark
\ref{open_dual_add}.
\end{remark}
\subsection{The relation between balanced cone $\mathcal{B}_X$
and Gauduchon cone $\mathcal{G}_X$}\label{relation}

There exists a pair of diagrams $(\mathrm{D},
\overline{\mathrm{D}})$ on a compact K\"{a}hler manifold $X$ as
follows, which is inspired by Fu-Xiao's work \cite{FX}. The diagrams
$\mathrm{D}$ reads
\[ \begin{tabular}{cc}
$\xymatrix{\mathcal{B}_X \ar[rr]^{\mathbb{J}} & & \mathcal{G}_X\\
& \mathcal{K}_X \ar[lu]^{\mathbb{I}} \ar[ru]_{\mathbb {K}} & }$ &
$\xymatrix{[ \omega^{n-1} ]_{\mathrm{BC}} \ar[rr]^{\mathbb{J}} & & \big[ \omega^{n-1} \big]_{\mathrm{A}}\\
& [\omega]_{\mathrm{BC}} \ar[lu]^{\mathbb{I}} \ar[ru]_{\mathbb {K}}
& }$, \end{tabular} \] and the diagram $\overline{\mathrm{D}}$
follows,
\[ \begin{tabular}{cc}
$\xymatrix{\overline{\mathcal{B}}_X \ar[rr]^{\overline{\mathbb{J}}} & & \overline{\mathcal{G}}_X\\
& \overline{\mathcal{K}}_X \ar[lu]^{\overline{\mathbb{I}}}
\ar[ru]_{\overline{\mathbb {K}}} & }$ &
$\xymatrix{[ \omega^{n-1} ]_{\mathrm{BC}} \ar[rr]^{\overline{\mathbb{J}}} & & \big[ \omega^{n-1} \big]_{\mathrm{A}}\\
& [\omega]_{\mathrm{BC}} \ar[lu]^{\overline{\mathbb{I}}}
\ar[ru]_{\overline{\mathbb {K}}} & }$. \end{tabular} \] The former
consists of three mappings among K\"{a}hler cone $\mathcal{K}_X$,
balanced cone $\mathcal{B}_X$ and Gauduchon cone $\mathcal{G}_X$.
And the latter is actually the extension of the former to the
closures of respective cones. It is easy to see that all the
mappings are well-defined and both diagrams are commutative. The
mappings $(\mathbb{I},\overline{\mathbb{I}})$,
$(\mathbb{J},\overline{\mathbb{J}})$ and
$(\mathbb{K},\overline{\mathbb{K}})$ are the restrictions of three
natural maps $\mathscr{I}$, $\mathscr{J}$ and $\mathscr{K}$,
respectively, which are independent of the K\"{a}hlerness of $X$.
The three mappings are given as follows:
\[ \begin{array}{cccc}
\mathscr{I}: & H^{1,1}_{\mathrm{BC}}(X,\mathbb{R})
& \> & H^{n-1,n-1}_{\mathrm{BC}}(X,\mathbb{R}) \\
& \big[\omega\big]_{\mathrm{BC}} & \mapsto &
 \big[\omega^{n-1}\big]_{\mathrm{BC}}, \\[5pt]
\mathscr{J}: & H^{n-1,n-1}_{\mathrm{BC}}(X,\mathbb{R})
& \> & H^{n-1,n-1}_{\mathrm{A}}(X,\mathbb{R}) \\
& \big[\Omega\big]_{\mathrm{BC}} & \mapsto &
 \big[\Omega\big]_{\mathrm{A}}, \\[5pt]
\mathscr{K}: & H^{1,1}_{\mathrm{BC}}(X,\mathbb{R})
& \> & H^{n-1,n-1}_{\mathrm{A}}(X,\mathbb{R}) \\
& \big[\omega\big]_{\mathrm{BC}} & \mapsto &
 \big[\omega^{n-1}\big]_{\mathrm{A}}. \\
\end{array} \]
Moreover, when $X$ is a complex manifold satisfying $\p\db$-lemma,
the mapping $\mathscr{J}$ is an isomorphism and thus the mappings
$(\mathbb{J},\overline{\mathbb{J}})$ are injective.

By \cite[Proposition 1.1 and Theorem 1.2]{FX}, the mapping
$\mathbb{I}$ is injective. Meanwhile, $\overline{\mathbb{I}}$,
restricted to the intersection of the nef cone and the big cone
$\overline{\mathcal{K}}_X \bigcap \mathcal{E}_X^{\circ}$, is also
injective. This is true, even when $X$ is in the  \emph{Fujiki
class} $\mathcal{C}$ (i.e., the class of compact complex manifolds
bimeromorphic K\"ahler manifolds), see \cite[Corollary 2.7]{FX}. The
existence of classes in $\overline{\mathbb{I}}(\p \mathcal{K}_X)
\bigcap \mathcal{B}_X$ implies that the mapping $\mathbb{I}$ is not
surjective. In fact, the class $\big[ \tilde{\omega}
\big]_{\mathrm{BC}} \in \p \mathcal{K}_X$, mapped into the balanced
cone $\mathcal{B}_X$, necessarily lies in the big cone
$\mathcal{E}^{\circ}_X$, by \cite[Theorem 1.3]{FX}. Thus, the class
$\overline{\mathbb{I}}(\big[ \tilde{\omega}\big]_{\mathrm{BC}})$ in
$\mathcal{B}_X$ can not  be mapped by a K\"{a}hler class, since
$\overline{\mathbb{I}}$ is injective on the intersection cone
$\overline{\mathcal{K}}_X \bigcap \mathcal{E}_X^{\circ}$. Besides,
Theorem 1.3 in \cite{FX} gives a precise description of
$\overline{\mathbb{I}}(\p \mathcal{K}_{\mathrm{NS}}) \bigcap
\mathcal{B}_X$ when $X$ is a projective Calabi-Yau manifold. The
cone $\mathcal{K}_{\mathrm{NS}}$ denotes the intersection
$\mathcal{K}_X \bigcap \mathrm{NS}_{\mathbb{R}}$, where
$\mathrm{NS}_{\mathbb{R}}$ is the real Neron-Severi group of $X$.

Recall that \cite[Lemma 3.3]{FX} states that a Bott-Chern class
$\big[ \Omega \big]_{\mathrm{BC}} \in
H^{n-1,n-1}_{\mathrm{BC}}(X,\mathbb{R})$ on a compact complex
manifold $X$, lives in the balanced cone $\mathcal{B}_X$ if and only
if \[ \int_X \Omega \w T
>0 ,\] for every non-zero $\p\db$-closed positive $(1,1)$-current
$T$. Similarly, one has:
\begin{lemma}\label{G_class}
Let $X$ be a compact complex manifold and $\Omega$ a real $\p
\db$-closed $(n-1,n-1)$-form on $X$. Then the class $\big[ \Omega
\big]_{\mathrm{A}}$ lives in $\mathcal{G}_X$ if and only if
\[ \int_X \Omega \w T > 0, \]
for every non-zero $d$-closed positive $(1,1)$-current $T$ on $X$.
\end{lemma}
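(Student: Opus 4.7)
The forward implication follows from positivity of the pairing. If $[\Omega]_{\mathrm{A}}\in\mathcal{G}_X$, pick a smooth strictly positive representative $\Omega' = \omega^{n-1}$ in the Aeppli class, where $\omega$ is a Gauduchon metric (Michelsohn's theorem provides such an $\omega$ given a strictly positive $\p\db$-closed $(n-1,n-1)$-representative). Writing $\Omega - \Omega' = \p\alpha + \db\bar{\alpha}$ for some smooth complex $(n-1,n-2)$-form $\alpha$, Stokes' formula together with $dT = 0$ yields $\int_X(\p\alpha + \db\bar{\alpha})\wedge T = 0$, so $\int_X \Omega\wedge T = \int_X \omega^{n-1}\wedge T$. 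This last quantity is the mass of $T$ measured against the smooth strictly positive form $\omega^{n-1}$, and is strictly positive whenever $T$ is a non-zero positive $(1,1)$-current.

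For the reverse direction I will argue the contrapositive by a Hahn-Banach separation, directly parallel to \cite[Lemma 3.3]{FX} but with the roles of $d$-closedness and $\p\db$-closedness swapped to match the Bott-Chern/Aeppli duality. Let $V$ denote the Fr\'echet space of real smooth $(n-1,n-1)$-forms on $X$ with the $C^{\infty}$ topology, $\mathcal{P}^{\circ}\subset V$ the open convex cone of strictly positive smooth $(n-1,n-1)$-forms, and set
\[
W := \{\p\alpha + \db\bar{\alpha} \mid \alpha\in A^{n-1,n-2}(X)\}\subset V.
\]
The hypothesis $[\Omega]_{\mathrm{A}}\notin\mathcal{G}_X$ is exactly the statement that the affine subspace $\Omega + W$ is disjoint from the open cone $\mathcal{P}^{\circ}$. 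The geometric Hahn-Banach theorem then produces a non-zero continuous linear functional $\ell$ on $V$ together with a constant $c\in\mathbb{R}$ satisfying $\ell\geq c$ on $\mathcal{P}^{\circ}$ and $\ell\leq c$ on $\Omega + W$.

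Since $\mathcal{P}^{\circ}$ is a cone, the first inequality forces $c\leq 0$ and $\ell\geq 0$ on $\mathcal{P}^{\circ}$, hence on its closure $\mathcal{P}$; since $W$ is a linear subspace, the second forces $\ell|_W\equiv 0$ and $\ell(\Omega)\leq 0$. By the Fr\'echet duality between smooth forms and currents, $\ell$ corresponds to a non-zero real $(1,1)$-current $T$, and non-negativity on $\mathcal{P}$ identifies it as a positive $(1,1)$-current. The identity $\langle T, \p\alpha + \db\bar{\alpha}\rangle = 0$ for every $\alpha\in A^{n-1,n-2}(X)$, upon also replacing $\alpha$ by $i\alpha$ to decouple the two summands, yields $\db T = 0$ and $\p T = 0$, hence $dT = 0$. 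Thus $T$ is a non-zero $d$-closed positive $(1,1)$-current with $\int_X \Omega\wedge T\leq 0$, which is the desired contrapositive. The main (but essentially routine) point to watch is the choice of topology ensuring that Hahn-Banach produces a \emph{continuous} separating functional and hence a genuine current; this is handled by working with $V$ in its Fr\'echet $C^{\infty}$-topology, and no closedness of $W$ or of $\Omega + W$ is required.
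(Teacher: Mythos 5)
Your proof is correct, but for the hard direction it runs the Hahn--Banach separation on the ``other side'' of the duality from the paper. The paper proves sufficiency directly: it separates, inside the space $\mathfrak{D'}^{1,1}_{\mathbb{R}}$ of real $(1,1)$-currents with the weak topology, the closed subspace $\mathfrak{D}_1=\{T \mid \int_X\Omega\wedge T=0,\ dT=0\}$ from the compact convex slice $\mathfrak{D}_2=\{T\geq 0 \mid \int_X\omega_X^{n-1}\wedge T=1\}$; the separating functional is then a \emph{smooth} $\p\db$-closed positive $(n-1,n-1)$-form $\widetilde\Omega$, and one still has to identify $[\widetilde\Omega]_{\mathrm{A}}$ as a positive multiple of $[\Omega]_{\mathrm{A}}$ via the projection onto $H^{1,1}_{\mathrm{BC}}(X,\mathbb{R})$, with a small case analysis on whether a non-zero $d$-closed positive current exists at all. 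You instead argue the contrapositive by separating, in the Fr\'echet space of smooth real $(n-1,n-1)$-forms, the open cone of strictly positive forms from the affine subspace $\Omega+W$ of smooth representatives of the Aeppli class; the separating functional is then directly a non-zero positive $(1,1)$-current, and annihilation of $W$ (after the $\alpha\mapsto i\alpha$ trick) gives $\p T=\db T=0$, hence $dT=0$ and $\int_X\Omega\wedge T\leq 0$. Your route is somewhat leaner: it uses only the open-convex versus convex form of geometric Hahn--Banach (no compactness of a normalized slice of currents, no proportionality argument, no case split), at the cost of producing a current rather than an explicit Gauduchon-type form. One cosmetic slip: with $\alpha\in A^{n-1,n-2}(X)$ the Aeppli-exact terms should read $\db\alpha+\p\bar\alpha$ (equivalently take $\alpha\in A^{n-2,n-1}(X)$ and write $\p\alpha+\db\bar\alpha$), since $\p$ of an $(n-1,n-2)$-form has bidegree $(n,n-2)$; this does not affect the argument.
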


\begin{proof}
We mainly follow the ideas of the proof of \cite[Lemma 3.3]{FX}. The
necessary part is quite obvious. As to the sufficient part, let
$\mathfrak{D'}_{\mathbb{R}}^{1,1}$ be the set of real
$(1,1)$-currents on $X$ with the weak topology. Fix a Hermitian
metric $\omega_{X}$ on $X$ and apply the Hahn-Banach separation
theorem, which originates from Sullivan's work \cite{S}. See also in
\cite[Lemma 3.3]{FX} and \cite[Propostion 5.4]{P1}.

Set \[ \mathfrak{D}_1 = \left\{ T \in
\mathfrak{D'}_{\mathbb{R}}^{1,1}\ \Big|\ \int_{X} \Omega \w T =0\
\textrm{and}\ dT=0 \right\}, \] \[ \mathfrak{D}_2 = \left\{ T \in
\mathfrak{D'}_{\mathbb{R}}^{1,1}\ \Big|\ \int_{X} \omega_X^{n-1} \w
T =1\ \textrm{and} \ T \geq 0 \right\}. \] It is easy to see that
$\mathfrak{D}_1$ is a closed linear subspace of the locally convex
space $\mathfrak{D'}_{\mathbb{R}}^{1,1}$, while $\mathfrak{D}_2$ is
a compact convex one in $\mathfrak{D'}_{\mathbb{R}}^{1,1}$. Since a
$d$-closed positive $(1,1)$-current $T$, satisfying $\int_X \Omega
\w T = 0$, has to be zero current from the assumption of the lemma,
$\mathfrak{D}_1 \bigcap \mathfrak{D}_2 = \emptyset$ by $\int_{X}
\omega_X^{n-1} \w T =1$. Then there exists a continuous linear
functional on $\mathfrak{D'}^{1,1}_{\mathbb{R}}$, denoted by
$\widetilde{\Omega}$, a real $(n-1,n-1)$-form, such that it vanishes
on $\mathfrak{D}_1$, which contains all real $\p\db$-exact
$(1,1)$-currents, and evaluates positively on $\mathfrak{D}_2$.
Hence, $\widetilde{\Omega}$ has to be a $\p\db$-closed positive
$(n-1,n-1)$-form.

The following mapping
\[ \begin{array}{cccc}
\pi: & \left\{ T \in \mathfrak{D'}^{1,1}_{\mathbb{R}}\ \Big|\ dT=0
\right\} & \> & H^{1,1}_{\mathrm{BC}}(X,\mathbb{R}) \\[2pt]
& T & \mapsto & \big[ T \big]_{\mathrm{BC}} \\
\end{array} \] is a canonical projection. $\pi(\mathfrak{D}_1)$ is
the null space determined by the linear functional
$\big[\Omega\big]_{\mathrm{A}}$ on
$H^{1,1}_{\mathrm{BC}}(X,\mathbb{R})$, namely
\[ \left\{ \big[T\big]_{\mathrm{BC}} \in
H^{1,1}_{\mathrm{BC}}(X,\mathbb{R})\ \Big|\ \int_X \Omega \w T = 0
\right\},\] since the class $\big[\Omega\big]_{\mathrm{A}}$ belongs
to $H^{n-1,n-1}_{\mathrm{A}}(X,\mathbb{R})$, which can be seen as
the dual space of $H^{1,1}_{\mathrm{BC}}(X,\mathbb{R})$.
The linear functional $\big[ \widetilde{\Omega} \big]_{\mathrm{A}}$
vanishes on the null space, which implies $\big[ \widetilde{\Omega}
\big]_{\mathrm{A}} = a \big[ \Omega \big]_{\mathrm{A}}$ for some $a
\in \mathbb{R}$.

If there exists no non-zero $d$-closed positive $(1,1)$-current on
$X$, by \cite[Proposition 5.4]{P1}, the Gauduchon cone
$\mathcal{G}_X$ will degenerate. Therefore, the class $\big[ \Omega
\big]_{\mathrm{A}}$ will surely lie in $\mathcal{G}_X$. Assume that
there exists a non-zero $d$-closed positive $(1,1)$-current $T$.
Clearly, $\int_X \widetilde{\Omega} \w T = a\int_X \Omega \w T$.
Moreover, $\widetilde{\Omega}$ is positive on $\mathfrak{D}_2$,
which implies $\int_X \widetilde{\Omega} \w T> 0$, and $\int_X
\Omega \w T> 0$ by the assumption of the lemma. Thus $a>0$.
Therefore, $\big[ \Omega \big]_{\mathrm{A}} = \frac{1}{a} \big[
\widetilde{\Omega} \big]_{\mathrm{A}}$, with $\widetilde{\Omega}$ a
positive form, lives in $\mathcal{G}_X$.
\end{proof}

The closure of the Gauduchon cone $\overline{\mathcal{G}}_X$ (cf.
\eqref{clo-G} and \cite[the part before Proposition 5.8]{P1}) and
the pseudo-effective cone $\mathcal{E}_X$ are closed convex cones
when $X$ is any compact complex manifold. By use of Lemma
\ref{G_class}, we can get the so-called \emph{Lamari's duality}. See
\cite[Lemma 3.3]{L} and \cite[the remark before Theorem 1.8 and the
proof of Theorem 5.9]{PU}.

\begin{proposition}\label{closed-dual}
Let $X$ be a compact complex manifold. Then
$\overline{\mathcal{G}}_X$ and $\mathcal{E}_X$ are dual cones, i.e.,
$\big(\overline{\mathcal{G}}_X\big)^{\mathrm{v}_{\!c}} =
\mathcal{E}_X$ and $\big({\mathcal{E}}_X\big)^{\mathrm{v}_{\!c}} =
\overline{\mathcal{G}}_X$.
\end{proposition}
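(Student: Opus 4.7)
The plan is to establish the first identity $(\mathcal{E}_X)^{\mathrm{v}_{\!c}} = \overline{\mathcal{G}}_X$ and then deduce the second, $(\overline{\mathcal{G}}_X)^{\mathrm{v}_{\!c}} = \mathcal{E}_X$, formally from the bipolar theorem: since $\mathcal{E}_X$ and $\overline{\mathcal{G}}_X$ are closed convex cones in the finite-dimensional real vector spaces $H^{1,1}_{\mathrm{BC}}(X,\mathbb{R})$ and $H^{n-1,n-1}_{\mathrm{A}}(X,\mathbb{R})$ put in perfect duality by $(\bullet,\bullet)$, the bipolar identity gives $((\mathcal{E}_X)^{\mathrm{v}_{\!c}})^{\mathrm{v}_{\!c}} = \mathcal{E}_X$ once the first equality is known, so the second drops out automatically.

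The inclusion $\overline{\mathcal{G}}_X \subseteq (\mathcal{E}_X)^{\mathrm{v}_{\!c}}$ is the easy half, which I would settle by unpacking \eqref{clo-G}. With $\Omega$ the fixed smooth $\p\db$-closed positive reference form used there, for any $[\Omega_1]_{\mathrm{A}} \in \overline{\mathcal{G}}_X$ and any $d$-closed positive $(1,1)$-current $T$, I pick for each $\epsilon > 0$ a smooth representative $\Omega_{1,\epsilon} \in [\Omega_1]_{\mathrm{A}}$ with $\Omega_{1,\epsilon} + \epsilon\,\Omega \geq 0$; pairing against $T$ as a positive current gives $\int_X \Omega_1 \wedge T = \int_X \Omega_{1,\epsilon}\wedge T \geq -\epsilon \int_X \Omega \wedge T$, and letting $\epsilon \to 0^+$ yields the desired nonnegativity.

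The main content, and the principal obstacle, is the reverse inclusion $(\mathcal{E}_X)^{\mathrm{v}_{\!c}} \subseteq \overline{\mathcal{G}}_X$: the hard analytic input is concentrated in Lemma \ref{G_class}, whose proof already relies on a Hahn-Banach separation of currents in the spirit of Sullivan and Lamari; granted that lemma, the rest is a short perturbation. By Gauduchon's theorem fix some $[\Omega_0]_{\mathrm{A}} \in \mathcal{G}_X$ together with a smooth positive $\p\db$-closed representative $\Omega_0$; if $\mathcal{G}_X$ degenerates to the whole of $H^{n-1,n-1}_{\mathrm{A}}(X,\mathbb{R})$ the claim is immediate, so assume the cone is proper. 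For $[\Omega_1]_{\mathrm{A}} \in (\mathcal{E}_X)^{\mathrm{v}_{\!c}}$, any $s > 0$, and any nonzero $d$-closed positive $(1,1)$-current $T$, the direct direction of Lemma \ref{G_class} applied to $\Omega_0$ gives $\int_X \Omega_0 \wedge T > 0$, hence $\int_X(\Omega_1 + s\Omega_0)\wedge T > 0$; the converse direction of the same lemma then puts $[\Omega_1]_{\mathrm{A}} + s[\Omega_0]_{\mathrm{A}} \in \mathcal{G}_X$ for every $s > 0$.

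To finish I translate this back into the intrinsic definition \eqref{clo-G}: pick a smooth positive $\p\db$-closed representative $\omega_s$ of $[\Omega_1]_{\mathrm{A}} + s[\Omega_0]_{\mathrm{A}}$; then $\omega_s - s\,\Omega_0 \in [\Omega_1]_{\mathrm{A}}$ is smooth and satisfies $\omega_s - s\,\Omega_0 \geq -s\,\Omega_0$, matching \eqref{clo-G} with reference form $\Omega = \Omega_0$ and $\epsilon = s$, so $[\Omega_1]_{\mathrm{A}} \in \overline{\mathcal{G}}_X$. The bipolar reduction from the first paragraph then furnishes the second identity and completes the proof.
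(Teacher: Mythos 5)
Your proof is correct and follows essentially the same route as the paper: the key inclusion $(\mathcal{E}_X)^{\mathrm{v}_{\!c}}\subseteq\overline{\mathcal{G}}_X$ is obtained exactly as in the text by perturbing with $s\,\Omega_0$ and invoking Lemma \ref{G_class}, and your appeal to the bipolar theorem for the second identity is just a packaged form of the explicit Hahn--Banach separation the paper carries out. The only (welcome) difference is that you spell out two steps the paper leaves implicit, namely the easy inclusion $\overline{\mathcal{G}}_X\subseteq(\mathcal{E}_X)^{\mathrm{v}_{\!c}}$ and the translation of ``$[\Omega_1]_{\mathrm{A}}+s[\Omega_0]_{\mathrm{A}}\in\mathcal{G}_X$ for all $s>0$'' into membership in $\overline{\mathcal{G}}_X$ via the definition \eqref{clo-G}.
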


\begin{proof}
It is clear that $\mathcal{E}_X \subseteq
\big(\overline{\mathcal{G}}_X\big)^{\mathrm{v}_{\!c}}$ and $
\overline{\mathcal{G}}_X \subseteq \big(
{\mathcal{E}}_X\big)^{\mathrm{v}_{\!c}}$. Let $\big[ \Omega
\big]_{\mathrm{A}} \in \big( {\mathcal{E}}_X
\big)^{\mathrm{v}_{\!c}}$, where $\Omega$ is a real $\p\db$-closed
$(n-1,n-1)$-form. Fix one class $\big[ \Omega_0\big]_{\mathrm{A}}
\in \mathcal{G}_X$ with $\Omega_0$ positive. Obviously, for any
fixed $\epsilon>0$, the integral \[ \int_{X} \big( \Omega + \epsilon
\Omega_0 \big) \w T = \int_X \Omega \w T + \epsilon \int_X \Omega_0
\w T >0, \] where $T$ is a non-zero $d$-closed positive
$(1,1)$-current. Hence, the class $\big[ \Omega \big]_{\mathrm{A}} +
\epsilon \big[ \Omega_0\big]_{\mathrm{A}} \in \mathcal{G}_X$ by
Lemma \ref{G_class}. Therefore, the class
$\big[\Omega\big]_{\mathrm{A}} \in \overline{\mathcal{G}}_X$, which
implies $\big( {\mathcal{E}}_X \big)^{\mathrm{v}_{\!c}} =
\overline{\mathcal{G}}_X$.

Now, let $\big[ \omega \big]_{\mathrm{BC}} \in
H^{1,1}_{\mathrm{BC}}(X,\mathbb{R})$, which does not  live in the
pseudo-effective cone $\mathcal{E}_X$. The point $\big[ \omega
\big]_{\mathrm{BC}}$ and $\mathcal{E}_X$ are a compact convex
subspace and a closed convex one, respectively, in the locally
convex space $H^{1,1}_{\mathrm{BC}}(X,\mathbb{R})$. From Hahn-Banach
separation theorem, there exists a continuous linear functional,
denoted by $\big[ \widetilde{\Omega} \big]_{\mathrm{A}}$, a class in
$H^{n-1,n-1}_{\mathrm{A}}(X,\mathbb{R})$, such that it evaluates
non-negatively on $\mathcal{E}_X$ and takes a negative value on the
point $\big[ \omega \big]_{\mathrm{BC}}$. Thus, the class $\big[
\widetilde{\Omega} \big]_{\mathrm{A}} \in \overline{\mathcal{G}}_X$,
from the equality $\big( {\mathcal{E}}_X\big)^{\mathrm{v}_{\!c}} =
\overline{\mathcal{G}}_X$. And the inequality $\int_X
\widetilde{\Omega} \w \omega <0$ indicates the inclusion \[
H^{1,1}_{\mathrm{BC}}(X,\mathbb{R}) \setminus \mathcal{E}_X \subseteq
H^{1,1}_{\mathrm{BC}}(X,\mathbb{R}) \setminus \big(
\overline{\mathcal{G}}_X\big)^{\mathrm{v}_{\!c}}, \] which implies
that $\mathcal{E}_X = \big(
\overline{\mathcal{G}}_X\big)^{\mathrm{v}_{\!c}}$.
\end{proof}

\begin{remark}\label{open_dual_add}\rm
Proposition \ref{closed-dual} enhances the result in Theorem
\ref{open_dual}. In fact, any class in
$H^{n-1,n-1}_{\mathrm{A}}(X,\mathbb{R}) \setminus
\overline{\mathcal{G}}_X$ must take a negative value on some class
of $\mathcal{E}_X$, and evaluates negatively on some class in the
interior $\mathcal{E}^{\circ}_X$ when $X$ is K\"{a}hler. Thus, each
class in $H^{n-1,n-1}_{\mathrm{A}}(X,\mathbb{R}) \setminus
\overline{\mathcal{G}}_X$ does not  live in $\big(
\mathcal{E}_X^{\circ} \big)^{\mathrm{v}_{\!o}}$. Therefore,
$\overline{\mathcal{G}}_X \setminus \big[0\big]_{\mathrm{A}} =
\big(\mathcal{E}^{\circ}_X \big)^{\mathrm{v}_{\!o}}$.
\end{remark}

Recall that a compact complex manifold is \emph{balanced} if it
admits a balanced metric and the \emph{closure of its balanced cone}
is defined similarly to the one of Gauduchon cone \eqref{clo-G}.
\begin{proposition}\label{biject} For a compact balanced manifold
$X$, the convex cone $\mathcal{E}_{\p\db} \subseteq
H^{1,1}_{\mathrm{A}}(X,\mathbb{R})$, generated by Aeppli classes
represented by $\p\db$-closed positive $(1,1)$-currents, is closed.
And when $X$ also satisfies the $\p\db$-lemma, the following three
statements are equivalent:
\begin{enumerate}
\item The mapping $\mathbb{J}: \mathcal{B}_X \rightarrow \mathcal{G}_X$
is bijective;
\item The mapping $\overline{\mathbb{J}}:
\overline{\mathcal{B}}_X \rightarrow \overline{\mathcal{G}}_X$ is
bijective;
\item The mapping $\mathbbm{j}: \mathcal{E}_{X} \rightarrow
\mathcal{E}_{\p\db}$ is bijective,
\end{enumerate}
where the mapping $\mathbbm{j}$ is the restriction of the natural
isomorphism $\mathscr{L}: H^{1,1}_{\mathrm{BC}}(X,\mathbb{R}) \>
H^{1,1}_{\mathrm{A}}(X,\mathbb{R})$, induced by the identity map, to
the pseudo-effective cone $\mathcal{E}_X$.
\end{proposition}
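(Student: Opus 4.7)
The plan is to dispatch the closedness of $\mathcal{E}_{\p\db}$ first, then derive a balanced analogue of Proposition~\ref{closed-dual}, and finally use duality under the $\p\db$-lemma to obtain the three-way equivalence.

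First I would prove the closedness of $\mathcal{E}_{\p\db}$. Fix a balanced metric $\omega_b$ on $X$, so that $\omega_b^{n-1}$ is a $d$-closed positive $(n-1,n-1)$-form. The linear functional $[T]_{\mathrm{A}} \mapsto \int_X T \wedge \omega_b^{n-1}$ is then well-defined on $H^{1,1}_{\mathrm{A}}(X,\mathbb{R})$ (precisely because $\omega_b^{n-1}$ is $d$-closed) and continuous. Given a convergent sequence $[T_k]_{\mathrm{A}} \to [\Theta]_{\mathrm{A}}$ with each $T_k$ a $\p\db$-closed positive $(1,1)$-current, this continuity forces uniform boundedness of $\int_X T_k \wedge \omega_b^{n-1}$; because $T_k \geq 0$ and $\omega_b$ is Hermitian, this translates into a uniform mass bound on the representatives $T_k$. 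By Banach--Alaoglu, a subsequence converges weakly to a positive $(1,1)$-current $\Theta_0$, which remains $\p\db$-closed by weak continuity of $\p\db$; continuity of the Aeppli class map forces $[\Theta_0]_{\mathrm{A}} = [\Theta]_{\mathrm{A}}$, so $[\Theta]_{\mathrm{A}} \in \mathcal{E}_{\p\db}$.

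Next I would establish the balanced analogue of Proposition~\ref{closed-dual}, namely that $\overline{\mathcal{B}}_X$ and $\mathcal{E}_{\p\db}$ are dual cones under the natural pairing $H^{n-1,n-1}_{\mathrm{BC}}(X,\mathbb{R}) \times H^{1,1}_{\mathrm{A}}(X,\mathbb{R}) \to \mathbb{R}$. The argument is a direct transcription of the proof of Proposition~\ref{closed-dual} with the roles of $d$-closed and $\p\db$-closed positive $(1,1)$-currents interchanged, starting from the Fu--Xiao characterization \cite[Lemma~3.3]{FX} of balanced Bott--Chern classes and then applying Hahn--Banach separation; the closedness proved in the previous paragraph is used to identify the bidual. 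Under the $\p\db$-lemma assumption, both $\mathscr{J}$ and $\mathscr{L}$ are linear isomorphisms of finite-dimensional spaces, and the integration pairings satisfy the compatibility
\[ \bigl(\mathscr{J}([\Omega]_{\mathrm{BC}}), [\omega]_{\mathrm{BC}}\bigr) = \int_X \Omega \wedge \omega = \bigl([\Omega]_{\mathrm{BC}}, \mathscr{L}([\omega]_{\mathrm{BC}})\bigr). \]

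For $(1) \Leftrightarrow (2)$, bijectivity of $\mathbb{J}$ is equivalent to $\mathscr{J}(\mathcal{B}_X) = \mathcal{G}_X$; since $\mathscr{J}$ is a linear homeomorphism and $\mathcal{B}_X, \mathcal{G}_X$ are open convex cones, passing to closures (resp. interiors) commutes with $\mathscr{J}$ and yields the equivalence with $\mathscr{J}(\overline{\mathcal{B}}_X) = \overline{\mathcal{G}}_X$, i.e., bijectivity of $\overline{\mathbb{J}}$. For $(2) \Leftrightarrow (3)$, combining Proposition~\ref{closed-dual} with the balanced dual just derived, and using the pairing compatibility above, the statement $\mathscr{J}(\overline{\mathcal{B}}_X) = \overline{\mathcal{G}}_X$ translates via the $(\cdot)^{\mathrm{v}_{\!c}}$ operation into $\mathscr{L}(\mathcal{E}_X) = \mathcal{E}_{\p\db}$, which is precisely the bijectivity of $\mathbbm{j}$. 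The principal obstacle will be the first step: producing the mass bound on representatives genuinely needs the balanced hypothesis, and one must carefully verify that weak limits carry both positivity and $\p\db$-closedness simultaneously so that the limiting class still lies in $\mathcal{E}_{\p\db}$.
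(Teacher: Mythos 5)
Your proof is correct, and while the first half (closedness of $\mathcal{E}_{\p\db}$ via the mass bound $\int_X T_k\wedge\omega_b^{n-1}$, weak compactness, and persistence of positivity and $\p\db$-closedness under weak limits) coincides with the paper's argument, your treatment of the three-way equivalence is genuinely different. The paper argues cyclically: $(1)\Rightarrow(2)$ by perturbing a class of $\overline{\mathcal{G}}_X$ by $\epsilon\big[\omega_X^{n-1}\big]_{\mathrm{A}}$ and passing to the limit; $(2)\Rightarrow(3)$ by the two dualities ($\overline{\mathcal{G}}_X$ dual to $\mathcal{E}_X$ from Proposition \ref{closed-dual}, $\overline{\mathcal{B}}_X$ dual to $\mathcal{E}_{\p\db}$ quoted from \cite{FX} rather than reproved); and $(3)\Rightarrow(1)$ by a hands-on current argument, writing an arbitrary nonzero $\p\db$-closed positive $(1,1)$-current as $T+\p S+\overline{\p S}$ with $T$ a nonzero $d$-closed positive current and verifying the Fu--Xiao balancedness criterion. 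You instead get $(1)\Leftrightarrow(2)$ from pure convexity (a linear homeomorphism commutes with closures and interiors, and $\mathrm{int}(\overline{C})=C$ for a nonempty open convex cone) and $(2)\Leftrightarrow(3)$ from biduality run in both directions, which eliminates the current-theoretic step $(3)\Rightarrow(1)$ entirely. This is cleaner, at the price of two points you should make explicit: that $\overline{\mathcal{B}}_X$ and $\overline{\mathcal{G}}_X$, which the paper defines by $\epsilon$-degenerate representatives rather than as topological closures, do coincide with the closures of the (nonempty, since $X$ is balanced) open cones $\mathcal{B}_X$ and $\mathcal{G}_X$; and that the adjoint identity $\big(\mathscr{J}[\Omega]_{\mathrm{BC}},[\omega]_{\mathrm{BC}}\big)=\big([\Omega]_{\mathrm{BC}},\mathscr{L}[\omega]_{\mathrm{BC}}\big)$ is what converts $\mathscr{J}(\overline{\mathcal{B}}_X)=\overline{\mathcal{G}}_X$ into $\mathscr{L}(\mathcal{E}_X)=\mathcal{E}_{\p\db}$ after taking $(\cdot)^{\mathrm{v}_c}$ twice. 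Both facts are standard and you essentially gesture at them, so there is no gap, only places to add a line.
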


\begin{proof}Fix a
balanced metric $\omega_X$ on $X$. Let $\left
\{\big[T_k\big]_{\mathrm{A}} \right\}_{k \in \mathbb{N}^+}$ be a
sequence in the cone $\mathcal{E}_{\p\db}$, where $T_k$ are
$\p\db$-closed positive $(1,1)$-currents. And the sequence converges
to an Aeppli class $\big[ \alpha \big]_{\mathrm{A}}$ in
$H^{1,1}_{\mathrm{A}}(X,\mathbb{R})$. It is clear that
\[\lim\limits_{k\>+\infty} \int_X T_k \w \omega_X^{n-1}
= \int_X \alpha \w \omega_X^{n-1}.\] Thus, the sequence
$\big\{T_k\big\}_{k\in\mathbb{N}^+}$ is bounded in mass, and
therefore weakly compact. Denote the limit of a weakly convergent
subsequence $\big\{T_{k_i}\big\}$ by $T$. It is easy to check that
$T$ is a $\p\db$-closed positive $(1,1)$-current and
$\big[T\big]_{\mathrm{A}} = \big[ \alpha \big]_{\mathrm{A}}$. Hence,
$\big[ \alpha \big]_{\mathrm{A}} \in \mathcal{E}_{\p\db}$, which
implies that the convex cone $\mathcal{E}_{\p\db}$ is closed.

It is obvious that the three mappings
$\mathbb{J},\overline{\mathbb{J}}$ and $\mathbbm{j}$ are injective,
since $\mathscr{J}$ and $\mathscr{L}$ are isomorphisms as long as
the complex manifold $X$ satisfies the $\p\db$-lemma.

$(1) \Rightarrow (2)$~: We need to show that the inverse
$\mathscr{J}^{\!-1}$ of the mapping $\mathscr{J}$ maps the closure
$\overline{\mathcal{G}}_X$ into the one $\overline{\mathcal{B}}_X$.
To see this, let $\big[ \Psi \big]_{\mathrm{A}} \in
\overline{\mathcal{G}}_X$. Denote the inverse image
$\mathscr{J}^{\!-1}(\big[\Psi\big]_{\mathrm{A}})$ of $\big[\Psi
\big]_{\mathrm{A}}$ under the mapping $\mathscr{J}$ by
$\big[\Omega\big]_{\mathrm{BC}}$. For any $\epsilon>0$,
\[ \mathscr{J}^{\!-1}(\big[\Psi\big]_{\mathrm{A}} + \epsilon
\big[\omega^{n-1}_X\big]_{\mathrm{A}}) =
\big[\Omega\big]_{\mathrm{BC}} + \epsilon
\big[\omega^{n-1}_X\big]_{\mathrm{BC}} \in \mathcal{B}_X,\] since
$\mathbb{J}$ is bijective and thus
$\mathscr{J}^{\!-1}(\mathcal{G}_X) \subseteq \mathcal{B}_X$. This
implies that $\big[\Omega \big]_{\mathrm{BC}} \in
\overline{\mathcal{B}}_X$. Then
$\mathscr{J}^{\!-1}(\overline{\mathcal{G}}_X) \subseteq
\overline{\mathcal{B}}_X$, namely, the mapping $\mathscr{J}^{\!-1}:
\overline{\mathcal{G}}_X \> \overline{\mathcal{B}}_X$ is
well-defined. Hence, $\mathscr{J}^{\!-1}$ is the inverse of the
mapping $\overline{\mathbb{J}}$ and thus $\overline{\mathbb{J}}$ is
bijective.

$(2)\Rightarrow (3)$~: $\overline{\mathcal{G}}_X$ and
$\mathcal{E}_X$ are dual cones by Proposition \ref{closed-dual}.
$\overline{\mathcal{B}}_X$ and $\mathcal{E}_{\p\db}$ are also dual
cones by \cite[Lemma 3.3 and Remark 3.4]{FX}. Hence, the mapping
$\mathbbm{j}$ is bijective due to the bijectivity of
$\overline{\mathbb{J}}$.

$(3) \Rightarrow (1)$~: It has to be shown that $\mathbb{J}$ is
surjective. Let $\big[\Omega\big]_{\mathrm{BC}}$ be a class in
$H^{n-1,n-1}_{\mathrm{BC}}(X,\mathbb{R})$, which is mapped into
$\mathcal{G}_X$ by $\mathscr{J}$. Then there exists a $\p\db$-closed
positive $(n-1,n-1)$-form $\Psi$ and an $(n-2,n-1)$-form $\Theta$,
such that
\[ \Omega = \Psi + \p \Theta + \overline{\p \Theta}.
\] Let $\widetilde{T}$ be any fixed nonzero $\p\db$-closed positive
$(1,1)$-current. From the bijectivity of $\mathbbm{j}$, there exists
a $d$-closed positive $(1,1)$-current $T$ and a $(0,1)$-current $S$,
such that
\[ \widetilde{T} = T + \p S + \overline{\p S}. \]
The current $T$ can not  be zero current. If not, $\widetilde{T} = \p S
+ \overline{\p S}$, which implies that the integral $\int_X
\omega_X^{n-1} \w \widetilde{T}$ will be larger than $0$ and also
equal to $0$. This is a contradiction. Hence,
\[ \int_X \Omega \w \widetilde{T} = \int_X \Omega \w (T+\p S + \overline{\p S})
= \int_X \Omega \w T = \int_X (\Psi + \p \Theta + \overline{\p
\Theta}) \w T = \int_X \Psi \w T >0 .\] Therefore, the class
$\big[\Omega\big]_{\mathrm{BC}}$ lies in the balanced cone
$\mathcal{B}_X$ by \cite[Lemma 3.3]{FX} and thus the mapping
$\mathbb{J}$ is surjective.
\end{proof}

\begin{definition}[{\cite[Definition
$1.3.(ii)$]{BDPP}}]\label{movable}\rm Movable cone $\mathcal{M}_X$

Define the \emph{movable cone} $\mathcal{M}_X \subseteq
H^{n-1,n-1}_{\mathrm{BC}}(X,\mathbb{R})$ to be the closure of the
convex cone generated by classes of currents in the type
\[ \mu_{\ast}(\widetilde{\omega}_1 \w \cdots \w \widetilde{\omega}_{n-1}) \]
where $\mu: \widetilde{X} \> X$ is an arbitrary modification and
$\widetilde{\omega}_j$ are K\"{a}hler forms on $\widetilde{X}$ for
$1 \leq j \leq n-1$. Here, $X$ is an $n$-dimensional compact
K\"{a}hler manifold.
\end{definition}

We restate a lemma hidden in \cite[Appendix]{FX} and \cite{Toma}.

\begin{lemma}\label{two-inclusions}
Let $X$ be a compact K\"{a}hler manifold. There exist the following
inclusions:
\[ \mathcal{E}_X \subseteq \mathscr{L}^{-1}(\mathcal{E}_{\p\db})
\subseteq \big(\mathcal{M}_X\big)^{\mathrm{v}_c}, \] where
$\mathscr{L}^{-1}(\mathcal{E}_{\p\db})$ denotes the inverse image of
the cone $\mathcal{E}_{\p\db}$ under the isomorphism $\mathscr{L}$.
Note that $H^{1,1}_{\mathrm{BC}}(X,\mathbb{R})$ and
$H^{n-1,n-1}_{\mathrm{BC}}(X,\mathbb{R})$ are dual vector spaces in
the K\"{a}hler case.
\end{lemma}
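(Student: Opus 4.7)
The two inclusions will be proved separately, via quite different arguments.

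For the first inclusion $\mathcal{E}_X \subseteq \mathscr{L}^{-1}(\mathcal{E}_{\p\db})$, the argument is essentially tautological. Given $[T]_{\mathrm{BC}} \in \mathcal{E}_X$ represented by a $d$-closed positive $(1,1)$-current $T$, the vanishing $dT = 0$ decomposes by bidegree as $\p T = 0$ and $\db T = 0$ separately, whence $\p\db T = 0$. Thus the same current $T$ is also a $\p\db$-closed positive $(1,1)$-current, so $\mathscr{L}([T]_{\mathrm{BC}}) = [T]_{\mathrm{A}}$ lies in $\mathcal{E}_{\p\db}$.

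For the second inclusion $\mathscr{L}^{-1}(\mathcal{E}_{\p\db}) \subseteq (\mathcal{M}_X)^{\mathrm{v}_c}$, I argue by duality. In the K\"ahler case, the $\p\db$-lemma identifies $H^{n-1,n-1}_{\mathrm{BC}}(X,\mathbb{R})$ with $H^{n-1,n-1}_{\mathrm{A}}(X,\mathbb{R})$, so the canonical pairing $H^{1,1}_{\mathrm{BC}}(X,\mathbb{R}) \times H^{n-1,n-1}_{\mathrm{BC}}(X,\mathbb{R}) \to \mathbb{R}$ is perfect; by the bipolar theorem for closed convex cones the target inclusion is then equivalent to
\[ \mathcal{M}_X \subseteq (\mathcal{E}_{\p\db})^{\mathrm{v}_c}. \]
By the balanced analogue of Lamari's duality (the Fu--Xiao criterion \cite[Lemma 3.3]{FX} cited in the proof of Proposition \ref{biject}), the right-hand side equals the closure $\overline{\mathcal{B}}_X$ of the balanced cone. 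The task therefore reduces to $\mathcal{M}_X \subseteq \overline{\mathcal{B}}_X$, i.e.\ to verifying, for every modification $\mu: \tilde X \to X$ from a K\"ahler $\tilde X$ with K\"ahler forms $\tilde\omega_1,\ldots,\tilde\omega_{n-1}$ and for every $\p\db$-closed positive $(1,1)$-current $T$ on $X$, the pairing inequality
\[ \int_X T \wedge \mu_*(\tilde\omega_1 \wedge \cdots \wedge \tilde\omega_{n-1}) \geq 0. \]

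To establish this, I regularize $T$ by Demailly's theorem (Theorem \ref{regularization}): for a fixed Hermitian form $\omega_0$ on $X$ and any $\epsilon > 0$ there exist smooth $(1,1)$-forms $T_\epsilon$ with $T_\epsilon \geq -\epsilon\, \omega_0$ converging weakly to $T$. For smooth $T_\epsilon$ the projection formula and positivity give
\[ \int_X T_\epsilon \wedge \mu_*(\tilde\omega_1 \wedge \cdots \wedge \tilde\omega_{n-1}) = \int_{\tilde X} \mu^* T_\epsilon \wedge \tilde\omega_1 \wedge \cdots \wedge \tilde\omega_{n-1} \geq -\epsilon \int_{\tilde X} \mu^*\omega_0 \wedge \tilde\omega_1 \wedge \cdots \wedge \tilde\omega_{n-1}, \]
since $\mu^*T_\epsilon + \epsilon\, \mu^*\omega_0 \geq 0$ on $\tilde X$ and the K\"ahler product is positive. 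Letting $\epsilon \to 0$ yields the desired non-negativity. The main obstacle lies in justifying the limit on the left-hand side: the current $\mu_*(\tilde\omega_1 \wedge \cdots \wedge \tilde\omega_{n-1})$ has a singular part supported along the branch locus of $\mu$, and pairing it against the smooth approximants $T_\epsilon$ and passing to the weak limit requires uniform mass bounds on the $T_\epsilon$ together with the finite total mass of the push-forward; these technical estimates are precisely what is worked out in \cite[Appendix]{FX} and \cite{Toma}.
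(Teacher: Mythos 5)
Your first inclusion coincides with the paper's (a $d$-closed positive current is in particular $\p\db$-closed, so its Aeppli class lies in $\mathcal{E}_{\p\db}$). Your reduction of the second inclusion --- using the symmetry of the pairing condition together with the Fu--Xiao duality $\big(\mathcal{E}_{\p\db}\big)^{\mathrm{v}_c}=\overline{\mathcal{B}}_X$ to reduce everything to the single inequality $\int_X T\w\mu_{*}(\widetilde{\omega}_1\w\cdots\w\widetilde{\omega}_{n-1})\ge 0$ for every $\p\db$-closed positive $(1,1)$-current $T$ --- is a legitimate, if roundabout, reformulation of exactly the inequality the paper proves directly.

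The gap is in how you establish that inequality. Demailly's Theorem \ref{regularization} does not deliver what you claim: (i) it applies to $d$-closed almost positive currents of the form $\theta+\sqrt{-1}\p\db f$, whereas your $T$ is only $\p\db$-closed and in general has no global almost psh potential, so the theorem is not even applicable; and (ii) even for $d$-closed $T\ge 0$, the approximants $T_k$ it produces carry analytic singularities along $Y_k$ --- they are \emph{not} smooth forms. Smooth representatives $T_\epsilon$ of the class with $T_\epsilon\ge -\epsilon\,\omega_0$ exist precisely when the class is nef, and this fails for a general pseudo-effective class (take $T=[D]$ for a non-nef effective divisor); so the displayed chain of inequalities cannot be run, and no mass estimate on the push-forward repairs it. The paper avoids regularization altogether and instead invokes Alessandrini--Bassanelli \cite{AB95}: for any modification $\mu:\widetilde{X}\>X$ and any $\p\db$-closed positive $(1,1)$-current $\widetilde{T}$ on $X$ there is a $\p\db$-closed positive $(1,1)$-current $T'$ on $\widetilde{X}$ with $\mu_{*}T'=\widetilde{T}$ and $T'\in\mu^{*}\big(\big[\widetilde{T}\big]_{\mathrm{A}}\big)$. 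Choosing $\widetilde{T}$ in the Aeppli class of the smooth representative $\alpha$ and writing $\Theta=\widetilde{\omega}_1\w\cdots\w\widetilde{\omega}_{n-1}$, one gets
\[
\int_X\alpha\w\mu_{*}\Theta=\int_{\widetilde{X}}\mu^{*}\alpha\w\Theta=\int_{\widetilde{X}}T'\w\Theta\ge 0,
\]
where the middle equality holds because $T'$ and $\mu^{*}\alpha$ lie in the same Aeppli class and $\Theta$ is $d$-closed. The positivity is thus exploited upstairs, where the other factor of the pairing is a smooth positive form, rather than by smoothing the current downstairs. Replacing your regularization step by this lifting argument would make the proof work.
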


\begin{proof}
It is clear that the mapping $\mathscr{L}$ is an isomorphism from
$H^{1,1}_{\mathrm{BC}}(X,\mathbb{R})$ to
$H^{1,1}_{\mathrm{A}}(X,\mathbb{R})$ and $\mathbbm{j}$ is injective
in the K\"{a}hler case. Thus, $\mathcal{E}_X \subseteq
\mathscr{L}^{-1}(\mathcal{E}_{\p\db})$. Let
$\big[\alpha\big]_{\mathrm{BC}}$ be a class in the cone
$\mathscr{L}^{\!-1}(\mathcal{E}_{\p\db})$ with $\alpha$ a smooth
representative, which implies that $\big[\alpha\big]_{\mathrm{A}}$
contains a $\p\db$-closed positive $(1,1)$-current $\widetilde{T}$.

To see $\mathscr{L}^{-1}(\mathcal{E}_{\p\db}) \subseteq
\big(\mathcal{M}_X\big)^{\mathrm{v}_c}$, we need to show that
$\int_X \alpha \w \mu_{\ast}(\widetilde{\omega}_1 \w \cdots \w
\widetilde{\omega}_{n-1}) \geq 0$ for arbitrary modification $\mu:
\widetilde{X} \> X$ and K\"{a}hler forms $\widetilde{\omega}_j$ on
$\widetilde{X}$. A result in \cite{AB95} states that for arbitrary
modification $\mu: \widetilde{X} \> X$ and any $\p\db$-closed
positive $(1,1)$-current $\widetilde{T}$ on $X$, there exists a
unique $\p\db$-closed positive $(1,1)$-current $T'$ on
$\widetilde{X}$ such that $\mu_{\ast}T' = \widetilde{T}$ and $T' \in
\mu^{\ast}(\big[\widetilde{T}\big]_{\mathrm{A}})$. Here, we choose
$\tilde{T}$ to be the one in the Aeppli class $\big[ \alpha
\big]_{\mathrm{A}}$. Then, one has
\[ \int_X \alpha \w \mu_{\ast}(\widetilde{\omega}_1 \w \cdots \w
\widetilde{\omega}_{n-1}) = \int_{\widetilde{X}} \mu^{\ast}\alpha \w
\widetilde{\omega}_1 \w \cdots \w \widetilde{\omega}_{n-1} =
\int_{\widetilde{X}} T' \w \widetilde{\omega}_1 \w \cdots \w
\widetilde{\omega}_{n-1} \geq 0, \] where $T'$ and $\mu^{\ast}
\alpha$ belong to the same Aeppli class on $\widetilde{X}$.
\end{proof}

\begin{corollary}[{\cite[Section 6]{P4}}] If Conjecture \ref{conj-mov} is assumed to hold true, then for a complex manifold $X$
in the Fujiki class $\mathcal{C}$,
\begin{equation}\label{db-i}
\mathscr{J}^{-1}(\mathcal{G}_X)=\mathcal{B}_X
\end{equation}
and thus Conjecture \ref{ddbar-b} is true in this case.
\end{corollary}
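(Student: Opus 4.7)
The plan is to reduce the first equality to the statement \eqref{int-eq}, which is exactly the content of Proposition \ref{biject} combined with Lemma \ref{two-inclusions}, and then invoke Conjecture \ref{conj-mov} to close the inclusion chain. First I would use that any $X$ in the Fujiki class $\mathcal{C}$ satisfies the $\partial\bar\partial$-lemma, so the isomorphism $\mathscr{J}:H^{n-1,n-1}_{\mathrm{BC}}(X,\mathbb{R})\to H^{n-1,n-1}_{\mathrm{A}}(X,\mathbb{R})$ is well-defined and \eqref{db-i} is equivalent to the bijectivity of $\mathbb{J}:\mathcal{B}_X\to\mathcal{G}_X$. By Proposition \ref{biject}, this bijectivity is in turn equivalent to the bijectivity of $\mathbbm{j}:\mathcal{E}_X\to\mathcal{E}_{\partial\bar\partial}$, or equivalently to the set-theoretic equality $\mathcal{E}_X=\mathscr{L}^{-1}(\mathcal{E}_{\partial\bar\partial})$ inside $H^{1,1}_{\mathrm{BC}}(X,\mathbb{R})$.

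Second, I would appeal to Lemma \ref{two-inclusions} to sandwich this putative equality between two already-established inclusions:
\[
\mathcal{E}_X\ \subseteq\ \mathscr{L}^{-1}(\mathcal{E}_{\partial\bar\partial})\ \subseteq\ \bigl(\mathcal{M}_X\bigr)^{\mathrm{v}_c}.
\]
Granting Conjecture \ref{conj-mov}, the outer terms coincide, and therefore both inclusions collapse to equalities. In particular $\mathcal{E}_X=\mathscr{L}^{-1}(\mathcal{E}_{\partial\bar\partial})$, yielding the bijectivity of $\mathbbm{j}$ and hence \eqref{db-i}. Since Lemma \ref{two-inclusions} was stated for compact K\"ahler manifolds, the transfer to the Fujiki class requires passing through a modification $\mu:\widetilde{X}\to X$ with $\widetilde{X}$ K\"ahler, and using the Alessandrini--Bassanelli lifting $\mu^*$ of $\partial\bar\partial$-closed positive $(1,1)$-currents (already invoked in the proof of Lemma \ref{two-inclusions}) together with the compatibility of $\mu_*$ with the pseudo-effective and movable cones. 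This bimeromorphic transfer is the step I expect to be the main obstacle: one must check that the BDPP equality on $\widetilde{X}$ really descends to the corresponding equality on $X$, and that all four cones $\mathcal{E}_X$, $\mathcal{E}_{\partial\bar\partial}$, $\mathcal{B}_X$, $\mathcal{G}_X$ are compatible with $\mu^*$ and $\mu_*$ in the sense needed for Proposition \ref{biject} to apply on $X$ itself.

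Once \eqref{db-i} is established, Conjecture \ref{ddbar-b} in the class $\mathcal{C}$ follows at once: Gauduchon's existence theorem (cited before Lemma \ref{Gcone}) guarantees that $\mathcal{G}_X\neq\emptyset$ on any compact complex manifold, and the $\partial\bar\partial$-lemma makes $\mathscr{J}$ an isomorphism, so $\mathcal{B}_X=\mathscr{J}^{-1}(\mathcal{G}_X)\neq\emptyset$. Any class in $\mathcal{B}_X$ has a representative of the form $\omega^{n-1}$ for a positive $(1,1)$-form $\omega$ by Michelsohn's linear-algebra fact recalled in Section \ref{Gau}, and this $\omega$ is then the sought balanced metric on $X$.
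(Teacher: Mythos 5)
Your proposal follows essentially the same route as the paper: reduce \eqref{db-i} via Proposition \ref{biject} to the equality $\mathcal{E}_X=\mathscr{L}^{-1}(\mathcal{E}_{\p\db})$, sandwich it with Lemma \ref{two-inclusions}, close the chain with Conjecture \ref{conj-mov}, and then deduce Conjecture \ref{ddbar-b} from the non-emptiness of $\mathcal{G}_X$ and the $\p\db$-lemma. The only difference is that the paper simply asserts ``without loss of generality, we can assume that $X$ is K\"ahler'' where you (rightly) flag the bimeromorphic transfer as the step needing justification, so your write-up is, if anything, more careful on that point.
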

\begin{proof} The argument is a bit different from that in \cite[Section 6]{P4} (or \cite[Section
2]{crs}) and we claim no originality here. That $X$ is balanced is
obviously a result of \eqref{db-i} since the Gauduchon cone of a
compact complex manifold is never empty and $\mathscr{J}$ is an
isomorphism from the $\p\db$-lemma. Now let us prove \eqref{db-i}
under the assumption of Conjecture \ref{conj-mov}. Without loss of
generality, we can assume that $X$ is K\"{a}hler and thus this
equality is a direct corollary of Lemma \ref{two-inclusions} and
Proposition \ref{biject}.
\end{proof}
Boucksom-Demailly-Paun-Peternell have proved in \cite[Theorem 10.12,
Corollary 10.13]{BDPP} that Conjecture \ref{conj-mov} is true, when
$X$ is a compact hyperk\"{a}hler manifold or a compact K\"{a}hler
manifold which is a limit by deformation of projective manifolds
with Picard number $\rho = h^{1,1}$. It follows that $\mathbb{J}$ is
bijective in these two cases. The qualitative part of Transcendental
Morse Inequalities Conjecture for differences of two nef classes
\cite[Conjecture 10.1.(ii)]{BDPP} has been proved by Popovici
\cite{P5} and Xiao \cite{X1}. And a partial answer to the
quantitative part is given by \cite{P4}, with the case of
nef $T^{1,0}_X$ obtained in \cite[Proposition 3.2]{X2}.

The following theorem may provide some evidence for the assertion of
Question \ref{que} whether the mapping $\mathbb{J}$ is bijective
from the balanced cone $\mathcal{B}_X$ to the Gauduchon cone
$\mathcal{G}_X$ on the K\"{a}hler manifold $X$.

Let us recall several important results from \cite{Yst,BEGZ} on
solving complex Monge-Amp\`{e}re equations on a compact K\"{a}hler
manifold $X$.

Fix a K\"{a}hler metric $\omega_X$, a nef and big class
$\big[\alpha\big]_{\mathrm{BC}}$ and a volume form $\eta$ on $X$. By
Yau's celebrated results in \cite{Yst}, for $0 < t \leq 1$, there
exists a unique smooth function $u_t$, satisfying that $\sup_X u_t
=0$, such that $\alpha + t \omega + \sqrt{-1}\p\db u_t$ is a
K\"{a}hler metric and
\[\big(\alpha + t\omega_X + \sqrt{-1}\p\db u_t \big)^{n} = c_t \eta,\]
where $c_t = \frac{\int_X (\alpha + t\omega_X)^{n}}{\int_X \eta}$.
As in \cite[Theorems B and C]{BEGZ}, when $t$ is equal to $0$, there
exists a unique $\alpha$-psh $u$, satisfying that $\sup_X u =0$,
such that \[ \big\langle (\alpha + \sqrt{-1}\p\db u)^{n} \big\rangle
= c \eta, \] where $c= \frac{\int_X \alpha^n}{\int_X \eta}$ and the
bracket $\langle \cdot \rangle$ denotes the non-pluripolar product
of positive currents. Moreover, $u$ has minimal singularities and is
smooth on $\mathrm{Amp}(\alpha)$, which is a Zariski open set on $X$
and only depends on the class $\big[ \alpha\big]_{\mathrm{BC}}$.

These results above can be viewed in the following manner as stated in
\cite[the part after Lemma 2.3]{FX}. The family of solutions $u_t$
is compact in $L^1(X)$-topology. Then there exists a sequence
$u_{t_k}$ such that
\[ \alpha + t_k \omega_X + \sqrt{-1}\p\db u_{t_k}
\rightarrow \alpha + \sqrt{-1} \p \db u \] in the sense of currents
on $X$ with $t_{k}\>0$. Meanwhile, $u_t$ is compact in
$C^{\infty}_{loc}(\mathrm{Amp}(\alpha))$, which means uniform
convergence on any compact subset of $\mathrm{Amp}(\alpha)$.
Therefore, there exists a subsequence of $u_{t_k}$, still denoted by
$u_{t_k}$, such that
\[ \alpha + t_k \omega_X + \sqrt{-1}\p\db u_{t_k} \rightarrow
\alpha + \sqrt{-1}\p\db u \] in the sense of
$C^{\infty}_{loc}(\mathrm{Amp}(\alpha))$. Hence $u$ is smooth on
$\mathrm{Amp}(\alpha)$ and $\alpha + \sqrt{-1}\p\db u$ is a
K\"{a}hler metric on $\mathrm{Amp}(\alpha)$, since $\eta$ is a
volume form.

\begin{theorem}\label{nef-b-g}
Let $X$ be a compact K\"{a}hler manifold and
$\big[\alpha\big]_{\mathrm{BC}}$ a nef class. Then
$\big[\alpha^{n-1}\big]_{\mathrm{A}} \in \mathcal{G}_X$ implies that
$\big[ \alpha^{n-1}\big]_{\mathrm{BC}} \in \mathcal{B}_X$. Hence,
$\overline{\mathbb{I}}(\overline{\mathcal{K}}_X) \bigcap
\mathcal{B}_X$ and $\overline{\mathbb{K}}(\overline{\mathcal{K}}_X)
\bigcap \mathcal{G}_X$ can be identified by the mapping
$\mathbb{J}$.
\end{theorem}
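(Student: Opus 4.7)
The plan is to reduce the theorem to Fu--Xiao's Theorem 1.3 in \cite{FX}, which asserts that a class $[\alpha]_{\mathrm{BC}}\in \overline{\mathcal{K}}_X\cap\mathcal{E}_X^{\circ}$ satisfies $[\alpha^{n-1}]_{\mathrm{BC}}\in\mathcal{B}_X$ via the BEGZ Monge--Amp\`ere theory \cite{BEGZ}. The only missing ingredient will be bigness of $[\alpha]_{\mathrm{BC}}$, which I plan to extract from the Gauduchon hypothesis through a mass-convergence argument.

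Let $\omega_G$ be the smooth positive $(1,1)$-form with $\Omega:=\omega_G^{n-1}$ representing $[\alpha^{n-1}]_{\mathrm{A}}$, produced from the hypothesis by Michelsohn's uniqueness of $(n-1)$-th roots. Since $\alpha$ is $d$-closed and $\Omega-\alpha^{n-1}=\p\eta+\db\b\eta$ for some $(n-2,n-1)$-form $\eta$, integration by parts gives
\[
\int_X\alpha^n\;=\;\int_X \Omega\wedge\alpha.
\]
Assume for contradiction that this integral vanishes. By nefness, for each $\epsilon>0$ there exists a smooth function $\phi_\epsilon$ such that
\[
T_\epsilon\;:=\;\alpha+\sqrt{-1}\p\db\phi_\epsilon+\epsilon\omega_X\;\geq\; 0
\]
pointwise. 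The Gauduchon identity $\int_X\sqrt{-1}\p\db\phi_\epsilon\wedge\Omega=0$ together with the assumed vanishing yields $\int_X T_\epsilon\wedge\Omega=\epsilon\int_X\omega_X\wedge\Omega\to 0$ as $\epsilon\to 0^+$. Using the pointwise identity $T_\epsilon\wedge\Omega=\tfrac{1}{n}\mathrm{tr}_{\omega_G}(T_\epsilon)\,\omega_G^n$, the nonnegative smooth functions $\mathrm{tr}_{\omega_G}(T_\epsilon)$ converge to $0$ in $L^1(X,\omega_G^n)$; combined with the pointwise estimate $|T_\epsilon|_{\omega_G}\leq \mathrm{tr}_{\omega_G}(T_\epsilon)$ for positive semidefinite Hermitian $T_\epsilon$, this forces $T_\epsilon\to 0$ as currents. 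Since $[T_\epsilon]_{\mathrm{BC}}=[\alpha]_{\mathrm{BC}}+\epsilon[\omega_X]_{\mathrm{BC}}$ in the finite-dimensional space $H^{1,1}_{\mathrm{BC}}(X,\mathbb{R})$, continuity of the Bott--Chern class map forces $[\alpha]_{\mathrm{BC}}=0$, so $[\alpha^{n-1}]_{\mathrm{A}}=0$ via cup-product compatibility and the $\p\db$-lemma, contradicting $[\alpha^{n-1}]_{\mathrm{A}}\in\mathcal{G}_X$ by Lemma \ref{Gcone}. Therefore $\int_X\alpha^n>0$, establishing bigness.

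With $[\alpha]_{\mathrm{BC}}\in\overline{\mathcal{K}}_X\cap\mathcal{E}_X^{\circ}$ in hand, Fu--Xiao's Theorem 1.3 produces a balanced representative of $[\alpha^{n-1}]_{\mathrm{BC}}$ via the BEGZ $\alpha$-plurisubharmonic potential with minimal singularities, smooth on the ample locus $\mathrm{Amp}(\alpha)$. For the second assertion, $\mathbb{J}$ is injective on K\"ahler manifolds by the $\p\db$-lemma (Proposition \ref{biject}), and the equivalence just proved, combined with the commutativity of diagram $\mathrm{D}$, delivers the bijection between $\overline{\mathbb{I}}(\overline{\mathcal{K}}_X)\cap\mathcal{B}_X$ and $\overline{\mathbb{K}}(\overline{\mathcal{K}}_X)\cap\mathcal{G}_X$. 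The most delicate step is the passage from vanishing mass $\int_X T_\epsilon\wedge\Omega\to 0$ to currents convergence $T_\epsilon\to 0$, which crucially uses both the pointwise positivity of $T_\epsilon$ and the strict positivity of $\omega_G$; without one of these, the naive approximation only yields $\int_X\alpha^n\geq 0$.
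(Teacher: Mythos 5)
There is a genuine gap. Your first step, establishing $\int_X\alpha^n>0$ from the Gauduchon hypothesis, is essentially sound (the paper gets the same conclusion more directly: nefness puts a $d$-closed positive current $S\in\big[\alpha\big]_{\mathrm{BC}}$ at one's disposal, $S\neq 0$ since $\big[0\big]_{\mathrm{A}}\notin\mathcal{G}_X$ by Lemma \ref{Gcone}, and then $\int_X\alpha^n=\int_X\alpha^{n-1}\w S>0$ by Lemma \ref{G_class}). The fatal step is the reduction of the conclusion to ``nef and big $\Rightarrow\big[\alpha^{n-1}\big]_{\mathrm{BC}}\in\mathcal{B}_X$'' via \cite[Theorem 1.3]{FX}. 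That implication is false: take $X$ the blow-up of $\mathbb{P}^2$ at a point and $\alpha$ the pullback of the Fubini--Study form; then $\big[\alpha\big]_{\mathrm{BC}}$ is nef and big, but $\big[\alpha^{n-1}\big]_{\mathrm{BC}}=\big[\alpha\big]_{\mathrm{BC}}$ is not balanced (for surfaces balanced $=$ K\"ahler), since it pairs to zero with the current of integration over the exceptional curve. Fu--Xiao's Theorem 1.3 is stated only for projective Calabi--Yau manifolds and contains, besides bigness, a necessary positivity condition against prime divisors in the null locus; you cannot drop that condition, and it is exactly the point of the present theorem that the hypothesis $\big[\alpha^{n-1}\big]_{\mathrm{A}}\in\mathcal{G}_X$ supplies it.

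Concretely, the paper's proof uses the Gauduchon hypothesis a second time, and this second use is absent from your argument. After establishing bigness, one takes an arbitrary $\p\db$-closed positive $(1,1)$-current $Q$ with $\int_X\alpha^{n-1}\w Q=0$ and must show $Q=0$ (by \cite[Lemma 3.3]{FX} this is the balanced criterion). Using the BEGZ potential $u$, smooth on $\mathrm{Amp}(\alpha)$, one shows $\mathrm{Supp}(Q)\subseteq X\setminus\mathrm{Amp}(\alpha)$, an analytic subvariety $V$; by the support theorems of \cite{AB92} and \cite[Lemma 3.5]{FX}, $Q=\sum_i c_i[V_i]$ with $c_i\geq 0$ running over the codimension-one components of $V$. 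Only now does Lemma \ref{G_class} --- i.e., the hypothesis $\big[\alpha^{n-1}\big]_{\mathrm{A}}\in\mathcal{G}_X$ --- give $\int_X\alpha^{n-1}\w[V_i]>0$ for each $i$, forcing all $c_i=0$. Without this step your argument proves nothing beyond bigness, and bigness alone does not suffice.
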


\begin{proof}
Assume that $\big[ \alpha^{n-1}\big]_{\mathrm{A}}$ belongs to
$\mathcal{G}_X$, where $\big[\alpha \big]_{\mathrm{BC}}$ is a nef
class. From Lemma \ref{G_class}, for any nonzero $d$-closed positive
$(1,1)$-current $T$, the integral $\int_X \alpha^{n-1} \w T>0$.
Since the nef cone $\overline{\mathcal{K}}_X $ is contained in the
pseudo-effective cone $\mathcal{E}_X$, the nef class
$\big[\alpha\big]_{\mathrm{BC}}$ contains a $d$-closed positive
$(1,1)$-current $S$, which can not  be the zero current. Otherwise,
$\big[ 0\big]_{\mathrm{A}} \in \mathcal{G}_X$, which contradicts
with Lemma \ref{Gcone}. Then, the integral $\int_X \alpha^{n} =
\int_X \alpha^{n-1} \w S >0$, which implies that the class
$\big[\alpha\big]_{\mathrm{BC}}$ is nef and big, by \cite[Theorem
0.5]{DP}.

Let $Q$ be any fixed $\p\db$-closed positive $(1,1)$-current on $X$.
From the discussion before this theorem, it is clear that the
sequence of positive measures \[\big\{ (\alpha + t_k \omega_X +
\sqrt{-1}\p\db u_{t_k})^{n-1} \w Q \big\}_{k \in \mathbb{N}^{+}}\]
has bounded mass, for example
\[\int_X (\alpha
+ t_k \omega_X + \sqrt{-1}\p\db u_{t_k})^{n-1} \w Q \leq \int_X
(\alpha + \omega_X)^{n-1} \w Q.\] Therefore, there exists a
subsequence, still denoted by
\[\big\{ (\alpha + t_k \omega_X + \sqrt{-1}\p\db u_{t_k})^{n-1} \w Q
\big\}_{k \in \mathbb{N}^{+}},\] weakly convergent to a positive
measure on $X$, denoted by $\mu$. It follows that \[\int_X \mu =
\int_X \alpha^{n-1} \w Q,\] since the equalities hold \[ \int_X \mu
= \lim_{k \> +\infty} \int_X (\alpha + t_k \omega_X + \sqrt{-1}\p\db
u_{t_k})^{n-1} \w Q = \lim_{k \> +\infty} \int_X (\alpha + t_k
\omega_X)^{n-1} \w Q = \int_X \alpha^{n-1} \w Q.\] Note that \[
\left. (\alpha + \sqrt{-1}\p\db u)^{n-1} \w Q
\right|_{\mathrm{Amp}(\alpha)}\] is a well-defined positive measure
on $\mathrm{Amp}(\alpha)$, since $\alpha + \sqrt{-1}\p\db u$ is a
K\"{a}hler metric on $\mathrm{Amp}(\alpha)$. Moreover, $\mu$ is
equal to \[ \left. (\alpha + \sqrt{-1}\p\db u)^{n-1} \w Q
\right|_{\mathrm{Amp}(\alpha)}\] on $\mathrm{Amp}(\alpha)$.
Actually, for any smooth function $f$ with $\mathrm{Supp}(f) \subseteq
\mathrm{Amp}(\alpha)$, one has
\begin{align}
\int_{\mathrm{Amp}(\alpha)} f \mu &= \int_X f \mu   \notag \\
&= \lim_{k\>+\infty} \int_X f(\alpha + t_k \omega_X + \sqrt{-1}\p\db
u_{t_k})^{n-1} \w Q  \notag \\
&= \int_X f(\alpha + \sqrt{-1}\p\db u)^{n-1} \w Q \label{4.4}\\
&= \int_{\mathrm{Amp}(\alpha)} f(\alpha + \sqrt{-1}\p\db u)^{n-1} \w
Q \notag\\
&= \int_{\mathrm{Amp}(\alpha)} f \left( \left.(\alpha +
\sqrt{-1}\p\db u)^{n-1} \w Q \right|_{\mathrm{Amp}(\alpha)} \right),
\notag
\end{align}
where the equality $\eqref{4.4}$ results from that the sequence $f(\alpha
+ t_k \omega_X + \sqrt{-1}\p\db u_{t_k})^{n-1}$ converges to
$f(\alpha + \sqrt{-1}\p\db u)^{n-1}$ in the sense of smooth
$(n-1,n-1)$-forms on $X$ due to the convergence result stated before
this theorem, with all their supports always contained in
$\mathrm{Amp}(\alpha)$.

It is obvious that the integral $\int_X \alpha^{n-1} \w Q \geq 0$
for $\big[ \alpha \big]_{\mathrm{BC}}$ nef. Now suppose that $\int_X
\alpha^{n-1} \w Q =0$. Then we have $\int_X \mu = \int_X
\alpha^{n-1} \w Q =0$. And $\mu$ is equal to $ \left.(\alpha +
\sqrt{-1}\p\db u)^{n-1} \w Q \right|_{\mathrm{Amp}(\alpha)}$ on
$\mathrm{Amp}(\alpha)$ with $(\alpha + \sqrt{-1}\p\db u)^{n-1}$ a
positive $(n-1,n-1)$-form on $\mathrm{Amp}(\alpha)$. Then
$\mathrm{Supp}(Q) \subseteq X \setminus \mathrm{Amp}(\alpha)$, which
is an analytic subvariety $V$ on $X$ with $\dim V \leq n-1$.

Denote the irreducible components with dimension $n-1$ of $V$ by $\{
V_i \}_{i=1}^m$. By \cite[Theorem 1.5]{AB92} and \cite[Lemma
3.5]{FX}, there exist constants $c_i \geq 0$ for $1 \leq i \leq m$
such that
\[ Q - \sum_{i=1}^m c_i [V_i] =0, \]
since $V$ has no irreducible component of dimension larger than
$n-1$. And we have $\int_X \alpha^{n-1} \w [V_i]>0$, where $[V_i]$
are nonzero $d$-closed positive $(1,1)$-currents for $1 \leq i \leq
m$. Then $\int_X \alpha^{n-1} \w Q =0$ forces that the constants
$c_i$ are all equal to $0$, namely $Q$ a zero current. Hence,
$\big[\alpha^{n-1}\big]_{\mathrm{BC}} \in \mathcal{B}_X$ from
\cite[Lemma 3.3]{FX}.

It is clear that the restricted mapping $\mathbb{J}$, from
$\overline{\mathbb{I}}(\overline{\mathcal{K}}_X) \bigcap
\mathcal{B}_X$ to $\overline{\mathbb{K}}(\overline{\mathcal{K}}_X)
\bigcap \mathcal{G}_X$, is injective. And the proof above shows that
it is also surjective. Hence the restricted mapping $\mathbb{J}$ is
bijective.
\end{proof}

We will describe the degeneration of balanced cones on compact
complex manifolds, similar to the case of Gauduchon cones in
\cite[Proposition 5.4]{P1}.
\begin{lemma}\label{degenerate_B_cone}
Let $X$ be a compact complex manifold. Then the balanced cone
$\mathcal{B}_X$ degenerates if and only if there exists no non-zero
$\p\db$-closed positive $(1,1)$-current $T$ on $X$.
\end{lemma}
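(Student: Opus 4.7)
The plan is to mimic Popovici's duality argument for the Gauduchon cone case (\cite[Proposition 5.4]{P1}, restated just before this lemma), with the balanced analogue of Lemma \ref{G_class} provided by \cite[Lemma 3.3]{FX}. Recall that \cite[Lemma 3.3]{FX} characterizes $\mathcal{B}_X$ as the set of Bott-Chern classes $[\Omega]_{\mathrm{BC}} \in H^{n-1,n-1}_{\mathrm{BC}}(X,\mathbb{R})$ such that $\int_X \Omega \wedge T > 0$ for every non-zero $\p\db$-closed positive $(1,1)$-current $T$. The integral depends only on the Bott-Chern class of $\Omega$ because $T$ is $\p\db$-closed, so the criterion is well-posed on the level of cohomology.

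For the ``$\Leftarrow$'' direction, if no non-zero $\p\db$-closed positive $(1,1)$-current exists on $X$, then the Fu-Xiao criterion is vacuously satisfied by every class in $H^{n-1,n-1}_{\mathrm{BC}}(X,\mathbb{R})$; consequently $\mathcal{B}_X = H^{n-1,n-1}_{\mathrm{BC}}(X,\mathbb{R})$, which is exactly the degeneracy of $\mathcal{B}_X$.

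For the ``$\Rightarrow$'' direction, assume $\mathcal{B}_X$ degenerates. In particular $[0]_{\mathrm{BC}} \in \mathcal{B}_X$, so by the definition of the balanced cone there exists a $d$-closed positive $(n-1,n-1)$-form $\Omega_0$ with $[\Omega_0]_{\mathrm{BC}}=0$, i.e. $\Omega_0 = \sqrt{-1}\,\p\db\,\eta$ for a real smooth $(n-2,n-2)$-form $\eta$. For any $\p\db$-closed positive $(1,1)$-current $T$, integration by parts (using $\p\db T = 0$) gives
\[
\int_X \Omega_0 \wedge T \;=\; \int_X \sqrt{-1}\,\p\db\,\eta \wedge T \;=\; \int_X \eta \wedge \sqrt{-1}\,\p\db\,T \;=\; 0.
\]
On the other hand, Fu-Xiao's criterion applied to $[\Omega_0]_{\mathrm{BC}} = 0 \in \mathcal{B}_X$ forces this pairing to be strictly positive whenever $T \neq 0$. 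Thus no non-zero $\p\db$-closed positive $(1,1)$-current can exist. Alternatively, one can invoke Michelsohn's observation that $\Omega_0 = \gamma^{n-1}$ for a positive Hermitian $(1,1)$-form $\gamma$, so that $\Omega_0 \wedge T$ is a non-negative measure whose total mass vanishes, forcing $T = 0$.

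The proof is essentially a translation of \cite[Proposition 5.4]{P1} with $d$-closed replaced by $\p\db$-closed and with the Fu-Xiao duality \cite[Lemma 3.3]{FX} taking the place of Lamari-type duality, so no serious obstacle is anticipated; the only point warranting care is the justification of the integration-by-parts pairing between the smooth representative $\eta$ of the $\p\db$-primitive of $\Omega_0$ and the current $T$, which is standard once $T$ is known to be $\p\db$-closed.
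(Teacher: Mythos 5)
Your proof is correct and follows essentially the same route as the paper's: the forward direction is the identical positivity-versus-exactness contradiction (a $\p\db$-exact positive $(n-1,n-1)$-form paired against a putative non-zero $\p\db$-closed positive $(1,1)$-current must integrate both to zero and to something positive), and your reverse direction is the same Hahn--Banach duality, merely packaged by citing \cite[Lemma 3.3]{FX} as a vacuously satisfied criterion, whereas the paper runs the separation argument directly on $\mathfrak{D}_1=\{T\ |\ \p\db T=0\}$ and the normalized positive currents to produce a $\p\db$-exact positive $(n-1,n-1)$-form representing the zero class. No gap.
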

\begin{proof}
Assume that $\mathcal{B}_X =
H^{n-1,n-1}_{\mathrm{BC}}(X,\mathbb{R})$. In particular, there
exists a Hermitian metric $\widetilde{\omega}$ on $X$, such that
$\widetilde{\omega}^{n-1}$ is $\p\db$-exact. If $T$ is a non-zero
$\p\db$-closed positive $(1,1)$-current on $X$, the integral $\int_X
\widetilde{\omega}^{n-1} \w T $ has to be larger than $0$ for the
form $\widetilde{\omega}^{n-1}$ being positive and simultaneously
equal to zero as $\widetilde{\omega}^{n-1}$ is $\p\db$-exact. This
contradiction leads to non-existence of such current $T$.

Conversely, assume that there exists no non-zero $\p\db$-closed
positive $(1,1)$-current $T$ on $X$. Let
$\mathfrak{D'}_{\mathbb{R}}^{1,1}$ be the set of real
$(1,1)$-currents on $X$ with the weak topology. Fix a Hermitian
metric $\omega_{X}$ on $X$. Then apply the Hahn-Banach separation
theorem.

Let us set \[ \mathfrak{D}_1 = \left\{ T \in
\mathfrak{D'}_{\mathbb{R}}^{1,1}\ \Big|\ \p \db T=0 \right\},\]
\[ \mathfrak{D}_2 = \left\{ T \in
\mathfrak{D'}_{\mathbb{R}}^{1,1}\ \Big|\ \int_{X} \omega_X^{n-1} \w
T =1\ \textrm{and} \ T \geq 0 \right\}. \] It is easy to see that
$\mathfrak{D}_1$ is a closed linear subspace of the locally convex
space $\mathfrak{D'}_{\mathbb{R}}^{1,1}$, while $\mathfrak{D}_2$ is
a compact convex one in $\mathfrak{D'}_{\mathbb{R}}^{1,1}$. And
$\mathfrak{D}_1 \bigcap \mathfrak{D}_2 = \emptyset$ from the
assumption. Then there exists a continuous linear functional on
$\mathfrak{D'}_{\mathbb{R}}^{1,1}$, denoted by $\Omega$, a real
$(n-1,n-1)$-form, such that it vanishes on $\mathfrak{D}_1$ and
evaluates positively on $\mathfrak{D}_2$. Hence, $\Omega$ has to be
a $\p\db$-exact positive $(n-1,n-1)$-form. It follows that the class
$\big[ \Omega\big]_{\mathrm{BC}}$ is the zero class in
$H^{n-1,n-1}_{\mathrm{BC}}(X,\mathbb{R})$, which also lives in the
balanced cone $\mathcal{B}_X$, which implies that the balanced cone
$\mathcal{B}_X$ degenerates.
\end{proof}

\begin{remark}\label{G_dgnt_B_dgnt_1}\rm \cite[Proposition 5.4]{P1}
tells us the Gauduchon cone of a compact complex manifold $X$
degenerates if and only if there exists no non-zero $d$-closed
positive $(1,1)$-current on $X$, and, together with Proposition
\ref{degenerate_B_cone}, implies that the Gauduchon cone of a
compact balanced manifold will degenerate when its balanced cone
does.
\end{remark}

\begin{question} \emph{Fu-Li-Yau \cite{FLY} constructed a balanced threefold,
which is a connected sum of $k$-copies of $S^3 \times S^3$ $(k \geq
2)$ and whose balanced cone degenerates (cf. \cite{FX}). Is it
possible to find a balanced manifold such that its Gauduchon cone
degenerates while its balanced cone does not ?}
\end{question}

\subsection{Deformation results related with
$\mathcal{G}_X$}\label{Gcone-deform} In this subsection, we will
discuss several deformation results related with $\mathcal{G}_X$ in
Theorems \ref{inclusion_1} and \ref{inclusion_2}.

Firstly, let us review Demailly's regularization theorem \cite{D1},
whose different versions have been used by various authors in the
literature. Recall that a real $(1,1)$-current $T$ is said to be
\emph{almost positive} if $T \geq \gamma$ for some real smooth
$(1,1)$-form, and each $d$-closed almost positive $(1,1)$-current
$T$ on a compact complex manifold can be written as $\theta +
\sqrt{-1} \p \db f$, where $\theta$ is a $d$-closed smooth
$(1,1)$-form with $f$ almost plurisubharmonic (shortly almost psh)
function (cf. \cite[Section 2.1]{Bou_1} and \cite[Section 3]{DP}).
We say that a $d$-closed almost positive $(1,1)$-current $T$ has
\emph{analytic (or algebraic) singularities} along the analytic
subvariety $Y$, if $f$ does, i.e., $f$ can be locally written as
\[ \frac{c}{2} \log (|g_1|^2+|g_2|^2+ \cdots+|g_N|^2) + h, \]
where $c>0$ (or $c \in \mathbb{Q}^+$), $\{g_i\}_{i=1}^N$ are local
generators of the ideal sheaf of $Y$ and $h$ is some smooth
function. It is clear that $T$ is smooth outside the singularity
$Y$. Then the following formulation of \emph{Regularization Theorem}
will be applied:
\begin{theorem}[{\cite[Theorem 3.2]{DP}; \cite[Theorem 2.4]{Bou_1};  \cite[Theorem 2.1]{Bou_2}}]
\label{regularization} Let $T = \theta + \sqrt{-1} \p \db f$ be a
$d$-closed almost positive $(1,1)$-current on a compact complex
manifold $X$, satisfying that $T \geq \gamma$ for some real smooth
$(1,1)$-form. Then there exists a sequence of functions $f_k$ with
analytic singularities $Y_k$ converging to $f$, such that, if we set
$T_k = \theta + \sqrt{-1} \p \db f_k$, it follows that
\begin{enumerate}
\item $T_k$ weakly converges to $T$;
\item $T_{k} \geq \gamma - \epsilon_k \omega$, where $\lim\limits_{k \> +\infty} \downarrow \epsilon_k =0$
and $\omega$ is some fixed Hermitian metric;
\item The Lelong numbers $\nu(T_k,x)$ increase to $\nu(T,x)$ uniformly with respect to $x\in X$;
\item The analytic singularities increase with respect to $k$, i.e., $Y_k \subseteq Y_{k+1}$.
\end{enumerate}
\end{theorem}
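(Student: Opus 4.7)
The plan is to establish Theorem \ref{regularization} via Demailly's local analytic approximation combined with the Ohsawa--Takegoshi $L^2$ extension theorem. Writing $T = \theta + \sqrt{-1}\,\p\db\varphi$ with $\varphi$ an almost psh function (as in the paragraph preceding the statement), the task reduces to constructing a sequence $\varphi_k$ of almost psh functions with analytic singularities along subvarieties $Y_k$, with $\varphi_k \to \varphi$ in $L^1_{loc}$, with $\sqrt{-1}\,\p\db\varphi_k \geq -\epsilon_k\omega$, and with the Lelong numbers $\nu(\varphi_k,x)$ increasing to $\nu(\varphi,x)$ uniformly in $x$. Setting $T_k := \theta + \sqrt{-1}\,\p\db\varphi_k$ will then deliver items (1)--(4).

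First I would carry out the local step. On a small Stein coordinate chart $B \subset X$ and for each integer $m \geq 1$, consider the Hilbert space
\[
\mathcal{H}_m(B) = \Bigl\{ f \in \mathcal{O}(B) \ \Big|\ \int_B |f|^2 e^{-2m\varphi}\, dV < +\infty \Bigr\},
\]
pick an orthonormal basis $(\sigma_{m,j})_{j}$, and define
\[
\varphi_m(z) := \frac{1}{2m} \log \sum_{j} |\sigma_{m,j}(z)|^2.
\]
This $\varphi_m$ is almost psh with analytic singularities along the common zero scheme of $\{\sigma_{m,j}\}$. The standard Bergman kernel two-sided bound
\[
\varphi(z) - \frac{C_1}{m} \leq \varphi_m(z) \leq \sup_{|w-z|<r}\varphi(w) + \frac{1}{m}\log \frac{C_2}{r^{2n}}
\]
is obtained from the mean value inequality (upper bound) and from Ohsawa--Takegoshi extension of the constant $1$ at $z$ with the weight $e^{-2m\varphi}$ (lower bound). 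This immediately gives weak convergence $T_m \to T$, the uniform control $\sqrt{-1}\,\p\db\varphi_m \geq -\epsilon_m\omega$ with $\epsilon_m = O(\log m / m)$, and Skoda's inequality $\nu(\varphi,x) - n/m \leq \nu(\varphi_m,x) \leq \nu(\varphi,x)$ for Lelong numbers.

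The main obstacle is globalization on the compact manifold $X$, since a naive partition-of-unity gluing destroys the plurisubharmonicity. The device I would use is to choose a Hermitian line bundle $(L,h_L)$ on $X$ whose Chern curvature dominates $\theta$, and apply Ohsawa--Takegoshi $L^2$ extension for sections of $mL \otimes K_X$ equipped with the singular weight $e^{-2m\varphi}h_L^{m}$: germs supported at a point $x$ extend to global holomorphic sections with quantitative $L^2$ control. The global Bergman-type kernel of this space of sections produces the desired $\varphi_m$ on $X$, which inherits the three local properties uniformly. Finally, for the monotonicity $Y_k \subseteq Y_{k+1}$ in item (4), I would extract a subsequence $m_k$ with $m_{k+1}$ a sufficiently large multiple of $m_k$, exploiting that products of $p = m_{k+1}/m_k$ elements of $\mathcal{H}_{m_k}$ lie in $\mathcal{H}_{m_{k+1}}$; this forces the containment of multiplier ideal sheaves and hence of their zero schemes. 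The technical crux throughout is the uniform tracking of constants in the $L^2$ estimates so that $\epsilon_k \downarrow 0$ and the nesting $Y_k \subseteq Y_{k+1}$ hold simultaneously, without sacrificing the weak convergence or the Lelong number control.
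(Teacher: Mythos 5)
This theorem is not proved in the paper at all: it is quoted as a black box from Demailly--P\u{a}un \cite[Theorem 3.2]{DP} and Boucksom \cite[Theorem 2.4]{Bou_1}, \cite[Theorem 2.1]{Bou_2}, so there is no internal argument to compare yours against. Judged on its own terms, your sketch correctly reproduces the local half of the standard proof: the Bergman-kernel potentials $\varphi_m=\frac{1}{2m}\log\sum_j|\sigma_{m,j}|^2$, the two-sided estimate via the mean value inequality and Ohsawa--Takegoshi, and the Skoda-type inequality $\nu(\varphi,x)-n/m\leq\nu(\varphi_m,x)\leq\nu(\varphi,x)$ are exactly the ingredients that give items $(1)$--$(3)$ locally, and the nesting of multiplier ideals handles item $(4)$.

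The genuine gap is in your globalization step. You propose to choose a Hermitian line bundle $(L,h_L)$ on $X$ whose Chern curvature dominates $\theta$ and to run Ohsawa--Takegoshi for sections of $mL\otimes K_X$ with weight $e^{-2m\varphi}h_L^m$. But the theorem is stated for an \emph{arbitrary} compact complex manifold $X$ (and is applied in this paper to K\"ahler, not projective, fibers), and such a line bundle need not exist: the existence of $L$ with curvature dominating a given real $(1,1)$-form with enough positivity left over to make the $L^2$ extension machinery work essentially forces $X$ to be projective, since it requires an integral K\"ahler class. Demailly's actual proof avoids any global line bundle: the local Bergman approximations on a finite coordinate cover are glued by the regularized maximum function $\max_\eta$, after the local potentials on overlapping charts are made comparable up to $O(|z|^2)$ errors by transporting quadratic terms along the exponential map (or flow) of a Chern connection on $T_X$; the $\epsilon_k\omega$ loss in item $(2)$ is precisely the price of these gluing errors. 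Without replacing your line-bundle device by such a gluing argument (or restricting the statement to projective $X$), the proof does not cover the generality in which the theorem is stated and used.
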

Denote the blow up of $X$ along the singularity $Y_k$ by $\mu_k:
\tilde{X}_k \rightarrow X$, and we will see that $\mu_k^*(T_k)$
still acquires the analytic singularity $\mu_k^{-1}(Y_k)$, without
irreducible components of complex codimensions at least $2$, for
each $k$. According to \cite[Section 2.5]{Bou_2}, the Siu's
decomposition \cite{Siu} for $\mu_k^*(T_k)$ writes
\begin{equation}\label{Siu_up}
\mu_k^*(T_k) = \tilde{R}_k +  \sum_j \nu_{kj} \big[ \tilde{Y}_{kj}
\big],
\end{equation}
where $\tilde{R}_k$ is a $d$-closed smooth $(1,1)$-form, satisfying
that $\tilde{R}_k \geq \mu_k^* ( \gamma - \epsilon_k \omega)$,
$\tilde{Y}_{kj}$ are irreducible components of complex codimension
one of $\mu_k^{-1}(Y_k)$ for all $j$, and $\nu_{kj}$ are all
positive numbers. It is obvious that the degree of $\mu_k$ is equal
to $1$ for each $k$. It follows that, after the push forward,
\begin{equation}\label{Siu_down}
T_k = \mu_{k*} \big( \mu_k^*(T_k) \big) = \mu_{k*}(\tilde{R}_k) +
\sum_j \nu_{kj} \big[ Y_{kj} \big],
\end{equation}
which is exactly the Siu's decomposition for $T_k$. Here,
$\mu_{k*}(\tilde{R}_k)$ is a $d$-closed positive $(1,1)$-current,
which is smooth outside irreducible components of complex
codimension at least $2$ of $Y_k$ and satisfies that
$\mu_{k*}(\tilde{R}_k) \geq \gamma - \epsilon_k \omega$. The symbols
$Y_{kj}$ stand for the irreducible components of complex codimension
one of $Y_k$, since the following equalities hold
\[ \mu_{k*} \Big( \big[ \tilde{Y}_{kj} \big] \Big)=
\begin{cases} \big[ \mu_k(\tilde{Y}_{kj}) \big], &\quad \text{when}\ \dim \mu_k(\tilde{Y}_{kj}) = n-1; \\
                         \qquad0,  &\quad \text{when}\ \dim \mu_k(\tilde{Y}_{kj}) < n-1.            \\
\end{cases}\]

Meanwhile, Barlet's theory \cite{Bar} of cycle spaces comes into
play and let us follow the statements in Demailly-Paun's paper
\cite[Section 5]{DP}. Let $\pi: \mathcal{X} \rightarrow
\Delta_{\epsilon}$ be a holomorphic family of K\"ahler fibers of
complex dimension $n$. Then there is a canonical holomorphic
projection
\[ \pi_p: C^{p}(\mathcal{X} / \Delta_{\epsilon}) \rightarrow \Delta_{\epsilon},\]
where $C^{p}(\mathcal{X} / \Delta_{\epsilon})$ denotes the relative
analytic cycle space of complex dimension $p$, i.e., all cycles
contained in the fibers of the family $\pi: \mathcal{X} \rightarrow
\Delta_{\epsilon}$. And it is known that the restriction of $\pi_p$
to the connected components of
$C^{p}(\mathcal{X}/\Delta_{\epsilon})$ are proper maps by the
K\"ahler property of the fibers. Also, there is a cohomology class
map, commuting with the projection to $\Delta_{\epsilon}$, defined
by
\[ \begin{array}{cccc}
\iota_p : &  C^{p}(\mathcal{X} / \Delta_{\epsilon}) & \rightarrow &
\mathrm{R}^{2(n-p)}\pi_{*} \big( \mathbb{Z_{\mathcal{X}}} \big) \\
          &    Z   & \mapsto  & \big[ Z \big], \\
\end{array} \]
which associates to every analytic cycle $Z$ in $X_t$ its cohomology
class $\big[Z\big] \in H^{2(n-p)}(X_t,\mathbb{Z})$. Again by the
K\"ahlerness, the mapping $\iota_{p}$ is proper.

Denote the images in $\Delta_{\epsilon}$ of those connected
components of $C^{p}(\mathcal{X} / \Delta_{\epsilon})$ which do not
project onto $\Delta_{\epsilon}$ under the mapping $\pi_{p}$ by
$\bigcup S_{\nu}$, namely a countable union of analytic subvarieties
$S_{\nu}$ of $\Delta_{\epsilon}$, from the properness of the mapping
$\pi_{p}$ restricted to each component of $C^{p}(\mathcal{X} /
\Delta_{\epsilon})$ for $1 \leq p \leq n-1$ (cf. \cite[proof of
Theorem 0.8]{DP}). Clearly, each $S_{\nu} \subsetneq
\Delta_{\epsilon}$. And thus, for $t \in  \Delta_{\epsilon}
\setminus \bigcup S_{\nu}$, every irreducible analytic subvariety of
complex codimension $n-p$ in $X_{t}$ can be extended into any other
fiber in the family $\pi: \mathcal{X} \rightarrow \Delta_{\epsilon}$
with the invariance of its cohomology class.

Now, let us go back to the deformation of Gauduchon cone. An
\emph{\textbf{sGG} manifold} is a compact complex manifold,
satisfying that each Gauduchon metric on it is strongly Gauduchon
from the definition in \cite[Lemma 1.2]{PU}. And the \textbf{sGG}
property is open under small holomorphic deformations from \cite[the
remark after Theorem 1.5]{PU}. Thus, let us call the holomorphic
family $\pi:\mathcal{X} \> \Delta_{\epsilon}$ with the central fiber
$X_0$ being an \textbf{sGG} manifold an \emph{\textbf{sGG} family}.
Moreover, Popovici and Ugarte proved that the following inclusion
holds
\[\mathcal{G}_{X_0} \subseteq \lim\limits_{t\>0} \mathcal{G}_{X_t}\]
when the family $\pi: \mathcal{X} \rightarrow \Delta_{\epsilon}$ is
an $\textbf{sGG}$ family in \cite[Definition 5.6, Theorem 5.7]{PU}.
The definition of $\lim\limits_{t\>0} \mathcal{G}_{X_t}$ is given by
\[ \lim\limits_{t\>0}
\mathcal{G}_{X_t} = \Big\{ \big[ \Omega \big]_{\mathrm{A}} \in
H^{n-1,n-1}_{\mathrm{A}}(X_0,\mathbb{R})\ \Big|\ \mathrm{P}_t \circ
\mathrm{Q}_{\,0} \Big(\big[ \Omega \big]_{\mathrm{A}}\Big) \in
\mathcal{G}_{X_t}\ \textrm{for}\ \textrm{sufficiently}\
\textrm{small}\ t\Big\},\] where the canonical mappings
$$\mathrm{P}_t: H^{2n-2}_{\mathrm{DR}}(X_t,\mathbb{R}) \>
H^{n-1,n-1}_{\mathrm{A}}(X_t,\mathbb{R})$$ send the De Rham class
$\big[ \Theta\big]_{\mathrm{DR}}$ to the Aeppli class $\big[
\Theta^{n-1,n-1}\big]_{\mathrm{A}}$, represented by the
$(n-1,n-1)$-component of $\Theta$ on $X_t$, and the mapping
$$\mathrm{Q}_{\,0}:  H^{n-1,n-1}_{\mathrm{A}}(X_0,\mathbb{R}) \>
H^{2n-2}_{\mathrm{DR}}(X_t,\mathbb{R}),$$ depends on a fixed
Hermitian metric $\omega_0$ on $X_0$ according to \cite[Definition
5.3]{PU}. By \cite[Proposition 5.1, Lemma 5.4]{PU}, the canonical
mappings $\mathrm{P}_t$ are surjective and the mapping
$\mathrm{Q}_{\,0}$ is injective, satisfying that \[\mathrm{P}_0
\circ \mathrm{Q}_{\,0} = \mathrm{id}_{H^{n-1,n-1}_{\mathrm{A}}
(X,\mathbb{R})}.\]

The following theorem gives a bound from the other side.
\begin{theorem}\label{inclusion_1}
Let $\pi: \mathcal{X} \rightarrow \Delta_{\epsilon}$ be a
holomorphic family with a K\"ahlerian central fiber. Then we have \[
\lim_{t \> \tau} \mathcal{G}_{X_t} \subseteq
\mathcal{N}_{X_{\tau}}\quad \textrm{for each}\ \ \tau \in
\Delta_{\epsilon},\] where $\mathcal{N}_{X_{\tau}}$ is the convex
cone generated by Aeppli classes of $\p_{\tau}\db_{\tau}$-closed
positive $(n-1,n-1)$-currents on $X_{\tau}$. Moreover, the following
inclusion holds,
\[  \lim_{t \> \tau} \mathcal{G}_{X_t} \subseteq \overline{\mathcal{G}}_{X_{\tau}} \quad
\textrm{for each}\ \ \tau \in \Delta_{\epsilon} \setminus \bigcup
S_{\nu},\] where $\bigcup S_{\nu}$ is explained above in this
section.
\end{theorem}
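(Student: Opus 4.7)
The plan is to prove the two inclusions in succession. For the first inclusion $\lim_{t\to\tau}\mathcal{G}_{X_t}\subseteq\mathcal{N}_{X_\tau}$, I would fix a class $[\Omega]_{\mathrm{A}}$ in the left-hand side and a smooth $d$-closed $(2n-2)$-form $\Theta$ on $\mathcal{X}$ whose restriction to each fiber represents $\mathrm{Q}_{\,\tau}([\Omega]_{\mathrm{A}})$ (transported via the Ehresmann trivialization). By definition, for $t$ in a punctured neighborhood of $\tau$, there exists a positive $\p_t\db_t$-closed $(n-1,n-1)_t$-form $\Omega_t$ on $X_t$ whose Aeppli class equals that of $\Theta^{(n-1,n-1)_t}$. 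Since the central fiber is K\"ahler, Kodaira-Spencer's local stability furnishes a smoothly varying family of K\"ahler metrics $\omega_t$ on nearby $X_t$; integration by parts using $d\omega_t=0$ then gives
\[ \int_{X_t}\Omega_t\wedge\omega_t = \int_{X_t}\Theta\wedge\omega_t, \]
whose right-hand side is continuous in $t$, providing a uniform mass bound on $\{\Omega_t\}$ viewed as currents on the underlying smooth manifold. Weak compactness extracts a subsequence $\Omega_{t_k}\rightharpoonup\Omega_\tau^*$, a positive current on $X_\tau$; since $J_{t_k}\to J_\tau$ smoothly, the $(n-1,n-1)_\tau$-type and $\p_\tau\db_\tau$-closedness pass to the weak limit. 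It remains to verify $[\Omega_\tau^*]_{\mathrm{A}_\tau}=[\Omega]_{\mathrm{A}}$, which I would do by pairing $\Omega_{t_k}-\Theta^{(n-1,n-1)_{t_k}}$ against arbitrary $d$-closed $(1,1)_\tau$-test forms extended to $d$-closed $2$-forms on $\mathcal{X}$ and tracking the types as $k\to\infty$; this yields $[\Omega]_{\mathrm{A}}\in\mathcal{N}_{X_\tau}$.

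For the second inclusion (assuming $\tau\notin\bigcup S_\nu$), Lamari's duality (Proposition \ref{closed-dual}) reduces the problem to proving $\langle[\Omega]_{\mathrm{A}},[T]_{\mathrm{BC}_\tau}\rangle\geq 0$ for every $d$-closed positive $(1,1)$-current $T$ on $X_\tau$. I plan to apply Demailly's regularization (Theorem \ref{regularization}) to approximate $T$ by a sequence $T_k\to T$ weakly with analytic singularities along $Y_k$ and $T_k\geq-\epsilon_k\omega_\tau$ for $\epsilon_k\downarrow 0$, and decompose $T_k=R_k+\sum_j\nu_{kj}[Y_{kj}]$ by Siu so that
\[ \langle[\Omega]_{\mathrm{A}},[T_k]_{\mathrm{BC}_\tau}\rangle = \langle[\Omega]_{\mathrm{A}},[R_k]_{\mathrm{BC}_\tau}\rangle + \sum_j\nu_{kj}\langle[\Omega]_{\mathrm{A}},[Y_{kj}]_{\mathrm{BC}_\tau}\rangle. \]
By continuity of the pairing in Bott-Chern cohomology, it suffices to verify non-negativity of each piece up to an error of size $O(\epsilon_k)$.

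For the divisor terms, the hypothesis $\tau\notin\bigcup S_\nu$ allows Barlet cycle-space theory to extend each hypersurface $Y_{kj}\subset X_\tau$ to a family $\{Y_{kj}^t\}_t$ of codimension-$1$ analytic cycles on nearby fibers with constant De Rham class. Positivity of $\Omega_t$ then gives
\[ \int_{X_t}\Omega_t\wedge[Y_{kj}^t] = \int_{Y_{kj}^t}\Omega_t|_{Y_{kj}^t}\geq 0, \]
and Aeppli-Bott-Chern equivalence on $X_t$ identifies this with the continuous function $\int_{Y_{kj}^t}\Theta|_{Y_{kj}^t}$; letting $t\to\tau$ yields $\langle[\Omega]_{\mathrm{A}},[Y_{kj}]_{\mathrm{BC}_\tau}\rangle\geq 0$. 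For the residual $R_k=\mu_{k*}\tilde{R}_k$, with $\mu_k:\widetilde{X}_k\to X_\tau$ the blow-up along $Y_k$ and $\tilde{R}_k$ smooth on $\widetilde{X}_k$, the projection formula reduces the corresponding pairing to $\int_{\widetilde{X}_k}\mu_k^*\Theta\wedge\tilde{R}_k$; combined with the lower bound $\tilde{R}_k\geq-\epsilon_k\mu_k^*\omega_\tau$ and an extension of the blow-up to the family $\pi$, this controls the residual contribution up to $O(\epsilon_k)$. Sending $k\to\infty$ delivers $\langle[\Omega]_{\mathrm{A}},[T]_{\mathrm{BC}_\tau}\rangle\geq 0$.

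The main obstacle lies in the residual analysis: one must transfer the family structure $\pi:\mathcal{X}\to\Delta_\epsilon$ to a simultaneous family of blow-ups, preserving both the uniform lower bound on $\tilde{R}_k$ and the positivity of the $\Omega_t$-integrals, in order to interpolate the pairing back to $X_\tau$. A secondary subtlety is the Aeppli-class convergence in the first inclusion, which requires a semicontinuity-type argument showing that weak limits of Aeppli-exact forms across the varying complex structures $J_{t_k}$ remain Aeppli-exact on the limit fiber $X_\tau$.
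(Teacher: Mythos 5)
Your proposal follows essentially the same route as the paper: bounded mass plus weak compactness for the first inclusion, and, for the second, Lamari duality (Proposition \ref{closed-dual}), Demailly regularization, the Siu decomposition, and Barlet extension of the codimension-one components of the singular set, with everything tested against the positive representatives $\Omega_t$ on the nearby fibers and then passed to the limit $t\to\tau$. The ``main obstacle'' you flag in the residual analysis is resolved in the paper exactly as you anticipate --- by extending $Y_k$ to a relative subvariety $\mathcal{Y}_k$ of the total space, blowing up the whole family along it, and compensating for the fact that $\mu_k(\tau)^*\omega_\tau$ is only semi-positive by subtracting a small multiple $\epsilon_k\lambda_k u_k$ of a form in the class of the exceptional divisor, so that Kodaira--Spencer stability produces a family of K\"ahler metrics on the blown-up fibers against which the pullbacks of the positive forms $\Omega_t$ pair non-negatively.
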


\begin{proof}
It is clear that we can assume that each fiber of the family $\pi:
\mathcal{X} \rightarrow \Delta_{\epsilon}$ is K\"ahler (apparently
an \textbf{sGG} family) and $\{\omega_t\}_{ t\in \Delta_{\epsilon}
}$ is a family of K\"ahler metrics of the fibers, varying smoothly
with respect to $t$, by use of the stability theorem of K\"{a}hler
structures \cite{KS}, after shrinking the disk $\Delta_{\epsilon}$.

For $\tau \in \Delta_{\epsilon}$, let
$\big[\Omega\big]_{\mathrm{A}}$ be an element of $\lim\limits
_{t\>\tau}{\mathcal{G}}_{X_t}$, $\Omega$ its smooth representative,
which indicates \[ \mathrm{P}_t \circ \mathrm{Q}_{\,\tau} \Big(
\big[\Omega\big]_{\mathrm{A}} \Big) \in \mathcal{G}_{X_t}\quad
\textrm{for}\ \ 0<|t-\tau|<\delta_{[\Omega]_{\mathrm{A}}}\] by
definition. Set the positive representative of $\mathrm{P}_t \circ
\mathrm{Q}_{\,\tau} ( \big[\Omega\big]_{\mathrm{A}} )$ as
$\Omega_t$. It is obvious that the following equality holds:
\[ \lim_{t\> \tau} \int_{X_t} \Omega_t \wedge \omega_t = \int_{X_{\tau}} \Omega \wedge \omega_{\tau}, \]
since the integral just depends on the Aeppli class of $\Omega_t$.
This implies that \[ \left\{ \Omega_t \right \}_{
0<|t-\tau|<\delta_{[\Omega]_{\mathrm{A}}}}\] have bounded mass, and
thus the weak limit of a subsequence is a
$\p_{\tau}\db_{\tau}$-closed positive $(n-1,n-1)$-current, which
lies in the Aeppli class $\big[ \Omega \big]_{\mathrm{A}}$ on
$X_{\tau}$. Hence, this shows \[ \lim_{t \> \tau} \mathcal{G}_{X_t}
\subseteq \mathcal{N}_{X_{\tau}}.\]

As to the second inclusion, let us fix $\tau \in \Delta_{\epsilon}
\setminus \bigcup S_{\nu}$. Then the following integral should be
considered
\[ \int_{X_{\tau}} \Omega \wedge T, \]
where $T$ is any fixed $d$-closed positive $(1,1)$-current on
$X_{\tau}$. Apply Theorem \ref{regularization} to $T$ and we have a
sequence of currents $T_k$ with analytic singularities, denoted by
$Y_k$, such that $T_k$ always lies in the Bott-Chern class
$\big[T\big]_{\mathrm{BC}}$ and $T_k \geq -\epsilon_k
\omega_{\tau}$. From the very definition of $\bigcup S_{\nu}$, the
singularity $Y_k$ on $X_{\tau}$, with possibly high codimensional
irreducible components, can be extended into the other fibers of the
family $\pi:\mathcal{X} \rightarrow \Delta_{\epsilon}$, for each
$k$. The extension of $Y_k$ is denoted by $\mathcal{Y}_k$, which is
a relative analytic subvariety of the total space $\mathcal{X}$ of
the family $\pi:\mathcal{X} \rightarrow \Delta_{\epsilon}$. Blow up
$\mathcal{X}$ along $\mathcal{Y}_k$, and then we will obtain
\[\tilde{\mathcal{X}}_k \stackrel{\mu_k}{\longrightarrow} \mathcal{X} \stackrel{\pi}{\longrightarrow} \Delta_{\epsilon}.\]
The restriction of $\mu_k$ to the $t$-fiber is exactly the blow up
$\mu_k(t): \tilde{X}_k(t) \rightarrow X_t$ of $X_t$ along $Y_k(t)$,
with the exceptional divisor denoted by $\tilde{Y}_k(t)$, where
$Y_k(t) = \mathcal{Y}_k \cap X_t$. Then we can apply Equalities
\eqref{Siu_up} and \eqref{Siu_down} to $T_k$:
\begin{equation}\label{estimate}
\begin{aligned}
\int_{X_{\tau}} \Omega \wedge T &= \int_{X_{\tau}} \Omega \wedge T_k \\
& =  \int_{X_{\tau}} \Omega \wedge \left( \mu_k(\tau)_{*} \big( \tilde{R}_k \big) + \sum_j \nu_{kj} \big[ Y_{kj} \big] \right) \\
& =  \int_{\tilde{X}_k(\tau)} \Big( \mu_k(\tau)^*\Omega \Big) \wedge \tilde{R}_k + \sum_j \nu_{kj} \int_{X_{\tau}} \Omega \wedge \big[ Y_{kj} \big],\\
\end{aligned}
\end{equation}
where $\tilde{R}_k \geq - \epsilon_k \mu_k(\tau)^* \omega_{\tau}$,
$Y_{kj}$ are irreducible components of complex codimension one of
$Y_k$ and $\nu_{kj}$ are positive numbers for all $j$.

We claim the following two statements:
\begin{enumerate}
\item\label{fst} $\int_{\tilde{X}_k(\tau)} \Big( \mu_k(\tau)^*\Omega \Big) \wedge \tilde{R}_k \geq
-\epsilon_k \int_{X_{\tau}} \Omega \wedge \omega_{\tau}$;
\item\label{scd} $ \int_{X_{\tau}} \Omega \wedge \big[ Y_{kj} \big] \geq 0 $.
\end{enumerate}
For the statement \eqref{fst}, we consider that
\[ \begin{aligned}
& \int_{\tilde{X}_k(\tau)} \Big( \mu_k(\tau)^*\Omega \Big) \wedge \tilde{R}_k \\
=& \int_{\tilde{X}_k(\tau)} \Big( \mu_k(\tau)^*\Omega \Big) \wedge
\Big(\tilde{R}_k + 2 \epsilon_k \mu_k(\tau)^* \omega_{\tau} \Big)
- 2 \epsilon_k \int_{\tilde{X}_k(\tau)} \Big( \mu_k(\tau)^*\Omega \Big) \wedge \Big( \mu_k(\tau)^*\omega_{\tau} \Big) \\
=& \int_{\tilde{X}_k(\tau)} \Big( \mu_k(\tau)^*\Omega \Big) \wedge \Big(\tilde{R}_k + 2 \epsilon_k \mu_k(\tau)^* \omega_{\tau} \Big) - 2 \epsilon_k \int_{X_{\tau}} \Omega \wedge \omega_{\tau}.\\
\end{aligned} \]
It should be noted that $\mu_k(\tau)^* \omega_{\tau}$ is a
semi-positive $(1,1)$-form on $\tilde{X}_k(\tau)$ for each $k$. And
thus, we can choose a sequence of positive numbers $\{\lambda_k\}_{k
\in \mathbb{N}^+}$, converging to $0$, such that
$\mu_k(\tau)^* \omega_{\tau} - \lambda_k u_k$ is positive for each
$k$, where $u_k$ is some smooth form in the Bott-Chern cohomology
class of $\big[ \tilde{Y}_k(\tau) \big]$ (cf. \cite[Lemma 3.5]{DP}).
Hence, the integral above amounts to the following equalities:
\[ \begin{aligned}
& \int_{\tilde{X}_k(\tau)} \Big( \mu_k(\tau)^*\Omega \Big) \wedge \tilde{R}_k \\
=& \int_{\tilde{X}_k(\tau)} \Big( \mu_k(\tau)^*\Omega \Big) \wedge
\Big(\tilde{R}_k + 2 \epsilon_k \mu_k(\tau)^* \omega_{\tau}
- \epsilon_k \lambda_k u_k \Big) \\
&+ \epsilon_k \lambda_k \int_{\tilde{X}_k(\tau)} \Big(
\mu_k(\tau)^*\Omega \Big) \wedge u_k
- 2 \epsilon_k \int_{X_{\tau}} \Omega \wedge \omega_{\tau}\\
=& \int_{\tilde{X}_k(\tau)} \Big( \mu_k(\tau)^*\Omega \Big) \wedge
\Big(\tilde{R}_k + 2 \epsilon_k \mu_k(\tau)^* \omega_{\tau}
- \epsilon_k \lambda_k u_k \Big) \\
&+ \epsilon_k \lambda_k \int_{\tilde{X}_k(\tau)} \Big(
\mu_k(\tau)^*\Omega \Big) \wedge \big[ \tilde{Y}_k(\tau) \big]
- 2 \epsilon_k \int_{X_{\tau}} \Omega \wedge \omega_{\tau}.\\
\end{aligned} \]
It is clear that \[ \Big(\tilde{R}_k + 2 \epsilon_k \mu_k(\tau)^*
\omega_{\tau} - \epsilon_k \lambda_k u_k \Big) = \Big(\tilde{R}_k +
\epsilon_k \mu_k(\tau)^* \omega_{\tau} \Big) + \epsilon_k \Big(
\mu_k(\tau)^* \omega_{\tau} - \lambda_k u_k \Big) \] is a K\"ahler
metric on $\tilde{X}_k(\tau)$ for each $k$. Then it follows that
\[ \int_{\tilde{X}_k(\tau)} \Big( \mu_k(\tau)^*\Omega \Big) \wedge \Big(\tilde{R}_k
+ 2 \epsilon_k \mu_k(\tau)^* \omega_{\tau} - \epsilon_k \lambda_k
u_k \Big) = \lim\limits_{t \> \tau} \int_{\tilde{X}_k(t)} \Big(
\mu_k(t)^*\Omega_t \Big) \wedge \tilde{\omega}_k(t) \geq 0,\] where
$\tilde{\omega}_k(t)$ is a family of K\"ahler metrics on
$\tilde{X}_k(t)$, starting with $$\Big(\tilde{R}_k + 2 \epsilon_k
\mu_k(\tau)^* \omega_{\tau} - \epsilon_k \lambda_k u_k \Big)$$ and
varying smoothly with respect to $t$, from the stability theorem of
K\"{a}hler structures \cite{KS}. Moreover, the integral
$\int_{\tilde{X}_k(t)} \Big( \mu_k(t)^*\Omega_t \Big) \wedge
\tilde{\omega}_k(t)$ only depends on the Aeppli class of $\mu_k(t)^*
\Omega_t$ and $\big[\mu_k(t)^* \Omega_t \big]_{\mathrm{A}}$
converges to $\big[\mu_k(\tau)^* \Omega \big]_{\mathrm{A}}$ when $t
\> \tau$. Similarly, we can get that
\[ \epsilon_k \lambda_k \int_{\tilde{X}_k(\tau)} \Big( \mu_k(\tau)^*\Omega \Big) \wedge \big[ \tilde{Y}_k(\tau) \big]
= \epsilon_k \lambda_k \lim\limits_{t \> \tau} \int_{\tilde{X}_k(t)}
\Big( \mu_k(\tau)^*\Omega_t \Big) \wedge \big[ \tilde{Y}_k(t) \big]
\geq 0, \] where $\tilde{Y}_k(t)$ is the extension of
$\tilde{Y}_k(\tau)$ to the $t$-fiber $\tilde{X}_k(t)$ of the total
space $\tilde{\mathcal{X}}_k$. Based on these two inequalities above,
one has
\[ \int_{\tilde{X}_k(\tau)} \Big( \mu_k(\tau)^*\Omega \Big) \wedge \tilde{R}_k
\geq -\epsilon_k \int_{X_{\tau}} \Omega \wedge \omega_{\tau}. \]
Therefore, the statement \eqref{fst} is proved.

For the statement \eqref{scd}, let us recall that every analytic
irreducible subvariety of complex codimension $n-p$ in $X_{\tau}$
can be extended into any other fiber in the family $\pi: \mathcal{X}
\rightarrow \Delta_{\epsilon}$ with the invariance of its cohomology
class, from Barlet's theory of analytic cycle discussed above.
Especially, the irreducible components $Y_{kj}$ of complex
codimension one of $Y_k$ on $X_{\tau}$ can be extended to the ones
$Y_{kj}(t)$ on the $t$-fiber $X_t$, which are contained in $Y_k(t)$.
Then it is easy to see that
\[ \int_{X_{\tau}} \Omega \wedge \big[ Y_{kj} \big]
= \lim_{t\>\tau} \int_{X_t} \Omega_t \wedge \big[ Y_{kj}(t)\big]
\geq 0. \] The statement \eqref{scd} is also proved.

Together with these two statements and \eqref{estimate}, it is clear
that
\[  \int_{X_{\tau}} \Omega \wedge T \geq -\epsilon_k \int_{X_{\tau}} \Omega \wedge \omega_{\tau}, \]
for each $k$. Then it follows that
\[ \int_{X_{\tau}} \Omega \wedge T \geq 0, \]
where $T$ is any fixed $d$-closed positive $(1,1)$-current on
$X_{\tau}$. Proposition \ref{closed-dual} assures the inclusion: for
$\tau \in \Delta_{\epsilon} \setminus \bigcup S_{\nu}$,
\[  \lim_{t \> \tau} \mathcal{G}_{X_t} \subseteq \overline{\mathcal{G}}_{X_{\tau}}. \]
\end{proof}

\begin{theorem}\label{inclusion_2}
Let $\pi: \mathcal{X} \rightarrow \Delta_{\epsilon}$ be a
holomorphic family with fibers all K\"{a}hler manifolds. For some
$\tau \in \Delta_{\epsilon}$, the fiber $X_{\tau}$ admits the
equality $\overline{\mathcal{K}}_{X_{\tau}}=\mathcal{E}_{X_{\tau}}$.
Then the inclusion holds:
\[ \lim_{t \> \tau} \mathcal{G}_{X_t}
\subseteq \overline{\mathcal{G}}_{X_{\tau}}. \] In particular, the
fiber $X_{\tau}$ with nef holomorphic tangent bundle
$T^{1,0}_{X_{\tau}}$ satisfies the inclusion above.
\end{theorem}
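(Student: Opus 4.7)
\medskip

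The plan is to combine Proposition~\ref{closed-dual} (Lamari's duality $\overline{\mathcal{G}}_{X_\tau} = (\mathcal{E}_{X_\tau})^{\mathrm{v}_c}$) with the hypothesis $\overline{\mathcal{K}}_{X_\tau}=\mathcal{E}_{X_\tau}$. Given a class $[\Omega]_{\mathrm{A}} \in \lim_{t\to \tau}\mathcal{G}_{X_t}$, it will suffice to verify that
\[ \int_{X_\tau} \Omega \wedge T \geq 0 \]
for every $d$-closed positive $(1,1)$-current $T$ on $X_\tau$, since the pairing descends to $H^{n-1,n-1}_{\mathrm{A}}(X_\tau,\mathbb{R})\times H^{1,1}_{\mathrm{BC}}(X_\tau,\mathbb{R})$ and then duality will place $[\Omega]_{\mathrm{A}}$ in $\overline{\mathcal{G}}_{X_\tau}$.

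First I would recall from the proof of Theorem~\ref{inclusion_1} that any $[\Omega]_{\mathrm{A}} \in \lim_{t\to\tau}\mathcal{G}_{X_t}$ actually admits a representative $\widetilde{\Omega}$ which is a $\p_\tau\db_\tau$-closed positive $(n-1,n-1)$-current on $X_\tau$; this representative arises as the weak limit (after extraction of a subsequence) of positive $(n-1,n-1)$-forms $\Omega_t$ with uniformly bounded mass, controlled by a fixed smoothly-varying Hermitian family $\{\omega_t\}$. Hence for the pairing computation I am free to use $\widetilde{\Omega}\geq 0$ in place of $\Omega$.

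Next, fix $T$ as above, so $[T]_{\mathrm{BC}}\in \mathcal{E}_{X_\tau}$. Invoking the hypothesis $\mathcal{E}_{X_\tau}=\overline{\mathcal{K}}_{X_\tau}$, the class $[T]_{\mathrm{BC}}$ is nef, so for each $\epsilon>0$ there exists a smooth real $(1,1)$-form $\alpha_\epsilon \in [T]_{\mathrm{BC}}$ with $\alpha_\epsilon \geq -\epsilon\, \omega_\tau$. Since the pairing between Aeppli and Bott--Chern cohomology is independent of representatives, one has
\[ \int_{X_\tau} \Omega \wedge T \;=\; \int_{X_\tau} \widetilde{\Omega}\wedge \alpha_\epsilon \;\geq\; -\epsilon \int_{X_\tau} \widetilde{\Omega}\wedge \omega_\tau, \]
where positivity of the current $\widetilde{\Omega}$ against the smooth form $\omega_\tau$ makes the right-hand side a finite quantity, independent of $\epsilon$. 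Letting $\epsilon\to 0^+$ yields $\int_{X_\tau}\Omega\wedge T \geq 0$, and Proposition~\ref{closed-dual} then gives $[\Omega]_{\mathrm{A}} \in \overline{\mathcal{G}}_{X_\tau}$, completing the main inclusion. The only non-routine input here is the ``positive current representative'' fact from Theorem~\ref{inclusion_1}; everything else is a transparent duality/nef-regularization step.

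Finally, for the ``In particular'' clause, I would simply note that when the holomorphic tangent bundle $T^{1,0}_{X_\tau}$ is nef, any pseudo-effective $(1,1)$-class is itself nef (a standard consequence of Demailly--Peternell--Schneider type positivity arguments for manifolds with nef tangent bundle), so the hypothesis $\overline{\mathcal{K}}_{X_\tau}=\mathcal{E}_{X_\tau}$ is automatically satisfied and the first part applies. The main obstacle, should one wish to remove the K\"ahler assumption on \emph{all} fibers or to bypass the positive-current extraction, would be to make sense of the weak limit globally; but within the K\"ahlerian setting assumed here, the bounded-mass argument from Theorem~\ref{inclusion_1} handles this cleanly.
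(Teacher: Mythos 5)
Your proposal is correct, and it shares the paper's overall skeleton (Lamari duality from Proposition \ref{closed-dual} plus the nef regularization $\alpha_\epsilon \geq -\epsilon\,\omega_\tau$ supplied by $\overline{\mathcal{K}}_{X_\tau}=\mathcal{E}_{X_\tau}$), but the mechanism you use for the key positivity step is genuinely different. The paper keeps the smooth representative $\Omega$ on $X_\tau$ and the positive forms $\Omega_t$ on nearby fibers, observes that $\alpha_\delta + 2\delta\omega_\tau$ is a K\"ahler metric, extends it to a smooth family of K\"ahler metrics $\tilde{\alpha}_\delta(t)$ on the $X_t$ via the Kodaira--Spencer stability theorem, and obtains $\int_{X_\tau}\Omega\wedge(\alpha_\delta+2\delta\omega_\tau)=\lim_{t\to\tau}\int_{X_t}\Omega_t\wedge\tilde{\alpha}_\delta(t)\geq 0$. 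You instead import the first inclusion of Theorem \ref{inclusion_1}, replace $[\Omega]_{\mathrm{A}}$ by its positive $\p_\tau\db_\tau$-closed $(n-1,n-1)$-current representative $\widetilde{\Omega}$ (the bounded-mass weak limit of the $\Omega_t$), and conclude on the fixed fiber $X_\tau$ from the elementary fact that a positive $(n-1,n-1)$-current pairs non-negatively with the semi-positive smooth form $\alpha_\epsilon+\epsilon\omega_\tau$. Your route is more economical -- no second invocation of the stability theorem, and all integrals live on $X_\tau$ -- but it leans on the assertion from Theorem \ref{inclusion_1} that the weak limit stays in the Aeppli class $[\Omega]_{\mathrm{A}}$, whereas the paper's argument is self-contained modulo the stability theorem. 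The representative-independence of the pairings you use (smooth $\p\db$-closed form against a $\sqrt{-1}\p\db$-exact current, and $\p\eta+\overline{\p\eta}$ against the $d$-closed $\alpha_\epsilon$) checks out by integration by parts, and your treatment of the nef-tangent-bundle case agrees in substance with the paper's appeal to Demailly's \cite[Corollary 1.5]{D1}.
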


\begin{proof}
The condition
$\overline{\mathcal{K}}_{X_{\tau}}=\mathcal{E}_{X_{\tau}}$ implies
that, for any $d$-closed positive $(1,1)$-current $T$ and arbitrary
$\delta >0$, there exists a smooth $(1,1)$-form $\alpha_{\delta}$,
which lies in the Bott-Chern class $\big[T\big]_{\mathrm{BC}}$, such
that $$\alpha_{\delta} \geq - \delta \omega_{\tau},$$ where
$\omega_{\tau}$ is the fixed K\"ahler metric of $X_{\tau}$.

Fix an element $\big[\Omega\big]_{\mathrm{A}}$ of $\lim\limits
_{t\>{\tau}}{\mathcal{G}}_{X_t}$, which means that
\[\mathrm{P}_t \circ \mathrm{Q}_{\, \tau} \Big( \big[\Omega\big]_{\mathrm{A}} \Big) \in
\mathcal{G}_{X_t} \quad \textrm{for}\ \
0<|t-\tau|<\delta_{[\Omega]_{\mathrm{A}}}. \] Then for any
$d$-closed positive $(1,1)$-current $T$,
\[ \begin{aligned}
\int_{X_{\tau}} \Omega \wedge T &= \int_{X_{\tau}} \Omega \wedge \alpha_{\delta} \\
& = \int_{X_{\tau}} \Omega \wedge \big( \alpha_{\delta} + 2 \delta
\omega_{\tau}\big) - 2 \delta \int_{X_{\tau}} \Omega \wedge
\omega_{\tau}. \\ \end{aligned}\]

It is clear that $ \alpha_{\tau} + 2 \tau \omega_{\tau}$ is a
K\"ahler metric on $X_{\tau}$, and thus, from the stability theorem
of K\"{a}hler structures \cite{KS}, there exists a family of
K\"ahler metrics $\tilde{\alpha}_{\delta}(t)$ on $X_t$, starting
with $ \alpha_{\tau} + 2 \delta \omega_{\tau}$ and varying smoothly
with respect to $t$. It follows that
\[ \int_{X_{\tau}} \Omega \wedge \big( \alpha_{\delta} + 2 \delta \omega_{\tau} \big)
= \lim_{t\>{\tau}} \int_{X_t} \Omega_t \wedge
\tilde{\alpha}_{\delta}(t) \geq 0,\] since the integral also depends
on the Aeppli class of $\Omega_t$ and $\Omega_t$ is the positive
representative in $\mathrm{P}_t \circ \mathrm{Q}_{\,\tau} \Big(
\big[\Omega\big]_{\mathrm{A}} \Big)$ for each $t \neq \tau$. As
$\delta$ can be arbitrarily small, we have
\[ \int_{X_{\tau}} \Omega \wedge T \geq 0, \]
which assures that $\big[\Omega \big]_{\mathrm{A}} \in
\overline{\mathcal{G}}_{X_{\tau}}$ by Proposition \ref{closed-dual}.
If a compact complex manifold has nef holomorphic tangent bundle,
the nef cone and the pseudo-effective cone coincide by
\cite[Corollary 1.5]{D1}. Therefore, the proofs are completed.
\end{proof}


\begin{thebibliography}{99}

\bibitem{AB92} L. Alessandrini, G. Bassanelli,
\newblock \textit{Positive $\p\db$-closed currents and non-K\"{a}hler geometry},
\newblock J. Geom. Anal. \textbf{2} (1992), 291-361.


\bibitem{AB95} L. Alessandrini, G. Bassanelli,
\newblock \textit{Modifications of compact balanced manifolds},
\newblock C. R. Math. Acad. Sci. Paris \textbf{320} (1995),
1517-1522.

\bibitem{A} D. Angella,
\newblock \textit{The cohomologies of the Iwasawa manifold and its small
deformations},
\newblock J. Geom. Anal., Volume 23, (2013), Issue 3,
1355-1378.

\bibitem{BK} S. Barannikov and M. Kontsevich,
\newblock \textit{Frobenius manifolds and formality of Lie algebras of
polyvector fields},
\newblock Internat. Math. Res. Notices (1998), no. 4, 201-215.

\bibitem{Bar} D. Barlet,
\newblock \textit{Espace analytique r\'{e}duit des cycles analytiques
complexes d'un espace analytique complexe de dimensin finie, in Fonctions
de Plusieurs Variables Complexes II}(S\'{e}m. Franois Norguet, 1974-1975),
\newblock Lecture Notes in Mathematics \textbf{482}, Springer-Verlag,
New York (1975), 1-158.

\bibitem{BHPV} W. Barth, K. Hulek, C. Peters, A. Van de Ven,
\newblock \textit{Compact complex surfaces},
\newblock Second edition. Ergebnisse der Mathematik und ihrer
Grenzgebiete. 3. Folge. A Series of Modern Surveys in Mathematics
[Results in Mathematics and Related Areas. 3rd Series. A Series of
Modern Surveys in Mathematics], 4. Springer-Verlag, Berlin, 2004.


\bibitem{Bou_1} S. Boucksom,
\newblock \textit{On the volume of a line bundle},
\newblock International J. Math., Vol.~13, No.~10 (2002), 1043-1063.

\bibitem{Bou_2} S. Boucksom,
\newblock \textit{Divisorial Zariski decomposition on compact complex manifolds},
\newblock  Ann. Sci. \'{E}cole Norm. Sup. 37 (2004), no. 1, 45-76.

\bibitem{BDPP} S. Boucksom, J.-P. Demailly, M. Paun, Th. Peternell,
\newblock \textit{The pseudo-effective cone of a compact K\"{a}hler manifold
and varieties of negative Kodaira dimension},
\newblock J. Algebraic Geometry \textbf{22} (2013) 201-248.

\bibitem{BEGZ} S. Boucksom, P. Eyssidieux, V. Guedj, A. Zeriahi,
\newblock  \textit{Monge-Amp\`{e}re equations in big cohomology
classes},
\newblock Acta Math. \textbf{205} (2010), 199-262.

\bibitem{cs} K. Chan, Y. Suen,
\newblock  \textit{A Chern-Weil approach to deformations of
pairs and its applications},
\newblock
\href{http://arxiv.org/abs/1406.6753v3}{arXiv:1406.6753v3}, Complex
Manifolds 3 (2016), Art. 2.

\bibitem{crs} I. Chiose, R. Rasdeaconu, I. Suvaina,
\newblock \textit{Balanced metrics on uniruled
manifolds,}
\newblock \href{http://arxiv.org/abs/1408.4769v1}{arXiv:1408.4769v1}.

\bibitem{C} H. Clemens,
\newblock \textit{Geometry of formal Kuranishi theory},
\newblock Advances in Mathematics \textbf{198} (2005), 311-365.

\bibitem{CFP} S. Console, A. Fino, Y.-S. Poon,
\newblock \textit{Stability of abelian complex structures},
\newblock International Journal of Mathematics, Vol. 17, No. 4 (2016), 401-416.

\bibitem{D1} J.-P. Demailly,
\newblock \textit{Regularization of closed positive currents and intersection theory},
\newblock J. Algebraic Geometry, \textbf{1} (1992), 361-409.


\bibitem{DPS} J.-P. Demailly, Th. Peternell, M. Schneider,
\newblock \textit{Compact complex manifolds with numerically effective tangent
bundles},
\newblock J. Algebraic Geometry, \textbf{3} (1994), 295-345.

\bibitem{DP} J.-P. Demailly, M. Paun,
\newblock \textit{Numerical characterization of the K\"{a}hler cone
of a compact K\"{a}hler manifold},
\newblock Annals of Mathematics, \textbf{159} (2004), 1247-1274.

\bibitem{E} C. Ehresmann, \textit{Sur les espaces fibres differentiables}, C. R. Acad.
Sci. Paris, 224, 1611-1612 (1947).

\bibitem{F} R. Friedman,
\newblock \textit{On threefolds with trivial canonical bundle},
\newblock Complex geometry and Lie theory (Sundance, UT, 1989), 103-134,
Proc. Sympos. Pure Math., 53, Amer. Math. Soc., Providence, RI,
1991.

\bibitem{Fr} A. Fr\"{o}licher, \textit{Zur Differentialgeometrie der komplexen Strukturen},
Math. Ann., Vol. \textbf{129} (1955), 50-95.

\bibitem{FLY} J. Fu, J. Li, S.-T. Yau,
\newblock \textit{Balanced metrics on non-K\"{a}hler Calabi-Yau threefolds},
\newblock J. Differential Geometry, \textbf{20} (2012), 81-129.

\bibitem {FX} J. Fu, J. Xiao,
\newblock \textit{Relations between the K\"{a}hler cone and
the balanced cone of a K\"{a}hler manifold},
\newblock Advances in Mathematics \textbf{263} (2014) 230-252.

\bibitem{G} P. Gauduchon,
\newblock \textit{Le th\'{e}or\`{e}m de l'excentricit\'{e} nulle},
\newblock C. R. Acad. Sc. Paris. S\'{e}rie A-B, \textbf{285} (1977), 387-390.

%
\bibitem{Gt} Hans Grauert,
\newblock \textit{Ein Theorem der analytischen Garbentheorie und die
Modulr\"{a}ume komplexer Strukturen},
\newblock (German) Inst. Hautes¨¦tudes Sci. Publ. Math., No. 5, 1960, 64 pp.

\bibitem{h}L. Huang,
\newblock \textit{On joint moduli spaces},
\newblock Math. Ann. 302 (1995), no. 1, 61-79.

\bibitem {K} K. Kodaira, \textit{On the structure of compact complex analytic surfaces. I. }, Amer. J. Math. 86 1964 751-798.

\bibitem {K2} K. Kodaira, \textit{Complex manifolds and deformations of complex
structures}, Grundlehren der Math. Wiss. 283, Springer (1986).

\bibitem{KS} K. Kodaira, D. Spencer,
\newblock \textit{On deformations of complex analytic structures. III. Stability
theorems for complex structures},
\newblock Ann. of Math. (2) \textbf{71}, 1960, 43-76.


\bibitem{ku} M. Kuranishi,
\newblock \textit{New proof for the existence of locally complete families of complex structures,}
 \newblock 1965 Proc. Conf. Complex Analysis (Minneapolis, 1964) pp. 142-154 Springer, Berlin.


\bibitem{L} A. Lamari,
\newblock \textit{Courants k\"{a}hl\'{e}riens et surfaces
compactes},
\newblock Ann. Inst. Fourier (Grenoble), \textbf{49}, 1 (1999),
263-285.

\bibitem{Li} Yi Li,
\newblock \textit{On deformations of generalized complex structures the
generalized Calabi-Yau case},
\newblock \href{http://arxiv.org/abs/0508030v2}{arXiv:0508030v2}, 15 Oct 2005.

\bibitem{LR} K. Liu, S. Rao,
\textit{Remarks on the Cartan formula and its applications},
\newblock Asian J. Math. Vol. 16, No. 1, 157-170, March (2012).

\bibitem{lry} K. Liu, S. Rao, X. Yang,
\newblock\textit{Quasi-isometry and deformations of Calabi-Yau manifolds},
\newblock Invent. Math. 199 (2015), no. 2, 423-453.

\bibitem{LSY} K. Liu, X. Sun, S.-T Yau,
\newblock \textit{Recent development on the geometry of
the Teichm\"{u}ller and moduli spaces of Riemann surfaces},
\newblock Surveys in differential geometry. Vol. XIV.
Geometry of Riemann surfaces and their moduli spaces, 221-259,
(2009).

\bibitem{LUV} A. Latorre, L. Ugarte, R. Villacampa,
\newblock \textit{On the Bott-Chern cohomology and balanced Hermitian nilmanifold},
\newblock Internat. J. Math. 25 (2014), no. 6, 1450057, 24 pp.


\bibitem {M} M. L. Michelsohn,
\newblock \textit{On the existence of special metrics in complex geometry},
\newblock Acta Math. \textbf{49} (1982), no. 3-4, 261-295.

\bibitem {MK} J. Morrow, K. Kodaira,
\newblock \textit{Complex manifolds},
\newblock Holt, Rinehart and Winston, Inc.,
New York-Montreal, Que.-London, (1971).

\bibitem{N} I. Nakamura,
\newblock \textit{Complex parallelisable manifolds and their small deformations},
\newblock J. Differential Geometry, \textbf{10} (1975), 85-112.

\bibitem {NN} A. Newlander, L. Nirenberg,
\newblock \textit{Complex analytic coordinates in almost complex
manifolds},
\newblock Ann. of Math. (2) \textbf{65} (1957), 391-404.

\bibitem{P3} D. Popovici,
\newblock \textit{Deformation limits of projective manifolds: Hodge
numbers and strongly Gauduchon metrics},
\newblock Invent. Math. \textbf{194} (2013), no. 3, 515-534.

\bibitem{P1} D. Popovici,
\newblock \textit{Aeppli Cohomology classes associated with Gauduchon metrics on compact complex manifolds},
\newblock Bull. Soc. Math. France 143 (2015), no. 4, 763-800.

\bibitem{P5} D. Popovici,
\newblock \textit{Sufficient bigness criterion for differences of two
nef classes},
\newblock  Math. Ann. 364 (2016), no. 1-2, 649-655
\href{http://arxiv.org/abs/1405.2518v1}{arXiv:1405.2518v1}.

\bibitem{P2} D. Popovici,
\newblock \textit{Holomorphic deformations of balanced
 Calabi-Yau $\p\db$-manifolds},
\newblock \href{http://arxiv.org/abs/1304.0331v1}{arXiv:1304.0331v1}.






\bibitem{P4} D. Popovici,
\newblock \textit{Volume and self-intersection of differences of two nef
classes},
\newblock \href{http://arxiv.org/abs/1505.03457v1}{arXiv:1505.03457v1}.

\bibitem{PU} D. Popovici, L. Ugarte,
\newblock \textit{The \textbf{sGG} classes of compact complex
manifolds},
\newblock \href{http://arxiv.org/abs/1407.5070v1}{arXiv:1407.5070v1}.


\bibitem {RwZ} S. Rao, X. Wan and Q. Zhao,
\newblock \textit{Power series proofs for local stabilities of K\"ahler and balanced structures with mild $\partial\bar\partial$-lemma},
\newblock \href{http://arxiv.org/abs/1609.05637v1}{arXiv: 1609.05637v1}.

\bibitem{RZ15} S. Rao, Q. Zhao,
\newblock \textit{Several special complex structures and their deformation
properties}, June 2015,
\newblock \href{http://arxiv.org/abs/1604.05396v1}{arXiv: 1604.05396v1}

\bibitem{Sc} M. Schweitzer,
\newblock \textit{Autour de la cohomologie de Bott-Chern,}
\newblock \href{http://arxiv.org/abs/0709.3528v1}{arXiv:0709.3528v1}.

\bibitem{S} D. Sullivan,
\newblock \textit{Cycles for the dynamical study of foliated manifolds and complex
manifolds},
\newblock Invent. Math. \textbf{36} (1976) 225-255.

\bibitem{Sak} Y. Sakane,
\newblock \textit{On compact complex parallelizable solvmanifolds},
\newblock Osaka J. Math. \textbf{13}(1), 187-212 (1976).

\bibitem{Siu} Y.-T. Siu,
\newblock \textit{Analycity of sets associated to Lelong numbers
and the extension of closed positive currents},
\newblock Invent. Math., 27 (1974), 53-156.

\bibitem{Sun} X. Sun,
\newblock \textit{Deformation of canonical metrics I},
\newblock Asian J. Math. \textbf{16} (2012), no. 1, 141-155.

\bibitem{SY} X. Sun, S.-T. Yau,
\newblock \textit{Deformation of K\"ahler-Einstein
metrics},
\newblock Surveys in geometric analysis and relativity,
467-489, Adv. Lect. Math. (ALM), 20, Int. Press, Somerville, MA,
2011.

\bibitem{T} G. Tian,
\newblock \textit{Smoothness of the universal deformation space
of compact Calabi-Yau manifolds and its Petersson-Weil metric},
\newblock Mathematical aspects of string theory (San Diego, Calif., 1986),
629-646, Adv. Ser. Math. Phys., 1, World Sci. Publishing, Singapore,
(1987).

\bibitem {To89} A. Todorov,
\newblock \textit{The Weil-Petersson geometry of the moduli
space of $\mathbb{SU}${$(n\geq3)$} (Calabi-Yau) manifolds I},
\newblock Comm. Math. Phys., \textbf{126(2)} 325-346, (1989).

\bibitem{Toma} M. Toma,
\newblock \textit{A note of the cone of mobile curves},
\newblock C. R. Math. Acad. Sci. Paris \textbf{348} (2010), 71-73.


\bibitem{Wa} J. Wavrik,
\newblock \textit{Deforming cohomology classes},
\newblock Trans. Amer. Math. Soc. 181 (1973), 341-350.


\bibitem{WYZ} D. Wu, S.-T. Yau, F. Zheng,
\newblock \textit{A degenerate Monge-Amp\`{e}re equation
and the boundary classes of K\"{a}hler cones},
\newblock Math. Res. Lett. \textbf{16} (2009), no 2, 365-374.

\bibitem{X1} J. Xiao,
\newblock \textit{Weak transcendental holomorphic Morse inequalities on
compact K\"ahler manifolds},
\newblock Ann. Inst. Fourier (Grenoble) 65 (2015), no. 3, 1367-1379.


\bibitem{X2} J. Xiao,
\newblock \textit{Characterizing volume via cone duality,}
\newblock \href{http://arxiv.org/abs/1502.06450}{arXiv:1502.06450}. Math. Ann. (2016). \href{https://doi.org/10.1007/s00208-016-1501-3}{https://doi.org/10.1007/s00208-016-1501-3}

\bibitem{Y} X. Ye,
\newblock \textit{The jumping phenomenon of Hodge numbers},
\newblock Pacific J. Math. \textbf{235} (2008), no. 2, 379-398.

\bibitem{Yst} S.-T. Yau,
\newblock \textit{On the Ricci curvature of a compact K\"{a}hler manifold
and the complex Monge-Amp\`{e}re equation, I},
\newblock Comm. Pure. Appl. Math \textbf{31} (1978), 339-411.

\bibitem {RZ}  Q. Zhao, S. Rao,
\newblock \textit{Applications of deformation formula of holomorphic
one-forms}, \newblock Pacific J. Math. Vol. 266, No. 1, 2013,
221-255.

\bibitem {RZ2}  Q. Zhao, S. Rao,
\newblock \textit{Extension formulas and deformation invariance of Hodge numbers},
\newblock C. R. Math. Acad. Sci. Paris, Volume 353, Issue 11, November 2015,
979-984.

\end{thebibliography}
\end{document}